\documentclass[9pt]{amsart}
\usepackage{amsmath}
\usepackage{amsthm}
\usepackage{amsbsy}
\usepackage{amssymb}
\usepackage{amsfonts}
\usepackage[dvips]{lscape}
\usepackage{xcolor}
\usepackage{amscd}
\usepackage[all,cmtip]{xy}
\usepackage{euscript}
\usepackage{parskip}
\usepackage{enumerate}
\usepackage[all,cmtip]{xy}
\usepackage{tikz-cd}
\usepackage{libertine}
\usepackage[libertine]{newtxmath}

\makeatletter
\newcommand{\manuallabel}[2]{\def\@currentlabel{#2}\label{#1}}
\makeatother

\usepackage[margin=1in]{geometry}
\geometry{letterpaper}

\usepackage[expansion=false]{microtype}

\usepackage[pdfusetitle,unicode,hidelinks]{hyperref}

\theoremstyle{plain}

\setlength{\parskip}{0pt}
\setlength{\parsep}{0pt}
\setlength{\partopsep}{0pt}

\setlength{\parindent}{0in}

\theoremstyle{plain}
\newtheorem{theorem}{Theorem}[section]

\newtheorem{proposition}[theorem]{Proposition}
\newtheorem{lemma}[theorem]{Lemma}
\newtheorem{corollary}[theorem]{Corollary}

\newtheorem*{THMA}{Theorem~\textup{A}}
\newtheorem*{THMB}{Theorem~\textup{B}}

\theoremstyle{remark}

\theoremstyle{definition}
\newtheorem{definition}[theorem]{Definition}

\setcounter{tocdepth}{2}
\setlength{\parindent}{0in}
\newcommand{\cal}{\EuScript}

\let\lim=\relax

\DeclareMathOperator*{\lim}{lim}

\renewcommand{\contentsname}{}

\newcommand{\vol}{\textup{vol}}

\newcommand{\grad}{\textup{grad}}

\begin{document}
\title{Stiefel manifolds and upper bounds for spherical codes and packings}
\author{Masoud Zargar}\footnotetext{This project was financially supported partly by the University of Southern California.}
\email{mzargar1225@gmail.com}
\renewcommand{\contentsname}{}
\date{\today}
\maketitle
\vspace{-1cm}
\begin{center}
\end{center}
\begin{abstract}
\begin{small}
We improve upper bounds on sphere packing densities and sizes of spherical codes in high dimensions. In particular, we prove that the maximal sphere packing densities $\delta_n$ in $\mathbb{R}^n$ satisfy
\[\delta_n\leq \frac{1+o(1)}{e}\cdot \delta^{\textup{KL}}_{n}\]
for large $n$, where $\delta^{\textup{KL}}_{n}$ is the best bound on $\delta_n$ obtained essentially by Kabatyanskii and Levenshtein from the 1970s with improvements over the years. We also obtain the same improvement factor for the maximal size $M(n,\theta)$ of $\theta$-spherical codes in $S^{n-1}$: for angles $0<\theta<\theta'\leq\frac{\pi}{2}$,
\[M(n,\theta)\leq \frac{1+o(1)}{e}\cdot \frac{M_{\textup{Lev}}(n-1,\theta')}{\mu_n(\theta,\theta')}\]
for large $n$, where $\mu_n(\theta,\theta')$ is the mass of the spherical cap in the unit sphere $S^{n-1}$ of radius $\frac{\sin(\theta/2)}{\sin(\theta'/2)}$, and $M_{\textup{Lev}}(n-1,\theta')$ is Levenshtein's upper bound on $M(n-1,\theta')$ when applying the Delsarte linear programming method to Levenshtein's optimal polynomials. In fact, we prove that there are no analytic losses in our arguments and that the constant $\frac{1}{e}=0.367...$ is optimal for the class of functions considered. Our results also show that the improvement factor does not depend on the special angle $\theta^*=62.997...^{\circ}$, explaining the numerics in~\cite{SZ}. In the spherical codes case, the above inequality improves the Kabatyanskii--Levenshtein bound by a factor of $0.2304...$ on geometric average. Along the way, we construct a general class of functions using Stiefel manifolds for which we prove general results and study the improvement factors obtained from them in various settings.
\end{small}
\end{abstract}
\vspace{-0.75cm}
\setcounter{tocdepth}{1}
\tableofcontents
\vspace{-0.5cm}
\section{History, methods, and main results}\label{intro}
\subsection{Sphere packings}\label{intro:packings}
Maximal sphere packing densities $\delta_n$ in $\mathbb{R}^n$ and related quantities have been of interest for at least a few centuries, partly for being simple to define yet very difficult to calculate, and partly for their appearance in different areas ranging from coding theory and physics to even counting $\ell$-adic sheaves on curves over finite fields. Though they have been extensively studied using different ideas and methods from harmonic analysis, number theory, combinatorics, optimization, and probability theory, very little is known about sphere packings. In this paper, we improve upper bounds on sphere packing densities and sizes of spherical codes, and study the connection between multiple projections and such bounds. Before providing relevant historical facts, methods, and the main results of this paper, we begin by defining the basic objects of study.\\
\\
Maximal sphere packing densities $\delta_n$ are defined as follows. Denote by $B_n(\boldsymbol{x},r)$ the open ball in $\mathbb{R}^n$ of radius $r>0$ centered at $\boldsymbol{x}\in\mathbb{R}^n$, distance measured using the Euclidean norm $|\cdot|$. For every (discrete) set $S$ of points in $\mathbb{R}^n$ such that any two distinct $\boldsymbol{x},\boldsymbol{y}\in S$ satisfy $|\boldsymbol{x}-\boldsymbol{y}|\geq 2$, we obtain a packing
\[\mathcal{P}:=\bigcup_{\boldsymbol{x}\in S}B_n(\boldsymbol{x},1),\]
a disjoint union of open unit balls. This is called a \textit{sphere packing} in $\mathbb{R}^n$; $S$ may vary. Define for each $r>0$ and each sphere packing $\mathcal{P}$ the quantity
\[\delta_{\mathcal{P}}(r):=\frac{\vol(\mathcal{P}\cap B_n(\boldsymbol{0},r))}{\vol(B_n(\boldsymbol{0},r))},\]
where the volume $\vol(.)$ is with respect to the Euclidean metric. The \textit{packing density} of $\mathcal{P}$ is then given by
\[\delta_{\mathcal{P}}:=\limsup_{r\rightarrow+\infty}\delta_{\mathcal{P}}(r)\in [0,1].\]
\begin{definition}The \textit{maximal sphere packing density} in $\mathbb{R}^n$ is
\[\delta_n:=\sup_{\mathcal{P}\subset\mathbb{R}^n}\delta_{\mathcal{P}},\]
where the supremum is over all sphere packings $\mathcal{P}$ in $\mathbb{R}^n$.
\end{definition}
$\delta_n$ is exactly known only in dimensions $n=1,2,3,8,24$. Clearly, $\delta_1=1$. $\delta_2=\frac{\pi}{2\sqrt{3}}=0.9068...$ was proved by Fejes T\'oth in 1942~\cite{FT}; this density comes from the hexagonal lattice. For $n=2$, it was essentially already proved by Lagrange in 1773 that among all \textit{lattice} packings, that is, packings where the centers of the unit balls are chosen to be from points in a lattice, the hexagonal lattice produces the densest possible packing. For $n=3$, Kepler conjectured in 1611 that $\delta_3=\frac{\pi}{3\sqrt{2}}=0.7404...$~\cite{Kepler}. This was proved by T. Hales~\cite{H} in 1998 by using extensive computerized proof by exhaustion. In this case, Gauss~\cite{Gauss} had proved in 1831 the $n=3$ analogue of Lagrange's theorem. The cases $n=8,24$ were resolved by M. Viazovska and collaborators~\cite{Via1},~\cite{Via2} recently using modular forms and the Cohn--Elkies linear programming method. They were shown to be realized by the $E_8$ and Leech lattices, respectively. In some sense, the few dimensions $n=1,2,3,8,24$ in which we know the precise value of $\delta_n$ are very special, and precise calculations of $\delta_n$ for any other given dimension $n$ is extremely difficult. They are instead bounded from above using linear and semi-definite programming methods, and from below either using probabilistic methods or through explicit constructions.\\
\\
For lower bounds, a simple greedy algorithm gives $\delta_n\geq 2^{-n}$, while Minkowski~\cite{Minkowski} obtained a constant improvement $\delta_n\geq (2+o(1))\cdot 2^{-n}$. In 1947, Rogers~\cite{Rogers} proved the first asymptotically growing improvement by showing that $\delta_n\geq (1+o(1))cn\cdot 2^{-n}$ with $c=\frac{2}{e}$. Over the years, the constant $c$ was enlarged to $c=1.68$ by Davenport--Rogers~\cite{DR}, then $c=2$ by Ball~\cite{Ball}, $c=\frac{6}{e}$ by Vance~\cite{Vance}, and $c=65963$ by Venkatesh~\cite{Venkatesh}. Venkatesh used Siegel's mean value theorem~\cite{Siegel} to obtain lower bounds by averaging over the moduli space of unimodular lattices. The best asymptotic lower bound to date is the recent inequality $\delta_n\geq (1-o(1))n\log(n)2^{-n-1}$ due to Campos--Jenssen--Michelsen--Sahasrabudhe~\cite{CJMS}, improving previous lower bounds by a factor of $\log(n)$ using a new graph-theoretic input.\\
\\
Though semi-definite programming is more powerful than linear programming for upper bounds, it is computationally feasible only in low dimensions. The best asymptotic upper bounds to date on $\delta_n$ are obtained using linear programming methods, the best known exponential bound being
\begin{equation}\label{KLsphbound}\delta_n\leq 2^{-n(0.599+o(1))}\textup{ as }n\rightarrow\infty,\end{equation}
dating back to Kabatyanskii--Levenshtein~\cite{KL} from 1978. Though it is expected by some that $\delta_n$ is close to the obtained lower bounds, this seems out of reach at this point. Aside from not having better lower bounds, Torquato--Stillinger~\cite{ST} have proved that the existing linear programming methods for upper bounds on $\delta_n$ cannot give a bound better than $\delta_n\leq 2^{-n\left(\log_2\left(\sqrt{\frac{8}{e}}\right)+o(1)\right)}$, where $\log_2\left(\sqrt{\frac{8}{e}}\right)=0.7786...$. In an unreleased paper, Sardari and Zargar have obtained a different proof of this dual linear programming bound which may lead to greater insight into the limits of the linear programming method. Nevertheless, the numerics of Afkhami-Jeddi--Cohn--Hartman--de Laat--Tajdini~\cite{AJCH} suggest that linear programming should be able to improve inequality~\eqref{KLsphbound} at least by a small exponential quantity. In fact, they conjecture that $\delta_n\leq 2^{-n(0.6044+o(1))}$ should be attainable from the current linear programming methods.\\
\\
Given that the lower and upper bounds on sphere packing densities are exponentially far apart, improvements to sphere packing density bounds have been sought for a long time. Improvements on upper bounds have generated many papers, \cite{Lev79},~\cite{CZ},~\cite{SZ} to name a few. However, progress regarding even linear improvements has been slower than that for lower bounds. The first explicit and uniform constant improvement to previous upper bounds was recently obtained by the author and Sardari~\cite{SZ}. In that paper, we proved a more effective version of the following theorem: for sufficiently large $n$,
\begin{equation}\label{SZbound}\delta_n\leq 0.4325\cdot\delta_n^{\textup{KL}},\end{equation}
where $\delta_n^{\textup{KL}}$ was the best previous bound obtained from the Delsarte linear programming method by Kabatyanskii--Levenshtein with some improvements over the years. More precisely, $\delta_n^{\textup{KL}}$ is the upper bound obtained using inequality~\eqref{CohnZhao} of Cohn--Zhao below proved in~\cite{CZ} combined with Levenshtein's~\cite{Lev79}. See~\eqref{Levdef},~\eqref{dKL}, and~\eqref{dKLstar} below for a definition of $\delta_n^{\textup{KL}}$. However, in~\cite{SZ}, we conjectured based on numerics that a sharp analysis of the functions that we constructed should lead to the improvement factor $\frac{1}{e}$. However, there were obstructions that we could not resolve. We prove that the improvement factor $\frac{1}{e}(1+o(1))$ is possible and optimal; see Theorem~\ref{THMone} below. We explain this after discussing all relevant concepts and methods, including the closely related problem of spherical codes.\\
\\
$\delta_n$ are often bounded from above by applying the Cohn--Elkies linear programming method, an incarnation of which is encapsulated in the following theorem, proved using the Poisson summation formula on tori.
\begin{theorem}[Cohn--Elkies~\cite{CE}]\label{CohnElkiesthm}
Suppose $f:\mathbb{R}^n\rightarrow\mathbb{R}$ is a continuous, positive semi-definite, and integrable function satisfying $f(\boldsymbol{x})\leq 0$ whenever $|\boldsymbol{x}|\geq 1$, and $\widehat{f}(\boldsymbol{0})>0$. Then
\begin{equation}\label{CEineqgeneral}
\delta_n\leq \frac{\vol(B_n(\boldsymbol{0},1)) f(\boldsymbol{0})}{2^n\widehat{f}(\boldsymbol{0})}.
\end{equation}
\end{theorem}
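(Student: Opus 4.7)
The plan is to prove inequality~\eqref{CEineqgeneral} first for \emph{periodic} packings via Poisson summation, and then pass to arbitrary packings by a standard density argument. Assume first that $\mathcal{P}$ is periodic, with set of centers $\bigsqcup_{i=1}^{N}(\boldsymbol{v}_i+\Lambda)$ for a full-rank lattice $\Lambda\subset\mathbb{R}^n$ and translates $\boldsymbol{v}_1,\ldots,\boldsymbol{v}_N$; then $\delta_{\mathcal{P}}=N\vol(B_n(\boldsymbol{0},1))/\vol(\mathbb{R}^n/\Lambda)$. The non-overlap condition $|\boldsymbol{v}_i+\boldsymbol{\lambda}-\boldsymbol{v}_j|\geq 2$ for every $(i,\boldsymbol{\lambda})\neq (j,\boldsymbol{0})$ means $|(\boldsymbol{v}_i-\boldsymbol{v}_j+\boldsymbol{\lambda})/2|\geq 1$ for every such nonzero difference, so after setting $g(\boldsymbol{x}):=f(\boldsymbol{x}/2)$ (with Fourier transform $\widehat{g}(\boldsymbol{\xi})=2^n\widehat{f}(2\boldsymbol{\xi})$), the function $g$ is nonpositive at every such nonzero difference. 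The scaling is what manufactures the eventual factor of $2^n$ in the denominator.

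I would then form the double sum
\[
S := \sum_{i,j=1}^{N}\sum_{\boldsymbol{\lambda}\in\Lambda} g(\boldsymbol{v}_i-\boldsymbol{v}_j+\boldsymbol{\lambda}).
\]
Isolating the $N$ terms with $i=j$ and $\boldsymbol{\lambda}=\boldsymbol{0}$ and discarding all the remaining (nonpositive) terms gives $S\leq N g(\boldsymbol{0})=Nf(\boldsymbol{0})$. Applying the Poisson summation formula to the inner sum and then summing in $i,j$ rewrites the same quantity as
\[
S=\frac{1}{\vol(\mathbb{R}^n/\Lambda)}\sum_{\boldsymbol{\mu}\in\Lambda^*}\widehat{g}(\boldsymbol{\mu})\,\Bigl|\sum_{i=1}^{N}e^{2\pi i\langle \boldsymbol{\mu},\boldsymbol{v}_i\rangle}\Bigr|^2.
\]
The positive semi-definiteness hypothesis gives $\widehat{g}(\boldsymbol{\mu})=2^n\widehat{f}(2\boldsymbol{\mu})\geq 0$ for every $\boldsymbol{\mu}$, so retaining only the $\boldsymbol{\mu}=\boldsymbol{0}$ contribution produces $S\geq N^2\cdot 2^n\widehat{f}(\boldsymbol{0})/\vol(\mathbb{R}^n/\Lambda)$. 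Combining the two estimates and cancelling a factor of $N$ yields $\delta_{\mathcal{P}}\leq \vol(B_n(\boldsymbol{0},1))f(\boldsymbol{0})/(2^n\widehat{f}(\boldsymbol{0}))$, which is exactly~\eqref{CEineqgeneral} for periodic $\mathcal{P}$.

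To pass to arbitrary packings, I would invoke the standard Groemer-type density approximation: for every sphere packing $\mathcal{P}$ and every $\varepsilon>0$ there is a periodic packing $\mathcal{P}'$ with $\delta_{\mathcal{P}'}\geq \delta_{\mathcal{P}}-\varepsilon$, obtained by intersecting $\mathcal{P}$ with a large cube, removing a thin boundary collar to preserve non-overlap across tile faces, and tiling $\mathbb{R}^n$ by translates of that cube. Taking $\varepsilon\to 0$ and supremizing over $\mathcal{P}$ recovers the stated bound on $\delta_n$.

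The main technical obstacle is justifying the Poisson summation step: continuity, integrability, and positive semi-definiteness of $f$ alone do not guarantee pointwise, let alone absolute, convergence of either side of the Poisson identity. The cleanest workaround is to prove the theorem first under a Schwartz assumption on $f$ (where every sum converges absolutely) and then mollify a general admissible $f$ by convolving with a narrow Gaussian and contracting slightly the radius in the sign condition; the mollified functions are Schwartz and inherit all three hypotheses, and both $f(\boldsymbol{0})$ and $\widehat{f}(\boldsymbol{0})$ vary continuously under this regularization, so the inequality survives in the limit. A subsidiary point is that the pointwise inequality $\widehat{f}\geq 0$ should be read as the content of positive semi-definiteness via Bochner's theorem.
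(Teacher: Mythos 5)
Your proposal is correct and follows the standard Cohn--Elkies argument, which is the proof the paper implicitly invokes by citing \cite{CE} and describing it as ``proved using the Poisson summation formula on tori'': reduce to periodic packings, bound the double sum $\sum_{i,j,\boldsymbol{\lambda}}g(\boldsymbol{v}_i-\boldsymbol{v}_j+\boldsymbol{\lambda})$ from above by dropping nonpositive off-diagonal terms and from below by Poisson summation on $\mathbb{R}^n/\Lambda$, then approximate general packings by periodic ones. You correctly identify the convergence subtlety in Poisson summation and the mollification fix, which is also the standard remedy.
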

In this theorem, $f:\mathbb{R}^n\rightarrow\mathbb{R}$ is positive-semidefinite if for every $N$ and every $\boldsymbol{x}_1,\hdots,\boldsymbol{x}_N\in\mathbb{R}^n$, the matrix $[f(\boldsymbol{x}_i-\boldsymbol{x}_j)]_{ij}$ is positive semi-definite. The Fourier transform of $f$ is given by
\begin{equation*}
\widehat{f}(\boldsymbol{\xi}):=\int_{\mathbb{R}^n}f(\boldsymbol{x})e^{2\pi i\left<\boldsymbol{\xi},\boldsymbol{x}\right>}d\boldsymbol{x},
\end{equation*}
where $\left<\cdot,\cdot\right>$ is the Euclidean inner product on $\mathbb{R}^n$. By a theorem of Bochner~\cite{Bochner}, continuous positive semi-definite functions are exactly those that are Fourier transforms of finite Borel measures. This implies that when $f$ and $\widehat{f}$ are both integrable, then $f$ is positive semi-definite if and only if $\widehat{f}\geq 0$ everywhere. The Cohn--Elkies linear programming method is the minimization of the right hand side of inequality~\eqref{CEineqgeneral} as $f$ varies over functions satisfying conditions in Theorem~\ref{CohnElkiesthm}. To be precise, the way the above theorem is formulated corresponds to considering sphere packings by spheres of radius $\frac{1}{2}$, hence the normalization factor $\frac{1}{2^n}$. Functions satisfying the Cohn--Elkies linear programming conditions could be constructed in different ways, the most powerful of which in high dimensions to date is through spherical codes.
\subsection{Spherical codes}\label{intro:codes}
Upper bounds on sphere packing densities are intimately related to the maximal sizes of spherical codes. In fact, the limitations in our understanding of spherical codes are comparable to those of sphere packings. Maximal sizes of spherical codes are, in turn, bounded from above using yet another linear programming method known as the Delsarte linear programming method~\cite{Delsarte},~\cite{Delsarte2},~\cite{KL} predating and motivating the Cohn--Elkies linear programming method. In this subsection, we recall the definition of spherical codes, how they are studied using the Delsarte linear programming method, and previous results. In Subsection~\ref{mainresults}, we explain how spherical codes and packings are related to each other, state our main results, and give an overview of the methods used in their proofs.\\
\\
Given an angle $0<\theta\leq \pi$, a \textit{$\theta$-spherical code} in the unit sphere $S^{n-1}=\{\boldsymbol{x}\in\mathbb{R}^n:|\boldsymbol{x}|=1\}$ is any (finite) set $A$ of points in $S^{n-1}$ such that any two distinct points are separated by angular distance at least $\theta$, that is, for every $\boldsymbol{x}\neq \boldsymbol{y}$ in $A$, $\left<\boldsymbol{x},\boldsymbol{y}\right>\leq\cos\theta$.
\begin{definition}
$M(n,\theta)$ is the largest possible size of $\theta$-spherical codes in $S^{n-1}$. 
\end{definition}
For example, $M(n,\frac{\pi}{3})$ are the kissing numbers with $M(3,\frac{\pi}{3})=12$ the topic of a famous discussion in the year 1694 between Newton and Gregory.\\
\\
Upper bounds on $M(n,\theta)$ may be obtained using the Delsarte linear programming method in the following manner. Consider a probability measure $\mu$ on $[-1,1]$. This gives the inner product
\begin{equation*}\label{L2}\left<f,g\right>_{L^2(\mu)}:=\int_{-1}^1f(t)g(t)\mu(dt)\end{equation*}
on the $\mathbb{R}$-vector space $\mathbb{R}[t]$ of real polynomials. Suppose now that $p_0,p_1,\hdots\in\mathbb{R}[t]$, with $\deg p_i=i$ for every $i$, is a basis orthogonal with respect to this inner product and $p_0=1$. Furthermore, assume that each $p_k$ is positive semi-definite on $S^{n-1}$, that is, for every $N\in\mathbb{N}$ and every finite subset $\{\boldsymbol{x}_1,\hdots,\boldsymbol{x}_N\}\subset S^{n-1}$ of arbitrary size and every $a_1,\hdots,a_N\in\mathbb{R}$,
\begin{equation*}\label{possemi}\sum_{i,j}a_ia_jp_k\left(\left<\boldsymbol{x}_i,\boldsymbol{x}_j\right>\right)\geq 0.\end{equation*}
Equivalently, the matrix $\begin{bmatrix}p_k\left(\left<\boldsymbol{x}_i,\boldsymbol{x}_j\right>\right)\end{bmatrix}_{ij}$ is positive semi-definite for any $N$ and any $\{\boldsymbol{x}_1,\hdots,\boldsymbol{x}_N\}\subset S^{n-1}$.\\
\\
Given a function $f=\sum_{k=0}^{\infty}f_kp_k$, with $f_k\in\mathbb{R}$ for every $k$, such that
\begin{enumerate}[i)]
\item $f(t)\leq 0$ for every $t\in [-1,\cos\theta]$, and
\item $f_k\geq 0$ for every $k$ and $f_0>0$,
\end{enumerate}
we obtain for any given $\theta$-spherical code $\{\boldsymbol{x}_1,\hdots,\boldsymbol{x}_N\}\subset S^{n-1}$ that
\begin{equation}\label{Delsarteineq}Nf(1)\geq \sum_{i,j}f\left(\left<\boldsymbol{x}_i,\boldsymbol{x}_j\right>\right)=\sum_{k=0}^{\infty}f_k\sum_{i,j}p_k\left(\left<\boldsymbol{x}_i,\boldsymbol{x}_j\right>\right)\geq N^2f_0,\end{equation}
from which we obtain that
\begin{equation}\label{Delsartebnd}M(n,\theta)\leq\frac{f(1)}{f_0}.\end{equation}
Note that since $p_0=1$ and $\{p_k\}_{k\geq 0}$ are orthogonal, $f_0=\int_{-1}^1f(t)\mu(dt)$. The equality in~\eqref{Delsarteineq} is a trace formula comparing a geometric quantity, the left hand side, to a spectral quantity, the right hand side. In order to bound $M(n,\theta)$ from above, we want to minimize the right hand side of~\eqref{Delsartebnd} over all functions $f$ satisfying conditions i) and ii) above. This is the Delsarte linear programming method with respect to suitable measures $\mu$. Note the similarity to the Cohn--Elkies linear programming method.\\
\\
A probability measure $\mu$ on $[-1,1]$ that gives polynomials $p_k^{\frac{n-3}{2},\frac{n-3}{2}}$ that are positive semi-definite on $S^{n-1}$ is the Jacobi measure
\begin{equation}\label{Jacmeasure}\mu(dt)=\frac{(1-t^2)^{\frac{n-3}{2}}dt}{\int_{-1}^1(1-t^2)^{\frac{n-3}{2}}dt},\end{equation}
obtained from a projection of the uniform measure on $S^{n-1}$ onto an axis. $p_k^{\frac{n-3}{2},\frac{n-3}{2}}$ in this case are called Gegenbauer polynomials, a special case of Jacobi polynomials. Throughout this paper, we implicitly apply the Delsarte linear programming method with respect to this measure, and refer to conditions i) and ii) in this case as the Delsarte linear programming conditions for $M(n,\theta)$.\\
\\
Levenshtein~\cite{Lev79}, continuing joint work with Kabatyanskii in~\cite{KL}, constructed certain polynomials $g_{n,\theta}$ that satisfy the Delsarte linear programming conditions for $M(n,\theta)$. See equation~\eqref{gnthetageneral} and surrounding discussion for the definition. We call these polynomials Levenshtein's optimal polynomials. An application of the Delsarte linear programming method for $M(n,\theta)$ to $g_{n,\theta}$ gives the bound
\begin{equation}\label{Levtrivial}M(n,\theta)\leq M_{\textup{Lev}}(n,\theta):=\frac{g_{n,\theta}(1)}{\frac{\int_{-1}^1g_{n,\theta}(t)(1-t^2)^{\frac{n-3}{2}}dt}{\int_{-1}^1(1-t^2)^{\frac{n-3}{2}}}dt}.\end{equation}
We briefly describe the value of $M_{\textup{Lev}}(n,\theta)$ assuming properties of Jacobi polynomials to be recalled in Section~\ref{Jacobiestimates}. Explicitly, its value is determined as follows. Throughout this paper, let $t_{1,d}^{\alpha,\beta}>\hdots>t_{d,d}^{\alpha,\beta}$ be the roots of the Jacobi polynomial $p_d^{\alpha,\beta}$, where $p_d^{\alpha,\beta}$, as the degree $d$ varies, are polynomials orthogonal with respect to the inner product
\begin{equation*}\label{L2ab}
\left<f,g\right>_{L^2(\mu_{\alpha,\beta})}:=\int_{-1}^1f(t)g(t)\mu_{\alpha,\beta}(dt)
\end{equation*}
on $\mathbb{R}[t]$ with $\alpha,\beta>-1$ and
\begin{equation*}
\label{Jacmeasureab}\mu_{\alpha,\beta}(dt):=\frac{(1-t)^{\alpha}(1+t)^{\beta}dt}{\int_{-1}^1(1-t)^{\alpha}(1+t)^{\beta}dt}.
\end{equation*}
As is well-known from the theory of orthogonal polynomials, the roots of $p_d^{\alpha,\beta}$ are all real and simple. Let $\alpha=\frac{n-3}{2}$, and let $d$ be uniquely determined by $t_{1,d-1}^{\alpha+1,\alpha+1}<\cos(\theta)\leq t_{1,d}^{\alpha+1,\alpha+1}$. By Lemmas 5.29 and 5.30 of~\cite{Lev98}, $t_{1,d-1}^{\alpha+1,\alpha+1}<t_{1,d}^{\alpha+1,\alpha}<t_{1,d}^{\alpha+1,\alpha+1}$. Then
\begin{equation}\label{Levdef}M_{\textup{Lev}}(n,\theta)=\begin{cases}2\binom{d+n-1}{n-1}&\textup{if }t_{1,d}^{\alpha+1,\alpha}<\cos(\theta)\leq t_{1,d}^{\alpha+1,\alpha+1}\\ \binom{d+n-1}{n-1}+\binom{d+n-2}{n-1}&\textup{if }t_{1,d-1}^{\alpha+1,\alpha+1}<\cos(\theta)\leq t_{1,d}^{\alpha+1,\alpha}.\end{cases}\end{equation}
Sidelnikov's inequality~\cite{Sidelnikov} states that
\begin{equation}\label{Sidelnikovineq}
M(n,\theta)\leq \frac{M(n+1,\theta')}{\mu_n(\theta,\theta')}\textup{ when }\theta<\theta',
\end{equation}
where
\begin{equation}\label{massformula}\mu_n(\theta,\theta')=\frac{\int_{\sqrt{\frac{\cos(\theta)-\cos(\theta')}{1-\cos(\theta')}}}^1(1-t^2)^{\frac{n-3}{2}}dt}{\int_{-1}^1(1-t^2)^{\frac{n-3}{2}}dt}\end{equation}
is the mass of the spherical cap of radius $\frac{\sin(\theta/2)}{\sin(\theta'/2)}$ in the unit sphere $S^{n-1}$. By combining $M(n+1,\theta')\leq M_{\textup{Lev}}(n+1,\theta')$ with inequality~\eqref{Sidelnikovineq}, Kabatyanskii--Levenshtein~\cite{KL} and further work of Levenshtein~\cite{Lev79} showed that for $0<\theta\leq\theta^*=62.997...^{\circ}$,
\begin{equation}\label{KLcodebound}M(n,\theta)\leq \frac{M_{\textup{Lev}}(n+1,\theta^*)}{\mu_n(\theta,\theta^*)}.\end{equation}
Here, $\theta^*$ is the unique point in the interval $(0,\frac{\pi}{2})$ satisfying
\begin{equation*}
\cos(\theta^*)\log\left(\frac{1+\sin(\theta^*)}{1-\sin(\theta^*)}\right)-(1+\cos(\theta^*))\sin(\theta^*)=0.
\end{equation*}
Inequality~\eqref{KLcodebound} is an upper bound that is exponentially better than inequality~\eqref{Levtrivial}; see Corollary 3.9 of~\cite{SZ}. In~\cite{SZ}, the author and Sardari proved that a stronger version of inequality~\eqref{KLcodebound} could be obtained by an application of the Delsarte linear programming method to a single function that we constructed, thus incorporating the geometric idea of comparing $\theta$-spherical codes to $\theta^*$-spherical codes into a single function. In fact, we proved the stronger bound 
\begin{equation}\label{SZbound2}M(n,\theta)\leq 0.4325\cdot \frac{M_{\textup{Lev}}(n-1,\theta^*)}{\mu_n(\theta,\theta^*)}\end{equation}
for $0<\theta<\theta^*$ and sufficiently large $n$, the first such linear improvement. In this paper, we also prove a stronger version of this inequality. We are now ready to explain the relation between spherical codes and packings and state our main results.

\subsection{Relations and Main results}\label{mainresults}
Spherical codes and sphere packing densities are related to each other by the inequality
\begin{equation*}\label{Sid}
\delta_n\leq\sin^n\left(\frac{\theta}{2}\right)M(n+1,\theta)
\end{equation*}
for any $0<\theta\leq\pi$~\cite{Sidelnikov}. There are other such inequalities. For example, for $\frac{\pi}{3}\leq\theta\leq\pi$, Cohn--Zhao~\cite{CZ} proved using elementary geometry that
\begin{equation}\label{CohnZhao}\delta_n\leq\sin^n\left(\frac{\theta}{2}\right)M(n,\theta).\end{equation}
We define for $\frac{\pi}{3}\leq\theta\leq\pi$
\begin{equation}\label{dKL}\delta_n^{\textup{KL}}(\theta):=\sin^n\left(\frac{\theta}{2}\right)M_{\textup{Lev}}(n,\theta),\end{equation}
which is an upper bound to $\delta_n$ by inequalities~\eqref{Levtrivial} and~\eqref{CohnZhao}. We write 
\begin{equation}\label{dKLstar}\delta_n^{\textup{KL}}:=\delta_n^{\textup{KL}}(\theta^*)\end{equation}
which is the minimum value of $\delta_n^{\textup{KL}}(\theta)$ as $\theta$ varies between $(0,\frac{\pi}{2})$. This inequality $\delta_n\leq  \delta_n^{\textup{KL}}$ of Cohn--Zhao gave an improvement by a factor of $0.7915...$ on (geometric) \textit{average} to bounds preceding it. Note however, that the bounds in~\cite{CZ} were obtained by combining inequality~\eqref{CohnZhao} with the polynomials of~\cite{KL}, giving bounds weaker than those of~\cite{Lev79} on $M(n,\theta)$. Due to the seminal nature of the work of Kabatyanskii--Levenshtein~\cite{KL}, we use the notation above. All improvements to sphere packing densities discussed in this paper are relative to this bound.\\
\\
In addition to inequality~\eqref{CohnZhao}, Cohn and Zhao observed that any bound on $\delta_n$ obtained by combining inequality~\eqref{CohnZhao} with an upper bound on $M(n,\theta)$ from the Delsarte linear programming method applied to a function $g$ may be obtained by applying the Cohn--Elkies linear programming method to a function $H:\mathbb{R}^n\rightarrow\mathbb{R}$ constructed from $g$ and satisfying the Cohn--Elkies linear programming conditions. For sphere packing densities, we have from the argument of Cohn--Zhao the following proposition which is a minor generalization of what they proved.
\begin{proposition}[Prop. 3.8 of~\cite{SZ}]\label{38}Fix an angle $0<\theta\leq\pi$. Let $g:[-1,1]\rightarrow\mathbb{R}$ be a continuous function satisfying the Delsarte linear programming conditions for $M(n,\theta)$, and suppose $F:[-1,1]\rightarrow\mathbb{R}$ is an integrable function such that 
\begin{equation*}
H(T):=\int_{\mathbb{R}^n}F(|\boldsymbol{x}-\boldsymbol{z}|)F(|\boldsymbol{y}-\boldsymbol{z}|)g\left(\left<\frac{\boldsymbol{x}-\boldsymbol{z}}{|\boldsymbol{x}-\boldsymbol{z}|},\frac{\boldsymbol{y}-\boldsymbol{z}}{|\boldsymbol{y}-\boldsymbol{z}|}\right>\right)d\boldsymbol{z}
\end{equation*}
for $T=|\boldsymbol{x}-\boldsymbol{y}|$ ($H$ is a point-pair invariant function) satisfies $H(T)\leq 0$ for every $|T|\geq 1$. Here, we think of $H$ as a radial function on $\mathbb{R}^n$ with $H(\boldsymbol{x})=H(|\boldsymbol{x}|)$. Then
\begin{equation}\label{delce}
\delta_n\leq \frac{\vol(B_n(\boldsymbol{0},1))\|F\|_{L^2(\mathbb{R}^n)}^2}{2^n\|F\|_{L^1(\mathbb{R}^n)}^2}\cdot \frac{g(1)\int_{-1}^1(1-t^2)^{\frac{n-3}{2}}dt}{\int_{-1}^1g(t)(1-t^2)^{\frac{n-3}{2}}dt}.
\end{equation}
\end{proposition}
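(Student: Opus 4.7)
The plan is to verify that $H$ fulfills the hypotheses of the Cohn--Elkies Theorem~\ref{CohnElkiesthm} and then to compute $H(\boldsymbol{0})$ and $\widehat{H}(\boldsymbol{0})$ explicitly so that inequality~\eqref{CEineqgeneral} reduces to~\eqref{delce}. The sign condition $H(T)\leq 0$ for $|T|\geq 1$ is hypothesized, and continuity/integrability of $H$ follow from those of $F$ and $g$ by standard parameter-integral arguments, so the real content lies in showing that $H$ is positive semi-definite on $\mathbb{R}^n$ and in evaluating $H(\boldsymbol{0})$ and $\widehat{H}(\boldsymbol{0})$ explicitly.

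For positive semi-definiteness, I would take arbitrary points $\boldsymbol{x}_1,\ldots,\boldsymbol{x}_N\in\mathbb{R}^n$ and scalars $a_1,\ldots,a_N\in\mathbb{R}$, and Fubini-swap to write
\begin{equation*}
\sum_{i,j}a_ia_jH(|\boldsymbol{x}_i-\boldsymbol{x}_j|)=\int_{\mathbb{R}^n}\sum_{i,j}c_i(\boldsymbol{z})c_j(\boldsymbol{z})\, g\!\left(\left<\boldsymbol{\xi}_i(\boldsymbol{z}),\boldsymbol{\xi}_j(\boldsymbol{z})\right>\right)d\boldsymbol{z},
\end{equation*}
where $c_i(\boldsymbol{z}):=a_iF(|\boldsymbol{x}_i-\boldsymbol{z}|)$ and $\boldsymbol{\xi}_i(\boldsymbol{z}):=(\boldsymbol{x}_i-\boldsymbol{z})/|\boldsymbol{x}_i-\boldsymbol{z}|\in S^{n-1}$. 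Expanding $g=\sum_kf_kp_k$ via condition ii) of the Delsarte setup, the inner sum becomes a non-negative combination of the quadratic forms $\sum_{i,j}c_ic_jp_k(\left<\boldsymbol{\xi}_i,\boldsymbol{\xi}_j\right>)$, each of which is $\geq 0$ by the positive-semidefiniteness of the Gegenbauer polynomial $p_k$ on $S^{n-1}$; integration then preserves non-negativity.

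For the explicit values, setting $\boldsymbol{x}=\boldsymbol{y}$ makes both unit vectors coincide and $g$ evaluate at $1$, giving $H(\boldsymbol{0})=g(1)\|F\|_{L^2(\mathbb{R}^n)}^2$. For $\widehat{H}(\boldsymbol{0})=\int_{\mathbb{R}^n}H(|\boldsymbol{x}|)d\boldsymbol{x}$, I would fix $\boldsymbol{y}=\boldsymbol{0}$, exchange the order of integration, and substitute $\boldsymbol{u}=\boldsymbol{x}-\boldsymbol{z}$ to fully decouple:
\begin{equation*}
\widehat{H}(\boldsymbol{0})=\int_{\mathbb{R}^n}\!\!\int_{\mathbb{R}^n}F(|\boldsymbol{u}|)F(|\boldsymbol{z}|)\, g\!\left(\left<\tfrac{\boldsymbol{u}}{|\boldsymbol{u}|},-\tfrac{\boldsymbol{z}}{|\boldsymbol{z}|}\right>\right)d\boldsymbol{u}\, d\boldsymbol{z}.
\end{equation*}
Polar coordinates separate radial and angular factors; the angular integral $\int_{S^{n-1}\times S^{n-1}}g(-\left<\boldsymbol{\xi},\boldsymbol{\eta}\right>)$ collapses by rotational invariance and the $t\mapsto-t$ symmetry of the Jacobi weight $(1-t^2)^{(n-3)/2}$ to $\vol(S^{n-1})\vol(S^{n-2})\int_{-1}^1g(t)(1-t^2)^{(n-3)/2}dt$. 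Combining with the radial factor $\bigl(\int_0^\infty F(r)r^{n-1}dr\bigr)^2=\|F\|_{L^1(\mathbb{R}^n)}^2/\vol(S^{n-1})^2$ and the sphere identity $\vol(S^{n-1})/\vol(S^{n-2})=\int_{-1}^1(1-t^2)^{(n-3)/2}dt$ yields the claimed denominator, and $\widehat{H}(\boldsymbol{0})>0$ follows since condition ii) guarantees $f_0>0$.

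Plugging $H(\boldsymbol{0})$ and $\widehat{H}(\boldsymbol{0})$ into~\eqref{CEineqgeneral} produces~\eqref{delce}. The hard step is the positive-semidefiniteness verification: the cleverness lies in the design of $H$, where integrating over the auxiliary base-point $\boldsymbol{z}$ is precisely what transfers the spherical positivity of $g$ on $S^{n-1}$ into Euclidean positivity of $H$ on $\mathbb{R}^n$. The remaining evaluation of $\widehat{H}(\boldsymbol{0})$ is bookkeeping that relies only on rotational symmetry of $d\boldsymbol{z}$ and evenness of the Jacobi weight.
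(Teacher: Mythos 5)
Your proposal is correct and follows the natural (and presumably the same) route as the cited proof in~\cite{SZ}: Fubini-swap to transfer the positive semi-definiteness of $g$ on $S^{n-1}$ to that of $H$ on $\mathbb{R}^n$, then compute $H(\boldsymbol{0})$ and $\widehat{H}(\boldsymbol{0})$ explicitly and substitute into Theorem~\ref{CohnElkiesthm}. One small point worth flagging: the identity $\bigl(\int_0^\infty F(r)r^{n-1}dr\bigr)^2=\|F\|_{L^1(\mathbb{R}^n)}^2/\vol(S^{n-1})^2$ (and hence the form of~\eqref{delce} itself) requires $F$ to be of constant sign; this is implicit in the statement and holds for the characteristic functions used in the paper, but for sign-changing $F$ the correct denominator is $\bigl(\int_{\mathbb{R}^n}F\bigr)^2$, not $\|F\|_{L^1}^2$.
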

In the above, $H$ is assumed to satisfy the Cohn--Elkies linear programming conditions for $\delta_n$, and inequality~\eqref{delce} is obtained by applying Theorem~\ref{CohnElkiesthm} to $H$. In~\cite{CZ}, $F$ was taken to be the characteristic function $\chi_{[0,r]}$ of the interval $[0,r]$ with $r=\frac{1}{2\sin(\theta/2)}$ and $\frac{\pi}{3}\leq\theta\leq\pi$. In this situation, the geometric argument proving inequality~\eqref{CohnZhao} also implies that the integrand of $H(T)$ is everywhere non-positive, and so the Cohn--Elkies linear programming conditions are satisfied (positive semi-definiteness of $H$ on $\mathbb{R}^n$ is a simple consequence of the definition of $H$). Furthermore, setting $g=g_{n,\theta}$, one obtains from Proposition~\ref{38} the inequality $\delta_n\leq\frac{M_{\textup{Lev}}(n,\theta)}{(2r)^n}=\delta_n^{\textup{KL}}(\theta)$ which is optimal when $\theta=\theta^*$.\\
\\
The main insight we had in~\cite{SZ} was that in contrast to~\cite{CZ}, we could choose $F=\chi_{[0,r+\delta]}$ for some $\delta>0$ for which the integrand of $H(T)$ is no longer everywhere non-positive but $H$ still satisfies the Cohn--Elkies linear programming conditions. Determining large $\delta$ for which this is true was the main difficulty in proving inequality~\eqref{SZbound} stating that
\[\delta_n\leq 0.4325\cdot\delta_n^{\textup{KL}}\]
for sufficiently large $n$. However, the numerics of Section 6 of~\cite{SZ} suggested the following stronger result that we prove in this paper.
\begin{THMA}\manuallabel{THMone}{A}Given $\frac{\pi}{3}\leq \theta\leq\frac{\pi}{2}$,
\begin{equation}\label{constantimprovement}\delta_n\leq \frac{1+o(1)}{e}\cdot\delta^{\textup{KL}}_n(\theta)\textup{ as }n\rightarrow\infty.\end{equation}
Furthermore, $\frac{1}{e}$ is the optimal constant obtainable from Proposition~\ref{38} applied to $g=g_{n,\theta}$ with $F$ a characteristic function $\chi_{[0,r+\delta]}$. This improved $\delta_n\leq \delta_n^{\textup{KL}}$ by a factor of $0.2304...$ on geometric average.
\end{THMA}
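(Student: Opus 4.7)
The plan is to apply Proposition~\ref{38} with $g=g_{n,\theta}$ and $F=\chi_{[0,r+\delta]}$ for $r=\frac{1}{2\sin(\theta/2)}$ and a parameter $\delta=\delta_n>0$. Since $F$ is an indicator of a radial ball, $\|F\|_{L^1(\mathbb{R}^n)}=\|F\|_{L^2(\mathbb{R}^n)}^2=\vol(B_n)(r+\delta)^n$ and inequality~\eqref{delce} collapses to
\[
\delta_n\leq\left(\frac{r}{r+\delta}\right)^n\cdot\delta_n^{\textup{KL}}(\theta).
\]
Taking $\delta_n=cr/n$ thus produces the improvement factor $e^{-c}(1+o(1))$, so Theorem~A reduces to identifying the sharp admissibility threshold $c=1$: namely, the associated $H$ must satisfy $H(T)\leq 0$ for all $T\geq 1$ whenever $c\leq 1+o(1)$, and must fail for any fixed $c>1$.

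Positive semi-definiteness of $H$ is automatic from its convolutional definition together with positive semi-definiteness of $g_{n,\theta}$ on $S^{n-1}$, so only the sign condition $H(T)\leq 0$ remains. Fix $T\geq 1$, place $\boldsymbol{x},\boldsymbol{y}=(\pm T/2,\boldsymbol{0})$, and parametrize $\boldsymbol{z}$ by the side lengths $s_1=|\boldsymbol{x}-\boldsymbol{z}|$, $s_2=|\boldsymbol{y}-\boldsymbol{z}|$ together with a rotation on the $(n-2)$-sphere perpendicular to $\boldsymbol{x}-\boldsymbol{y}$. Integrating out the rotation and using the law of cosines $t=\cos\angle \boldsymbol{x}\boldsymbol{z}\boldsymbol{y}=(s_1^2+s_2^2-T^2)/(2s_1s_2)$ together with $\rho=s_1s_2\sqrt{1-t^2}$ for the perpendicular distance yields
\[
H(T)=\omega_{n-2}\iint g_{n,\theta}(t)\,(s_1s_2)^{n-2}(1-t^2)^{(n-3)/2}\,ds_1\,ds_2
\]
over the feasibility region $(s_1,s_2)\in[0,r+\delta]^2$ with $|s_1-s_2|\leq T\leq s_1+s_2$. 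The Jacobi weight $(1-t^2)^{(n-3)/2}$ emerges naturally from the equatorial slice and matches the Delsarte measure for $g_{n,\theta}$. When $\delta=0$ the triangle inequality forces $t\leq\cos\theta$ on the support, so $g_{n,\theta}(t)\leq 0$ pointwise (Cohn--Zhao); for $\delta>0$ a thin sliver with $t>\cos\theta$ appears near the corner $(r+\delta,r+\delta)$.

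The asymptotic analysis then proceeds by Laplace expansion at this corner. Setting $x=r+\delta-s_1,\ y=r+\delta-s_2$ gives $(s_1s_2)^{n-2}(1-t^2)^{(n-3)/2}\approx w_* e^{-a(x+y)}$ for an exponent $a$ combining both factors, while $t-\cos\theta\approx(\cos\theta_{\textup{eff}}-\cos\theta)-b(x+y)$ with $\cos\theta_{\textup{eff}}:=1-1/(2(r+\delta)^2)$, $\cos\theta_{\textup{eff}}-\cos\theta\approx\delta/r^3$, and $b=1/(2r^3)$. Since $g_{n,\theta}$ has a simple zero at $\cos\theta$, $g_{n,\theta}(t)\approx g_{n,\theta}'(\cos\theta)(t-\cos\theta)$ in this range. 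Using the identity $\iint_{x,y\geq 0}F(x+y)\,dx\,dy=\int_0^\infty F(\zeta)\zeta\,d\zeta$ together with the elementary integrals $\int_0^\infty\tau e^{-\tau}d\tau=1$ and $\int_0^\infty\tau^2e^{-\tau}d\tau=2$, the admissibility condition at $T=1$ collapses to a linear inequality in $c$ whose critical value is exactly $c=1$. The same expansion at other $T\in[1,1+\delta/r]$ shows that $T=1$ is the binding value, and running the computation in reverse for $c>1$ exhibits some $T_0\geq 1$ with $H(T_0)>0$, establishing optimality.

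The main obstacle is precisely this sharp Laplace analysis: \cite{SZ} obtained only the weaker constant $0.4325$ because the positive sliver was estimated rather than balanced to leading order against the negative Cohn--Zhao mass. Capturing the exact constant $1/e$ requires retaining both the $(s_1s_2)^{n-2}$ and $(1-t^2)^{(n-3)/2}$ factors in the combined Laplace rate, tracking the shift of $t$ induced by $\delta$, and maintaining uniform control of $g_{n,\theta}$ in a shrinking neighborhood of $\cos\theta$ via the Jacobi polynomial asymptotics reviewed in Section~\ref{Jacobiestimates}. Once these leading-order constants are matched up, the $1/e$ threshold and its optimality both emerge from the Gamma-function identity above.
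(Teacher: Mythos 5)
Your high-level plan matches the paper's: Proposition~\ref{38} with $g=g_{n,\theta}$ and $F=\chi_{[0,r+\delta]}$, the normalization $\delta_n\leq(r/(r+\delta))^n\delta_n^{\textup{KL}}(\theta)$, a Laplace expansion at the corner where the sign of the integrand changes, and a Gamma-integral computation to find the critical $c$. The $(s_1,s_2)$ parametrization you use is an alternative (and geometrically more transparent) coordinate choice to the paper's one-dimensional conditional density $\mu(x;\cdot)$ from Proposition~\ref{spheredensity}; both reduce to the same one-dimensional Laplace problem.

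However, the recipe as you have written it produces the wrong critical constant, and this is precisely the obstacle the paper exists to overcome. You propose to take the Laplace rate $a$ ``combining both factors'' $(s_1s_2)^{n-2}$ and $(1-t^2)^{(n-3)/2}$, and then \emph{separately} linearize $g_{n,\theta}(t)\approx g_{n,\theta}'(\cos\theta)(t-\cos\theta)$. But $g_{n,\theta}$ is a degree-$\Theta(n)$ polynomial with a rapidly growing WKB envelope: differentiating $(1-x^2)^{\alpha}g_{n,\theta}(x)$ twice at $x=s$ (using $g_{n,\theta}(s)=0$, $\alpha=\tfrac{n-3}{2}$) yields $g_{n,\theta}''(s)/g_{n,\theta}'(s)\approx 2ns/(1-s^2)$, so on the window $|t-s|=O(1/n)$ the ratio of $g_{n,\theta}(t)$ to its linear Taylor polynomial equals $\left(\tfrac{1-s^2}{1-t^2}\right)^{(n-3)/2}=e^{\Theta(1)}$, \emph{not} $1+o(1)$. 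If one nonetheless linearizes $g_{n,\theta}$ and absorbs $(1-t^2)^{(n-3)/2}$ into the rate, the Laplace rate in $\zeta=x+y$ drops from $n/r$ to $n/(r(1+\cos\theta))$, and your Gamma-function identity gives the critical value $c=1+\cos\theta$, i.e.\ an improvement $e^{-(1+\cos\theta)}$ which the paper's own optimality statement rules out. The correct statement, and the core new ingredient here, is the paper's Proposition~\ref{gcorollary} (built on Krasikov's Lemma~\ref{Krasikovlemma}): it is the \emph{weighted} polynomial $(1-t^2)^{(n-3)/2}g_{n,\theta}(t)$, not $g_{n,\theta}$ itself, that equals $(1+o(1))K(t-s)$ on $|t-s|=o(n^{-2/3})$. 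Substituting this cancels the Jacobi factor against the WKB envelope, leaving $(s_1s_2)^{n-2}$ as the only exponential factor (rate $n/r$), and only then does your Gamma identity deliver $c=1$. Your closing sentence gestures at ``Jacobi polynomial asymptotics from Section~\ref{Jacobiestimates}'' but frames them as \emph{retaining} $(1-t^2)^{(n-3)/2}$ in the rate, which is backwards; the estimate is there to \emph{remove} it.

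A secondary, minor gap: the paper works with the zero $s'=t_{1,d}^{\alpha+1,\alpha+1}$ (or $t_{1,d}^{\alpha+1,\alpha}$), assuming WLOG $\cos\theta$ coincides with this largest root; your claim that $g_{n,\theta}$ vanishes exactly at $\cos\theta$ is not literal in general and should carry this normalization. Also, for $T>1$ the Jacobian picks up a factor $T^{-(n-2)}$ that your formula at general $T$ omits, though this does not affect the $T=1$ case which is the binding one.
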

This captures the two conjectures suggested by the numerics of~\cite{SZ}: a) the improvement factor is independent of the angle $\theta$ and b) $\frac{1}{e}$ is optimal if the analytic losses of~\cite{SZ} are eliminated. In this paper, we also prove the following strengthening of inequality~\eqref{SZbound2}. 
\begin{THMB}\manuallabel{THMtwo}{B}For every $0<\theta<\theta'\leq\frac{\pi}{2}$,
\begin{equation*}M(n,\theta)\leq \frac{1+o(1)}{e}\cdot\frac{M_{\textup{Lev}}(n-1,\theta')}{\mu_n(\theta,\theta')}\textup{ as }n\rightarrow\infty,\end{equation*}
where $\mu_n(\theta,\theta')$ is the mass of the spherical cap of radius $\frac{\sin(\theta/2)}{\sin(\theta'/2)}$ in the unit sphere $S^{n-1}$ (see~\eqref{massformula}). Furthermore, $\frac{1}{e}$ is optimal for the function constructed in~\cite{SZ}.\end{THMB}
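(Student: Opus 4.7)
The plan is to carry the strategy underlying Theorem~\ref{THMone} over to the spherical codes setting. Following the spherical codes analog of Proposition~\ref{38} developed in~\cite{SZ}, any $g$ satisfying the Delsarte LP conditions for $M(n-1,\theta')$ together with any non-negative cap profile $F:[-1,1]\to\mathbb{R}$ give rise to an $O(n)$-invariant kernel
\[
G(\langle\boldsymbol{x},\boldsymbol{y}\rangle)\;=\;\int_{S^{n-1}} F(\langle\boldsymbol{x},\boldsymbol{z}\rangle)\,F(\langle\boldsymbol{y},\boldsymbol{z}\rangle)\,g\bigl(w(\boldsymbol{x},\boldsymbol{y},\boldsymbol{z})\bigr)\,d\sigma(\boldsymbol{z}),
\]
where $w(\boldsymbol{x},\boldsymbol{y},\boldsymbol{z})$ is the inner product of the normalized projections of $\boldsymbol{x},\boldsymbol{y}$ onto the equator orthogonal to $\boldsymbol{z}$. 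Its Gegenbauer coefficients are non-negative since those of $g$ are, so once the sign condition $G(t)\leq 0$ on $[-1,\cos\theta]$ is verified, the Delsarte LP method yields $M(n,\theta)\leq G(1)/G_0$. Taking $g=g_{n-1,\theta'}$ and $F=\chi_{[\cos\phi_0,1]}$ the indicator of the critical Sidelnikov cap---whose half-angle $\phi_0$ is chosen so that the integrand is automatically non-positive whenever $\langle\boldsymbol{x},\boldsymbol{y}\rangle\leq\cos\theta$---reproduces exactly the bound $M_{\textup{Lev}}(n-1,\theta')/\mu_n(\theta,\theta')$.

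To improve this, I would enlarge the cap by taking $F=\chi_{[\cos(\phi_0+\delta),1]}$ for $\delta>0$ and seek the largest $\delta=\delta(n)$ for which $G(t)\leq 0$ still holds on $[-1,\cos\theta]$. For $\delta>0$ the integrand picks up a positive contribution whenever $w$ lies in $(\cos\theta',1]$, but the integral against $g_{n-1,\theta'}$ can remain non-positive on the critical interval thanks to oscillatory cancellation. A direct computation shows $G(1)/G_0 = M_{\textup{Lev}}(n-1,\theta')/\mu$, where $\mu$ is the mass of the enlarged cap; with the sharp threshold $\delta(n)\sim c(\theta,\theta')/n$ coming from the sign analysis, the ratio $\mu_n(\theta,\theta')/\mu$ tends to $e^{-1}$ via the high-dimensional limit $(1-a/n)^n\to e^{-a}$ evaluated at the critical product $a=1$, producing the improvement factor $\tfrac{1+o(1)}{e}$ independently of $\theta$ and $\theta'$.

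The main obstacle is the sharp asymptotic analysis of $G(t)$ on the enlarged region. Exploiting $O(n)$-invariance the $(n-1)$-fold integral reduces to a two-dimensional one that I would estimate using Mehler--Heine asymptotics for the Jacobi polynomials composing $g_{n-1,\theta'}$; the balance between the unchanged ``geometric'' piece and the new positive contribution is dictated by matching the scale of $\delta$ with the transition length of these polynomials near their smallest root, and tracking the leading constants pins down $\delta(n)$ and hence the factor $\tfrac{1}{e}$. For the optimality claim within this family of test functions, I would conversely exhibit a point $t\in[-1,\cos\theta]$ just to the left of the smallest root of $g_{n-1,\theta'}$ past $\cos\theta'$ at which $G(t)$ becomes positive whenever $\delta$ asymptotically exceeds $\delta(n)$; this obstructs any smaller bound obtainable from this class of functions and establishes sharpness of $\tfrac{1}{e}$.
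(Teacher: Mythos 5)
Your overall framework---invoke the spherical-code analogue of Proposition~\ref{38} (namely Proposition~\ref{36}), thicken the base region by $\delta=\Theta(1/n)$, analyse the sign of $H(t)$ near $t=\cos\theta$ by zooming in on the behaviour of $g_{n-1,\theta'}$ near its sign-change at $\cos\theta'$, and match the critical thickening scale to extract the factor $e^{-1}$ via $(1+a/n)^{-n}\to e^{-a}$ at $a=1$---is the right skeleton and agrees with the paper's second proof in Subsection~\ref{subsection:m1b}. However there are two genuine problems.

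First, your choice $F=\chi_{[\cos\phi_0,1]}$ is not correct. The region needed is the strip $[r,R]$ with $r=\sqrt{\tfrac{\cos\theta-\cos\theta'}{1-\cos\theta'}}$ and $R$ given by~\eqref{bigRfirst}, and generically $R<1$. The point of the upper cutoff is that the sign implication $\langle\boldsymbol{x},\boldsymbol{y}\rangle\le\cos\theta\Rightarrow \langle\Pi_{\boldsymbol z}\boldsymbol{x},\Pi_{\boldsymbol z}\boldsymbol{y}\rangle\le\cos\theta'$ requires \emph{both} $\langle\boldsymbol{x},\boldsymbol{z}\rangle,\langle\boldsymbol{y},\boldsymbol{z}\rangle\in[r,R]$: setting $u=r$, the function $v\mapsto \frac{\cos\theta - rv}{\sqrt{(1-r^2)(1-v^2)}}$ equals $\cos\theta'$ at $v=r$ and again at $v=R$, and when $r<\cos\theta$ it tends to $+\infty$ as $v\to 1^-$, so it \emph{exceeds} $\cos\theta'$ on $(R,1)$. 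The inequality $r<\cos\theta$ holds in the relevant range (e.g.\ $\theta'=\theta^*$, $\theta$ not too small), so the cap $[\cos\phi_0,1]$ fails the pointwise sign condition already at $\delta=0$; no choice of half-angle $\phi_0$ ``automatically'' makes the integrand non-positive. Equivalently, the $\mu_n(\theta,\theta')$ appearing in the statement is the cap mass that arises \emph{after} the asymptotic calculation, not the mass of the support of $F$. The thickening is correctly on the $r$ side, but the upper endpoint must be $R$, not $1$.

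Second, and more seriously for pinning down the constant $\tfrac1e$, Mehler--Heine asymptotics are the wrong tool here. They describe $p_d^{(\alpha,\beta)}(\cos(z/d))$ for fixed $\alpha,\beta$ and $x$ near $1$. In our setting the Jacobi parameter $\alpha=\tfrac{n-4}{2}$ grows proportionally to the degree $d$ (so $\alpha/d\to 1/(2\rho)$), and the relevant point is the \emph{largest} root $t_{1,d}^{\alpha+1,\alpha+1}\to\cos\theta'<1$, not a point approaching $1$; this is an edge/transition regime, not the Mehler--Heine regime. What the proof actually needs is a uniform pointwise estimate of $(1-x^2)^{\alpha}g(x)$ in a window of width $o(n^{-2/3})$ about $s'=t_{1,d}^{\alpha+1,\alpha+1}$, with multiplicative error $1+o(1)$, so that positive and negative contributions to $H$ can be weighed against each other with no analytic loss. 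This is furnished by Krasikov's WKB-type estimate (Lemma~\ref{Krasikovlemma} combined with Lemmas~\ref{diffroots},~\ref{omegaTaylor},~\ref{RTaylor}, culminating in Proposition~\ref{gcorollary}), yielding $(1-x^2)^\alpha g(x)=(1+o(1))K(x-s')$. The cruder Taylor-expansion estimates from~\cite{SZ} give only $0.4325$, and Mehler--Heine would not even apply; without a sharp two-sided control of this linearised behaviour near the largest root (not ``smallest root'' as you wrote), the constant $\tfrac1e$ cannot be reached, nor can the matching optimality argument---which in the paper comes from Lemma~\ref{Krasikovupperbound} showing the estimate is tight up to polynomial factors.

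Aside from these two gaps, the description of the reduction to a low-dimensional integral, the heuristic ``balance'' driving the choice of $\delta$, the independence of the final factor from $(\theta,\theta')$, and the spirit of the optimality argument all match the paper.
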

Compare Theorem~\ref{THMtwo} to Theorem 1.1 of~\cite{SZ} where the improvement constant depended on $\theta'$ and the comparison was made to angle $\theta^*$.\\
\\
Theorem~\ref{THMone} is not simply a combination of Theorem~\ref{THMtwo} with inequality~\eqref{CohnZhao}. Though the proof of Theorem~\ref{THMone} uses Proposition~\ref{38}, the proof of Theorem~\ref{THMtwo} uses its spherical codes variant based on the Delsarte linear programming method. Indeed, we will use Proposition 3.6 of~\cite{SZ}. In the following, for each $\boldsymbol{z}\in S^{n-1}$, $\Pi_{\boldsymbol{z}}:S^{n-1}\setminus\{\pm\boldsymbol{z}\}\rightarrow S^{n-1}$ is projection onto the orthogonal complement of $\boldsymbol{z}$ followed by normalization to a unit vector, almost everywhere defined on $S^{n-1}$.
\begin{proposition}[Prop. 3.6 of~\cite{SZ}]\label{36} Suppose $0<\theta<\theta'\leq\frac{\pi}{2}$ are fixed angles and $g$ satisfies the Delsarte linear programming conditions for $M(n-1,\theta')$, and suppose $F:[-1,1]\rightarrow\mathbb{R}$ is a function such that
\begin{equation*}
H(t):=\int_{S^{n-1}}F(\left<\boldsymbol{x},\boldsymbol{z}\right>)F(\left<\boldsymbol{y},\boldsymbol{z}\right>)g\left(\left<\Pi_{\boldsymbol{z}}(\boldsymbol{x}),\Pi_{\boldsymbol{z}}(\boldsymbol{y})\right>\right)d\boldsymbol{z}
\end{equation*}
with $t=\left<\boldsymbol{x},\boldsymbol{y}\right>$ satisfies $H(t)\leq 0$ for every $t\in[-1,\cos\theta]$. Then
\begin{equation*}
M(n,\theta)\leq \frac{g(1)\int_{-1}^1(1-t^2)^{\frac{n-4}{2}}dt}{\int_{-1}^1g(t)(1-t^2)^{\frac{n-4}{2}}dt}\cdot\frac{\int_{-1}^1F(t)^2(1-t^2)^{\frac{n-3}{2}}dt\int_{-1}^1(1-t^2)^{\frac{n-3}{2}}dt}{\left(\int_{-1}^1F(t)(1-t^2)^{\frac{n-3}{2}}dt\right)^2}.
\end{equation*} 
\end{proposition}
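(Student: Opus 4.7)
The plan is to verify that $H$, regarded as a point-pair invariant kernel on $S^{n-1}$, satisfies the Delsarte linear programming conditions for $M(n,\theta)$, and then apply inequality~\eqref{Delsartebnd}. Condition (i), non-positivity of $H$ on $[-1,\cos\theta]$, is part of the hypothesis. For condition (ii), positive semi-definiteness of $H$ on $S^{n-1}$, I would observe that for each fixed $\boldsymbol{z}\in S^{n-1}$ the integrand
\[
K_{\boldsymbol{z}}(\boldsymbol{x},\boldsymbol{y}) := F(\left<\boldsymbol{x},\boldsymbol{z}\right>)F(\left<\boldsymbol{y},\boldsymbol{z}\right>)\, g\left(\left<\Pi_{\boldsymbol{z}}(\boldsymbol{x}),\Pi_{\boldsymbol{z}}(\boldsymbol{y})\right>\right)
\]
is the Schur (entrywise) product of two PSD kernels on $S^{n-1}$: the rank-one kernel $\phi_{\boldsymbol{z}}(\boldsymbol{x})\phi_{\boldsymbol{z}}(\boldsymbol{y})$ with $\phi_{\boldsymbol{z}}(\boldsymbol{x})=F(\left<\boldsymbol{x},\boldsymbol{z}\right>)$, and the pullback via $\Pi_{\boldsymbol{z}}$ of the kernel $(u,v)\mapsto g(\left<u,v\right>)$ on $S^{n-2}$, which is PSD precisely because $g$ is assumed to satisfy the Delsarte conditions for $M(n-1,\theta')$. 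The Schur product theorem then gives PSD-ness of $K_{\boldsymbol{z}}$ for each $\boldsymbol{z}$, and averaging over $\boldsymbol{z}$ preserves PSD-ness, so $H(\left<\boldsymbol{x},\boldsymbol{y}\right>)=\int_{S^{n-1}}K_{\boldsymbol{z}}(\boldsymbol{x},\boldsymbol{y})\,d\boldsymbol{z}$ is PSD on $S^{n-1}$. This is exactly condition (ii).

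With both Delsarte conditions verified, \eqref{Delsartebnd} applied to $H$ gives $M(n,\theta)\le H(1)/H_0$, where $H_0$ is the zeroth Gegenbauer coefficient of $H$ against the Jacobi measure $\mu_{(n-3)/2,(n-3)/2}$. Evaluating $H(1)$ is immediate: setting $\boldsymbol{x}=\boldsymbol{y}$ collapses $\left<\Pi_{\boldsymbol{z}}(\boldsymbol{x}),\Pi_{\boldsymbol{z}}(\boldsymbol{x})\right>=1$, so $H(1)=g(1)\int_{S^{n-1}}F(\left<\boldsymbol{x},\boldsymbol{z}\right>)^2\,d\boldsymbol{z}$. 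Since the marginal distribution of $\left<\boldsymbol{x},\boldsymbol{z}\right>$ under the normalized uniform measure on $S^{n-1}$ is $\mu_{(n-3)/2,(n-3)/2}$, this becomes $g(1)\int_{-1}^1 F(t)^2(1-t^2)^{(n-3)/2}dt\,/\,\int_{-1}^1(1-t^2)^{(n-3)/2}dt$.

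For $H_0 = \int_{S^{n-1}}H(\left<\boldsymbol{x},\boldsymbol{y}\right>)\,d\boldsymbol{y}$, I would apply Fubini to swap the $\boldsymbol{y}$ and $\boldsymbol{z}$ integrals and, for fixed $\boldsymbol{z}$, parametrize $\boldsymbol{y}=s\boldsymbol{z}+\sqrt{1-s^2}\,\boldsymbol{u}$ with $s\in[-1,1]$ and $\boldsymbol{u}\in\boldsymbol{z}^{\perp}\cap S^{n-1}\cong S^{n-2}$. The normalized surface measure on $S^{n-1}$ factors through this parametrization as $(1-s^2)^{(n-3)/2}ds\cdot d\boldsymbol{u}$ (up to the usual normalizing constants), and one has $\Pi_{\boldsymbol{z}}(\boldsymbol{y})=\boldsymbol{u}$. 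The $\boldsymbol{u}$-integral of $g(\left<\Pi_{\boldsymbol{z}}(\boldsymbol{x}),\boldsymbol{u}\right>)$ over $S^{n-2}$ is, by rotational invariance, independent of $\Pi_{\boldsymbol{z}}(\boldsymbol{x})$ and equals $\int_{-1}^1 g(t)(1-t^2)^{(n-4)/2}dt\,/\,\int_{-1}^1(1-t^2)^{(n-4)/2}dt$. The remaining $s$-integral yields a factor $\int F(s)(1-s^2)^{(n-3)/2}ds\,/\,\int(1-s^2)^{(n-3)/2}ds$, which is independent of $\boldsymbol{z}$; a final integration in $\boldsymbol{z}$ against $F(\left<\boldsymbol{x},\boldsymbol{z}\right>)$ produces the same factor once more. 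Forming the quotient $H(1)/H_0$ and rearranging yields precisely the stated bound.

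The main obstacle is bookkeeping: three different spherical dimensions ($S^{n-1}$ for $\boldsymbol{x},\boldsymbol{y},\boldsymbol{z}$; $S^{n-2}$ for the fibers $\boldsymbol{z}^{\perp}$; and the Jacobi marginal on $[-1,1]$) with two different Jacobi weights $(n-3)/2$ and $(n-4)/2$ must be tracked so that all normalizing constants cancel correctly. One also notes that $\Pi_{\boldsymbol{z}}$ is undefined on the measure-zero set $\boldsymbol{z}\in\{\pm\boldsymbol{x},\pm\boldsymbol{y}\}$, which causes no issue for the integrals. The PSD argument via the Schur product is the conceptual heart and can equivalently be carried out by expanding $g$ in Gegenbauer polynomials on $S^{n-2}$ and applying the spherical-harmonic addition formula fiberwise, yielding the same conclusion.
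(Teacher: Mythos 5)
Your proof is correct and assembles exactly the ingredients the paper provides: the Schur-product/pullback argument for positive semi-definiteness is precisely the content of Lemma~\ref{positivity} (where the paper expands $\sum_{i,j}c_ic_jh(\boldsymbol{x}_i,\boldsymbol{x}_j;\boldsymbol{z})=\sum_{i,j}(c_ia_i)(c_ja_j)g(\left<\Pi_{\boldsymbol{z}}\boldsymbol{x}_i,\Pi_{\boldsymbol{z}}\boldsymbol{x}_j\right>)$ with $a_i=F(\left<\boldsymbol{x}_i,\boldsymbol{z}\right>)$, which is the Schur-product theorem written out), and your formulas for $H(1)$ and $H_0$ agree with the paper's (stated after Proposition~\ref{Hexplicit1} in the $m=1$ specialization). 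One small difference of route: for $H_0$ you parametrize $\boldsymbol{y}=s\boldsymbol{z}+\sqrt{1-s^2}\,\boldsymbol{u}$ directly and use rotational invariance on the $S^{n-2}$ fiber, whereas the paper derives these identities from the conditional-density formula of Lemma~\ref{conditionaldensity} (designed to handle general $m$); for $m=1$ your direct parametrization is a bit cleaner. One detail worth making explicit: passing from positive semi-definiteness of $H$ on $S^{n-1}$ to nonnegativity of its Gegenbauer coefficients (so that the Delsarte inequality~\eqref{Delsarteineq} can be applied with $H$) invokes Schoenberg's theorem; the paper is careful to record this in the corollary following Lemma~\ref{positivity}, and you should too (your final remark about expanding $g$ fiberwise in Gegenbauer polynomials and using the addition formula is an acceptable alternative that avoids the appeal to Schoenberg by producing the coefficient expansion directly).
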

We explain the geometry behind the proofs of Theorems~\ref{THMone} and~\ref{THMtwo} in Section~\ref{general}, and sketch the main ingredients in their proofs. We will explain the method in greater detail in Subsections~\ref{m1} and~\ref{m1b}. Aside from these propositions, we will use refined estimates of Jacobi polynomials to obtain sharp estimates of Levenshtein's optimal polynomials near their largest roots. See the next subsection for more on this.
\subsection{General results, proofs, and connection to Stiefel manifolds}\label{intro:general}
Given angles $0<\theta<\theta'\leq\frac{\pi}{2}$, Proposition~\ref{36} takes a function $g$ satisfying the Delsarte linear programming conditions for $M(n-1,\theta')$ and produces a function $H$ satisfying the Delsarte linear programming conditions for $M(n,\theta)$. As suggested by the definition of $H$, the integrand was produced by projecting a spherical code onto the orthogonal complement of vectors $\boldsymbol{z}\in S^{n-1}$, and then averaging over suitable $\boldsymbol{z}$. Similarly, Proposition~\ref{38} took a function $g$ satifying the Delsarte linear programming conditions for $M(n,\theta)$ and produced a function $H$ satisfying the Cohn--Elkies linear programming conditions for $\mathbb{R}^n$. A natural question that arises is the following: what happens if instead of projecting onto the orthogonal complement of $1$-dimensional subspaces, we project onto the orthogonal complement of a subspace spanned by orthonormal $\boldsymbol{z}_1,\hdots,\boldsymbol{z}_m\in S^{n-1}$ and then average over suitable such ordered families of $m$ orthonormal vectors? This corresponds to averaging over a subset of the Stiefel manifold $V_{m}(\mathbb{R}^n)$. There are three questions that we answer regarding this generalization.
\vspace{5mm}
\begin{enumerate}
\item How do we produce from a function $g$ satisfying the Delsarte linear programming conditions for $M(n-m,\theta')$ a function $H$ potentially satisfying the Delsarte linear programming conditions for $M(n,\theta)$?
\item Given the general construction, how do we verify that the function $H$ satisfies the Delsarte linear programming conditions, that is, what are the required estimates and calculations?
\item What upper bounds on spherical codes are obtainable if we apply such a general procedure to Levenshtein's optimal polynomials and suitably chosen regions of integration? How do the improvement factors behave when $m\geq 2$?
\end{enumerate}
\vspace{5mm}
The general construction is given in Subsection~\ref{mhigher}. The method of~\cite{SZ} corresponds to averaging over $V_1(\mathbb{R}^n)=S^{n-1}$. Regarding the second question, note that $V_m(\mathbb{R}^n)$ has large dimension $nm-\frac{m(m+1)}{2}$. However, we will apply changes of coordinates to reduce the dimensions of the domains of integration of our integrals, in fact to integrals over subsets of $\mathbb{B}^m\times\mathbb{B}^m$, where $\mathbb{B}^m$ is the closed unit ball in $\mathbb{R}^m$ centered at $\boldsymbol{0}$. After proving general results about our functions, we investigate certain constructions obtained by averaging special functions over $V_m(\mathbb{R}^n)$ for $m$ and $n$ related to each other in several ways. This part is an investigation of the connection between upper bounds for sizes of spherical codes and geometric constructions involving multiple orthogonal projections. These general results and ideas will be applied in future work to other classes of functions.\\
\\
Aside from the new class of functions that we construct, an important ingredient in our proofs is a different approach to the estimation of Jacobi polynomials compared to~\cite{SZ} to which we alluded in the previous subsection. Our estimate relies on Krasikov's estimate of ultraspherical polynomials obtained from the WKB method~\cite{IK}. We use an estimate of Krasikov~\cite{IK} for $(1-x^2)^{\frac{n-1}{2}}(p_d^{\frac{n-3}{2},\frac{n-3}{2}}(x))^2$ in a sub-interval $[-\sqrt{1-q},\sqrt{1-q}]$ of $[-1,1]$ including all its zeros. In this sub-interval, this function is nearly equioscillatory. On $[t_{1,d}^{\frac{n-3}{2},\frac{n-3}{2}},\sqrt{1-q}]$, we will show that this function is concave, implying that it lies below the tangent line at $t_{1,d}^{\frac{n-3}{2},\frac{n-3}{2}}$. Furthermore, we prove that the function is decreasing to the right of $\sqrt{1-q}$. Therefore, to the right of $t_{1,d}^{\frac{n-3}{2},\frac{n-3}{2}}$, we bound $(1-x^2)^{\frac{n-1}{2}}(p_d^{\frac{n-3}{2},\frac{n-3}{2}}(x))^2$ from above using its tangent line at $t_{1,d}^{\frac{n-3}{2},\frac{n-3}{2}}$. Furthermore, we also use Krasikov's result to estimate $(1-x^2)^{\frac{n-1}{2}}(p_d^{\frac{n-3}{2},\frac{n-3}{2}}(x))^2$ on an interval of the form $[t_{1,d}^{\frac{n-3}{2},\frac{n-3}{2}}-o(n^{-2/3}),t_{1,d}^{\frac{n-3}{2},\frac{n-3}{2}}]$. These estimates allow us to not only refine our estimates of Levenshtein's optimal polynomials, but also circumvent more complicated versions of the conditional density estimates of~\cite{SZ} in our proofs. In this paper, for example, $\theta^*$ does not play a role in the proofs of Theorems~\ref{THMone} and~\ref{THMtwo}. These lead to uniform improvement constants that are sharp when Levenshtein's optimal polynomials are used. Though the full strength of the estimates that we prove are not necessary for the results for this paper, we include them for future work along these lines. That being said, the estimates of Jacobi polynomials proved in~\cite{SZ} are insufficient for the results of this paper, including for Theorems~\ref{THMone} and~\ref{THMtwo}.\\
\\
That we cannot do better than $\frac{1}{e}$ if using Propositions~\ref{38} and~\ref{36} applied to Levenshtein's optimal polynomials will be a consequence of upper bounds on the sup-norms of Jacobi polynomials of the form found in~\cite{EMN} or~\cite{IK2}. See Lemma~\ref{Krasikovupperbound} below.  
\subsection{Outline of paper}
In Section~\ref{general}, we recall the constructions of~\cite{SZ}, and then proceed to give the construction of the general functions of this paper constructed using Stiefel manifolds. We then roughly explain the proofs of Theorems~\ref{THMone} and~\ref{THMtwo}. In Section~\ref{changeofvar}, we primarily rewrite the general function~\eqref{hStiefel} constructed in Subsection~\ref{mhigher} by introducing a convenient coordinate system that allows us to reduce our integrals to integrals over (subsets of) $\mathbb{B}^m\times\mathbb{B}^m$, where $\mathbb{B}^m$ is the closed unit $m$-dimensional ball in $\mathbb{R}^m$. Along the way, we prove results relevant to calculating the bounds obtained on $M(n,\theta)$ assuming that the general function $H$ satisfies the Delsarte linear programming conditions. We also quickly recall estimates and changes of coordinates from~\cite{SZ}. In Section~\ref{Jacobiestimates}, we recall the precise definition of Jacobi polynomials, and some of their properties useful to us. In particular, we state Krasikov's estimates of Jacobi polynomials, using which we prove sharp estimates of Levenshtein's optimal polynomials near their maximal roots after defining them. These estimates on Levenshtein's optimal polynomials are crucial to our proofs. In Section~\ref{section:m1b}, we prove Theorem~\ref{THMone}, while in Section~\ref{section:m1} we give two proofs of Theorem~\ref{THMtwo}, one using conditional density estimates and one avoiding them. In Section~\ref{section:m2}, we determine the improvement factors from $V_2(\mathbb{R}^n)$, that is, $m=2$. In Section~\ref{section:mhigher}, we analyze bounds obtained when $m=3$, $m=4$, and when $m$ is large but $m=o(n)$. Finally, in Section~\ref{section:constant}, we analyze the case where $n-m$ is constant and $n$ is large. Though the author has analyzed other situations, they are not included in this paper for brevity.
\section{General functions and overview}\label{general}
In Subsection~\ref{m1}, we recall the construction of~\cite{SZ} for spherical codes and explain the main ingredients in this paper that prove the improvement constant $\frac{1}{e}$ when applying the methods of~\cite{SZ}. We also explain why $\frac{1}{e}$ is the optimal constant in that setup if using Levenshtein's optimal polynomials. In Subsection~\ref{m1b}, we do the same for sphere packings. In the final Subsection~\ref{mhigher}, we explain the construction of functions obtained by averaging over Stiefel manifolds.
\subsection{Averaging over $S^{n-1}$ for spherical codes}\label{m1}
In~\cite{SZ}, the author and Sardari constructed general functions for spherical codes that we describe in this subsection. This simplified version clarifies the general construction using Stiefel manifolds given in Subsection~\ref{mhigher}.\\
\\
The construction was as follows. Suppose we are given angles $0<\theta<\theta'\leq\frac{\pi}{2}$, a point $\boldsymbol{z}\in S^{n-1}$ in the unit sphere, and integrable functions $F:[-1,1]\rightarrow\mathbb{R}$ and $g:[-1,1]\rightarrow\mathbb{R}$. Associated to such a data, we consider the function
\[h(\cdot,\cdot;\boldsymbol{z}):(S^{n-1}\setminus\{\pm\boldsymbol{z}\})\times (S^{n-1}\setminus\{\pm\boldsymbol{z}\})\rightarrow\mathbb{R}\]
given by
\begin{equation*}\label{preh}
h(\boldsymbol{x},\boldsymbol{y};\boldsymbol{z}):=F\left(\left<\boldsymbol{x},\boldsymbol{z}\right>\right)F\left(\left<\boldsymbol{y},\boldsymbol{z}\right>\right)g\left(\left<\Pi_{\boldsymbol{z}}(\boldsymbol{x}),\Pi_{\boldsymbol{z}}(\boldsymbol{y})\right>\right),
\end{equation*}
where $\Pi_{\boldsymbol{z}}:S^{n-1}\setminus\{\pm\boldsymbol{z}\}\rightarrow S^{n-2}$ is the projection map onto the orthogonal complement of $\boldsymbol{z}$ in $\mathbb{R}^n$ followed by normalization to a unit vector. Up to a subset of measure zero, $h$ is defined almost everywhere on $S^{n-1}\times S^{n-1}$. Under the situations in which the function
\begin{equation}\label{h}H(\boldsymbol{x},\boldsymbol{y}):=\int_{S^{n-1}}h(\boldsymbol{x},\boldsymbol{y};\boldsymbol{z})d\boldsymbol{z}\end{equation}
obtained by averaging over $\boldsymbol{z}\in S^{n-1}$ with respect to the uniform measure satisfies the Delsarte linear programming conditions for $M(n,\theta)$, we obtain bounds on $M(n,\theta)$ from Proposition~\ref{36}. Note that $H$ is a point pair invariant function, that is, it is a function that depends only on $t:=\left<\boldsymbol{x},\boldsymbol{y}\right>$. It is easy to see that $H$ is positive semi-definite on $S^{n-1}$ as soon as $g$ is positive semi-definite on $S^{n-2}$. Assuming that we choose the data appropriately to have $H(t)\leq 0$ for every $t\in[-1,\cos\theta]$, we obtain from Proposition~\ref{36} the bound
\begin{equation*}
M(n,\theta)\leq \frac{g(1)\int_{-1}^1(1-t^2)^{\frac{n-4}{2}}dt}{\int_{-1}^1g(t)(1-t^2)^{\frac{n-4}{2}}dt}\cdot\frac{\int_{-1}^1F(t)^2(1-t^2)^{\frac{n-3}{2}}dt\int_{-1}^1(1-t^2)^{\frac{n-3}{2}}dt}{\left(\int_{-1}^1F(t)(1-t^2)^{\frac{n-3}{2}}dt\right)^2}.
\end{equation*}
In~\cite{SZ}, we proved inequality~\eqref{SZbound2} above by taking $g=g_{n-1,\theta'}$ to be Levenshtein's optimal polynomial bounding $M(n-1,\theta')\leq M_{\textup{Lev}}(n-1,\theta')$. Given angles $0<\theta<\theta'\leq\frac{\pi}{2}$, real numbers $r,R\in(0,1]$ such that $r<R$, $\delta\in(0,r)$, we took $F:[-1,1]\rightarrow\mathbb{R}_{\geq 0}$ to be $F=\chi_{[r-\delta,R]}$, the characteristic function on the interval $[r-\delta,R]$. We chose $r,R$ such that for $\left<\boldsymbol{x},\boldsymbol{z}\right>,\left<\boldsymbol{y},\boldsymbol{z}\right>\in [r,R]$, if $\left<\boldsymbol{x},\boldsymbol{y}\right>\leq\cos\theta$ then $\left<\Pi_{\boldsymbol{z}}(\boldsymbol{x}),\Pi_{\boldsymbol{z}}(\boldsymbol{y})\right>\leq\cos\theta'$. Geometrically, this means that for any two points $\boldsymbol{x},\boldsymbol{y}$ in the strip
\begin{equation}\label{def:strip}\textup{Str}_{\theta,\theta'}(\boldsymbol{z})=\left\{\boldsymbol{u}\in S^{n-1}:r\leq\left<\boldsymbol{u},\boldsymbol{z}\right>\leq R\right\}\end{equation}
that are of angular distance at least $\theta$, their projections $\Pi_{\boldsymbol{z}}(\boldsymbol{x}),\Pi_{\boldsymbol{z}}(\boldsymbol{y})\in S^{n-2}$ are of angular distance at least $\theta'$ from each other, leading to the inequality $g_{n-1,\theta'}\left(\left<\Pi_{\boldsymbol{z}}(\boldsymbol{x}),\Pi_{\boldsymbol{z}}(\boldsymbol{y})\right>\right)\leq 0$. In fact, we let
\begin{equation}\label{goodR}r:=\sqrt{\frac{\cos(\theta)-\cos(\theta')}{1-\cos(\theta')}}\end{equation}
and
\begin{equation}\label{bigRfirst}R:=\cos\left(2\arctan\left(\frac{\cos(\theta)}{\sqrt{(1-\cos(\theta))(\cos(\theta)-\cos(\theta'))}}\right)+\arccos(r)-\pi\right).\end{equation}
However, the idea of~\cite{SZ} giving our constant improvement was that we do not need pointwise non-positivity of the integrand of~\eqref{h} to ensure that $H(t)\leq 0$ for every $t\in[-1,\cos\theta]$. In fact, we chose $\delta>0$ such that we still had this non-positivity condition after integration. In~\cite{SZ}, we proved that one could choose $\delta=O(1/n)$, leading to the constant improvement of~\eqref{SZbound2} over previous bounds. Note that the volume of the strip~\eqref{def:strip} concentrates around the edge of larger radius $\sqrt{1-r^2}$. Therefore, we cannot expect $\delta$ to be larger than $O(1/n)$. An ingredient in finding a large $\delta$ was the estimation of Levenshtein's optimal polynomials $g_{n-1,\theta'}$ using a Taylor expansion at its largest root. We also used an analogue of the above for sphere packing densities based on the Cohn--Elkies linear programming method, giving us inequality~\eqref{SZbound} above. We describe this in Subsection~\ref{m1b} below.\\
\\
The estimate proved and used in Proposition 5.1 of~\cite{SZ} on Jacobi polynomials near their largest roots was crude in two ways. To the right of their largest zeros, in retrospect, a crude exponential upper bound was given. It was obtained by using the differential and interlacing properties of Jacobi polynomials. From the crude bound, the reverse triangle inequality was used to bound Levenshtein's optimal polynomials to the left of their largest zeros. To the left of the largest zero, the bound we obtained was only valid in a neighbourhood of length $O(1/n)$ around the largest root, with multiplicative constant absolutely determined. These lead to a restriction on the amount of negative contribution of the integrand of $H$ that we could use and overestimated the positive contributions of the integrand of~\eqref{h}, thus limiting how large of a $\delta$ we could choose. However, in this paper, we do not use the same estimate on Levenshtein's optimal polynomials. As explained in Subsection~\ref{mhigher}, we use a result of Krasikov (see Lemma~\ref{Krasikovlemma}) giving us a very good estimate of Jacobi polynomials on $[0,\sqrt{1-q}]$ where $q$ is such that $\sqrt{1-q}$ is $O(n^{-2/3})$ to the right of their largest roots, which is more than we need. This estimate gives the best possible constant improvement for functions obtained by averaging Levenshtein's optimal polynomials as above. In addition to its statement, the proof of Lemma~\ref{Krasikovlemma} given by Krasikov is also important to us in proving our estimates in Section~\ref{Jacobiestimates}.\\
\\
In addition to leading to stronger results, there are two additional features of the estimates proved and used in this paper. The first is that they allow us to circumvent the complicated conditional density estimates proved in~\cite{SZ}. The second is that the proof given in this paper, in contrast to that given in~\cite{SZ}, does not use $\theta'=\theta^*$ and its value from the beginning. Instead, we work with general angles $0<\theta<\theta'\leq\frac{\pi}{2}$, and obtain a general improvement factor from our methods. It is only in the final stage that we specialize to $\theta'=\theta^*$ if $\theta<\theta^*$ for reasons explained in the introduction. As we will see, this also explains the apparent convergence of linear improvement factors to inequality~\eqref{CohnZhao} observed in the numerics of~\cite{SZ} to $\frac{1}{e}$ \textit{independent of the angle chosen}. Therefore, aside from leading to stronger bounds, the strengthened estimates in this paper lead to cleaner arguments that prove the numerical observations of~\cite{SZ}.\\
\\
In Subsection~\ref{mhigher}, we explain a generalization of this construction. Instead of averaging over $S^{n-1}$, we average over Stiefel manifolds $V_m(\mathbb{R}^n)$. In $m\geq 2$, in a sense to be made precise, instead of just thickening in a radial direction as in~\cite{SZ} (using $[r-\delta,R]$ instead of $[r,R]$), we also thicken in an angular direction, giving us more flexibility. We will see that in the situations that we will consider for $m\geq 2$, the improvement factors will no longer be independent of $\theta'$ and converge to $0$ as $\theta'$, and so also $\theta$, converges to $0$.
\subsection{Averaging over $\mathbb{R}^n$ for sphere packings}\label{m1b}
In~\cite{CZ}, Cohn and Zhao proved their inequality~\eqref{CohnZhao}. The linear programming incarnation of that argument was given by starting with a function $g:[-1,1]\rightarrow\mathbb{R}$ satisfying the Delsarte linear programming conditions for $M(n,\theta)$ where $0<\theta\leq\pi$, and $F:[-1,1]\rightarrow\mathbb{R}$ an appropriately chosen characteristic function so that if $F(|\boldsymbol{x}-\boldsymbol{z}|)F(|\boldsymbol{y}-\boldsymbol{z}|)\neq 0$ for $\boldsymbol{x},\boldsymbol{y}\neq \boldsymbol{z}$ in $\mathbb{R}^n$ such that $|\boldsymbol{x}-\boldsymbol{y}|\geq 1$, then $\left<\frac{\boldsymbol{x}-\boldsymbol{z}}{|\boldsymbol{x}-\boldsymbol{z}|},\frac{\boldsymbol{y}-\boldsymbol{z}}{|\boldsymbol{y}-\boldsymbol{z}|}\right>\in[-1,\cos\theta]$ and so
\begin{equation*}
g\left(\left<\frac{\boldsymbol{x}-\boldsymbol{z}}{|\boldsymbol{x}-\boldsymbol{z}|},\frac{\boldsymbol{y}-\boldsymbol{z}}{|\boldsymbol{y}-\boldsymbol{z}|}\right>\right)\leq 0.
\end{equation*}
In fact, $F=\chi_{[0,\ell]}$ with $\ell:=\frac{1}{2\sin(\theta/2)}$ and $\frac{\pi}{3}\leq\theta\leq\pi$ was chosen. Then the function
\begin{equation*}
H(\boldsymbol{x},\boldsymbol{y}):=\int_{\mathbb{R}^n}F(|\boldsymbol{x}-\boldsymbol{z}|)F(|\boldsymbol{y}-\boldsymbol{z}|)g\left(\left<\frac{\boldsymbol{x}-\boldsymbol{z}}{|\boldsymbol{x}-\boldsymbol{z}|},\frac{\boldsymbol{y}-\boldsymbol{z}}{|\boldsymbol{y}-\boldsymbol{z}|}\right>\right)d\boldsymbol{z}
\end{equation*}
is a point-pair invariant function. The choice of function $F$ ensures that if $|\boldsymbol{x}-\boldsymbol{y}|\geq 1$, then the integrand of the integral is not positive. This is a consequence of the geometric argument given in proving inequality~\eqref{CohnZhao}. If we abuse notation and write $H(T)=H(\boldsymbol{x},\boldsymbol{y})$ if $|\boldsymbol{x}-\boldsymbol{y}|=T$, we then have that $H(T)\leq 0$ whenever $|T|\geq 1$. It is also easy to see that $g$ being positive semi-definite on $S^{n-1}$ implies that $H$ is positive semi-definite on $\mathbb{R}^n$. In that case, $H$ satisfies the Cohn--Elkies linear programming conditions of Theorem~\ref{CohnElkiesthm} for $\mathbb{R}^n$. An application of Theorem~\ref{CohnElkiesthm} then implies inequality 
\begin{equation*}\delta_n\leq\sin^n(\theta/2)\frac{g(1)\int_{-1}^1(1-t^2)^{\frac{n-3}{2}}dt}{\int_{-1}^1g(t)(1-t^2)^{\frac{n-3}{2}}dt},\end{equation*}
specializing to $\delta_n\leq\delta_n^{\textup{KL}}(\theta)$ when $g=g_{n,\theta}$.\\
\\
In~\cite{SZ}, similar to the spherical codes case, the author and Sardari observed that it is not necessary for the integrand to be everywhere non-positive when $|T|\geq 1$ if we require $H(T)\leq 0$ for $|T|\geq 1$. We could instead take $F=\chi_{[0,\ell+\delta]}$ for some $\delta=O(n^{-1})$ leading to some positive contributions to the integrand while still satisfying $H(T)\leq 0$ whenever $|T|\geq 1$. Obtaining a large $\delta$ was a delicate analytic problem that we dealt with. There, we took $g=g_{n,\theta}$ to be Levenshtein's optimal polynomial for $M(n,\theta)$. As in the case of spherical codes, we used an estimation of Jacobi polynomials to prove that
\begin{equation*}
\delta_n\leq 0.4325\cdot\delta_n^{\textup{KL}}
\end{equation*}
for sufficiently large $n$. However, by using our refined estimation of Levenshtein's optimal polynomials described in the previous subsection, we obtain Theorem~\ref{THMone}.
\subsection{Averaging over Stiefel manifolds for spherical codes}\label{mhigher}
A generalization of the construction of Subsection~\ref{m1} is the following. Instead of averaging over $\boldsymbol{z}\in S^{n-1}$, we may average over ordered families $\boldsymbol{z}_1,\hdots,\boldsymbol{z}_m$ of $m$ orthonormal vectors in $S^{n-1}$. This is achieved by integrating over the Stiefel manifold $V_m(\mathbb{R}^n)$. Our setup is as follows.\\
\\
Suppose $1\leq m\leq n$ is a natural number, and that we are given angles $0<\theta<\theta'\leq\frac{\pi}{2}$, \textit{orthonormal} vectors $\boldsymbol{z}_1,\hdots,\boldsymbol{z}_m\in S^{n-1}$, an integrable function $F:[-1,1]^{m}\rightarrow\mathbb{R}$, and an integrable function $g:[-1,1]\rightarrow\mathbb{R}$. Associated to such a data, we consider the function
\[h(\cdot,\cdot;\boldsymbol{z}_1,\hdots,\boldsymbol{z}_m):(S^{n-1}\setminus\{\pm\boldsymbol{z}_1,\hdots,\pm\boldsymbol{z}_m\})\times (S^{n-1}\setminus\{\pm\boldsymbol{z}_1,\hdots,\pm\boldsymbol{z}_m\})\rightarrow\mathbb{R}\]
given by
\begin{equation*}
h(\boldsymbol{x},\boldsymbol{y};\boldsymbol{z}_1,\hdots,\boldsymbol{z}_m):=F\left(\left<\boldsymbol{x},\boldsymbol{z}_1\right>,\hdots,\left<\boldsymbol{x},\boldsymbol{z}_m\right>\right)F\left(\left<\boldsymbol{y},\boldsymbol{z}_1\right>,\hdots,\left<\boldsymbol{y},\boldsymbol{z}_m\right>\right)g\left(\left<\Pi_{m}\hdots\Pi_1(\boldsymbol{x}),\Pi_{m}\hdots\Pi_1(\boldsymbol{y})\right>\right),
\end{equation*}
where for each $1\leq j\leq m$, $\Pi_j:S^{n-1}\setminus\{\pm\boldsymbol{z}_j\}\rightarrow S^{n-1}$ is the projection operator onto the orthogonal complement of $\boldsymbol{z}_j$ in $\mathbb{R}^n$ followed by normalization to a unit vector. $h$ is defined almost everywhere for almost every choice of $\boldsymbol{z}_1,\hdots,\boldsymbol{z}_m$. If the function
\begin{equation}\label{hStiefel}H(\boldsymbol{x},\boldsymbol{y}):=\int_{V_m(\mathbb{R}^n)}h(\boldsymbol{x},\boldsymbol{y};\boldsymbol{z}_1,\hdots,\boldsymbol{z}_m)d\boldsymbol{z}_1\cdots d\boldsymbol{z}_m\end{equation}
obtained by averaging over the Stiefel manifold $V_m(\mathbb{R}^n)$ of ordered $m$-tuples $(\boldsymbol{z}_1,\hdots,\boldsymbol{z}_m)$ of orthonormal vectors in $\mathbb{R}^n$ satisfies the Delsarte linear programming conditions for $M(n,\theta)$, we obtain an upper bound on $M(n,\theta)$. Here, we are thinking of $V_m(\mathbb{R}^n)$ as $S^{n-1}\times S^{n-2}\times\hdots\times S^{n-m}$ with measure given by the product of the uniform spherical measures on each of the factors. Note the following lemma that we are implicitly using in this claim.
\begin{lemma}\label{pointpair}$H(\boldsymbol{x},\boldsymbol{y})$ is a point pair invariant function, that is, it depends only on $\left<\boldsymbol{x},\boldsymbol{y}\right>$.
\end{lemma}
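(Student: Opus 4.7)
The plan is to show that $H$ is invariant under the diagonal action of $O(n)$ on $S^{n-1}\times S^{n-1}$, and then invoke the classical fact that two pairs $(\boldsymbol{x},\boldsymbol{y})$ and $(\boldsymbol{x}',\boldsymbol{y}')$ on $S^{n-1}\times S^{n-1}$ lie in the same $O(n)$-orbit if and only if $\left<\boldsymbol{x},\boldsymbol{y}\right>=\left<\boldsymbol{x}',\boldsymbol{y}'\right>$. The main ingredients are the $O(n)$-equivariance of the building blocks of the integrand together with $O(n)$-invariance of the measure on $V_m(\mathbb{R}^n)$.

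First I would record the equivariance of the projection maps. For any $U\in O(n)$ and any $\boldsymbol{z}\in S^{n-1}$, the identity $U(\boldsymbol{x}-\left<\boldsymbol{x},\boldsymbol{z}\right>\boldsymbol{z})=U\boldsymbol{x}-\left<U\boldsymbol{x},U\boldsymbol{z}\right>U\boldsymbol{z}$ together with $|U\boldsymbol{v}|=|\boldsymbol{v}|$ gives $U\Pi_{\boldsymbol{z}}(\boldsymbol{x})=\Pi_{U\boldsymbol{z}}(U\boldsymbol{x})$ wherever both sides are defined. Iterating this over $j=1,\ldots,m$ yields
\[U\,\Pi_m\cdots\Pi_1(\boldsymbol{x})=\Pi_{U\boldsymbol{z}_m}\cdots\Pi_{U\boldsymbol{z}_1}(U\boldsymbol{x}).\]
Since the Euclidean inner product is preserved by $U$, every inner product appearing in the definition of $h$ is unchanged when $(\boldsymbol{x},\boldsymbol{y};\boldsymbol{z}_1,\ldots,\boldsymbol{z}_m)$ is replaced by $(U\boldsymbol{x},U\boldsymbol{y};U\boldsymbol{z}_1,\ldots,U\boldsymbol{z}_m)$. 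Consequently
\[h(U\boldsymbol{x},U\boldsymbol{y};U\boldsymbol{z}_1,\ldots,U\boldsymbol{z}_m)=h(\boldsymbol{x},\boldsymbol{y};\boldsymbol{z}_1,\ldots,\boldsymbol{z}_m).\]

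Second, I would use that the measure on $V_m(\mathbb{R}^n)$ described in the statement—namely the product of uniform spherical measures on $S^{n-1}\times S^{n-2}\times\cdots\times S^{n-m}$ via the iterated-orthogonal-complement identification—agrees with the unique (up to scale) $O(n)$-invariant measure on the homogeneous space $V_m(\mathbb{R}^n)\cong O(n)/O(n-m)$ under left multiplication by $U$; this follows because each $\boldsymbol{z}_j$ lives on the unit sphere of the $(n-j+1)$-dimensional subspace $\textup{span}(\boldsymbol{z}_1,\ldots,\boldsymbol{z}_{j-1})^{\perp}$, which $U$ maps isometrically to $\textup{span}(U\boldsymbol{z}_1,\ldots,U\boldsymbol{z}_{j-1})^{\perp}$. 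Substituting $\boldsymbol{z}_j=U\boldsymbol{w}_j$ in the integral defining $H(U\boldsymbol{x},U\boldsymbol{y})$, then applying the equivariance of $h$ established above, gives
\[H(U\boldsymbol{x},U\boldsymbol{y})=\int_{V_m(\mathbb{R}^n)}h(\boldsymbol{x},\boldsymbol{y};\boldsymbol{w}_1,\ldots,\boldsymbol{w}_m)\,d\boldsymbol{w}_1\cdots d\boldsymbol{w}_m=H(\boldsymbol{x},\boldsymbol{y}).\]

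Finally, given any two unit vectors $\boldsymbol{x},\boldsymbol{y}\in S^{n-1}$ with $t=\left<\boldsymbol{x},\boldsymbol{y}\right>$, one can find $U\in O(n)$ sending $(\boldsymbol{x},\boldsymbol{y})$ to any chosen pair with the same inner product, so the $O(n)$-invariance just proved forces $H(\boldsymbol{x},\boldsymbol{y})$ to depend only on $t$, as desired. I do not anticipate a genuine obstacle in this argument; the only point that requires care is verifying that the product measure on $S^{n-1}\times S^{n-2}\times\cdots\times S^{n-m}$ agrees with the canonical $O(n)$-invariant measure on $V_m(\mathbb{R}^n)$, which is standard but worth stating explicitly since the equivariance is what allows the change of variables $\boldsymbol{z}_j\mapsto U\boldsymbol{w}_j$ to preserve the measure.
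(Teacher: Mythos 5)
Your proof is correct, and it takes a route the paper does not spell out explicitly. The paper states Lemma~\ref{pointpair} without proof, and the point-pair invariance of $H$ is then made evident only implicitly in Section~\ref{changeofvar}: there one introduces coordinates $t,u_1,v_1,\dots,u_m,v_m$, computes the conditional density $\mu(\boldsymbol{u},\boldsymbol{v};t)$ of the pushforward of the Stiefel measure (Lemma~\ref{conditionaldensity}), and from the resulting formula in Proposition~\ref{Hexplicit1} it is manifest that $H$ depends on $(\boldsymbol{x},\boldsymbol{y})$ only through $t=\left<\boldsymbol{x},\boldsymbol{y}\right>$. Your argument instead proceeds by the standard group-theoretic route: you establish $\Pi_{U\boldsymbol{z}}(U\boldsymbol{x})=U\Pi_{\boldsymbol{z}}(\boldsymbol{x})$, deduce $O(n)$-equivariance of the integrand $h$, combine this with the $O(n)$-invariance of the iterated spherical product measure on $V_m(\mathbb{R}^n)$, and then use the transitivity of $O(n)$ on pairs with a fixed inner product. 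Both arguments are correct; yours is shorter and more conceptual, isolating the invariance principle, while the paper's coordinate computation, though heavier, is needed anyway since Proposition~\ref{Hexplicit1} is used quantitatively throughout the rest of the paper. The one place your proof requires (and gives) a little care is the observation that the product-of-spheres measure is the unique $O(n)$-invariant measure on the homogeneous space $V_m(\mathbb{R}^n)\cong O(n)/O(n-m)$; that is correctly handled by noting that conditioning on $\boldsymbol{z}_1,\dots,\boldsymbol{z}_{j-1}$ leaves the rotation-invariant uniform measure on the unit sphere of $\mathrm{span}(\boldsymbol{z}_1,\dots,\boldsymbol{z}_{j-1})^{\perp}$, and $U$ intertwines these spheres isometrically.
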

Therefore, $H$ may be thought of as a function of $t:=\left<\boldsymbol{x},\boldsymbol{y}\right>$. We abuse notation and write $H(t)=H(\boldsymbol{x},\boldsymbol{y})$. Observe that $m=1$ corresponds to the construction of~\cite{SZ} described in the previous subsection.\\
\\
Two natural questions that arise are the following. 1) Under what conditions do we have that the function $H$ satisfies the Delsarte linear programming conditions? 2) What specific choices coming from geometry lead to improved upper bounds on sizes of spherical codes and sphere packing densities? For positive semi-definiteness, we have the following lemma.
\begin{lemma}\label{positivity}Suppose $F:[-1,1]^m\rightarrow\mathbb{R}$ is integrable, we have $m$ orthonormal $\boldsymbol{z}_1,\hdots,\boldsymbol{z}_m\in S^{n-1}$, and $g$ is positive semi-definite for points in $S^{n-m-1}$. Then $h$ is a positive semi-definite function on $S^{n-1}$ almost everywhere, that is, for any finite subset $A:=\{\boldsymbol{x}_1,\hdots,\boldsymbol{x}_N\}\subset S^{n-1}\setminus\{\pm\boldsymbol{z}_1,\hdots,\pm\boldsymbol{z}_m\}$ and any $c_1,\hdots,c_N\in\mathbb{R}$, we have
\begin{equation*}\label{hposdef}
\sum_{i,j}c_ic_jh(\boldsymbol{x}_i,\boldsymbol{x}_j;\boldsymbol{z}_1,\hdots,\boldsymbol{z}_m)\geq 0.
\end{equation*}
In particular, $H$ is positive semi-definite on $S^{n-1}$.
\end{lemma}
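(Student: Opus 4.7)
The plan is to reduce positive semi-definiteness of $h$ to that of $g$ by a direct multiplicative factorization of the integrand, and then deduce positive semi-definiteness of $H$ by integrating a pointwise non-negative quantity against the non-negative uniform measure on $V_m(\mathbb{R}^n)$. The argument is computational rather than conceptual; no delicate estimate is needed. I anticipate no genuine obstacle, only minor bookkeeping around the measure-zero exceptional set on which the projections $\Pi_j$ are undefined.

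First I would fix orthonormal vectors $\boldsymbol{z}_1,\dots,\boldsymbol{z}_m \in S^{n-1}$, a finite set $A=\{\boldsymbol{x}_1,\dots,\boldsymbol{x}_N\}\subset S^{n-1}\setminus\{\pm\boldsymbol{z}_1,\dots,\pm\boldsymbol{z}_m\}$, and coefficients $c_1,\dots,c_N\in\mathbb{R}$. Setting
\[
a_i := c_i\cdot F\bigl(\langle \boldsymbol{x}_i,\boldsymbol{z}_1\rangle,\dots,\langle \boldsymbol{x}_i,\boldsymbol{z}_m\rangle\bigr),
\]
the defining formula for $h$ immediately gives
\[
\sum_{i,j} c_i c_j\, h(\boldsymbol{x}_i,\boldsymbol{x}_j;\boldsymbol{z}_1,\dots,\boldsymbol{z}_m)
= \sum_{i,j} a_i a_j\, g\bigl(\langle \Pi_m\cdots\Pi_1(\boldsymbol{x}_i),\, \Pi_m\cdots\Pi_1(\boldsymbol{x}_j)\rangle\bigr).
\]
Since $\boldsymbol{z}_1,\dots,\boldsymbol{z}_m$ are orthonormal, the composition $\Pi_m\cdots\Pi_1$ maps each admissible $\boldsymbol{x}_i$ to a unit vector in $W:=\textup{span}\{\boldsymbol{z}_1,\dots,\boldsymbol{z}_m\}^{\perp}$, whose unit sphere is isometric to $S^{n-m-1}$. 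Applying the hypothesis that $g$ is positive semi-definite on $S^{n-m-1}$ to the points $\Pi_m\cdots\Pi_1(\boldsymbol{x}_i)\in S(W)$ with weights $a_i$ shows the right-hand side is $\geq 0$. This establishes positive semi-definiteness of $h(\cdot,\cdot;\boldsymbol{z}_1,\dots,\boldsymbol{z}_m)$ off a measure-zero subset of $S^{n-1}\times S^{n-1}$.

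Next I would deduce positive semi-definiteness of $H$ on $S^{n-1}$. For any finite $\{\boldsymbol{x}_1,\dots,\boldsymbol{x}_N\}\subset S^{n-1}$ and coefficients $c_1,\dots,c_N$, Fubini's theorem (justified by integrability of $F$ and the fact that $V_m(\mathbb{R}^n)$ carries a finite non-negative product measure) yields
\[
\sum_{i,j} c_i c_j\, H(\boldsymbol{x}_i,\boldsymbol{x}_j)
= \int_{V_m(\mathbb{R}^n)} \sum_{i,j} c_i c_j\, h(\boldsymbol{x}_i,\boldsymbol{x}_j;\boldsymbol{z}_1,\dots,\boldsymbol{z}_m)\,d\boldsymbol{z}_1\cdots d\boldsymbol{z}_m.
\]
For almost every $(\boldsymbol{z}_1,\dots,\boldsymbol{z}_m)\in V_m(\mathbb{R}^n)$, none of $\pm\boldsymbol{z}_j$ coincide with any $\boldsymbol{x}_i$, so the previous step makes the integrand pointwise non-negative almost everywhere. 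Hence the integral is non-negative, completing the proof.

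The only subtlety worth flagging is the almost-everywhere definition of $h$: one must check that the exceptional sets where some $\Pi_j$ is undefined form a null set with respect to the uniform measure on $V_m(\mathbb{R}^n)$, which is immediate since for fixed finite $A$ the set of $(\boldsymbol{z}_1,\dots,\boldsymbol{z}_m)$ with some $\boldsymbol{z}_j\in\pm A$ has measure zero. Everything else is a transparent consequence of the multiplicative structure of $h$.
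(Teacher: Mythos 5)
Your proof is correct and is essentially the same argument as the paper's: you absorb the $F$ values into weights $a_i$, observe that the sum then factors as a positive-semi-definite form in $g$ evaluated on images in $S^{n-m-1}$, and integrate over $V_m(\mathbb{R}^n)$ to pass to $H$. The paper presents this more tersely (keeping $c_i$ and $a_i$ separate and leaving the Fubini/measure-zero bookkeeping implicit), but there is no difference of substance.
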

\begin{proof}From the definition of $h$, we have
\[\sum_{i,j}c_ic_jh(\boldsymbol{x}_i,\boldsymbol{x}_j;\boldsymbol{z}_1,\hdots,\boldsymbol{z}_m)=\sum_{i,j}c_ia_ic_ja_jg\left(\left<\Pi_{m}\hdots\Pi_1(\boldsymbol{x}_i),\Pi_{m}\hdots\Pi_1(\boldsymbol{x}_j)\right>\right),\]
where
\[a_i:=F\left(\left<\boldsymbol{x}_i,\boldsymbol{z}_1\right>,\hdots,\left<\boldsymbol{x}_i,\boldsymbol{z}_m\right>\right).\]
The inequality follows from the fact that for each $i$, $\Pi_{m}\hdots\Pi_1(\boldsymbol{x}_i)$ may be thought of as a point in $S^{n-m-1}$, and $g$ is positive semi-definite on $S^{n-m-1}$.
\end{proof}
Schoenberg's theorem~\cite{Sch42} on positive semi-definiteness of functions on spheres then leads to the following corollary.
\begin{corollary}Suppose $F:[-1,1]^m\rightarrow\mathbb{R}$ is integrable, we have $m$ orthonormal $\boldsymbol{z}_1,\hdots,\boldsymbol{z}_m\in S^{n-1}$, and $g$ is positive semi-definite on $S^{n-m-1}$. Then $H$ satisfies for every $k\geq 0$,
\begin{equation*}
\int_{-1}^1H(t)p_k^{\frac{n-3}{2},\frac{n-3}{2}}(t)(1-t^2)^{\frac{n-3}{2}}dt\geq 0.
\end{equation*}
In particular, as soon as $H_0:=\frac{\int_{-1}^1H(t)(1-t^2)^{\frac{n-3}{2}}dt}{\int_{-1}^1(1-t^2)^{\frac{n-3}{2}}dt}>0$, the second Delsarte linear programming condition is satisfied with respect to the Jacobi measure~\eqref{Jacmeasure}.
\end{corollary}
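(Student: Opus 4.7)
The plan is to invoke Schoenberg's theorem directly, using the positive semi-definiteness already established in Lemma~\ref{positivity}. By Lemma~\ref{pointpair}, $H$ is a point-pair invariant function on $S^{n-1}$, so we may regard it as a function of $t=\left<\boldsymbol{x},\boldsymbol{y}\right>\in[-1,1]$; by Lemma~\ref{positivity}, $H$ is positive semi-definite on $S^{n-1}$. Schoenberg's theorem~\cite{Sch42} characterizes the zonal positive semi-definite functions on $S^{n-1}$ as precisely those admitting a Gegenbauer expansion
\[
H(t)=\sum_{k\geq 0}H_k\,p_k^{\frac{n-3}{2},\frac{n-3}{2}}(t)
\]
with coefficients $H_k\geq 0$ for all $k$, the convergence being in $L^2$ with respect to the Jacobi measure~\eqref{Jacmeasure}. (Since $F$ is only integrable one should either first verify that $H$ is bounded and continuous by a dominated convergence argument applied to~\eqref{hStiefel}, or else pass to the standard $L^2$ version of Schoenberg's theorem; the integrability of $F$ makes $H$ at worst an $L^2$ function on $[-1,1]$ against the Jacobi measure, which suffices.)

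Once the Gegenbauer expansion is in hand, the stated integral inequality is just the orthogonality of the Jacobi polynomials. Indeed, with $\alpha=\frac{n-3}{2}$, the polynomials $p_k^{\alpha,\alpha}$ are orthogonal with respect to $(1-t^2)^{\alpha}dt$, so integrating $H(t)\,p_k^{\alpha,\alpha}(t)(1-t^2)^{\alpha}$ over $[-1,1]$ and interchanging sum and integral yields a positive multiple of $H_k$. Since $H_k\geq 0$ by Schoenberg's theorem, the integral on the left-hand side of the claimed inequality is non-negative for every $k\geq 0$.

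For the "in particular" claim, observe that $p_0^{\alpha,\alpha}\equiv 1$, and with the normalization of the Jacobi measure~\eqref{Jacmeasure} the coefficient $H_0$ equals exactly the quantity
\[
H_0=\frac{\int_{-1}^1 H(t)(1-t^2)^{\frac{n-3}{2}}dt}{\int_{-1}^1(1-t^2)^{\frac{n-3}{2}}dt}.
\]
The second Delsarte linear programming condition for $M(n,\theta)$ with respect to the measure~\eqref{Jacmeasure} is that the Gegenbauer expansion coefficients of $H$ are non-negative and that $H_0>0$. The non-negativity has just been established, so the hypothesis $H_0>0$ is the only additional requirement, and the condition follows.

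The only non-routine aspect is ensuring that one is in a setting where Schoenberg's theorem applies; this is essentially a matter of regularity of $H$ as a function of $t$, which is cheap because averaging over the Stiefel manifold in~\eqref{hStiefel} smooths $F$ considerably, but it is the one step that deserves explicit verification. Once that is granted, the corollary reduces to orthogonality of Gegenbauer polynomials and the identification $p_0=1$.
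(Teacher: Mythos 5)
Your proof is correct and follows exactly the route the paper intends: Lemma~\ref{positivity} gives positive semi-definiteness of $H$ on $S^{n-1}$, and Schoenberg's theorem converts that into non-negativity of the Gegenbauer coefficients, which via orthogonality is the displayed integral inequality. The paper simply cites Schoenberg after Lemma~\ref{positivity} without writing out the orthogonality step; your version spells it out and flags the (mild) regularity issue, but the argument is the same.
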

As a result of this corollary, the difficulty is in providing interesting sufficient conditions for the first Delsarte linear programming condition, that is, $H(t)\leq 0$ for every $t\in[-1,\cos\theta]$.\\
\\
Aside from proving general results for general functions of the form~\eqref{hStiefel}, we will work in a specialization of the above general construction. We will take $m\geq 2$ and $g=g_{n-m,\theta'}$ to be Levenshtein's optimal polynomial giving the bound $M(n-m,\theta')\leq M_{\textup{Lev}}(n-m,\theta')$. Throughout this paper, we let
\begin{equation*}\boldsymbol{p}:=(0,\hdots,0,1)\in S^{n-1}.\end{equation*}
Given $0<\theta<\theta'\leq \frac{\pi}{2}$, let $r$ and $R$ be as in~\eqref{goodR} and~\eqref{bigRfirst}, respectively, as in Subsection~\ref{m1}. We also choose $\eta,\delta$, and take $F=\chi_{\cal{C}^{\theta,\theta'}_{\delta,\eta,m}}:[-1,1]^m\rightarrow\mathbb{R}_{\geq 0}$ to be the characteristic function of the region
\begin{equation*}\label{conicalregion}
\cal{C}^{\theta,\theta'}_{\delta,\eta,m}:=\left\{\boldsymbol{u}\in\mathbb{B}^m: r-\delta\leq|\boldsymbol{u}|\leq R\textup{ and }\left<\boldsymbol{u},\boldsymbol{p}\right>\geq |\boldsymbol{u}|\cos\left(\eta\right)\right\},
\end{equation*}
a subset of the $m$-dimensional closed unit ball $\mathbb{B}^m$ centered at $\boldsymbol{0}$. This choice of data is partly motivated by the fact that with $r,R$ as in~\eqref{goodR} and~\eqref{bigRfirst}, and $\delta=\eta=0$, if
\begin{equation}\label{Fneq0}F\left(\left<\boldsymbol{x},\boldsymbol{z}_1\right>,\hdots,\left<\boldsymbol{x},\boldsymbol{z}_m\right>\right)F\left(\left<\boldsymbol{y},\boldsymbol{z}_1\right>,\hdots,\left<\boldsymbol{y},\boldsymbol{z}_m\right>\right)\neq 0\end{equation}
and
\begin{equation}\label{Fneq1}\left<\boldsymbol{x},\boldsymbol{y}\right>\leq\cos\theta,\end{equation}
then
\begin{equation*}\label{Fneq0consequence}\left<\Pi_{m}\hdots\Pi_1(\boldsymbol{x}),\Pi_{m}\hdots\Pi_1(\boldsymbol{y})\right>\leq\cos\theta'\end{equation*}
and so $g\left(\left<\Pi_{m}\hdots\Pi_1(\boldsymbol{x}),\Pi_{m}\hdots\Pi_1(\boldsymbol{y})\right>\right)\leq 0$, implying that the integrand of $H$ is not positive for the above $g,F$ when~\eqref{Fneq0} and~\eqref{Fneq1} are true and $\delta=\eta=0$. We will prove in the next section that this is true by performing a change of coordinates. Therefore, by allowing $\delta,\eta$ to vary and $m\geq 2$, in addition to a radial thickening or shrinking of a good subset measured by $\delta$, we are also allowing an angular thickening determined by $\eta$. In the $m=1$ case, there is no room for angular thickenings. In the case $m\geq 2$, it is not true that the larger $\delta,\eta$ are, the better it is; $\delta$ and $\eta$ are not optimized independently. We will be able to take $\delta=O(n^{-1})$ and $\eta=O(n^{-\frac{1}{2}})$.
\section{Change of coordinates and properties of general functions}\label{changeofvar}
In order to study the functions $H$, we introduce coordinate systems that will be convenient for our calculations. Our integrals over $V_m(\mathbb{R}^n)$ defining $H$ become integrals over $\mathbb{B}^m\times\mathbb{B}^m$. We first treat the case of averaging over Stiefel manifolds, and then recall from~\cite{SZ} results pertaining to averaging over spheres for spherical codes and averaging over $\mathbb{R}^n$ for sphere packings.

\subsection{Averaging over Stiefel manifolds $V_m(\mathbb{R}^n)$} Recall the general construction of Subsection~\ref{mhigher}. Given points $\boldsymbol{x},\boldsymbol{y}\in S^{n-1}$ and an ordered set of orthonormal $\boldsymbol{z}_1,\hdots,\boldsymbol{z}_m\in S^{n-1}$, let
\[t:=\left<\boldsymbol{x},\boldsymbol{y}\right>,\]
and, for every $1\leq i\leq m$, let
\[u_i:=\left<\boldsymbol{x},\boldsymbol{z}_i\right>,\]
\[v_i:=\left<\boldsymbol{y},\boldsymbol{z}_i\right>.\]
Recall the definition of the projection maps $\Pi_i$ from Subsection~\ref{mhigher}. By the orthonormality of $\boldsymbol{z}_1,\hdots,\boldsymbol{z}_m$, we have
\begin{equation*}\Pi_i\cdots\Pi_1(\boldsymbol{x})=\frac{\boldsymbol{x}-\left<\boldsymbol{x},\boldsymbol{z}_1\right>\boldsymbol{z}_1-\hdots-\left<\boldsymbol{x},\boldsymbol{z}_i\right>\boldsymbol{z}_i}{|\boldsymbol{x}-\left<\boldsymbol{x},\boldsymbol{z}_1\right>\boldsymbol{z}_1-\hdots-\left<\boldsymbol{x},\boldsymbol{z}_i\right>\boldsymbol{z}_i|},\end{equation*}
and
\begin{equation*}\Pi_i\cdots\Pi_1(\boldsymbol{y})=\frac{\boldsymbol{y}-\left<\boldsymbol{y},\boldsymbol{z}_1\right>\boldsymbol{z}_1-\hdots-\left<\boldsymbol{y},\boldsymbol{z}_i\right>\boldsymbol{z}_i}{|\boldsymbol{y}-\left<\boldsymbol{y},\boldsymbol{z}_1\right>\boldsymbol{z}_1-\hdots-\left<\boldsymbol{y},\boldsymbol{z}_i\right>\boldsymbol{z}_i|}.\end{equation*}
Using these two formulas along with the orthonormality of the $\boldsymbol{z}_1,\hdots,\boldsymbol{z}_m$, we have, in the coordinates $t,u_1,v_1,\hdots,u_m,v_m$, that
\begin{equation}\label{ti}t_i:=\left<\Pi_{i}\hdots\Pi_1(\boldsymbol{x}),\Pi_{i}\hdots\Pi_1(\boldsymbol{y})\right>=\frac{t-u_1v_1-\hdots-u_iv_i}{\sqrt{(1-u_1^2-\hdots-u_i^2)(1-v_1^2-\hdots-v_i^2)}},
\end{equation}
\begin{equation}\label{projxz}
\left<\Pi_{i}\cdots\Pi_1(\boldsymbol{x}),\boldsymbol{z}_{i+1}\right>=\frac{u_{i+1}}{\sqrt{1-u_1^2-\hdots-u_i^2}},
\end{equation}
and
\begin{equation}\label{projyz}
\left<\Pi_{i}\cdots\Pi_1(\boldsymbol{y}),\boldsymbol{z}_{i+1}\right>=\frac{v_{i+1}}{\sqrt{1-v_1^2-\hdots-v_i^2}}.
\end{equation}
We now rewrite the general function
\begin{equation}\label{Ht}H(t)=\int_{V_m(\mathbb{R}^n)}F\left(\left<\boldsymbol{x},\boldsymbol{z}_1\right>,\hdots,\left<\boldsymbol{x},\boldsymbol{z}_m\right>)F(\left<\boldsymbol{y},\boldsymbol{z}_1\right>,\hdots,\left<\boldsymbol{y},\boldsymbol{z}_m\right>\right)g\left(\left<\Pi_{m}\hdots\Pi_1(\boldsymbol{x}),\Pi_{m}\hdots\Pi_1(\boldsymbol{y})\right>\right)d\boldsymbol{z}_1\cdots d\boldsymbol{z}_m\end{equation}
given in~\eqref{hStiefel} using these coordinates. Abstractly, if we let $\mu(u_1,v_1,\hdots,u_m,v_m;t)$ be the density function of the pushforward of the measure $d\boldsymbol{z}_1\cdots d\boldsymbol{z}_m$ on $V_m(\mathbb{R}^n)$ onto the coordinates $u_1,v_1,\hdots,u_m,v_m$ \textit{conditional} on fixed $t$, we obtain a measure on $\mathbb{B}^m\times \mathbb{B}^m$.  Indeed,
\begin{equation*}u_1^2+\hdots+u_m^2=\left<\boldsymbol{x},\boldsymbol{z}_1\right>^2+\hdots+\left<\boldsymbol{x},\boldsymbol{z}_m\right>^2\leq |\boldsymbol{x}|^2=1,\end{equation*}
and
\begin{equation*}v_1^2+\hdots+v_m^2=\left<\boldsymbol{y},\boldsymbol{z}_1\right>^2+\hdots+\left<\boldsymbol{y},\boldsymbol{z}_m\right>^2\leq |\boldsymbol{y}|^2=1,\end{equation*}
where the inequalities follow from the orthonormality of $\boldsymbol{z}_1,\hdots,\boldsymbol{z}_m$. Using the coordinates $t,u_1,v_1,\hdots,u_m,v_m$, this abstractly defined measure $\mu(u_1,v_1,\hdots,u_m,v_m;t)$ on $\mathbb{B}^m\times\mathbb{B}^m$ allows us to rewrite~\eqref{Ht} as
\begin{equation*}\label{Ht2}H(t)=\iint_{\mathbb{B}^m\times\mathbb{B}^m}g\left(\frac{t-\left<\boldsymbol{u},\boldsymbol{v}\right>}{\sqrt{(1-|\boldsymbol{u}|^2)(1-|\boldsymbol{v}|^2)}}\right)F\left(\boldsymbol{u})F(\boldsymbol{v}\right)\mu(\boldsymbol{u},\boldsymbol{v};t)d\boldsymbol{u}d\boldsymbol{v},\end{equation*}
where $\boldsymbol{u}=(u_1,\hdots,u_m)$, $\boldsymbol{v}=(v_1,\hdots,v_m)$, $\mu(\boldsymbol{u},\boldsymbol{v};t)=\mu(u_1,v_1,\hdots,u_m,v_m;t)$, and $d\boldsymbol{u}d\boldsymbol{v}$ is the uniform probability measure on $\mathbb{B}^m\times\mathbb{B}^m$. Here, we also used~\eqref{ti} for $i=m$. We make this implicit formula for $H(t)$ explicit by calculating $\mu(\boldsymbol{u},\boldsymbol{v};t)$ in the following lemma.
\begin{lemma}\label{conditionaldensity}Up to positive normalization constants, we have
\begin{equation*}\mu(\boldsymbol{u},\boldsymbol{v};t)=\begin{cases}\left((1-|\boldsymbol{u}|^2)(1-|\boldsymbol{v}|^2)-(t-\left<\boldsymbol{u},\boldsymbol{v}\right>)^2\right)^{\frac{n-m-3}{2}}&\textup{if }t\in(-1,1)\textup{ and }-1\leq\frac{t-\left<\boldsymbol{u},\boldsymbol{v}\right>}{\sqrt{(1-|\boldsymbol{u}|^2)(1-|\boldsymbol{v}|^2)}}\leq 1\\
\delta_{\boldsymbol{0}}(\boldsymbol{u}-\boldsymbol{v})\left(1-|\boldsymbol{u}|^2\right)^{\frac{n-m-2}{2}}&\textup{if }t=1\\
\delta_{\boldsymbol{0}}(\boldsymbol{u}+\boldsymbol{v})\left(1-|\boldsymbol{u}|^2\right)^{\frac{n-m-2}{2}}&\textup{if }t=-1\\
0&\textup{otherwise,}
\end{cases}\end{equation*}
where $\delta_{\boldsymbol{0}}$ is the delta function supported at $\boldsymbol{0}$.
\end{lemma}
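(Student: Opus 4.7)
The plan is to invoke the classical matrix-variate density for the top two rows of a uniformly random Stiefel matrix and then rewrite that density in the coordinates of the lemma via an affine change of variables.

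First, using the $O(n)$-invariance of the uniform measure on $V_m(\mathbb{R}^n)$, I would rotate so that $\boldsymbol{x}=\boldsymbol{e}_1$ and, when $|t|<1$, $\boldsymbol{y}=t\boldsymbol{e}_1+\sqrt{1-t^2}\,\boldsymbol{e}_2$. Writing $Z$ for the $n\times m$ matrix with columns $\boldsymbol{z}_1,\hdots,\boldsymbol{z}_m$ and $A$ for its top $2\times m$ block, we have $u_i=A_{1,i}$ and $v_i=tA_{1,i}+\sqrt{1-t^2}\,A_{2,i}$, so the map $A\mapsto(\boldsymbol{u},\boldsymbol{v})$ is linear and invertible with Jacobian depending only on $t$. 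One then invokes the classical matrix-variate fact (e.g.\ from Muirhead's \emph{Aspects of Multivariate Statistical Theory}) that the density of $A$ under this pushforward is proportional to $\det(I_m-A^{T}A)^{(n-m-3)/2}$ on $\{A:I_m-A^{T}A\succeq 0\}$. This can be proved directly by writing $Z=\binom{A}{W}$, using the constraint $W^{T}W=I_m-A^{T}A$ to parametrise the fibre by $V_m(\mathbb{R}^{n-2})$ through a Cholesky factorisation $I_m-A^{T}A=LL^{T}$ with $W=QL^{T}$, and computing the Jacobian of the induced volume form on the Stiefel manifold.

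Second, to match the stated formula I would use the Weinstein--Aronszajn identity $\det(I_m-A^{T}A)=\det(I_2-AA^{T})$ together with
\[AA^{T}=\begin{pmatrix}|\boldsymbol{u}|^2 & \left<\boldsymbol{u},\boldsymbol{c}\right>\\ \left<\boldsymbol{u},\boldsymbol{c}\right> & |\boldsymbol{c}|^2\end{pmatrix},\qquad \boldsymbol{c}:=\frac{\boldsymbol{v}-t\boldsymbol{u}}{\sqrt{1-t^2}},\]
and expand $(1-|\boldsymbol{u}|^2)(1-|\boldsymbol{c}|^2)-\left<\boldsymbol{u},\boldsymbol{c}\right>^2$ back in terms of $\boldsymbol{v}$ to obtain
\[\det(I_m-A^{T}A)=\frac{(1-|\boldsymbol{u}|^2)(1-|\boldsymbol{v}|^2)-(t-\left<\boldsymbol{u},\boldsymbol{v}\right>)^2}{1-t^2}.\]
The remaining powers of $1-t^2$, together with the Jacobian of $A\leftrightarrow(\boldsymbol{u},\boldsymbol{v})$, are functions of $t$ alone and are absorbed into the normalisation constant. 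The positivity constraint $I_m-A^{T}A\succeq 0$ translates to $|\boldsymbol{u}|\le 1$ and $\left|(t-\left<\boldsymbol{u},\boldsymbol{v}\right>)/\sqrt{(1-|\boldsymbol{u}|^2)(1-|\boldsymbol{v}|^2)}\right|\le 1$, which is exactly the stated support.

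Finally, in the degenerate cases $t=\pm 1$ the vectors $\boldsymbol{x}$ and $\boldsymbol{y}$ coincide up to sign, forcing $v_i=\pm u_i$ pointwise and collapsing the pushforward onto the diagonal (resp.\ antidiagonal) through the factor $\delta_{\boldsymbol{0}}(\boldsymbol{u}\mp\boldsymbol{v})$; the surviving marginal in $\boldsymbol{u}$ is the density of the top $1\times m$ block of a uniform $Z\in V_m(\mathbb{R}^n)$, which by the $k=1$ instance of the same matrix-variate formula (exponent $(n-m-k-1)/2$) equals $(1-|\boldsymbol{u}|^2)^{(n-m-2)/2}$ up to normalisation. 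The main obstacle is the Jacobian computation in step one: one must carefully track the induced volume form on $V_m(\mathbb{R}^n)$ under the block decomposition $(A,W)$, which is textbook but delicate. Everything else is elementary linear algebra.
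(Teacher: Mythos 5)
Your proof is correct, and it takes a genuinely different route from the paper's. The paper proceeds iteratively: it writes the conditional density as a limit of volume ratios, peels off one orthonormal vector $\boldsymbol{z}_i$ at a time via the projection maps $\Pi_i$, produces a product of $m$ determinantal factors (one per coordinate pair $(u_i,v_i)$, each obtained from the $m=1$ base case already established in~\cite{SZ}), and then observes that the product telescopes to $\left((1-|\boldsymbol{u}|^2)(1-|\boldsymbol{v}|^2)-(t-\left<\boldsymbol{u},\boldsymbol{v}\right>)^2\right)^{(n-m-3)/2}$ after dividing by the marginal density of $t$. You instead rotate to put $\boldsymbol{x},\boldsymbol{y}$ in a fixed $2$-plane, identify $(\boldsymbol{u},\boldsymbol{v})$ with an invertible linear image of the top $2\times m$ block $A$ of the Stiefel matrix, invoke the classical matrix-variate fact that this block has density $\propto\det(I_m-A^TA)^{(n-m-3)/2}$, and then use Weinstein--Aronszajn $\det(I_m-A^TA)=\det(I_2-AA^T)$ to evaluate the $m\times m$ determinant as a $2\times 2$ one; the algebra $(1-|\boldsymbol{u}|^2)(1-|\boldsymbol{c}|^2)-\langle\boldsymbol{u},\boldsymbol{c}\rangle^2=\frac{(1-|\boldsymbol{u}|^2)(1-|\boldsymbol{v}|^2)-(t-\langle\boldsymbol{u},\boldsymbol{v}\rangle)^2}{1-t^2}$ checks out, and the exponent matches the general formula $(n-m-k-1)/2$ with $k=2$ (and $k=1$ for the degenerate $t=\pm1$ cases). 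The trade-off is that the paper's argument is self-contained modulo the $m=1$ case it already proved, whereas yours is shorter but outsources the hardest Jacobian computation to a cited matrix-variate theorem; if you made the block-decomposition Jacobian argument explicit, as you sketch, the two proofs would be of comparable length, with yours avoiding the iterated projections and the telescoping step entirely.
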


\begin{proof}Note that the conditional density $\mu(\boldsymbol{u},\boldsymbol{v};t)$ is non-zero only if

\[-1\leq\frac{t-\left<\boldsymbol{u},\boldsymbol{v}\right>}{\sqrt{(1-|\boldsymbol{u}|^2)(1-|\boldsymbol{v}|^2)}}\leq 1\]
is satisfied. When this is satisfied and $t\in(-1,1)$, this conditional density $\mu(\boldsymbol{u},\boldsymbol{v};t)$ is proportional to
\[\lim_{\varepsilon\rightarrow 0^+}\frac{\vol(\cal{R}(u_1,v_1,\hdots,u_m,v_m,\varepsilon;\boldsymbol{x},\boldsymbol{y}))}{\varepsilon^{2m}},\]
where
\begin{eqnarray*}&&\cal{R}(u_1,v_1,\hdots,u_m,v_m,\varepsilon;\boldsymbol{x},\boldsymbol{y}):=\left\{(\boldsymbol{z}_1,\hdots,\boldsymbol{z}_m)\in V_m(\mathbb{R}^n)|\ \forall 1\leq i\leq m,\ 0\leq \left<\boldsymbol{x},\boldsymbol{z}_i\right>-u_i\leq\varepsilon\textup{ and }0\leq \left<\boldsymbol{y},\boldsymbol{z}_i\right>-v_i\leq\varepsilon\right\}\\
&=& \left\{(\boldsymbol{z}_1,\hdots,\boldsymbol{z}_m)\in V_m(\mathbb{R}^n)|\ \forall 1\leq i\leq m,\ \substack{0\leq \left<\Pi_{i-1}\hdots\Pi_1\boldsymbol{x},\boldsymbol{z}_i\right>-\frac{u_i}{\sqrt{1-u_1^2-\hdots-u_{i-1}^2}}\leq\frac{\varepsilon}{\sqrt{1-u_1^2-\hdots-u_{i-1}^2}}\\ 0\leq \left<\Pi_{i-1}\hdots\Pi_1\boldsymbol{y},\boldsymbol{z}_i\right>-\frac{v_i}{\sqrt{1-v_1^2-\hdots-v_{i-1}^2}}\leq\frac{\varepsilon}{\sqrt{1-v_1^2-\hdots-v_{i-1}^2}}}\right\}.
\end{eqnarray*}
The second equality is a consequence of equalities~\eqref{projxz} and~\eqref{projyz}. From this and the proof of Proposition 4.1 of~\cite{SZ}, we obtain that $\mu(\boldsymbol{u},\boldsymbol{v};t)$ is proportional to

\begin{eqnarray*}
\frac{\frac{\det\begin{bmatrix}1&t& u_1\\ t&1&v_1\\ u_1&v_1&1\end{bmatrix}^{\frac{n-4}{2}}}{(1-t^2)^{\frac{n-3}{2}}}\frac{\det\begin{bmatrix}1&t_1& \frac{u_2}{\sqrt{1-u_1^2}}\\ t_1&1&\frac{v_2}{\sqrt{1-v_1^2}}\\ \frac{u_2}{\sqrt{1-u_1^2}}&\frac{v_2}{\sqrt{1-v_1^2}}&1\end{bmatrix}^{\frac{n-5}{2}}}{(1-t_1^2)^{\frac{n-4}{2}}}\cdots\frac{\det\begin{bmatrix}1&t_{m-1}& \frac{u_m}{\sqrt{1-u_1^2-\hdots-u_{m-1}^2}}\\ t_{m-1}&1&\frac{v_m}{\sqrt{1-v_1^2-\hdots-v_{m-1}^2}}\\ \frac{u_m}{\sqrt{1-u_1^2-\hdots-u_{m-1}^2}}&\frac{v_m}{\sqrt{1-v_1^2-\hdots-v_{m-1}^2}}&1\end{bmatrix}^{\frac{n-m-3}{2}}}{(1-t_{m-1}^2)^{\frac{n-m-2}{2}}}}{\prod_{i=1}^{m-1}\sqrt{\left(1-u_1^2-\hdots-u_i^2\right)\left(1-v_1^2-\hdots-v_i^2\right)}}.
\end{eqnarray*}
This is equal to
\[\frac{\left((1-t_m^2)(1-u_1^2-\hdots-u_m^2)(1-v_1^2-\hdots-v_m^2)\right)^{\frac{n-m-3}{2}}}{(1-t^2)^{\frac{n-3}{2}}}=\frac{\left((1-|\boldsymbol{u}|^2)(1-|\boldsymbol{v}|^2)-(t-\left<\boldsymbol{u},\boldsymbol{v}\right>)^2\right)^{\frac{n-m-3}{2}}}{(1-t^2)^{\frac{n-3}{2}}}.\]
The cases $t=\pm 1$ follow in an easier manner. Indeed, $t=1$ is equivalent to $\boldsymbol{x}=\boldsymbol{y}$, implying that $\boldsymbol{u}=\boldsymbol{v}$. As a result, $\mu(\boldsymbol{u},\boldsymbol{v};1)$ is proportional to
\[\delta_0(\boldsymbol{u}-\boldsymbol{v})\lim_{\varepsilon\rightarrow 0^+}\frac{\vol(\cal{D}(u_1,\hdots,u_m,\varepsilon;\boldsymbol{x}))}{\varepsilon^{m}},\]
where
\begin{eqnarray*}&&\cal{D}(u_1,\hdots,u_m,\varepsilon;\boldsymbol{x}):=\left\{(\boldsymbol{z}_1,\hdots,\boldsymbol{z}_m)\in V_m(\mathbb{R}^n)|\ \forall 1\leq i\leq m,\ 0\leq \left<\boldsymbol{x},\boldsymbol{z}_i\right>-u_i\leq\varepsilon\right\}\\
&=& \left\{(\boldsymbol{z}_1,\hdots,\boldsymbol{z}_m)\in V_m(\mathbb{R}^n)|\ \forall 1\leq i\leq m,\ \substack{\\ \\ 0\leq \left<\Pi_{i-1}\hdots\Pi_1\boldsymbol{x},\boldsymbol{z}_i\right>-\frac{u_i}{\sqrt{1-u_1^2-\hdots-u_{i-1}^2}}\leq\frac{\varepsilon}{\sqrt{1-u_1^2-\hdots-u_{i-1}^2}}}\right\}.
\end{eqnarray*}
Therefore, $\mu(\boldsymbol{u},\boldsymbol{v};1)$ is proportional to
\[\delta_0(\boldsymbol{u}-\boldsymbol{v})(1-u_1^2)^{\frac{n-3}{2}}\frac{\left(1-\left(\frac{u_2}{\sqrt{1-u_1^2}}\right)^2\right)^{\frac{n-4}{2}}}{\sqrt{1-u_1^2}}\hdots\frac{\left(1-\left(\frac{u_m}{\sqrt{1-u_1^2-\hdots-u_{m-1}^2}}\right)^2\right)^{\frac{n-m-2}{2}}}{\sqrt{1-u_1^2-\hdots-u_{m-1}^2}}=\delta_0(\boldsymbol{u}-\boldsymbol{v})\left(1-|\boldsymbol{u}|^2\right)^{\frac{n-m-2}{2}},\]
as required. The case $t=-1$ is proved in a similar way.
 The conclusion follows.
\end{proof}
Let
\begin{equation*}\label{Rmtdef}\cal{R}^m_t:=\left\{(\boldsymbol{u},\boldsymbol{v})\in\mathbb{B}^m\times\mathbb{B}^m:-1\leq\frac{t-\left<\boldsymbol{u},\boldsymbol{v}\right>}{\sqrt{(1-|\boldsymbol{u}|^2)(1-|\boldsymbol{v}|^2)}}\leq 1\right\}.\end{equation*}
A recurring notation throughout this paper is the following. Since $H$ are used to bound sizes of spherical codes in $S^{n-1}$, we write
\begin{equation*}\label{H0def}
H_0:=\frac{\int_{-1}^1H(t)(1-t^2)^{\frac{n-3}{2}}dt}{\int_{-1}^1(1-t^2)^{\frac{n-3}{2}}dt}.
\end{equation*}
On the other hand, since $g$ will be chosen to relate to spherical codes in $S^{n-m-1}$, we write
\begin{equation*}\label{g0def}
g_0:=\frac{\int_{-1}^1g(t)(1-t^2)^{\frac{n-m-3}{2}}dt}{\int_{-1}^1(1-t^2)^{\frac{n-m-3}{2}}dt}.
\end{equation*}
As a consequence of Lemma~\ref{conditionaldensity}, we obtain the following.
\begin{proposition}\label{Hexplicit1}The general function $H(t)$ is a positive multiple of
\begin{equation}\label{Hrewritten}
\iint_{\cal{R}^m_t}g\left(\frac{t-\left<\boldsymbol{u},\boldsymbol{v}\right>}{\sqrt{(1-|\boldsymbol{u}|^2)(1-|\boldsymbol{v}|^2)}}\right)F\left(\boldsymbol{u})F(\boldsymbol{v}\right)\left((1-|\boldsymbol{u}|^2)(1-|\boldsymbol{v}|^2)-(t-\left<\boldsymbol{u},\boldsymbol{v}\right>)^2\right)^{\frac{n-m-3}{2}}d\boldsymbol{u}d\boldsymbol{v}
\end{equation}
for $t\in(-1,1)$. We also have 
\begin{equation}\label{Hrewrittengeneral}
H(1)=g(1)\left(\frac{\int_{\mathbb{B}^m}F(\boldsymbol{u})^2(1-|\boldsymbol{u}|^2)^{\frac{n-m-2}{2}}d\boldsymbol{u}}{\int_{\mathbb{B}^m}(1-|\boldsymbol{u}|^2)^{\frac{n-m-2}{2}}d\boldsymbol{u}}\right)
\end{equation}
and
\begin{equation}\label{H0rewrittengeneral}H_0=g_0\left(\frac{\int_{\mathbb{B}^m}F(\boldsymbol{u})(1-|\boldsymbol{u}|^2)^{\frac{n-m-2}{2}}d\boldsymbol{u}}{\int_{\mathbb{B}^m}(1-|\boldsymbol{u}|^2)^{\frac{n-m-2}{2}}d\boldsymbol{u}}\right)^2.
\end{equation}
\end{proposition}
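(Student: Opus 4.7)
The plan is to derive all three formulas directly from Lemma~\ref{conditionaldensity}, which gives (up to a positive normalization constant) the conditional density $\mu(\boldsymbol{u},\boldsymbol{v};t)$ of the pushforward of the uniform measure on $V_m(\mathbb{R}^n)$ onto the coordinates $(\boldsymbol{u},\boldsymbol{v})$. Recall that, after the change of variables set up just before Lemma~\ref{conditionaldensity}, we have
\[H(t) \;=\; \iint_{\mathbb{B}^m\times\mathbb{B}^m} g\!\left(\frac{t-\left<\boldsymbol{u},\boldsymbol{v}\right>}{\sqrt{(1-|\boldsymbol{u}|^2)(1-|\boldsymbol{v}|^2)}}\right)F(\boldsymbol{u})F(\boldsymbol{v})\,\mu(\boldsymbol{u},\boldsymbol{v};t)\,d\boldsymbol{u}\,d\boldsymbol{v}.\]
For $t\in(-1,1)$, I would simply plug in the first case of Lemma~\ref{conditionaldensity}: the density vanishes outside $\cal{R}^m_t$, so the domain of integration reduces to $\cal{R}^m_t$, and on $\cal{R}^m_t$ the density equals a positive constant times $\bigl((1-|\boldsymbol{u}|^2)(1-|\boldsymbol{v}|^2)-(t-\left<\boldsymbol{u},\boldsymbol{v}\right>)^2\bigr)^{\frac{n-m-3}{2}}$. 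This is precisely expression~\eqref{Hrewritten}.

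For $H(1)$, I would use the second case of the lemma: $\mu(\boldsymbol{u},\boldsymbol{v};1)$ is a positive constant times $\delta_{\boldsymbol{0}}(\boldsymbol{u}-\boldsymbol{v})(1-|\boldsymbol{u}|^2)^{\frac{n-m-2}{2}}$. Integrating the delta function against $\boldsymbol{v}$ collapses the double integral to a single integral over $\mathbb{B}^m$; at $\boldsymbol{u}=\boldsymbol{v}$ the argument of $g$ becomes $\frac{1-|\boldsymbol{u}|^2}{1-|\boldsymbol{u}|^2}=1$, so $g(1)$ pulls out, and $F(\boldsymbol{u})F(\boldsymbol{v})$ becomes $F(\boldsymbol{u})^2$. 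To cast the result in the ratio form~\eqref{Hrewrittengeneral}, I divide numerator and the implicit normalization by $\int_{\mathbb{B}^m}(1-|\boldsymbol{u}|^2)^{\frac{n-m-2}{2}}d\boldsymbol{u}$; the overall positive constant is pinned down by the sanity check that, with $F\equiv 1$, both sides should reduce to $g(1)$.

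Finally, for $H_0$, the cleanest route is probabilistic. Since the normalized measure $(1-t^2)^{\frac{n-3}{2}}dt$ is the distribution of $\left<\boldsymbol{x},\boldsymbol{y}\right>$ for independent uniform $\boldsymbol{x},\boldsymbol{y}\in S^{n-1}$, one has $H_0 = \mathbb{E}_{\boldsymbol{x},\boldsymbol{y}}[H(\left<\boldsymbol{x},\boldsymbol{y}\right>)]$. Swapping expectations via Fubini and using rotation invariance of $V_m(\mathbb{R}^n)$, this equals $\mathbb{E}_{\boldsymbol{x},\boldsymbol{y}}[h(\boldsymbol{x},\boldsymbol{y};\boldsymbol{z}_1,\ldots,\boldsymbol{z}_m)]$ for any fixed orthonormal frame $(\boldsymbol{z}_1,\ldots,\boldsymbol{z}_m)$. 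The key observation is that, conditional on the frame, the coordinate vector $\boldsymbol{u}=(\left<\boldsymbol{x},\boldsymbol{z}_i\right>)_i\in\mathbb{B}^m$ is independent of the normalized residual $\Pi_m\cdots\Pi_1(\boldsymbol{x})\in S^{n-m-1}$, which is itself uniformly distributed on $S^{n-m-1}$; the analogous statement holds for $\boldsymbol{y}$, and $\boldsymbol{x},\boldsymbol{y}$ are independent. Hence the three factors $F(\boldsymbol{u})$, $F(\boldsymbol{v})$, and $g(\left<\Pi_m\cdots\Pi_1(\boldsymbol{x}),\Pi_m\cdots\Pi_1(\boldsymbol{y})\right>)$ are mutually independent, so the expectation factors as $\mathbb{E}[F(\boldsymbol{u})]^2\cdot g_0$, with $\mathbb{E}[F(\boldsymbol{u})]$ the ratio appearing in~\eqref{H0rewrittengeneral} because the marginal density of $\boldsymbol{u}$ is proportional to $(1-|\boldsymbol{u}|^2)^{\frac{n-m-2}{2}}$ on $\mathbb{B}^m$. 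The main thing to keep track of here is the bookkeeping of normalization constants between Lemma~\ref{conditionaldensity} and~\eqref{Hrewrittengeneral}--\eqref{H0rewrittengeneral}; the probabilistic picture above applied to $F\equiv 1$ fixes these constants simultaneously for both endpoint formulas.
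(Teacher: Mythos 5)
Your proposal is correct. For~\eqref{Hrewritten} and~\eqref{Hrewrittengeneral} you follow the same route as the paper: both are read off directly from the corresponding cases of Lemma~\ref{conditionaldensity}. Where you genuinely diverge is~\eqref{H0rewrittengeneral}. The paper derives it from the integral form~\eqref{Hrewritten} by applying Fubini, then noting (via Cauchy--Schwarz and $(|\boldsymbol{u}|-|\boldsymbol{v}|)^2\ge 0$) that for fixed $(\boldsymbol{u},\boldsymbol{v})$ the affine change of variable $t\mapsto \frac{t-\langle\boldsymbol{u},\boldsymbol{v}\rangle}{\sqrt{(1-|\boldsymbol{u}|^2)(1-|\boldsymbol{v}|^2)}}$ carries $[-1,1]$ onto $[-1,1]$, which separates the $g$-integral from the $F$-integrals. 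You instead bypass Lemma~\ref{conditionaldensity} for this part and argue probabilistically: for a fixed orthonormal frame and $\boldsymbol{x}$ uniform on $S^{n-1}$, the vector $\boldsymbol{u}=(\langle\boldsymbol{x},\boldsymbol{z}_i\rangle)_i$ (with density $\propto(1-|\boldsymbol{u}|^2)^{\frac{n-m-2}{2}}$ on $\mathbb{B}^m$) is independent of the residual direction $\Pi_m\cdots\Pi_1(\boldsymbol{x})$, which is uniform on $S^{n-m-1}$; together with independence of $\boldsymbol{x}$ and $\boldsymbol{y}$ this immediately factors $\mathbb{E}[h]$ as $\mathbb{E}[F(\boldsymbol{u})]^2\, g_0$. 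This is a valid and more conceptual route: the independence observation is precisely the probabilistic content underlying both Lemma~\ref{conditionaldensity} and the paper's change-of-variable step, and your packaging makes the factorization transparent. The one thing worth stating explicitly rather than deferring to a sanity check is that the Stiefel measure is normalized to a probability measure, which is what makes the constants in~\eqref{Hrewrittengeneral} and~\eqref{H0rewrittengeneral} exactly $1$ rather than merely positive; as you note, this is confirmed at once by taking $F\equiv 1$, and in any case the constant cancels in $H(1)/H_0$, which is what the application uses.
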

\begin{proof}Both~\eqref{Hrewritten} and~\eqref{Hrewrittengeneral} are immediate consequences of Lemma~\ref{conditionaldensity} and its proof. We verify~\eqref{H0rewrittengeneral} as follows. It follows from~\eqref{Hrewritten} that
\begin{eqnarray*}&&H_0\\&=&\frac{\int_{-1}^1\iint_{\cal{R}^m_t}g\left(\frac{t-\left<\boldsymbol{u},\boldsymbol{v}\right>}{\sqrt{(1-|\boldsymbol{u}|^2)(1-|\boldsymbol{v}|^2)}}\right)F\left(\boldsymbol{u})F(\boldsymbol{v}\right)\left((1-|\boldsymbol{u}|^2)(1-|\boldsymbol{v}|^2)-(t-\left<\boldsymbol{u},\boldsymbol{v}\right>)^2\right)^{\frac{n-m-3}{2}}d\boldsymbol{u}d\boldsymbol{v}dt}{\int_{-1}^1\iint_{\cal{R}^m_t}\left((1-|\boldsymbol{u}|^2)(1-|\boldsymbol{v}|^2)-(t-\left<\boldsymbol{u},\boldsymbol{v}\right>)^2\right)^{\frac{n-m-3}{2}}d\boldsymbol{u}d\boldsymbol{v}dt}\\
&=& \frac{\iint_{\mathbb{B}^m\times\mathbb{B}^m}F\left(\boldsymbol{u})F(\boldsymbol{v}\right)\left((1-|\boldsymbol{u}|^2)(1-|\boldsymbol{v}|^2)\right)^{\frac{n-m-3}{2}}\int\limits_{\substack{-1\leq t\leq 1\\ -1\leq\frac{t-\left<\boldsymbol{u},\boldsymbol{v}\right>}{\sqrt{(1-|\boldsymbol{u}|^2)(1-|\boldsymbol{v}|^2)}}\leq 1}}g\left(\frac{t-\left<\boldsymbol{u},\boldsymbol{v}\right>}{\sqrt{(1-|\boldsymbol{u}|^2)(1-|\boldsymbol{v}|^2)}}\right)\left(1-\left(\frac{t-\left<\boldsymbol{u},\boldsymbol{v}\right>}{\sqrt{(1-|\boldsymbol{u}|^2)(1-|\boldsymbol{v}|^2)}}\right)^2\right)^{\frac{n-m-3}{2}}dtd\boldsymbol{u}d\boldsymbol{v}}{\iint_{\mathbb{B}^m\times\mathbb{B}^m}\left((1-|\boldsymbol{u}|^2)(1-|\boldsymbol{v}|^2)\right)^{\frac{n-m-3}{2}}\int\limits_{\substack{-1\leq t\leq 1\\ -1\leq\frac{t-\left<\boldsymbol{u},\boldsymbol{v}\right>}{\sqrt{(1-|\boldsymbol{u}|^2)(1-|\boldsymbol{v}|^2)}}\leq 1}}\left(1-\left(\frac{t-\left<\boldsymbol{u},\boldsymbol{v}\right>}{\sqrt{(1-|\boldsymbol{u}|^2)(1-|\boldsymbol{v}|^2)}}\right)^2\right)^{\frac{n-m-3}{2}}dtd\boldsymbol{u}d\boldsymbol{v}}.\end{eqnarray*}
For any fixed $\boldsymbol{u},\boldsymbol{v}\in\mathbb{B}^m$, when $t=1$ we deduce that
\[\frac{t-\left<\boldsymbol{u},\boldsymbol{v}\right>}{\sqrt{(1-|\boldsymbol{u}|^2)(1-|\boldsymbol{v}|^2)}}\geq \frac{1-|\boldsymbol{u}||\boldsymbol{v}|}{\sqrt{(1-|\boldsymbol{u}|^2)(1-|\boldsymbol{v}|^2)}}\geq 1,\]
where the first inequality is a consequence of the Cauchy--Schwarz inequality and the second inequality is equivalent to $(|\boldsymbol{u}|-|\boldsymbol{v}|)^2\geq 0$. When $t=-1$, we similarly obtain that
\[\frac{t-\left<\boldsymbol{u},\boldsymbol{v}\right>}{\sqrt{(1-|\boldsymbol{u}|^2)(1-|\boldsymbol{v}|^2)}}\leq -\frac{1-|\boldsymbol{u}||\boldsymbol{v}|}{\sqrt{(1-|\boldsymbol{u}|^2)(1-|\boldsymbol{v}|^2)}}\leq -1.\]
As a result, as $t$ varies in $[-1,1]$, $\frac{t-\left<\boldsymbol{u},\boldsymbol{v}\right>}{\sqrt{(1-|\boldsymbol{u}|^2)(1-|\boldsymbol{v}|^2)}}$ linearly covers $[-1,1]$. Consequently, rewriting the integral over $t$ in terms of $\frac{t-\left<\boldsymbol{u},\boldsymbol{v}\right>}{\sqrt{(1-|\boldsymbol{u}|^2)(1-|\boldsymbol{v}|^2)}}$, we get 
\begin{eqnarray*}H_0&=&\frac{\iint_{\mathbb{B}^m\times\mathbb{B}^m}F\left(\boldsymbol{u})F(\boldsymbol{v}\right)\left((1-|\boldsymbol{u}|^2)(1-|\boldsymbol{v}|^2)\right)^{\frac{n-m-2}{2}}d\boldsymbol{u}d\boldsymbol{v}\int_{-1}^1g(t)(1-t^2)^{\frac{n-m-3}{2}}dt}{\iint_{\mathbb{B}^m\times\mathbb{B}^m}\left((1-|\boldsymbol{u}|^2)(1-|\boldsymbol{v}|^2)\right)^{\frac{n-m-2}{2}}d\boldsymbol{u}d\boldsymbol{v}\int_{-1}^1(1-t^2)^{\frac{n-m-3}{2}}dt}\\
&=&g_0\left(\frac{\int_{\mathbb{B}^m}F(\boldsymbol{u})(1-|\boldsymbol{u}|^2)^{\frac{n-m-2}{2}}d\boldsymbol{u}}{\int_{\mathbb{B}^m}(1-|\boldsymbol{u}|^2)^{\frac{n-m-2}{2}}d\boldsymbol{u}}\right)^2,\end{eqnarray*}
as required.
\end{proof}

From Proposition~\ref{Hexplicit1}, the Delsarte linear programming method implies that if we choose $F$ a characteristic function and suitable $g$ and $m$ so that the associated function $H$ satisfies $H(t)\leq 0$ for every $t\in[-1,\cos\theta]$, then
\begin{equation}\label{generalchibound}M(n,\theta)\leq \frac{g(1)/g_0}{\left(\frac{\int_{\mathbb{B}^m}F(\boldsymbol{u})(1-|\boldsymbol{u}|^2)^{\frac{n-m-2}{2}}d\boldsymbol{u}}{\int_{\mathbb{B}^m}(1-|\boldsymbol{u}|^2)^{\frac{n-m-2}{2}}d\boldsymbol{u}}\right)}.\end{equation}
We prove the following lemma for future use.
\begin{lemma}\label{masscomputation}For $F$ the characteristic function of the region
\[\left\{\boldsymbol{u}\in\mathbb{B}^m:R\leq |\boldsymbol{u}|\leq S\textup{ and }\left<\boldsymbol{u},\boldsymbol{p}\right>\geq |\boldsymbol{u}|\cos\varphi\right\},\]
we have
\begin{equation*}\frac{\int_{\mathbb{B}^m}F(\boldsymbol{u})(1-|\boldsymbol{u}|^2)^{\frac{n-m-2}{2}}d\boldsymbol{u}}{\int_{\mathbb{B}^m}(1-|\boldsymbol{u}|^2)^{\frac{n-m-2}{2}}d\boldsymbol{u}}=\frac{\int_{\cos\varphi}^1(1-t^2)^{\frac{m-3}{2}}dt}{\int_{-1}^1(1-t^2)^{\frac{m-3}{2}}dt}\frac{\int_R^S(1-r^2)^{\frac{n-m-2}{2}}r^{m-1}dr}{\int_0^1(1-r^2)^{\frac{n-m-2}{2}}r^{m-1}dr}.\end{equation*}
\end{lemma}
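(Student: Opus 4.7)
The plan is to evaluate both numerator and denominator by passing to polar/spherical coordinates on $\mathbb{R}^m$ and exploiting the fact that both the weight $(1-|\boldsymbol{u}|^2)^{(n-m-2)/2}$ and the region defining $F$ are invariant under rotations fixing the axis spanned by $\boldsymbol{p}$. Concretely, write $\boldsymbol{u} = r\boldsymbol{\omega}$ with $r = |\boldsymbol{u}| \in [0,1]$ and $\boldsymbol{\omega} \in S^{m-1}$, so that
\[
d\boldsymbol{u} = r^{m-1}\, dr\, d\sigma_{m-1}(\boldsymbol{\omega}),
\]
where $d\sigma_{m-1}$ is the uniform (unnormalized) surface measure on $S^{m-1}$. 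Under this substitution the defining conditions for $F$ split cleanly: $R \leq |\boldsymbol{u}| \leq S$ becomes $R \leq r \leq S$, while $\langle\boldsymbol{u},\boldsymbol{p}\rangle \geq |\boldsymbol{u}|\cos\varphi$ becomes $\langle\boldsymbol{\omega},\boldsymbol{p}\rangle \geq \cos\varphi$. The weight $(1-|\boldsymbol{u}|^2)^{(n-m-2)/2} = (1-r^2)^{(n-m-2)/2}$ depends only on $r$, so Fubini gives
\[
\int_{\mathbb{B}^m} F(\boldsymbol{u})(1-|\boldsymbol{u}|^2)^{\frac{n-m-2}{2}}\, d\boldsymbol{u} = \left(\int_R^S (1-r^2)^{\frac{n-m-2}{2}} r^{m-1}\, dr\right)\cdot\sigma_{m-1}\!\left(\{\boldsymbol{\omega}\in S^{m-1}:\langle\boldsymbol{\omega},\boldsymbol{p}\rangle\geq\cos\varphi\}\right),
\]
and likewise for the denominator with $R=0$, $S=1$, and $\varphi=\pi$ (so the angular condition is vacuous).

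The next step is to identify the pushforward of $d\sigma_{m-1}$ under $\boldsymbol{\omega}\mapsto\langle\boldsymbol{\omega},\boldsymbol{p}\rangle$. This is the standard computation underlying equation~\eqref{Jacmeasure}: slicing $S^{m-1}\subset \mathbb{R}^m$ by the hyperplane $\{\langle\boldsymbol{\omega},\boldsymbol{p}\rangle = t\}$ yields a sphere of dimension $m-2$ and radius $\sqrt{1-t^2}$, whose measure is proportional to $(1-t^2)^{(m-2)/2}$; combining this with the Jacobian $1/\sqrt{1-t^2}$ from the slicing produces the density $(1-t^2)^{(m-3)/2}$ on $[-1,1]$. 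Hence
\[
\frac{\sigma_{m-1}(\{\langle\boldsymbol{\omega},\boldsymbol{p}\rangle \geq \cos\varphi\})}{\sigma_{m-1}(S^{m-1})} = \frac{\int_{\cos\varphi}^1 (1-t^2)^{\frac{m-3}{2}}\, dt}{\int_{-1}^1 (1-t^2)^{\frac{m-3}{2}}\, dt}.
\]

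Dividing the numerator expression by the denominator expression, the total measure of $S^{m-1}$ cancels and we obtain the asserted product of two one-dimensional ratios. There is no real obstacle here; the only subtlety worth flagging is the correct exponent $(m-3)/2$ in the Jacobi weight, which one must track carefully (it is the same exponent that appears in~\eqref{Jacmeasure} with $m$ in place of $n$, and it degenerates to a sum of point masses when $m=1$, in which case the formula should be interpreted accordingly).
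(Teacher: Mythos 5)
Your proof is correct and follows essentially the same route as the paper: pass to spherical coordinates on $\mathbb{B}^m$, use the product structure of the region and the weight to split the integral into a radial factor times the measure of a spherical cap in $S^{m-1}$, and then invoke the standard formula for the cap mass in terms of $\int_{\cos\varphi}^1(1-t^2)^{(m-3)/2}\,dt$. The paper states the cap-mass identity without the one-line derivation via slicing that you include, but the underlying decomposition is identical.
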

\begin{proof}
First, using spherical coordinates, we obtain
\[\int_{\mathbb{B}^m}(1-|\boldsymbol{u}|^2)^{\frac{n-m-2}{2}}d\boldsymbol{u}=\vol(S^{m-1})\int_0^1(1-r^2)^{\frac{n-m-2}{2}}r^{m-1}dr,\]
where $\vol(S^{m-1})$ is the volume of the unit $(m-1)$-sphere. Similarly,
\[\int_{\mathbb{B}^m}F(\boldsymbol{u})(1-|\boldsymbol{u}|^2)^{\frac{n-m-2}{2}}d\boldsymbol{u}=\vol(S^{m-1}_{\varphi})\int_R^S(1-r^2)^{\frac{n-m-2}{2}}r^{m-1}dr,\]
where $\vol(S^{m-1}_{\varphi})$ is the volume of the subset of points $\boldsymbol{u}\in S^{m-1}$ satisfying $\left<\boldsymbol{u},\boldsymbol{p}\right>\geq\cos\varphi$. We know that
\[\frac{\vol(S^{m-1}_{\varphi})}{\vol(S^{m-1})}=\frac{\int_{\cos\varphi}^1(1-t^2)^{\frac{m-3}{2}}dt}{\int_{-1}^1(1-t^2)^{\frac{m-3}{2}}dt}.\]
Therefore,
\[\frac{\int_{\mathbb{B}^m}F(\boldsymbol{u})(1-|\boldsymbol{u}|^2)^{\frac{n-m-2}{2}}d\boldsymbol{u}}{\int_{\mathbb{B}^m}(1-|\boldsymbol{u}|^2)^{\frac{n-m-2}{2}}d\boldsymbol{u}}=\frac{\int_{\cos\varphi}^1(1-t^2)^{\frac{m-3}{2}}dt}{\int_{-1}^1(1-t^2)^{\frac{m-3}{2}}dt}\frac{\int_R^S(1-r^2)^{\frac{n-m-2}{2}}r^{m-1}dr}{\int_0^1(1-r^2)^{\frac{n-m-2}{2}}r^{m-1}dr},\]
as required.
\end{proof}
Though the coordinate system above was suitable for proving the above results, in order to verify for which choice of parameters we have $H(t)\leq 0$ for every $t\in[-1,\cos\theta]$, it is sometimes convenient to reduce to one-dimensional integrals, depending on how complicated $g$ is chosen to be. We do not use the following proposition in this paper; instead, we give sharp estimates of Levenshtein's optimal polynomials near their largest roots. See Section~\ref{Jacobiestimates}. Nonetheless, we state and prove the following proposition.
\begin{proposition}\label{Hexplicit2} For $m\geq 2$,
\begin{equation*}\iint_{\cal{R}^m_t}g\left(\frac{t-\left<\boldsymbol{u},\boldsymbol{v}\right>}{\sqrt{(1-|\boldsymbol{u}|^2)(1-|\boldsymbol{v}|^2)}}\right)F\left(\boldsymbol{u})F(\boldsymbol{v}\right)\left((1-|\boldsymbol{u}|^2)(1-|\boldsymbol{v}|^2)-(t-\left<\boldsymbol{u},\boldsymbol{v}\right>)^2\right)^{\frac{n-m-3}{2}}d\boldsymbol{u}d\boldsymbol{v}=\int_{-1}^1g(x)\nu_m(x;t,F)dx,\end{equation*}
where
\begin{equation*}\label{nugeneral}\nu_m(x;t,F):=\left(\frac{1-x^2}{x^2}\right)^{\frac{n-m-3}{2}}\int_{-1}^1\int_{\cal{R}^m_{t,\alpha,x}}F\left(\boldsymbol{u})F(\boldsymbol{v}\right)\frac{\left(t-|\boldsymbol{u}||\boldsymbol{v}|\alpha\right)^{n-m-3}}{|\grad f_{t,\alpha}|\sqrt{\left(\frac{1}{|\boldsymbol{u}|^2}+\frac{1}{|\boldsymbol{v}|^2}\right)}}dA_{x,\alpha}\frac{d\alpha}{\sqrt{1-\alpha^2}},\end{equation*}
$dA_{x,\alpha}$ is the volume measure induced from the pullback of the Euclidean metric on $\mathbb{B}^m\times\mathbb{B}^m$ to
\begin{equation*}\cal{R}^m_{t,\alpha,x}:=\left\{(\boldsymbol{u},\boldsymbol{v})\in(\mathbb{B}^m\setminus\{\boldsymbol{0}\})\times(\mathbb{B}^m\setminus\{\boldsymbol{0}\}):\frac{t-\left<\boldsymbol{u},\boldsymbol{v}\right>}{\sqrt{(1-|\boldsymbol{u}|^2)(1-|\boldsymbol{v}|^2)}}=x\textup{ and }\left<\frac{\boldsymbol{u}}{|\boldsymbol{u}|},\frac{\boldsymbol{v}}{|\boldsymbol{v}|}\right>=\alpha\right\},\end{equation*}
and
\begin{equation*}|\grad f_{t,\alpha}|=\sqrt{\frac{(|\boldsymbol{u}|t-|\boldsymbol{v}|\alpha)^2}{(1-|\boldsymbol{u}|^2)^3(1-|\boldsymbol{v}|^2)}+\frac{(|\boldsymbol{v}|t-|\boldsymbol{u}|\alpha)^2}{(1-|\boldsymbol{u}|^2)(1-|\boldsymbol{v}|^2)^3}},\end{equation*}
the norm of the gradient of the function
\begin{equation*}f_{t,\alpha}(\boldsymbol{u},\boldsymbol{v}):=\frac{t-|\boldsymbol{u}||\boldsymbol{v}|\alpha}{\sqrt{(1-|\boldsymbol{u}|^2)(1-|\boldsymbol{v}|^2)}}.\end{equation*}
\end{proposition}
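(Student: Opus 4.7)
\textbf{Proof proposal for Proposition~\ref{Hexplicit2}.} The plan is to rewrite the double integral over $\cal{R}^m_t\subset\mathbb{B}^m\times\mathbb{B}^m$ as an iterated integral by slicing along the level sets of the two-component map
\[
\Phi(\boldsymbol{u},\boldsymbol{v}):=\bigl(x(\boldsymbol{u},\boldsymbol{v}),\alpha(\boldsymbol{u},\boldsymbol{v})\bigr),\qquad x:=\frac{t-\left<\boldsymbol{u},\boldsymbol{v}\right>}{\sqrt{(1-|\boldsymbol{u}|^2)(1-|\boldsymbol{v}|^2)}},\qquad \alpha:=\left<\tfrac{\boldsymbol{u}}{|\boldsymbol{u}|},\tfrac{\boldsymbol{v}}{|\boldsymbol{v}|}\right>.
\]
Under this map, the generic fiber is exactly $\cal{R}^m_{t,\alpha,x}$. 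I will apply the co-area formula in a nested fashion: first slice by $\alpha$, then, on each leaf $\{\alpha=\text{const}\}$, slice by $x$, which on that leaf agrees with the restriction of $f_{t,\alpha}(\boldsymbol{u},\boldsymbol{v})=(t-|\boldsymbol{u}||\boldsymbol{v}|\alpha)/\sqrt{(1-|\boldsymbol{u}|^2)(1-|\boldsymbol{v}|^2)}$.

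The key structural observation that makes the nested co-area formula clean is an orthogonality between the two gradients. A direct computation gives
\[
\nabla_{\boldsymbol{u}}\alpha=\tfrac{1}{|\boldsymbol{u}|}\bigl(\widehat{\boldsymbol{v}}-\alpha\widehat{\boldsymbol{u}}\bigr),\qquad\nabla_{\boldsymbol{v}}\alpha=\tfrac{1}{|\boldsymbol{v}|}\bigl(\widehat{\boldsymbol{u}}-\alpha\widehat{\boldsymbol{v}}\bigr),
\]
so that $|\nabla\alpha|^2=(1-\alpha^2)\bigl(\tfrac{1}{|\boldsymbol{u}|^2}+\tfrac{1}{|\boldsymbol{v}|^2}\bigr)$ and $\nabla\alpha$ is purely angular (perpendicular to $\widehat{\boldsymbol{u}}$ and $\widehat{\boldsymbol{v}}$). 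On the other hand, the computation already visible in the statement shows that $\grad f_{t,\alpha}$ (taken with $\alpha$ frozen) is purely radial in each of $\boldsymbol{u},\boldsymbol{v}$. Hence $\grad f_{t,\alpha}\perp\nabla\alpha$, which means that restricting $f_{t,\alpha}$ to the leaf $\{\alpha=\text{const}\}$ preserves both the direction and the magnitude of its gradient. Consequently, the co-area formula first gives
\[
\iint_{\cal{R}^m_t}G\,d\boldsymbol{u}d\boldsymbol{v}=\int_{-1}^{1}\int_{\alpha^{-1}(\alpha_0)\cap\cal{R}^m_t}\frac{G}{|\nabla\alpha|}\,d\mathcal{H}^{2m-1}\,d\alpha_0,
\]
and then a second co-area with respect to $f_{t,\alpha_0}$ on each leaf splits the inner integral as $\int dx\int_{\cal{R}^m_{t,\alpha_0,x}}\frac{(\,\cdot\,)}{|\grad f_{t,\alpha_0}|}\,dA_{x,\alpha_0}$. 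Substituting the expression for $|\nabla\alpha|$ produces the combined weight $\bigl(\,|\grad f_{t,\alpha}|\sqrt{\tfrac{1}{|\boldsymbol{u}|^2}+\tfrac{1}{|\boldsymbol{v}|^2}}\,\bigr)^{-1}$ in the integrand, and extracts the factor $\tfrac{d\alpha}{\sqrt{1-\alpha^2}}$ outside.

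It remains to simplify the power $((1-|\boldsymbol{u}|^2)(1-|\boldsymbol{v}|^2)-(t-\left<\boldsymbol{u},\boldsymbol{v}\right>)^2)^{(n-m-3)/2}$ on the slice $\cal{R}^m_{t,\alpha,x}$. Using $\left<\boldsymbol{u},\boldsymbol{v}\right>=|\boldsymbol{u}||\boldsymbol{v}|\alpha$ together with the defining relation $x^2(1-|\boldsymbol{u}|^2)(1-|\boldsymbol{v}|^2)=(t-|\boldsymbol{u}||\boldsymbol{v}|\alpha)^2$, one obtains
\[
(1-|\boldsymbol{u}|^2)(1-|\boldsymbol{v}|^2)-(t-\left<\boldsymbol{u},\boldsymbol{v}\right>)^2=(t-|\boldsymbol{u}||\boldsymbol{v}|\alpha)^2\cdot\frac{1-x^2}{x^2},
\]
so raising to the $(n-m-3)/2$ power splits into $(t-|\boldsymbol{u}||\boldsymbol{v}|\alpha)^{n-m-3}$ (times a sign which is $+$ on the support) and $\bigl(\tfrac{1-x^2}{x^2}\bigr)^{(n-m-3)/2}$. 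The latter depends only on $x$ and can be pulled outside the inner integrals, yielding precisely the prefactor of $\nu_m(x;t,F)$. Combining these ingredients and renaming $\alpha_0\to\alpha$ gives the stated identity.

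The main obstacle is justifying the nested co-area formula — in particular, checking that on each leaf $\{\alpha=\alpha_0\}$ the gradient of $x|_{\{\alpha=\alpha_0\}}$ equals $\grad f_{t,\alpha_0}$ with the same norm, which rests on the orthogonality $\grad f_{t,\alpha}\perp\nabla\alpha$ noted above, and carefully handling the loci where $|\boldsymbol{u}|$, $|\boldsymbol{v}|$, or $1-\alpha^2$ vanish (a measure-zero set that one excises and treats by a standard approximation argument). Everything else is algebraic manipulation.
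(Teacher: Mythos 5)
Your proof follows essentially the same route as the paper: two nested applications of the co-area formula (first slicing by $\alpha$, then by $x=f_{t,\alpha}$ on each $\alpha$-leaf), followed by the same algebraic rewriting of $((1-|\boldsymbol{u}|^2)(1-|\boldsymbol{v}|^2))^{\frac{n-m-3}{2}}$ in terms of $(t-|\boldsymbol{u}||\boldsymbol{v}|\alpha)^{n-m-3}$ and $\left(\frac{1-x^2}{x^2}\right)^{\frac{n-m-3}{2}}$ on the level set, and your explicit observation that $\grad f_{t,\alpha}\perp\nabla\alpha$ is the implicit justification the paper relies on when it uses the ambient gradient of $f_{t,\alpha}$ in the second co-area. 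One small imprecision: $\nabla_{\boldsymbol{u}}\alpha=\tfrac{1}{|\boldsymbol{u}|}(\widehat{\boldsymbol{v}}-\alpha\widehat{\boldsymbol{u}})$ is perpendicular to $\widehat{\boldsymbol{u}}$ but not to $\widehat{\boldsymbol{v}}$ (indeed $\langle\nabla_{\boldsymbol{u}}\alpha,\widehat{\boldsymbol{v}}\rangle=\tfrac{1-\alpha^2}{|\boldsymbol{u}|}$); the argument only needs each block of $\nabla\alpha$ orthogonal to its own radial direction, which suffices because the corresponding blocks of $\grad f_{t,\alpha}$ are radial.
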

\begin{proof}
First, consider the function $\pi:(\mathbb{B}^m\setminus\{\boldsymbol{0}\})\times(\mathbb{B}^m\setminus\{\boldsymbol{0}\})\rightarrow[-1,1]$ given by
\[\pi(\boldsymbol{u},\boldsymbol{v})=\left<\frac{\boldsymbol{u}}{|\boldsymbol{u}|},\frac{\boldsymbol{v}}{|\boldsymbol{v}|}\right>.\]
Then
\[|\grad \pi|^2=\left(\frac{1}{|\boldsymbol{u}|^2}+\frac{1}{|\boldsymbol{v}|^2}\right)\left(1-\left<\frac{\boldsymbol{u}}{|\boldsymbol{u}|},\frac{\boldsymbol{v}}{|\boldsymbol{v}|}\right>^2\right).\]
By the co-area formula,
\begin{eqnarray*}
&&\iint_{\cal{R}^m_t}g\left(\frac{t-\left<\boldsymbol{u},\boldsymbol{v}\right>}{\sqrt{(1-|\boldsymbol{u}|^2)(1-|\boldsymbol{v}|^2)}}\right)F\left(\boldsymbol{u})F(\boldsymbol{v}\right)\left((1-|\boldsymbol{u}|^2)(1-|\boldsymbol{v}|^2)-(t-\left<\boldsymbol{u},\boldsymbol{v}\right>)^2\right)^{\frac{n-m-3}{2}}d\boldsymbol{u}d\boldsymbol{v}\\
&=&\iint_{\cal{R}^m_t\cap((\mathbb{B}^m\setminus\{\boldsymbol{0}\})\times(\mathbb{B}^m\setminus\{\boldsymbol{0}\}))}g\left(\frac{t-|\boldsymbol{u}||\boldsymbol{v}|\left<\frac{\boldsymbol{u}}{|\boldsymbol{u}|},\frac{\boldsymbol{v}}{|\boldsymbol{v}|}\right>}{\sqrt{(1-|\boldsymbol{u}|^2)(1-|\boldsymbol{v}|^2)}}\right)F\left(\boldsymbol{u})F(\boldsymbol{v}\right)\left((1-|\boldsymbol{u}|^2)(1-|\boldsymbol{v}|^2)-\left(t-|\boldsymbol{u}||\boldsymbol{v}|\left<\frac{\boldsymbol{u}}{|\boldsymbol{u}|},\frac{\boldsymbol{v}}{|\boldsymbol{v}|}\right>\right)^2\right)^{\frac{n-m-3}{2}}d\boldsymbol{u}d\boldsymbol{v}\\
&=&\int_{-1}^1\int_{\cal{R}^m_{t,\alpha}}g\left(\frac{t-|\boldsymbol{u}||\boldsymbol{v}|\alpha}{\sqrt{(1-|\boldsymbol{u}|^2)(1-|\boldsymbol{v}|^2)}}\right)F\left(\boldsymbol{u})F(\boldsymbol{v}\right)\frac{\left((1-|\boldsymbol{u}|^2)(1-|\boldsymbol{v}|^2)-(t-|\boldsymbol{u}||\boldsymbol{v}|\alpha)^2\right)^{\frac{n-m-3}{2}}}{\sqrt{\left(\frac{1}{|\boldsymbol{u}|^2}+\frac{1}{|\boldsymbol{v}|^2}\right)}}d\boldsymbol{u}d\boldsymbol{v}\frac{d\alpha}{\sqrt{1-\alpha^2}},
\end{eqnarray*}
where
\[\cal{R}^m_{t,\alpha}:=\left\{(\boldsymbol{u},\boldsymbol{v})\in(\mathbb{B}^m\setminus\{\boldsymbol{0}\})\times(\mathbb{B}^m\setminus\{\boldsymbol{0}\}):-1\leq\frac{t-\left<\boldsymbol{u},\boldsymbol{v}\right>}{\sqrt{(1-|\boldsymbol{u}|^2)(1-|\boldsymbol{v}|^2)}}\leq 1\textup{ and }\left<\frac{\boldsymbol{u}}{|\boldsymbol{u}|},\frac{\boldsymbol{v}}{|\boldsymbol{v}|}\right>=\alpha\right\}.\]
Consider the function $f_{t,\alpha}:\mathbb{B}^m\times\mathbb{B}^m\rightarrow \mathbb{R}$ given by
\[f_{t,\alpha}(\boldsymbol{u},\boldsymbol{v}):=\frac{t-|\boldsymbol{u}||\boldsymbol{v}|\alpha}{\sqrt{(1-|\boldsymbol{u}|^2)(1-|\boldsymbol{v}|^2)}}.\]
The function $f_{t,\alpha}$ satisfies
\begin{eqnarray*}|\grad f_{t,\alpha}|^2&=&\sum_{i=1}^m\left(\frac{(u_i(|\boldsymbol{u}|t-|\boldsymbol{v}|\alpha))^2}{|\boldsymbol{u}|^2(1-|\boldsymbol{u}|^2)^3(1-|\boldsymbol{v}|^2)}+\frac{(v_i(|\boldsymbol{v}|t-|\boldsymbol{u}|\alpha))^2}{|\boldsymbol{v}|^2(1-|\boldsymbol{u}|^2)(1-|\boldsymbol{v}|^2)^3}\right)\\
&=&\frac{(|\boldsymbol{u}|t-|\boldsymbol{v}|\alpha)^2}{(1-|\boldsymbol{u}|^2)^3(1-|\boldsymbol{v}|^2)}+\frac{(|\boldsymbol{v}|t-|\boldsymbol{u}|\alpha)^2}{(1-|\boldsymbol{u}|^2)(1-|\boldsymbol{v}|^2)^3}.
\end{eqnarray*}
A second application of the co-area formula gives
\begin{eqnarray*}
&&\iint_{\cal{R}^m_t}g\left(\frac{t-\left<\boldsymbol{u},\boldsymbol{v}\right>}{\sqrt{(1-|\boldsymbol{u}|^2)(1-|\boldsymbol{v}|^2)}}\right)F\left(\boldsymbol{u})F(\boldsymbol{v}\right)\left((1-|\boldsymbol{u}|^2)(1-|\boldsymbol{v}|^2)-(t-\left<\boldsymbol{u},\boldsymbol{v}\right>)^2\right)^{\frac{n-m-3}{2}}d\boldsymbol{u}d\boldsymbol{v}\\
&=&\int_{-1}^1g\left(x\right)(1-x^2)^{\frac{n-m-3}{2}}\int_{-1}^1\int_{\cal{R}^m_{t,\alpha,x}}F\left(\boldsymbol{u})F(\boldsymbol{v}\right)\frac{\left((1-|\boldsymbol{u}|^2)(1-|\boldsymbol{v}|^2)\right)^{\frac{n-m-3}{2}}}{|\grad f_{t,\alpha}|\sqrt{\left(\frac{1}{|\boldsymbol{u}|^2}+\frac{1}{|\boldsymbol{v}|^2}\right)}}dA_{x,\alpha}\frac{d\alpha}{\sqrt{1-\alpha^2}}dx\\
&=&\int_{-1}^1g\left(x\right)\left(\frac{1-x^2}{x^2}\right)^{n-m-3}\int_{-1}^1\int_{\cal{R}^m_{t,\alpha,x}}F\left(\boldsymbol{u})F(\boldsymbol{v}\right)\frac{\left(t-|\boldsymbol{u}||\boldsymbol{v}|\alpha\right)^{n-m-3}}{|\grad f_{t,\alpha}|\sqrt{\left(\frac{1}{|\boldsymbol{u}|^2}+\frac{1}{|\boldsymbol{v}|^2}\right)}}dA_{x,\alpha}\frac{d\alpha}{\sqrt{1-\alpha^2}}dx,
\end{eqnarray*}
where
\[\cal{R}^m_{t,\alpha,x}:=\left\{(\boldsymbol{u},\boldsymbol{v})\in(\mathbb{B}^m\setminus\{\boldsymbol{0}\})\times(\mathbb{B}^m\setminus\{\boldsymbol{0}\}):\frac{t-\left<\boldsymbol{u},\boldsymbol{v}\right>}{\sqrt{(1-|\boldsymbol{u}|^2)(1-|\boldsymbol{v}|^2)}}=x\textup{ and }\left<\frac{\boldsymbol{u}}{|\boldsymbol{u}|},\frac{\boldsymbol{v}}{|\boldsymbol{v}|}\right>=\alpha\right\}\]
and $dA_{x,\alpha}$ is the volume measure on $\cal{R}^m_{t,\alpha,x}$ obtained from the pullback of the Euclidean metric on $\mathbb{B}^m\times\mathbb{B}^m$. 
We let
\[\nu_m(x;t,F):=\left(\frac{1-x^2}{x^2}\right)^{\frac{n-m-3}{2}}\int_{-1}^1\int_{\cal{R}^m_{t,\alpha,x}}F\left(\boldsymbol{u})F(\boldsymbol{v}\right)\frac{\left(t-|\boldsymbol{u}||\boldsymbol{v}|\alpha\right)^{n-m-3}}{|\grad f_{t,\alpha}|\sqrt{\left(\frac{1}{|\boldsymbol{u}|^2}+\frac{1}{|\boldsymbol{v}|^2}\right)}}dA_{x,\alpha}\frac{d\alpha}{\sqrt{1-\alpha^2}},\]
from which the conclusion follows.
\end{proof}

\subsection{Averaging over spheres}
In~\cite{SZ}, the author and Sardari worked with the specialization $m=1$, that is, we averaged over $V_1(\mathbb{R}^n)=S^{n-1}$. Since we will use this when proving Theorem~\ref{THMtwo}, we explicitly state the $m=1$ specialization of the previous subsection. We also use this opportunity to fix notation.\\
\\
Given points $\boldsymbol{x}$, $\boldsymbol{y}$, $\boldsymbol{z}\in S^{n-1}$, let
\[t:=\left<\boldsymbol{x},\boldsymbol{y}\right>,\]
\[u:=\left<\boldsymbol{x},\boldsymbol{z}\right>,\]
\[v:=\left<\boldsymbol{y},\boldsymbol{z}\right>.\]
$\Pi_{\boldsymbol{z}}$ be the projection $\Pi_1$ from the introduction for projection of points in $S^{n-1}\setminus\{\pm\boldsymbol{z}\}$ onto the orthogonal complement of $\boldsymbol{z}$ followed by normalization to a unit vector. We have
\begin{equation*}\Pi_{\boldsymbol{z}}(\boldsymbol{x})=\frac{\boldsymbol{x}-\left<\boldsymbol{x},\boldsymbol{z}\right>\boldsymbol{z}}{|\boldsymbol{x}-\left<\boldsymbol{x},\boldsymbol{z}\right>\boldsymbol{z}|},\end{equation*}
and
\begin{equation*}\Pi_{\boldsymbol{z}}(\boldsymbol{y})=\frac{\boldsymbol{y}-\left<\boldsymbol{y},\boldsymbol{z}\right>\boldsymbol{z}}{|\boldsymbol{y}-\left<\boldsymbol{y},\boldsymbol{z}\right>\boldsymbol{z}|}.\end{equation*}
Using these two formulas, we have, in the coordinates $t,u,v$, that
\begin{equation*}\left<\Pi_{\boldsymbol{z}}(\boldsymbol{x}),\Pi_{\boldsymbol{z}}(\boldsymbol{y})\right>=\frac{t-uv}{\sqrt{(1-u^2)(1-v^2)}}.
\end{equation*}
We may also rewrite the general function
\begin{equation*}H\left(\left<\boldsymbol{x},\boldsymbol{y}\right>\right)=\int_{S^{n-1}}F\left(\left<\boldsymbol{x},\boldsymbol{z}\right>)F(\left<\boldsymbol{y},\boldsymbol{z}\right>\right)g\left(\left<\Pi_{\boldsymbol{z}}(\boldsymbol{x}),\Pi_{\boldsymbol{z}}(\boldsymbol{y})\right>\right)d\boldsymbol{z}\end{equation*}
constructed in the introduction in these coordinates. Abstractly, if we let $\mu(u,v;t)$ be the density function of the projection of the measure $d\boldsymbol{z}$ on $S^{n-1}$ onto the coordinates $u,v$ \textit{conditional} on fixed $t$, we obtain a measure on $[-1,1]^2$. Using the coordinates $t,u,v$, this abstractly defined measure $\mu(u,v;t)$ on $[-1,1]^2$ allows us to rewrite
\begin{equation*}H(t)=\iint_{[-1,1]^2}g\left(\frac{t-uv}{\sqrt{(1-u^2)(1-v^2)}}\right)F\left(u)F(v\right)\mu(u,v;t)dudv.\end{equation*}
Let
\begin{equation*}\cal{R}_t:=\left\{(u,v)\in[-1,1]^2:-1\leq\frac{t-uv}{\sqrt{(1-u^2)(1-v^2)}}\leq 1\right\}.\end{equation*}
As a consequence of Proposition~\ref{Hexplicit1}, we obtain that for $t\in(-1,1)$, $H(t)$ is a positive multiple of
\begin{equation}\label{Hrewrittenm1}
\iint_{\cal{R}_t}g\left(\frac{t-uv}{\sqrt{(1-u^2)(1-v^2)}}\right)F(u)F(v)\left((1-u^2)(1-v^2)-(t-uv)^2\right)^{\frac{n-4}{2}}dudv.
\end{equation}
Furthermore, 
\begin{equation*}\label{Hrewrittenspecialm1}
H(1)=g(1)\left(\frac{\int_{-1}^1F(u)^2(1-u^2)^{\frac{n-3}{2}}du}{\int_{-1}^1(1-u^2)^{\frac{n-3}{2}}du}\right)
\end{equation*}
and
\begin{equation*}\label{H0generalm1}H_0=g_0\left(\frac{\int_{-1}^1F(u)(1-u^2)^{\frac{n-3}{2}}du}{\int_{-1}^1(1-u^2)^{\frac{n-3}{2}}du}\right)^2.\end{equation*}
If our parameters and functions are chosen so that $H(t)\leq 0$ for every $t\in[-1,\cos\theta]$, then
\begin{equation*}
M(n,\theta)\leq\frac{g(1)}{g_0}\cdot\frac{\left(\frac{\int_{-1}^1F(u)^2(1-u^2)^{\frac{n-3}{2}}du}{\int_{-1}^1(1-u^2)^{\frac{n-3}{2}}du}\right)}{\left(\frac{\int_{-1}^1F(u)(1-u^2)^{\frac{n-3}{2}}du}{\int_{-1}^1(1-u^2)^{\frac{n-3}{2}}du}\right)^2}.
\end{equation*}
Our function $F$ will be chosen to be a characteristic function, in which case, this inequality simplifies to
\begin{equation*}
M(n,\theta)\leq\frac{g(1)/g_0}{\left(\frac{\int_{-1}^1F(u)(1-u^2)^{\frac{n-3}{2}}du}{\int_{-1}^1(1-u^2)^{\frac{n-3}{2}}du}\right)}.
\end{equation*}
The analogue of Proposition~\ref{Hexplicit2} for $m=1$ is the following.
\begin{proposition}\label{Hexplicit2m1}
\begin{equation*}\iint_{\cal{R}_t}g\left(\frac{t-uv}{\sqrt{(1-u^2)(1-v^2)}}\right)F(u)F(v)\left((1-u^2)(1-v^2)-(t-uv)^2\right)^{\frac{n-4}{2}}dudv=\int_{-1}^1g(x)\nu_1(x;t,F)dx,\end{equation*}
where
\begin{equation*}\cal{R}_{t,x}:=\left\{(u,v)\in[-1,1]^2:\frac{t-uv}{\sqrt{(1-u^2)(1-v^2)}}=x\right\},\end{equation*}
and
\begin{equation*}\nu_1(x;t,F):=\left(\frac{1-x^2}{x^2}\right)^{\frac{n-4}{2}}\int_{\cal{R}_{t,x}}F(u)F(v)\frac{\left(t-uv\right)^{\frac{n-4}{2}}}{\sqrt{\frac{(ut-v)^2}{(1-u^2)^3(1-v^2)}+\frac{(vt-u)^2}{(1-u^2)(1-v^2)^3}}}d\ell_{x}\end{equation*}
with $d\ell_x$ the measure on $\cal{R}_{t,x}$ induced from the pullback of the Euclidean metric on $[-1,1]^2$.
\end{proposition}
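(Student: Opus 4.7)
The plan is to apply the smooth co-area formula to the function
\[
f_t(u,v) := \frac{t-uv}{\sqrt{(1-u^2)(1-v^2)}}
\]
defined on the interior of $\cal{R}_t$, whose level sets are precisely the curves $\cal{R}_{t,x}$. This is the $m=1$ specialization of the argument used in the proof of Proposition~\ref{Hexplicit2}, with the simplification that no radial/angular polar decomposition is needed and only a single application of the co-area formula suffices (whereas for $m\geq 2$ the co-area formula was applied twice, once to isolate the angular variable $\alpha$ and once to reduce to level sets of $f_{t,\alpha}$).

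First I would compute the partial derivatives of $f_t$ by direct differentiation; after a short simplification one finds
\[
\frac{\partial f_t}{\partial u} = \frac{ut-v}{(1-u^2)^{3/2}(1-v^2)^{1/2}}, \qquad \frac{\partial f_t}{\partial v} = \frac{vt-u}{(1-u^2)^{1/2}(1-v^2)^{3/2}},
\]
so that $|\nabla f_t|$ matches exactly the expression under the square root in the denominator of $\nu_1(x;t,F)$. Next, applying the co-area formula to
\[
\phi(u,v) := g\!\left(f_t(u,v)\right) F(u)F(v)\bigl((1-u^2)(1-v^2)-(t-uv)^2\bigr)^{(n-4)/2}
\]
gives
\[
\iint_{\cal{R}_t}\phi(u,v)\,du\,dv = \int_{-1}^1\!\!\int_{\cal{R}_{t,x}} \frac{\phi(u,v)}{|\nabla f_t(u,v)|}\,d\ell_x\,dx,
\]
where $d\ell_x$ is the induced $1$-dimensional Hausdorff measure on the level curve $\cal{R}_{t,x}$.

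The last step is to simplify the inner integrand on the level set $\{f_t = x\}$. There $g(f_t(u,v)) = g(x)$ is constant, and the identity $(1-u^2)(1-v^2)-(t-uv)^2 = (1-u^2)(1-v^2)(1-x^2)$, combined with $(1-u^2)(1-v^2) = (t-uv)^2/x^2$, lets one pull the factor $\bigl((1-x^2)/x^2\bigr)^{(n-4)/2}$ outside the inner integral and rewrite the remaining $\bigl((1-u^2)(1-v^2)\bigr)^{(n-4)/2}$ as an appropriate power of $(t-uv)$. Collecting everything yields the claimed expression for $\nu_1(x;t,F)$.

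There is no real obstacle: the co-area formula applies because $f_t$ is smooth on the open set where $|u|,|v|<1$ and its critical set (where $ut=v$ and $vt=u$) has measure zero, and the apparent blow-up of $1/|\nabla f_t|$ near the boundary of $\cal{R}_t$ is compensated by the vanishing of the factor $\bigl((1-u^2)(1-v^2)-(t-uv)^2\bigr)^{(n-4)/2}$ there, so all integrals are finite for $n\geq 4$. The mildest technical care is in bookkeeping the signs when passing from $(1-u^2)(1-v^2)=(t-uv)^2/x^2$ to extracting $(t-uv)$ from powers of its square on each connected component of the level curve.
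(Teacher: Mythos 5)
Your proof is essentially identical to the paper's: the paper computes $|\grad f_t|^2$ explicitly, applies the co-area formula once to $f_t(u,v)=\frac{t-uv}{\sqrt{(1-u^2)(1-v^2)}}$, and simplifies on the level sets $\cal{R}_{t,x}$ using $(1-u^2)(1-v^2)-(t-uv)^2=(1-u^2)(1-v^2)(1-x^2)$ together with $(1-u^2)(1-v^2)=(t-uv)^2/x^2$. Two small remarks on your final paragraph: the sign worry is actually a non-issue, because on the level set $(t-uv)/x=\sqrt{(1-u^2)(1-v^2)}\geq 0$ automatically, so no case analysis by component is needed; but carrying out the substitution carefully gives $\left((1-u^2)(1-v^2)\right)^{\frac{n-4}{2}}=(t-uv)^{n-4}/x^{n-4}$, so the exponent of $(t-uv)$ in $\nu_1$ should be $n-4$ rather than the $\frac{n-4}{2}$ appearing in the paper's display (an apparent typo that your phrase \emph{``collecting everything yields the claimed expression''} glosses over), though this does not affect the paper's downstream asymptotics.
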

\begin{proof}
Consider the function $f_t:[-1,1]^2\rightarrow [-1,1]$ given by
\[f_t(u,v):=\frac{t-uv}{\sqrt{(1-u^2)(1-v^2)}}.\]
The function $f_t$ satisfies
\begin{eqnarray*}|\grad f_t|^2&=&\frac{(ut-v)^2}{(1-u^2)^3(1-v^2)}+\frac{(vt-u)^2}{(1-u^2)(1-v^2)^3}.
\end{eqnarray*}
An application of the co-area formula gives
\begin{eqnarray*}
&&\iint_{\cal{R}_t}g\left(\frac{t-uv}{\sqrt{(1-u^2)(1-v^2)}}\right)F(u)F(v)\left((1-u^2)(1-v^2)-(t-uv)^2\right)^{\frac{n-4}{2}}d\boldsymbol{u}d\boldsymbol{v}\\
&=&\int_{-1}^1g\left(x\right)(1-x^2)^{\frac{n-4}{2}}\int_{\cal{R}_{t,x}}F(u)F(v)\frac{\left((1-u^2)(1-v^2)\right)^{\frac{n-4}{2}}}{|\grad f_t|}d\ell_{x}dx\\
&=&\int_{-1}^1g\left(x\right)\left(\frac{1-x^2}{x^2}\right)^{\frac{n-4}{2}}\int_{\cal{R}_{t,x}}F(u)F(v)\frac{\left(t-uv\right)^{\frac{n-4}{2}}}{|\grad f_t|}d\ell_{x}dx
\end{eqnarray*}
where
\[\cal{R}_{t,x}:=\left\{(u,v)\in[-1,1]^2:\frac{t-uv}{\sqrt{(1-u^2)(1-v^2)}}=x\right\}\]
and $d\ell_x$ is the area measure on $\cal{R}_{t,x}$ obtained from the pullback of the Euclidean metric on $[-1,1]$. 
We let
\[\nu_1(x;t,F):=\left(\frac{1-x^2}{x^2}\right)^{\frac{n-4}{2}}\int_{\cal{R}_{t,x}}F(u)F(v)\frac{\left(t-uv\right)^{\frac{n-4}{2}}}{|\grad f_t|}d\ell_{x},\]
from which the conclusion follows.
\end{proof}
For the special parameters chosen in~\cite{SZ}, we could estimate the conditional density $\nu_1(x;t,F)$ for $|x-s'|=o(n^{-\frac{1}{2}})$, where $s':=\cos(\theta')$. Also, let $s:=\cos(\theta)$. Since we will give two proofs of Theorem~\ref{THMtwo}, one that still uses the estimation of the conditional densities, we recall the estimate proved in~\cite{SZ}.\\
\\
For angles $0<\theta<\theta'\leq\frac{\pi}{2}$, we took $F=\chi_{[r-\delta,R]}$ for an appropriately chosen $\delta\in [0,r]$, where
\begin{equation*}\label{littler}r=\sqrt{\frac{s-s'}{1-s'}},\end{equation*}
and
\begin{equation*}\label{bigR}R=\cos\left(2\arctan\left(\frac{s}{\sqrt{(1-s)(s-s')}}\right)+\arccos(r)-\pi\right).\end{equation*}
$R>r$ and both $r,R$ are roots of the equation
\begin{equation*}\frac{s-rx}{\sqrt{(1-r^2)(1-x^2)}}=s'\end{equation*}
in $x$. In the above, $r,R$ are chosen so that 
\begin{equation*}\label{condition}\frac{s-uv}{\sqrt{(1-u^2)(1-v^2)}}\leq s'\end{equation*}
for every $r\leq u,v\leq R$. We will use the following estimate. The proof may be found in~\cite{SZ} as Proposition 4.2. There is also a version of this estimate for $\nu_m(x;t,F)$. However, we do not include it in this paper. In the following and the rest of the paper, for every $x\in\mathbb{C}$, we define
\[x^+:=\begin{cases}x&\textup{ if }x\in\mathbb{R}_{\geq 0}\\ 0 &\textup{ otherwise.}\end{cases}\]
\begin{proposition}\label{nuFestimate}
For $|x-s'|=o\left(n^{-\frac{1}{2}}\right)$ as $n\rightarrow\infty$, we have up to a positive multiplicative constant depending on $n,R,r,\delta,s$, that $\nu_1(x;s,\chi_{[r-\delta,R]})$ is equal to
\[(1+o(1))\left(\frac{1-x^2}{x^2}\right)^{\frac{n-4}{2}}\left(\delta+\sqrt{\frac{s-x}{1-x}}-r\right)^+e^{-\frac{2nr\left(\sqrt{\frac{s-x}{1-x}}-r\right)}{s-r^2}}.\]
\end{proposition}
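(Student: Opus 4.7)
The plan is to apply the Laplace method to the one-dimensional integral defining $\nu_1$ in Proposition~\ref{Hexplicit2m1}. Set $w:=w(x):=\sqrt{(s-x)/(1-x)}$. The curve $\cal{R}_{s,x}$ is symmetric under $(u,v)\mapsto(v,u)$ and passes through the diagonal point $(w,w)$; moreover, because $(s-uv)^2=x^2(1-u^2)(1-v^2)$ holds on the curve, one has $((1-u^2)(1-v^2))^{\frac{n-4}{2}}=((s-uv)/x)^{n-4}$, so after substituting this identity into the integrand the heavy factor is $(s-uv)^{n-4}$, while the prefactor $\left(\frac{1-x^2}{x^2}\right)^{\frac{n-4}{2}}$ is preserved. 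Since $uv$ attains its minimum on $\cal{R}_{s,x}$ at $(w,w)$, the factor $(s-uv)^{n-4}$ concentrates at $(w,w)$ on a scale $\Theta(n^{-1/2})$.

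First I would parametrize the curve near $(w,w)$ by $u=w+p+q$, $v=w+p-q$; substituting into $(s-uv)^2=x^2(1-u^2)(1-v^2)$ and matching powers of $q$ yields $p=Cq^2+O(q^4)$ with $C=\frac{2xw^2+(1-w^2)}{2w(1-w^2)(1-x)}$, so the curve is tangent at $(w,w)$ to $\{u+v=2w\}$ and $d\ell_x\sim\sqrt{2}\,dq$. A direct Taylor expansion, using $s-w^2=x(1-w^2)$, then gives
\begin{equation*}
s-uv\;=\;(s-w^2)\left(1-\frac{(1+w^2)}{(1-w^2)^2(1-x)}\,q^2+O(q^4)\right),
\end{equation*}
hence $(s-uv)^{n-4}\sim (s-w^2)^{n-4}\exp\!\left(-\frac{(n-4)(1+w^2)}{(1-w^2)^2(1-x)}\,q^2\right)$, a Gaussian of width $\Theta(n^{-1/2})$.

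Next I would convert $(s-w^2)^{n-4}$ into the claimed exponential by expanding $\log(s-w^2)=\log(s-r^2)-\frac{2r(w-r)}{s-r^2}+O((w-r)^2)$; the hypothesis $|x-s'|=o(n^{-1/2})$ forces $|w-r|=o(n^{-1/2})$, so the quadratic error disappears after multiplication by $n$, giving $(s-w^2)^{n-4}=(1+o(1))(s-r^2)^{n-4}e^{-2nr(w-r)/(s-r^2)}$. The support constraint $u,v\in[r-\delta,R]$ then turns (since $R-w$ stays bounded away from $0$) into $p\pm q\ge-(w-r+\delta)$, i.e.\ $|q|\le(w-r+\delta)+O(q^2)$. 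If $w-r+\delta\le 0$ the peak lies outside the support and Gaussian decay plus the $e^{-\Omega(n)}$ suppression of the far arcs of $\cal{R}_{s,x}$ (those near $(r,R)$ and $(R,r)$) make the integral negligible. Otherwise, in the regime $w-r+\delta=o(n^{-1/2})$ (as holds in our applications with $\delta=O(n^{-1})$), the window is shorter than the Gaussian width and the Gaussian is effectively constant there, so the integral reduces to (window length)$\times$(peak value) $=2(w-r+\delta)^+$ up to the constant $O(1)$ factors coming from $|\grad f_s|$ and $d\ell_x/dq$ evaluated at $(w,w)$, all of which are absorbed into the multiplicative constant allowed by the statement. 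Restoring the prefactor $\left(\frac{1-x^2}{x^2}\right)^{\frac{n-4}{2}}$ reproduces the stated asymptotic.

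The main obstacles I anticipate are twofold: first, the borderline regime where $w-r+\delta$ is comparable to $n^{-1/2}$ requires careful handling of the truncated Gaussian, and the $O(q^4)$ corrections (both in $p=Cq^2+\cdots$ and in the expansion of $s-uv$) must be checked to contribute only to the $(1+o(1))$ error; second, one must verify uniform decay of $(s-uv)^{n-4}$ away from $(w,w)$ along $\cal{R}_{s,x}$ so that the contribution from the far arcs (including the other symmetric branch near $(R,r)$ and $(r,R)$) is exponentially smaller than the main term and disappears into the $(1+o(1))$.
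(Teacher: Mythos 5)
Your overall strategy — applying Laplace's method to the one–dimensional co-area integral of Proposition~\ref{Hexplicit2m1}, locating the peak of $(s-uv)^{n-4}$ at the diagonal point $(w,w)$, and recognizing that the support constraint $u,v\ge r-\delta$ truncates the integral to a window of length $2(w-r+\delta)^+$ over which the Gaussian is essentially constant — is sound, and the final asymptotic you obtain is the one claimed. Note that the paper itself does not prove this statement; it defers to~\cite{SZ}, Proposition~4.2, so I can only evaluate your argument on its own merits. (You are also right to read the exponent on $(t-uv)$ as $n-4$: the displayed exponent $\tfrac{n-4}{2}$ in Proposition~\ref{Hexplicit2m1} is a typo, as tracing through $((1-u^2)(1-v^2))^{\frac{n-4}{2}}=((t-uv)^2/x^2)^{\frac{n-4}{2}}$ shows.)

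Two smaller points. First, your constant $C$ in $p=Cq^2+O(q^4)$ is not quite right. Expanding $(s-uv)^2 = x^2(1-u^2)(1-v^2)$ with $u=w+p+q$, $v=w+p-q$ and using $s-w^2=x(1-w^2)$ gives
\[
p=\frac{x(1+w^2)+(1-w^2)}{2w(1-w^2)(1-x)}\,q^2+O(q^4),
\]
so the correct curvature is $s-uv=(s-w^2)\bigl(1-\tfrac{2}{(1-w^2)^2(1-x)}q^2+O(q^4)\bigr)$ rather than with coefficient $\tfrac{1+w^2}{(1-w^2)^2(1-x)}$. The slip is harmless for the statement at hand precisely because (as you observe) the window is shorter than the Gaussian width, so $e^{-(n-4)Aq^2}=1+o(1)$ there for any $A=\Theta(1)$; but since you present the constant explicitly, it should be corrected. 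Second, your handling of the case $w-r+\delta\le 0$ (``make the integral negligible'') does not actually establish the displayed asymptotic there, since the right-hand side is exactly zero while $\nu_1$ is merely exponentially small, and a $(1+o(1))$ multiplicative factor cannot account for this. That imprecision is arguably inherited from the statement itself and is harmless in all downstream uses, but a careful write-up should either restrict $x$ so that the window is nonempty or phrase the error additively relative to the peak value.
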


\subsection{Averaging over $\mathbb{R}^n$}
In the case of sphere packings, we average over $\mathbb{R}^n$; see Proposition~\ref{38}. For this setting too, we will change coordinates. In this case, we are given an angle $0<\theta\leq\frac{\pi}{2}$, a function $g:[-1,1]\rightarrow\mathbb{R}$ satisfying the Delsarte linear programming conditions for $M(n,\theta)$, and a function $F:[-1,1]\rightarrow\mathbb{R}$. From this data, we defined the function $H:\mathbb{R}^n\times\mathbb{R}^n\rightarrow\mathbb{R}$ given by
\begin{equation}\label{Hspheresubsection}
H(\boldsymbol{x},\boldsymbol{y}):=\int_{\mathbb{R}^n}F(|\boldsymbol{x}-\boldsymbol{z}|)F(|\boldsymbol{y}-\boldsymbol{z}|)g\left(\left<\frac{\boldsymbol{x}-\boldsymbol{z}}{|\boldsymbol{x}-\boldsymbol{z}|},\frac{\boldsymbol{y}-\boldsymbol{z}}{|\boldsymbol{y}-\boldsymbol{z}|}\right>\right)d\boldsymbol{z}.
\end{equation}
Recall that $H$ is a point-pair invariant function, and so is only a function of $T:=|\boldsymbol{x}-\boldsymbol{y}|$. We could also view $H$ as radial function on $\mathbb{R}^n$. By Section 4.2 of~\cite{SZ}, in particular equation (33) and Lemmas 4.3 and 4.4 therein, if we let $r:=\frac{1}{2\sin(\theta/2)}$ for $\frac{\pi}{3}\leq\theta\leq\pi$ and $F=\chi_{[0,r+\delta]}$, then
\begin{equation}\label{HT1}
H(T)=T^{2n-1}\int_{-1}^1g(x)\mu(x;\chi_{[0,r+\delta]}(x/T))dx,
\end{equation} 
where $\mu$ is given by the following proposition.
\begin{proposition}\label{spheredensity} When $|x-\cos(\theta)|=o(n^{-1/2})$, $x\leq\frac{1}{2}$, and $\delta=o(n^{-1/2})$, there is a constant $C_{n,\delta,r}>0$ such that
\begin{equation*}
\mu(x;\chi_{[0,r+\delta]})=C_{n,\delta,r}(1+o(1))\frac{(1-x^2)^{\frac{n-4}{2}}}{(1-x)^{n-\frac{1}{2}}}\sqrt{1+\left(x-\frac{(1-x^2)r}{\sqrt{1-(1-x^2)r^2}}\right)^2}\left(r+\delta-\sqrt{1-(1-x^2)(r+\delta)^2}-x(r+\delta)\right)^+.
\end{equation*}
\end{proposition}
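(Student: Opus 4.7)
The plan is to compute $\mu(x;\chi_{[0,r+\delta]})$ by rewriting~\eqref{Hspheresubsection} in coordinates adapted to the rotational symmetry of the integrand around the axis through $\boldsymbol{x},\boldsymbol{y}$, applying the co-area formula to extract the $x$-dependence, and finally performing an asymptotic expansion in the regime $|x-\cos\theta|,\delta=o(n^{-1/2})$. The argument closely parallels the proof of Proposition~\ref{nuFestimate} in the spherical-codes setting, and I expect to reuse the same overall strategy.

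First, place $\boldsymbol{x},\boldsymbol{y}$ symmetrically on a coordinate axis at distance $T$ apart and parametrize $\boldsymbol{z}$ by $a=|\boldsymbol{x}-\boldsymbol{z}|$, $b=|\boldsymbol{y}-\boldsymbol{z}|$, and an angular position $\Omega\in S^{n-2}$ around the axis. The law of sines applied to the triangle $\boldsymbol{x}\boldsymbol{z}\boldsymbol{y}$ gives $\rho=ab\sqrt{1-x^2}/T$ for the off-axis distance, where $x=(a^2+b^2-T^2)/(2ab)$ is the cosine of the angle at $\boldsymbol{z}$. A direct Jacobian computation yields $d\boldsymbol{z}=\rho^{n-3}(ab/T)\,da\,db\,d\Omega$. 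I then change variables $(a,b)\mapsto(a,x)$ using $b(a,x)=ax+\sqrt{T^2-a^2(1-x^2)}$ (the physically relevant positive branch) with Jacobian $\partial b/\partial x=b/\sqrt{T^2-a^2(1-x^2)}$. Integrating out $\Omega$, scaling out $T$ (which accounts for the prefactor $T^{2n-1}$ in~\eqref{HT1}), and setting $T=1$ identifies $\mu(x;\chi_{[0,r+\delta]})$, up to an $x$-independent constant, with
\begin{equation*}
(1-x^2)^{(n-3)/2}\int \chi_{[0,r+\delta]}(a)\,\chi_{[0,r+\delta]}(b(a,x))\,\frac{a^{n-2}b^{n-1}}{\sqrt{1-a^2(1-x^2)}}\,da.
\end{equation*}

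Next, I identify the range of $a$ cut out by the characteristic functions. A direct calculation shows $\partial b/\partial a=x-a(1-x^2)/\sqrt{1-a^2(1-x^2)}$, which at the critical configuration $(a,b,x)=(r,r,\cos\theta)$ with $r=1/(2\sin(\theta/2))$ evaluates to $-1$, so $b$ is strictly decreasing in $a$ there. The constraint $b(a,x)\le r+\delta$ is a quadratic in $a$ whose relevant larger root $a_{\min}=(r+\delta)x+\sqrt{1-(r+\delta)^2(1-x^2)}$ combined with $a\le r+\delta$ gives the integration interval $[a_{\min},r+\delta]$ of length $L(x,\delta)=(r+\delta)(1-x)-\sqrt{1-(1-x^2)(r+\delta)^2}$, which is precisely the positive-part factor in the proposition; when this is non-positive the interval is empty and $\mu=0$. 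In the stated regime, $L=o(n^{-1/2})$ and the interval concentrates at $a=b=r$. The key observation is that although the integrand contains the high powers $a^{n-2}b^{n-1}$, the product $ab$ is \emph{stationary} at $a=b=r$ along $\{x=\text{const}\}$: using $b(a,x)=ax+\sqrt{1-a^2(1-x^2)}$ and $r^2=1/(2(1-x))$, one verifies that the derivative of $a\,b(a,x)$ with respect to $a$ vanishes at $a=r$. Consequently $(ab)^{n-2}$ varies by only $1+O(nL^2)=1+o(1)$ across the interval, so the integrand may be replaced by its value at the critical point up to $(1+o(1))$ error. Evaluating at $(a,b)=(r,r)$ using $\sqrt{1-r^2(1-x^2)}=\sqrt{(1-x)/2}$ produces the claimed powers $(1-x^2)^{(n-4)/2}/(1-x)^{n-1/2}$ (with $x$-independent factors absorbed into $C_{n,\delta,r}$), and the factor $\sqrt{1+(x-(1-x^2)r/\sqrt{1-(1-x^2)r^2})^2}$ arises, equaling $\sqrt{2}$ at the critical point, as the slope factor $\sqrt{1+(\partial b/\partial a)^2}$ converting the $a$-interval length into an arc length along the boundary curve $\{b=r+\delta\}$ in the $(a,b)$-plane.

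The main obstacle is this concentration step: a naive bound would estimate the variation of $a^{n-2}b^{n-1}$ across an interval of length $o(n^{-1/2})$ as $e^{O(n^{1/2})}$, which would destroy the $(1+o(1))$ claim. The resolution—also the heart of the proof of Proposition~\ref{nuFestimate}—is the stationarity of $ab$ along the level set, reducing the effective variation to $e^{O(nL^2)}=e^{o(1)}$. Establishing this stationarity cleanly and tracking all Jacobian factors carefully, particularly the interplay between the change of variables $(a,b)\mapsto(a,x)$ and the arc-length reinterpretation that produces the explicit square-root slope factor, is the bulk of the technical work; a secondary subtlety is verifying the monotonicity $\partial b/\partial a<0$ near the critical point so that the two characteristic-function constraints collapse into the single interval $[a_{\min},r+\delta]$.
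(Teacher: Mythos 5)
Your overall strategy — pass to cylindrical coordinates $(a,b,\Omega)$ adapted to the axis through $\boldsymbol x,\boldsymbol y$, change $(a,b)\mapsto(a,x)$, read off the $a$-interval cut out by the two indicator constraints to obtain the $L^+$ factor, and then exploit a stationarity property to replace the integrand by its value at a critical point — is the right one, and the key difficulty you isolate (controlling a factor like $(ab)^{n}$ over an interval of length $o(n^{-1/2})$) is genuinely the heart of the matter. However, the way you carry out the concentration step would not, as written, produce the stated $x$-dependence.

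The central slip is that you repeatedly substitute $r^2=1/(2(1-x))$. This identity holds \emph{only} at $x=\cos\theta=s$, since $r=\tfrac{1}{2\sin(\theta/2)}$ is a fixed constant depending only on $\theta$; the $x$-dependent quantity satisfying $a^2=1/(2(1-x))$ is the critical point $a_{\mathrm{crit}}(x)$ of the map $a\mapsto ab(a,x)$, not $r$. Solving $\partial_a(ab)=0$ with $b=ax+\sqrt{1-a^2(1-x^2)}$ in fact gives $a_{\mathrm{crit}}(x)=(2(1-x))^{-1/2}$, and at that point $a=b$, $ab=1/(2(1-x))$ and $\sqrt{1-a_{\mathrm{crit}}^2(1-x^2)}=\sqrt{(1-x)/2}$. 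Your phrase ``evaluating at $(a,b)=(r,r)$'' is therefore wrong on two counts: $(r,r)$ does not lie on the level set $\{x(a,b)=x\}$ for $x\neq s$, and $(ab)^{n-1}=r^{2(n-1)}$ there is a constant with no $x$-dependence at all — whereas the claimed denominator $(1-x)^{n-1/2}$ only emerges from the critical value $\bigl(ab\bigr)^{n-1}_{\mathrm{crit}}=(2(1-x))^{-(n-1)}$. Similarly, your claimed simplification $\sqrt{1-r^2(1-x^2)}=\sqrt{(1-x)/2}$ holds at $x=s$ but not for general $x$; the correct statement is about $a_{\mathrm{crit}}(x)$, not $r$. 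The concentration argument is still salvageable: since $|a_{\mathrm{crit}}(x)-r|=O(|x-s|)=o(n^{-1/2})$ and the integration interval has length $o(n^{-1/2})$ and sits within $o(n^{-1/2})$ of $r$, the deviation of $(ab)^n$ across the interval is $\exp(O(n\cdot o(n^{-1})))=1+o(1)$; but to get the right main term you must evaluate at $a_{\mathrm{crit}}(x)$, not at the constant $r$.

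Two smaller points. First, the Jacobian is $\partial b/\partial x=ab/\sqrt{T^2-a^2(1-x^2)}$ (from $2b\,\partial_x b=2ab+2ax\,\partial_x b$), not $b/\sqrt{T^2-a^2(1-x^2)}$; this gives $(ab)^{n-1}$ rather than $a^{n-2}b^{n-1}$ in the integrand. Here the error is harmless since it is a multiplicative factor of $a=r(1+o(1))$ absorbed into $C_{n,\delta,r}$, but it should be fixed. Second, the slope factor $\sqrt{1+(\partial b/\partial a)^2}$ in the target formula does come from the arc-length conversion as you say, but in a direct change-of-variables treatment that factor cancels against a corresponding factor in $|\nabla x|^{-1}$; the proposition's formula with the explicit slope factor together with $(1-x^2)^{(n-4)/2}$ (rather than the $(1-x^2)^{(n-3)/2}$ your computation naturally produces) simply reflects a different bookkeeping convention — both agree up to a constant and a $(1+o(1))$ factor in the regime $|x-s|=o(n^{-1/2})$. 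It would be worth stating this reconciliation explicitly, because as written your argument produces a formula whose $x$-dependence does not literally match the claim until one observes that the discrepancy factor $\sqrt{1+(\cdot)^2}/(1-x^2)^{1/2}$ is $\sqrt{2/(1-s^2)}\cdot(1+o(1))$.
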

In this proposition, we took $T=1$. This is a minor modification of Proposition 4.4 of~\cite{SZ}.

\section{Estimating Levenshtein's optimal polynomials}\label{Jacobiestimates}
In order to prove our main results, we need to estimate Levenshtein's optimal polynomials. In Proposition 5.1 of~\cite{SZ}, the author and Sardari used the differential and interlacing properties of Jacobi polynomials to estimate Jacobi polynomials near their largest roots from a Taylor expansion. However, those estimates are not sharp enough for the results of this paper. We will use a different and sharp estimate of Jacobi polynomials to determine sufficient conditions under which the first Delsarte linear programming condition, $H(t)\leq 0$ for every $t\in[-1,\cos\theta]$, holds when $g$ is chosen to be Levenshtein's optimal polynomials.\\
\\
We first recall Jacobi polynomials and some of their properties. Given $\alpha,\beta>-1$, the Jacobi polynomials $\{p_d^{\alpha,\beta}(t)\}_{d\geq 0}$ of different degrees $d$ are orthogonal with respect to the inner product on $\mathbb{R}[t]$ given by
\begin{equation*}\left<f,g\right>_{L^2(\mu_{\alpha,\beta})}:=\int_{-1}^1f(t)g(t)\mu_{\alpha,\beta}(dt),\end{equation*}
where
\begin{equation*}\mu_{\alpha,\beta}(dt):=(1-t)^{\alpha}(1+t)^{\beta}dt.\end{equation*}
We denote the $L^2$-norm associated to this inner product by $\|\cdot\|_2$. We normalization our Jacobi polynomials so that
\begin{equation*}p_d^{\alpha,\beta}(1)=\binom{d+\alpha}{d}\end{equation*}
for every $d$. Throughout this paper,
\begin{equation*}\binom{a+b}{b}:=\frac{\Gamma(a+b+1)}{\Gamma(a+1)\Gamma(b+1)},\end{equation*}
where
\begin{equation*}\Gamma(z):=\int_0^{\infty}t^{z-1}e^{-t}dt,\ \textup{Re}(z)>0,\end{equation*}
is the Gamma function. By the general theory of orthogonal polynomials, the Jacobi polynomials $p_d^{\alpha,\beta}$ have $d$ simple real roots $t_{1,d}^{\alpha,\beta}>t_{2,d}^{\alpha,\beta}>\hdots>t_{d,d}^{\alpha,\beta}$. We will use the following estimate on Jacobi polynomials, a result due to Krasikov and a part of Theorem 1 of his~\cite{IK}.
\begin{lemma}[Krasikov~\cite{IK}]\label{Krasikovlemma}Suppose $d\geq 2$ is an even integer and $\alpha\geq\sqrt{\frac{7}{6}}$. Let
\begin{equation}\label{fda}f_{d,\alpha}(x):=\left((1-q_{d,\alpha}-x^2)(d+\alpha)(d+\alpha+1)\right)^{\frac{1}{4}}(1-x^2)^{\frac{\alpha}{2}}p_d^{\alpha,\alpha}(x),\end{equation}
where $q_{d,\alpha}:=\frac{\alpha^2-1}{(d+\alpha)(d+\alpha+1)}$. For $0\leq x\leq\sqrt{1-q_{d,\alpha}}$, we have
\begin{equation*}\label{fdaequal}f_{d,\alpha}(x)=f_{d,\alpha}(0)(\cos(\omega_{d,\alpha}(x))+R_{d,\alpha}(x)),\end{equation*}
where
\begin{equation}\label{omegada}\omega_{d,\alpha}(x):=\sqrt{(d+\alpha)(d+\alpha+1)}\left(\arccos\sqrt{\frac{1-q_{d,\alpha}-x^2}{1-q_{d,\alpha}}}-\sqrt{q_{d,\alpha}}\arccos\sqrt{\frac{1-q_{d,\alpha}-x^2}{(1-q_{d,\alpha})(1-x^2)}}\right)\end{equation}
and
\begin{equation}\label{Rda}|R_{d,\alpha}(x)|<\begin{cases}\frac{2(1-x^2)x}{(1-q_{d,\alpha})(1-q_{d,\alpha}-x^2)^{\frac{3}{2}}\sqrt{(d+\alpha)(d+\alpha+1)}}&\textup{ if }q_{d,\alpha}\in\left[0,\frac{1}{2}\right)\\\frac{(1+q_{d,\alpha})x}{4(1-q_{d,\alpha}-x^2)^{\frac{3}{2}}\sqrt{(d+\alpha)(d+\alpha+1)}}&\textup{ if }q_{d,\alpha}\in\left[\frac{1}{2},1\right).\end{cases}\end{equation}
Furthermore,
\[f_{d,\alpha}(0)=\left(\frac{-1}{4}\right)^{\frac{d}{2}}\binom{d+\alpha}{\frac{d}{2}}(d^2+2d\alpha+d+\alpha+1)^{\frac{1}{4}}.\]
\end{lemma}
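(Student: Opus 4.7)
The plan is to follow Krasikov's Liouville--Green (WKB) analysis of the ultraspherical differential equation. The ultraspherical polynomial $p_d^{\alpha,\alpha}(x)$ satisfies the second-order linear ODE $(1-x^2)y'' - 2(\alpha+1)xy' + d(d+2\alpha+1)y = 0$. The first step is to eliminate the first-order term by the Liouville substitution $y = (1-x^2)^{-\alpha/2}v$; this motivates the factor $(1-x^2)^{\alpha/2}$ in the definition~\eqref{fda} of $f_{d,\alpha}$. Normalizing further by the quartic-root prefactor brings the equation into the standard WKB form $f_{d,\alpha}'' + Q_{d,\alpha}(x)f_{d,\alpha} = \Phi_{d,\alpha}(x)f_{d,\alpha}$, where $Q_{d,\alpha}(x)$ is an explicit rational function with a simple turning point at $x = \pm\sqrt{1-q_{d,\alpha}}$ and $\Phi_{d,\alpha}$ is a residual Schwarzian-type term that one proves is small away from the turning point.

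Next, in the oscillatory region $|x|<\sqrt{1-q_{d,\alpha}}$, I would apply the Liouville--Green ansatz $f_{d,\alpha}(x) \sim Q_{d,\alpha}(x)^{-1/4}(A\cos\phi(x) + B\sin\phi(x))$ with phase $\phi(x) = \int_0^x\sqrt{Q_{d,\alpha}(t)}\,dt$. A direct integration of $\sqrt{Q_{d,\alpha}}$ can be carried out in closed form via the double substitution $\cos\psi = \sqrt{(1-q_{d,\alpha}-x^2)/(1-q_{d,\alpha})}$ and $\cos\chi = \sqrt{(1-q_{d,\alpha}-x^2)/((1-q_{d,\alpha})(1-x^2))}$, and it produces exactly the phase $\omega_{d,\alpha}(x)$ displayed in~\eqref{omegada}. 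Because $d$ is even, $p_d^{\alpha,\alpha}$ is an even polynomial and hence $f_{d,\alpha}'(0)=0$; this boundary condition forces the $\sin$-coefficient to vanish and fixes $A = f_{d,\alpha}(0)$, yielding the leading term $f_{d,\alpha}(0)\cos(\omega_{d,\alpha}(x))$. The closed-form value of $f_{d,\alpha}(0)$ then follows from the classical identity $p_d^{\alpha,\alpha}(0) = (-1/4)^{d/2}\binom{d+\alpha}{d/2}$ (valid for even $d$), combined with the identity $1-q_{d,\alpha} = (d^2+2d\alpha+d+\alpha+1)/((d+\alpha)(d+\alpha+1))$ plugged into the prefactor of~\eqref{fda} at $x=0$.

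The main analytic work is the rigorous control of the remainder $R_{d,\alpha}(x)$. For this, I would rewrite the perturbed Schrödinger equation as a Volterra integral equation of the form
\begin{equation*}
R_{d,\alpha}(x) = \int_0^x K(x,t)\,\Phi_{d,\alpha}(t)\,(\cos(\omega_{d,\alpha}(t)) + R_{d,\alpha}(t))\,dt,
\end{equation*}
where $K(x,t)$ is built from the two independent WKB solutions. Iterating this equation and bounding $|\Phi_{d,\alpha}(t)|$ by a multiple of $(1-q_{d,\alpha}-t^2)^{-3/2}$ (which captures the blow-up at the turning point) produces the error estimate~\eqref{Rda}; the two cases correspond to different ways of bounding the Schwarzian depending on whether $q_{d,\alpha}$ is below or above $1/2$.

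The hard part will be obtaining the sharp numerical constants in~\eqref{Rda} and handling the transition region near the turning point $\sqrt{1-q_{d,\alpha}}$, where the naive WKB expansion degenerates. Krasikov's trick is to avoid Airy-function matching altogether by leveraging monotonicity of the phase $\omega_{d,\alpha}$ together with a carefully chosen Sturm comparison equation whose solutions can be bounded explicitly; this both keeps the error estimate elementary and yields the piecewise-clean form stated in the lemma. Since the statement is quoted verbatim from Theorem~1 of~\cite{IK}, a self-contained proof would reproduce Krasikov's argument in full, and for the purposes of this paper we simply cite it.
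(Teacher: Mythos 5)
You correctly identify that the paper does not prove this lemma: it is quoted (with the paper's normalization $p_d^{\alpha,\alpha}(1)=\binom{d+\alpha}{d}$) directly from Theorem~1 of Krasikov~\cite{IK}, and the paper's role is only to state it and later to import the explicit forms of $b_{d,\alpha}$, $\varepsilon_{d,\alpha}$, and the sine-kernel Volterra representation of $R_{d,\alpha}$ from Krasikov's equations (11) and (14). Your WKB/Liouville--Green reconstruction of Krasikov's argument is structurally right: pass to the self-adjoint normal form, take $\omega_{d,\alpha}'=b_{d,\alpha}$ as the phase, use evenness of $p_d^{\alpha,\alpha}$ to kill the sine term, evaluate at $x=0$, and control the remainder via a Volterra integral equation. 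Two details are slightly off. First, the normal-form substitution that eliminates the first-order term from $(1-x^2)y''-2(\alpha+1)xy'+d(d+2\alpha+1)y=0$ is $v=(1-x^2)^{(\alpha+1)/2}y$, not $(1-x^2)^{\alpha/2}y$; the paper in fact records this (see the proof of Lemma~\ref{lemma:concavity}, where $y:=(1-t^2)^{\frac{\alpha+1}{2}}p_d^{\alpha,\alpha}(t)$ satisfies $y''+b_{d,\alpha}^2y=0$). The exponent $\alpha/2$ appearing in $f_{d,\alpha}$ is what remains after one additionally multiplies by the WKB amplitude $b_{d,\alpha}^{1/2}=\left(\frac{\sqrt{1-q_{d,\alpha}-x^2}\sqrt{(d+\alpha)(d+\alpha+1)}}{1-x^2}\right)^{1/2}$, whose $(1-x^2)^{-1/2}$ factor lowers $(\alpha+1)/2$ to $\alpha/2$; so the quartic-root prefactor is doing two jobs at once, not just one. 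Second, the forcing term $\varepsilon_{d,\alpha}b_{d,\alpha}$ in the Volterra equation (the paper quotes $\varepsilon_{d,\alpha}(x)=-\frac{x^6+6q_{d,\alpha}x^4-3x^2+4q_{d,\alpha}^2-6q_{d,\alpha}+2}{4(d+\alpha)(d+\alpha+1)(1-q_{d,\alpha}-x^2)^3}$) actually blows up like $(1-q_{d,\alpha}-t^2)^{-5/2}$; the power $-3/2$ in~\eqref{Rda} is what survives \emph{after} the $t$-integration up to $x$, not a bound on the kernel itself. Neither slip affects the thrust of the outline, and since the paper treats the lemma as a black box, citing Krasikov is the intended argument.
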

The polynomials $p_d^{\alpha,\alpha}$ satisfy the recurrence relation
\begin{equation}\label{recurrence}
xp_d^{\alpha,\alpha}(x)=\frac{(d+1)p_{d+1}^{\alpha,\alpha}(x)+(d+2\alpha)p_{d-1}^{\alpha,\alpha}(x)}{2d+2\alpha+1},
\end{equation}
while the polynomials $p_d^{\alpha+1,\alpha}$ satisfy the recurrence relation
\begin{equation}\label{recurrence2}
p_d^{\alpha+1,\alpha}(x)=\frac{(d+2\alpha+2)p_d^{\alpha+1,\alpha+1}(x)+(d+\alpha+1)p_{d-1}^{\alpha+1,\alpha+1}(x)}{2(d+\alpha+1)}.
\end{equation}
As a result, one could use Lemma~\ref{Krasikovlemma} for even $d+1$ and $d-1$ along with recurrence relation~\eqref{recurrence} to estimate $p_d^{\alpha,\alpha}(x)$ for odd $d$. One could then use~\eqref{recurrence2} to estimate $p_d^{\alpha+1,\alpha}$.\\
\\
The arguments within~\cite{IK} are equally important to us. A couple important facts are that
\begin{equation}\label{omegader}
\omega'_{d,\alpha}(x)=b_{d,\alpha}(x):=\frac{\sqrt{1-q_{d,\alpha}-x^2}\sqrt{(d+\alpha)(d+\alpha+1)}}{1-x^2},
\end{equation}
and the remainder satisfies
\begin{equation}\label{Rintegral} R_{d,\alpha}(x)=\frac{1}{f_{d,\alpha}(0)}\int_0^x\varepsilon_{d,\alpha}(t)b_{d,\alpha}(t)f_{d,\alpha}(t)\sin(\omega_{d,\alpha}(x)-\omega_{d,\alpha}(t))dt,\end{equation}
where
\begin{equation*}
\varepsilon_{d,\alpha}(x):=\frac{3(b'_{d,\alpha}(x))^2-2b_{d,\alpha}(x)b''_{d,\alpha}(x)}{4b^4_{d,\alpha}(x)}=-\frac{x^6+6q_{d,\alpha}x^4-3x^2+4q_{d,\alpha}^2-6q_{d,\alpha}+2}{4(d+\alpha)(d+\alpha+1)(1-q_{d,\alpha}-x^2)^3}
\end{equation*}
as given in equations (11) and (14) of~\cite{IK}. We also define
\begin{equation}\label{Tint}
T_{d,\alpha}(x):=\frac{1}{f_{d,\alpha}(0)}\int_0^{x}\varepsilon_{d,\alpha}(t)b_{d,\alpha}(t)f_{d,\alpha}(t)\cos(\omega_{d,\alpha}(x)-\omega_{d,\alpha}(t))dt
\end{equation}
to be used later. The following lemma is important to us.
\begin{lemma}\label{diffroots}
Let $d\geq 5$ and $\alpha>0$. Then
\begin{equation*}t_{1,d}^{\alpha,\alpha}<\sqrt{1-q_{d,\alpha}}\left(1-\frac{3(\alpha+1)^{\frac{4}{3}}}{2d^{\frac{2}{3}}(d+\alpha+1)^{\frac{2}{3}}(d+2\alpha+1)^{\frac{2}{3}}}\right).\end{equation*}
\end{lemma}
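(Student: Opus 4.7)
The plan is to use Krasikov's approximation (Lemma~\ref{Krasikovlemma}) to locate the zeros of $p_d^{\alpha,\alpha}$ in the range where the estimate is sharp, and to analyze the behavior of the phase function $\omega_{d,\alpha}$ near the turning point $x_0 := \sqrt{1-q_{d,\alpha}}$. In the main case where $d$ is even and $\alpha \geq \sqrt{7/6}$, Lemma~\ref{Krasikovlemma} identifies the positive zeros of $p_d^{\alpha,\alpha}$ in $[0, x_0]$ with the zeros of $\cos(\omega_{d,\alpha}(x)) + R_{d,\alpha}(x)$. Since $\omega'_{d,\alpha}(x) = b_{d,\alpha}(x)$ from~\eqref{omegader} vanishes like $\sqrt{x_0-x}$ at $x_0$, a Taylor expansion near $x_0$ yields
\[
\omega_{d,\alpha}(x_0) - \omega_{d,\alpha}(x_0(1-\epsilon)) = \frac{2\sqrt{2}\,(1-q_{d,\alpha})\,((d+\alpha)(d+\alpha+1))^{3/2}}{3(\alpha^2-1)}\,\epsilon^{3/2} + O(\epsilon^{5/2}).
\]
Substituting the claimed $\epsilon$ simplifies this gap to an explicit expression of order $\frac{(\alpha+1)}{(\alpha-1)}$ times a ratio of polynomials in $d$. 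Combined with the estimate~\eqref{Rda} on $|R_{d,\alpha}|$, this should rule out roots of $\cos(\omega_{d,\alpha}) + R_{d,\alpha}$ in the interval $[x_0(1-\epsilon),x_0]$ lying to the right of the last zero of $\cos(\omega_{d,\alpha})$ that is actually realized by $p_d^{\alpha,\alpha}$, forcing $t_{1,d}^{\alpha,\alpha} < x_0(1-\epsilon)$.

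For the remaining cases, I would use the recurrences~\eqref{recurrence} and~\eqref{recurrence2} to reduce odd $d$ to the adjacent even values and transfer the bound via the interlacing of the largest zeros of consecutive Jacobi polynomials. For $\alpha \in (0,\sqrt{7/6})$, Krasikov's estimate does not apply directly, but since $q_{d,\alpha} \leq 0$ (so $\sqrt{1-q_{d,\alpha}} \geq 1$) whenever $\alpha \leq 1$, the stated inequality reduces to a classical upper bound on $t_{1,d}^{\alpha,\alpha}$, which follows from the Mehler--Heine asymptotic $1 - t_{1,d}^{\alpha,\alpha} \sim j_{\alpha,1}^2/(2(d+\alpha+1/2)^2)$ (where $j_{\alpha,1}$ is the first positive zero of the Bessel function $J_\alpha$) combined with elementary comparison to the claimed $\epsilon$.

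The main obstacle is the quantitative matching of the constants and exponents. Precisely, $|R_{d,\alpha}(x)|$ becomes large in the turning-point region $x_0 - x \sim d^{-2/3}$ where Airy-type behavior takes over from cosine behavior, and the claimed $\epsilon \sim d^{-2}$ cuts into this transition zone where the Krasikov approximation degrades. Careful tracking of the interaction between the cosine oscillations and the error term $R_{d,\alpha}$ near $x_0$, together with the precise Taylor expansion of $\omega_{d,\alpha}$ (and a corresponding rigorous inversion bound), is needed to certify the specific prefactor $3/2$ and the exponents $4/3$ in $(\alpha+1)^{4/3}$ and $2/3$ in the denominator factors $d^{2/3}(d+\alpha+1)^{2/3}(d+2\alpha+1)^{2/3}$, as well as to maintain uniformity over all $\alpha > 0$ and $d \geq 5$.
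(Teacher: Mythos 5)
The paper's actual proof is much shorter and does not re-derive anything from the WKB approximation: it simply \emph{cites} a two-sided sharp bound on $t_{1,d}^{\alpha,\alpha}$ that Krasikov proved separately in~\cite{IK3} and~\cite{IK4}, valid for $\alpha>-1$ and $d\geq 5$, namely
\[
t_{1,d}^{\alpha,\alpha}=\sqrt{\tfrac{d(d+2\alpha+1)}{(d+\alpha+1)^2-d}}-\frac{3(\alpha+1)^{4/3}(1+2\psi)}{2d^{1/6}(d+2\alpha+1)^{1/6}((d+\alpha+1)^2-d)^{5/6}}
\]
for some $\psi\in(0,1)$. Dropping the $2\psi$ term gives an upper bound, and then a one-line algebraic comparison shows $\sqrt{\tfrac{d(d+2\alpha+1)}{(d+\alpha+1)^2-d}}<\sqrt{1-q_{d,\alpha}}$ when $\alpha>0$, which yields the lemma. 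The exact prefactor $3/2$ and the exponents $4/3$, $2/3$ come pre-packaged from this quotation.

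Your route — re-deriving a root bound from Lemma~\ref{Krasikovlemma} by Taylor-expanding $\omega_{d,\alpha}$ near the turning point $\sqrt{1-q_{d,\alpha}}$ and controlling $R_{d,\alpha}$ — is genuinely different and would be much harder, and as written it has real gaps. The key problem, which you do flag, is fatal as stated: the bound~\eqref{Rda} on $|R_{d,\alpha}(x)|$ blows up like $(1-q_{d,\alpha}-x^2)^{-3/2}$ as $x\to\sqrt{1-q_{d,\alpha}}$, precisely in the window $[x_0(1-\epsilon),x_0]$ you want to control; one is squarely in the Airy transition zone (note that when $\alpha$ grows proportionally to $d$ — the regime the paper actually uses — your $\epsilon$ is of order $d^{-2/3}$, not $d^{-2}$), and the cosine expansion from Lemma~\ref{Krasikovlemma} cannot by itself locate the extreme zero to the precision required. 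Recovering the exact constant $3/2$ and the $(\alpha+1)^{4/3}$ scaling this way would essentially require reproving~\cite{IK3}/\cite{IK4} from scratch, which is a different (Airy/Sturm-comparison type) argument. Separately, for $\alpha\in(0,\sqrt{7/6})$ your Mehler--Heine appeal is only an asymptotic in $d$ with $\alpha$ fixed; it does not give the required uniform bound for all $d\geq 5$ and does not address $1<\alpha<\sqrt{7/6}$ where $q_{d,\alpha}>0$. So the proposal is not a complete proof; the efficient move the paper makes is to quote Krasikov's extreme-zero estimate rather than extract it from the oscillatory-region approximation.
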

\begin{proof}
I. Krasikov proved sharp estimates of $t_{1,d}^{\alpha,\alpha}$ in~\cite{IK3} and~\cite{IK4}. He proved that when $\alpha>-1$ and $d\geq 5$,
\[t_{1,d}^{\alpha,\alpha}=\sqrt{\frac{d(d+2\alpha+1)}{(d+\alpha+1)^2-d}}-\frac{3(\alpha+1)^{\frac{4}{3}}(1+2\psi)}{2d^{\frac{1}{6}}(d+2\alpha+1)^{\frac{1}{6}}((d+\alpha+1)^2-d)^{\frac{5}{6}}}\]
for some $\psi\in(0,1)$. In particular,
\[t_{1,d}^{\alpha,\alpha}<\sqrt{\frac{d(d+2\alpha+1)}{(d+\alpha+1)^2-d}}\left(1-\frac{3(\alpha+1)^{\frac{4}{3}}}{2d^{\frac{2}{3}}(d+\alpha+1)^{\frac{2}{3}}(d+2\alpha+1)^{\frac{2}{3}}}\right).\]
Furthermore,
\[\sqrt{\frac{d(d+2\alpha+1)}{(d+\alpha+1)^2-d}}< \sqrt{1-q_{d,\alpha}}\]
follows from
\[1-q_{d,\alpha}-\frac{d(d+2\alpha+1)}{(d+\alpha+1)^2-d}> 1-q_{d,\alpha}-\frac{d(d+2\alpha+1)}{(d+\alpha)(d+\alpha+1)}=\frac{\alpha}{(d+\alpha)(d+\alpha+1)}>0,\]
where the last inequality is a consequence of $\alpha>0$. The conclusion follows.
\end{proof}
Lemma~\ref{diffroots} bounds from below the distance between $\sqrt{1-q_{d,\alpha}}$ and $t_{1,d}^{\alpha,\alpha}$. For the application that we have in mind, $\alpha$ grows linearly in $d$, with both $\sqrt{1-q_{d,\alpha}}$ and $t_{1,d}^{\alpha,\alpha}$ converging to the constant $\cos(\theta')>0$. Therefore, the estimate in Lemma~\ref{Krasikovlemma} is valid $\Theta(d^{-2/3})$ to the right of $t_{1,d}^{\alpha,\alpha}$.\\
\\
We will also use the following result of Krasikov in our argument for the optimality of $\frac{1}{e}$ in Theorems~\ref{THMone} and~\ref{THMtwo} as well as extend the domain of validity of our estimates of Levenshtien's optimal polynomials.
\begin{lemma}[Krasikov~\cite{IK2}]\label{Krasikovupperbound}For $d\geq 6$ and $\alpha,\beta\geq \frac{1+\sqrt{2}}{4}$,
\begin{equation*}\max_{x\in[-1,1]}(1-x)^{\alpha+\frac{1}{2}}(1+x)^{\beta+\frac{1}{2}}(p_d^{\alpha,\beta}(x))^2< 3\|p_d^{\alpha,\beta}\|_2^2\alpha^{\frac{1}{3}}\left(1+\frac{\alpha}{d}\right)^{\frac{1}{6}}.\end{equation*}
\end{lemma}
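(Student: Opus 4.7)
Since this lemma is attributed to Krasikov and the paper cites \cite{IK2} in its statement, the authors most likely simply quote the result. Below I describe the strategy I would take to reprove it from scratch, in the spirit of the WKB arguments already invoked in Lemma~\ref{Krasikovlemma}.

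The natural approach is a Liouville--Green (WKB) analysis of the Jacobi polynomial. The key observation is that the factor $(1-x)^{\alpha+\frac{1}{2}}(1+x)^{\beta+\frac{1}{2}}$ is precisely the square of the Liouville weight that puts the Jacobi ODE into normal form. Concretely, setting
\[
\phi(x) := (1-x)^{\frac{2\alpha+1}{4}}(1+x)^{\frac{2\beta+1}{4}} p_d^{\alpha,\beta}(x),
\]
the Jacobi equation for $p_d^{\alpha,\beta}$ transforms into a Schr\"odinger-type equation $\phi''(x)+Q_{d,\alpha,\beta}(x)\phi(x)=0$, where $Q_{d,\alpha,\beta}$ is a rational function of $x$ with two real turning points $x_{\pm}\in(-1,1)$ (for $\alpha,\beta\geq \frac{1+\sqrt{2}}{4}$, the condition $Q>0$ holds on a sub-interval containing the bulk of the zeros of $p_d^{\alpha,\beta}$). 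Thus the quantity we want to bound is exactly $\phi(x)^2$, and our goal becomes comparing $\|\phi\|_\infty^2$ to $\|p_d^{\alpha,\beta}\|_2^2=\int_{-1}^1 \phi(t)^2\,\frac{dt}{\sqrt{(1-t)(1+t)}}$.

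Inside $[x_-,x_+]$ the WKB ansatz yields
\[
\phi(x)=\frac{1}{Q_{d,\alpha,\beta}(x)^{1/4}}\bigl(A\cos\Omega(x)+B\sin\Omega(x)\bigr)+\mathrm{(remainder)},\qquad \Omega(x)=\int^x\!\sqrt{Q_{d,\alpha,\beta}},
\]
with error bounds for the remainder of the same type as \eqref{Rda} and \eqref{Rintegral}. From this representation the envelope $\sqrt{Q_{d,\alpha,\beta}(x)}\,\phi(x)^2$ is nearly constant equal to $A^2+B^2$ on the oscillatory interval. Outside $[x_-,x_+]$ one checks that $\phi(x)$ is rapidly decaying (exponentially in the WKB phase difference to the nearest turning point), so the sup-norm is attained inside $[x_-,x_+]$. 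Hence
\[
\|\phi\|_\infty^2 \;\leq\; (A^2+B^2)\,\Bigl(\min_{[x_-,x_+]}\sqrt{Q_{d,\alpha,\beta}}\Bigr)^{-1}(1+o(1)).
\]

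On the other hand, averaging $\phi^2$ over many oscillations and using $\int \cos^2\Omega\,d\Omega = \pi$, the $L^2$-norm integral is essentially $\tfrac{A^2+B^2}{2}\int_{x_-}^{x_+} \frac{dt}{\sqrt{Q_{d,\alpha,\beta}(t)(1-t^2)}}$. Dividing, the unknown amplitude $A^2+B^2$ cancels, and $\|\phi\|_\infty^2/\|p_d^{\alpha,\beta}\|_2^2$ is estimated by a purely geometric ratio of the minimum of $\sqrt{Q_{d,\alpha,\beta}}$ on $[x_-,x_+]$ to the integral of $1/\sqrt{Q_{d,\alpha,\beta}(1-t^2)}$. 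A direct computation with $Q_{d,\alpha,\beta}$ shows this ratio is bounded by a constant multiple of $\alpha^{1/3}(1+\alpha/d)^{1/6}$, the Airy-type exponent $1/3$ coming from the behavior of $\sqrt{Q_{d,\alpha,\beta}}$ near the turning point $x_+$ (which is at distance $\Theta((\alpha+d)^{-2/3})$ from $\sqrt{1-q_{d,\alpha,\beta}}$, as in Lemma~\ref{diffroots}).

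The main obstacle is bookkeeping: one must track the WKB remainder uniformly in $(\alpha,\beta,d)$ to obtain the explicit constant $3$ rather than merely $O(1)$. This requires the sharper integral form of the error (analogous to \eqref{Rintegral} and \eqref{Tint}), together with the hypothesis $\alpha,\beta\geq \frac{1+\sqrt{2}}{4}$ which ensures $q_{d,\alpha,\beta}\geq 0$ and keeps the turning points safely inside $(-1,1)$; this is precisely where the curious constant $\frac{1+\sqrt{2}}{4}$ enters. The asymmetric case $\alpha\neq\beta$ does not reduce to Krasikov's symmetric statement (Lemma~\ref{Krasikovlemma}), but the same methodology applies with a slightly different potential $Q_{d,\alpha,\beta}$; the upper bound depends only on $\alpha$ (rather than $\beta$) because the decisive turning point for the maximum is the one near $x=+1$, controlled by the exponent $\alpha$.
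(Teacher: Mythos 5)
You are correct that the paper gives no proof of this lemma: it is stated as a quotation of Krasikov's theorem from~\cite{IK2} (with the older Erd\'elyi--Magnus--Nevai bound from~\cite{EMN} mentioned as an alternative), and is used later only as a black box to control the ratio between the main-term estimate of Proposition~\ref{gcorollary} and the true size of the Jacobi polynomial away from the largest root. So there is no in-paper proof to compare against.

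Your sketch is a sensible roadmap in the Liouville--Green spirit, and it correctly identifies the key structural facts: the weight $(1-x)^{\alpha+\frac{1}{2}}(1+x)^{\beta+\frac{1}{2}}$ is exactly the square of the Liouville factor, the sup is attained in the oscillatory region, and the $\alpha^{1/3}$ arises from Airy-type behavior at distance $\Theta(d^{-2/3})$ from the turning point. However, two details need correction. First, Krasikov's actual argument in~\cite{IK2} is not a WKB expansion with explicit remainder integrals but a Sonin/Lyapunov envelope argument: one studies a function of the form $S(x)=f(x)+\frac{(f'(x))^2}{4B(x)}$ built from $f(x)=(1-x)^{\alpha+\frac12}(1+x)^{\beta+\frac12}(p_d^{\alpha,\beta}(x))^2$, shows $S$ is monotone in the relevant region via a sign analysis of $S'$, and reads off the sup-norm bound from the boundary values; the constant $3$ comes out of that monotonicity argument, not from tracking a WKB remainder. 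Second, your explanation of the hypothesis $\alpha,\beta\ge\frac{1+\sqrt{2}}{4}$ is wrong: since $q_{d,\alpha}=\frac{\alpha^2-1}{(d+\alpha)(d+\alpha+1)}$, nonnegativity of $q$ requires $\alpha\ge 1$, whereas $\frac{1+\sqrt{2}}{4}\approx 0.60<1$. That threshold instead arises from the sign conditions needed to make Krasikov's Sonin-type function monotone, not from keeping $q\ge 0$ or the turning points inside $(-1,1)$. Finally, as you yourself note, the sketch does not close the gap to the explicit constant $3$, which is the substance of the theorem; as stated it only gives an $O(1)$ bound.
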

This is a sharpening of such a result first proved by Erd\'elyi--Magnus--Nevai~\cite{EMN} from 1994. Their bound was $\frac{e\|p_d^{\alpha,\beta}\|_2^2(2+\sqrt{\alpha^2+\beta^2})}{\pi}$ but valid for $d\geq 0$ and $\alpha,\beta\geq-\frac{1}{2}$. For our proof of optimality, we could replace Lemma~\ref{Krasikovupperbound} with this inequality of~\cite{EMN}. For optimality, we also record the following comparison.
\begin{lemma}\label{magnitudecomp}As $\alpha,d\rightarrow\infty$ with $d$ even,
\begin{equation*}\frac{f_{d,\alpha}(0)^2}{\|p_d^{\alpha,\alpha}\|_2^2}=(1+o(1))\frac{\sqrt{(d+\alpha)(d+\alpha+1)}(2d+2\alpha+1)}{\pi\sqrt{d(d+2\alpha)}}.\end{equation*}
\end{lemma}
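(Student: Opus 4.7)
The plan is a direct evaluation obtained by combining the explicit value of $f_{d,\alpha}(0)$ from Lemma~\ref{Krasikovlemma} with the classical $L^{2}$-norm formula for Jacobi polynomials and then applying Stirling. Squaring the formula at the end of Lemma~\ref{Krasikovlemma} gives
\[
f_{d,\alpha}(0)^{2} \;=\; \frac{1}{4^{d}}\binom{d+\alpha}{d/2}^{2}\sqrt{d^{2}+2d\alpha+d+\alpha+1},
\]
while the standard formula for the Jacobi norm (in the normalization $p_{d}^{\alpha,\alpha}(1)=\binom{d+\alpha}{d}$ used in the paper) reads
\[
\|p_{d}^{\alpha,\alpha}\|_{2}^{2}\;=\;\frac{2^{2\alpha+1}}{2d+2\alpha+1}\cdot\frac{\Gamma(d+\alpha+1)^{2}}{d!\,\Gamma(d+2\alpha+1)}.
\]
Expanding the binomial as $\binom{d+\alpha}{d/2}=\Gamma(d+\alpha+1)/[\Gamma(d/2+1)\Gamma(d/2+\alpha+1)]$ and forming the ratio, the two copies of $\Gamma(d+\alpha+1)^{2}$ cancel at once.

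The second step is to split the remaining Gamma functions into halves so that Stirling applies. This is carried out by the Legendre duplication formula $\Gamma(2z)=2^{2z-1}\pi^{-1/2}\Gamma(z)\Gamma(z+1/2)$ applied to both $d!=\Gamma(d+1)$ and $\Gamma(d+2\alpha+1)$, yielding $d!=\tfrac{2^{d}}{\sqrt{\pi}}\Gamma(d/2+1/2)\Gamma(d/2+1)$ and the analogous identity for $\Gamma(d+2\alpha+1)$. All powers of two then collapse through $2^{2d+2\alpha}/(4^{d}\cdot 2^{2\alpha+1})=1/2$, leaving
\[
\frac{f_{d,\alpha}(0)^{2}}{\|p_{d}^{\alpha,\alpha}\|_{2}^{2}}\;=\;\frac{\sqrt{d^{2}+2d\alpha+d+\alpha+1}\,(2d+2\alpha+1)}{2\pi}\cdot\frac{\Gamma(d/2+1/2)}{\Gamma(d/2+1)}\cdot\frac{\Gamma(d/2+\alpha+1/2)}{\Gamma(d/2+\alpha+1)}.
\]

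The final step is to apply Stirling in the form $\Gamma(z+1/2)/\Gamma(z+1)=z^{-1/2}(1+O(z^{-1}))$ with $z=d/2$ and with $z=d/2+\alpha$, producing $(1+o(1))\sqrt{2/d}$ and $(1+o(1))\sqrt{2/(d+2\alpha)}$ respectively; their product contributes a factor $(1+o(1))\cdot 2/\sqrt{d(d+2\alpha)}$. The identity $d^{2}+2d\alpha+d+\alpha+1=(d+\alpha)(d+\alpha+1)-\alpha^{2}+1$ then lets one replace the radical by $\sqrt{(d+\alpha)(d+\alpha+1)}\,(1+o(1))$ in the regime governing the intended application, completing the proof. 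The argument is essentially bookkeeping; the only points requiring care are tracking the powers of two after the duplication formula and verifying that Stirling's error is genuinely $o(1)$, which is automatic once $d\to\infty$. There is no substantive obstacle.
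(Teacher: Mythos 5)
Your approach is essentially the same as the paper's: plug the explicit value of $f_{d,\alpha}(0)$ from Lemma~\ref{Krasikovlemma} into the ratio against the classical $L^2$-norm formula, split the Gammas with the duplication formula, and apply Stirling. The computation is correct through the identity
\[
\frac{f_{d,\alpha}(0)^{2}}{\|p_{d}^{\alpha,\alpha}\|_{2}^{2}}=\frac{\sqrt{d^{2}+2d\alpha+d+\alpha+1}\,(2d+2\alpha+1)}{2\pi}\cdot\frac{\Gamma(d/2+1/2)}{\Gamma(d/2+1)}\cdot\frac{\Gamma(d/2+\alpha+1/2)}{\Gamma(d/2+\alpha+1)},
\]
and the Stirling step giving $\frac{2}{\sqrt{d(d+2\alpha)}}(1+o(1))$ for the product of Gamma ratios is routine.

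The gap is the final replacement of $\sqrt{d^{2}+2d\alpha+d+\alpha+1}$ by $\sqrt{(d+\alpha)(d+\alpha+1)}\,(1+o(1))$, which you justify by appealing to ``the regime governing the intended application.'' That is exactly the regime where the replacement fails. Writing $d^2+2d\alpha+d+\alpha+1 = (1-q_{d,\alpha})(d+\alpha)(d+\alpha+1)$ with $q_{d,\alpha}=\frac{\alpha^2-1}{(d+\alpha)(d+\alpha+1)}$, the two radicals differ by the factor $\sqrt{1-q_{d,\alpha}}$. In the lemma's applications (Corollary~\ref{corollary:magnitudecomp}, Proposition~\ref{gcorollary}) one takes $\alpha/d\to\frac{1}{2\rho}$ with $\rho>0$ fixed, whence $q_{d,\alpha}\to\frac{1}{(1+2\rho)^2}=\sin^2\theta'>0$ (see~\eqref{qlim}); so $\sqrt{1-q_{d,\alpha}}\to\cos\theta'<1$ and is not $1+o(1)$. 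The replacement would only be justified for $\alpha=o(d)$, the opposite of what is needed. In fact what you have computed is a corrected version of the lemma, with $\sqrt{d^{2}+2d\alpha+d+\alpha+1}$ where the statement has $\sqrt{(d+\alpha)(d+\alpha+1)}$; the paper's own proof silently makes the same substitution (it writes $f_{d,\alpha}(0)^2=4^{-d}\binom{d+\alpha}{d/2}^2((d+\alpha)(d+\alpha+1))^{1/2}$ immediately after citing Lemma~\ref{Krasikovlemma}). The discrepancy is a bounded positive constant and is harmless downstream since Proposition~\ref{gcorollary} is used only up to positive multiples, but as a step of your argument the asserted $(1+o(1))$ does not hold and should not be claimed.
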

\begin{proof}
By Lemma~\ref{Krasikovlemma},
\[f_{d,\alpha}(0)^2=\left(\frac{1}{4}\right)^{d}\binom{d+\alpha}{\frac{d}{2}}^2((d+\alpha)(d+\alpha+1))^{\frac{1}{2}}.\]
On the other hand, it is well-known that
\[\|p_d^{\alpha,\beta}\|_2^2=\frac{2^{\alpha+\beta+1}}{2d+\alpha+\beta+1}\frac{\Gamma(d+\alpha+1)\Gamma(d+\beta+1)}{\Gamma(d+\alpha+\beta+1)d!},\]
from which we obtain that as $\alpha,d\rightarrow\infty$,
\[\|p_d^{\alpha,\alpha}\|_2^2=(1+o(1))\frac{\pi\sqrt{d(d+2\alpha)}}{2d+2\alpha+1}\frac{1}{4^d}\binom{d+\alpha}{\frac{d}{2}}^2.\]
These are based on the normalization $p_d^{\alpha,\beta}(1)=\binom{d+\alpha}{d}$. The estimate is a consequence of Stirling's formula giving
\[\binom{2n}{n}=(1+o(1))\frac{2^{2n}}{\sqrt{\pi n}}\textup{ as }n\rightarrow\infty.\]
Therefore, as $\alpha,d\rightarrow\infty$ with $d$ even,
\[\frac{f_{d,\alpha}(0)^2}{\|p_d^{\alpha,\alpha}\|_2^2}=(1+o(1))\frac{\sqrt{(d+\alpha)(d+\alpha+1)}(2d+2\alpha+1)}{\pi\sqrt{d(d+2\alpha)}}.\]
\end{proof}
\begin{corollary}\label{corollary:magnitudecomp}
If $\frac{\alpha}{d}\rightarrow\frac{1}{2\rho}$ as $d\rightarrow\infty$ with $d$ even, then 
\begin{equation*}\frac{f_{d,\alpha}(0)^2}{d}=(1+o(1))\frac{2\|p_d^{\alpha,\alpha}\|_2^2\left(1+\frac{1}{2\rho}\right)^2}{\pi\sqrt{1+\frac{1}{\rho}}}.\end{equation*}
\end{corollary}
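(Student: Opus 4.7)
The plan is to derive Corollary~\ref{corollary:magnitudecomp} as a direct asymptotic specialization of Lemma~\ref{magnitudecomp}. By that lemma, in the regime $\alpha,d\to\infty$ (with $d$ even) we have
\begin{equation*}
\frac{f_{d,\alpha}(0)^2}{\|p_d^{\alpha,\alpha}\|_2^2}=(1+o(1))\frac{\sqrt{(d+\alpha)(d+\alpha+1)}(2d+2\alpha+1)}{\pi\sqrt{d(d+2\alpha)}},
\end{equation*}
so the whole task reduces to evaluating the right-hand side under the additional hypothesis $\alpha/d\to\tfrac{1}{2\rho}$.

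The key step is to factor a $d$ out of each group on the right. First I would write $d+\alpha=d\bigl(1+\tfrac{\alpha}{d}\bigr)$ and $d+\alpha+1=d\bigl(1+\tfrac{\alpha}{d}+\tfrac{1}{d}\bigr)$, so that $\sqrt{(d+\alpha)(d+\alpha+1)}=d\bigl(1+\tfrac{1}{2\rho}+o(1)\bigr)$. Similarly, $2d+2\alpha+1=2d\bigl(1+\tfrac{1}{2\rho}+o(1)\bigr)$, and $d+2\alpha=d\bigl(1+\tfrac{1}{\rho}+o(1)\bigr)$, giving $\sqrt{d(d+2\alpha)}=d\sqrt{1+\tfrac{1}{\rho}}\,(1+o(1))$. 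Substituting these asymptotics into the quotient from Lemma~\ref{magnitudecomp} produces
\begin{equation*}
\frac{f_{d,\alpha}(0)^2}{\|p_d^{\alpha,\alpha}\|_2^2}=(1+o(1))\frac{d\bigl(1+\tfrac{1}{2\rho}\bigr)\cdot 2d\bigl(1+\tfrac{1}{2\rho}\bigr)}{\pi\, d\sqrt{1+\tfrac{1}{\rho}}}=(1+o(1))\,\frac{2d\bigl(1+\tfrac{1}{2\rho}\bigr)^2}{\pi\sqrt{1+\tfrac{1}{\rho}}}.
\end{equation*}
Dividing both sides by $d$ and rearranging then yields the claimed identity.

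There is no real obstacle here: the hypothesis $\alpha/d\to\tfrac{1}{2\rho}$ keeps the ratios $\tfrac{\alpha}{d}$, $\tfrac{\alpha+1}{d}$, and $\tfrac{1}{d}$ all bounded or vanishing, so every square root and every sum admits a clean $(1+o(1))$ expansion, and absorbing the various $(1+o(1))$ factors into a single one is routine. The only mild care is to ensure that the hypothesis $\alpha\to\infty$ required by Lemma~\ref{magnitudecomp} holds, which is guaranteed because $\alpha=\tfrac{d}{2\rho}(1+o(1))\to\infty$ as $d\to\infty$ (assuming $\rho>0$ is finite, as implicitly required for the target expression to be meaningful).
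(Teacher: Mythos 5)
Your proof is correct and follows exactly the (implicit) route the paper intends: the corollary is an immediate asymptotic specialization of Lemma~\ref{magnitudecomp}, obtained by factoring $d$ out of each term under the hypothesis $\alpha/d\to\tfrac{1}{2\rho}$ and collecting $(1+o(1))$ factors. The paper gives no separate proof for the corollary precisely because this substitution is routine, and your verification that $\alpha\to\infty$ is automatic is a sensible sanity check.
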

Later, we will estimate Levenshtein's optimal polynomials near their largest roots. For that, we will apply Lemma~\ref{Krasikovlemma} to estimate Jacobi polynomials near $t_{1,d}^{\alpha,\alpha}$. The following lemma will be used in our estimation.
\begin{lemma}\label{omegaTaylor}Assume the conditions and notations of Lemma~\ref{Krasikovlemma}. Then for $s'=t_{1,d}^{\alpha,\alpha}$ and $\alpha,d=O(n)$,
\begin{equation*}\omega_{d,\alpha}(x)-\omega_{d,\alpha}(s')=\frac{\sqrt{(d+\alpha)(d+\alpha+1)}\sqrt{1-q_{d,\alpha}-s'^2}}{1-s'^2}(x-s')+\frac{s'\sqrt{(d+\alpha)(d+\alpha+1)}\left(1-2q_{d,\alpha}-s'^2\right)}{2\sqrt{1-q_{d,\alpha}-s'^2} \left(1-s'^2\right)^{2}}(x-s')^2+O(n^2(x-s')^3).\end{equation*}
\end{lemma}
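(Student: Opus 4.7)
The plan is to Taylor expand $\omega_{d,\alpha}$ about $s' := t_{1,d}^{\alpha,\alpha}$ using the explicit formula $\omega_{d,\alpha}'(x) = b_{d,\alpha}(x)$ from~\eqref{omegader}. The linear and quadratic coefficients then come from $b_{d,\alpha}(s')$ and $\tfrac12 b_{d,\alpha}'(s')$, computed by a direct differentiation. The cubic error requires a uniform bound on $\omega_{d,\alpha}''' = b_{d,\alpha}''$ near $s'$; the one thing to check is that $s'$ is not so close to the singular endpoint $\sqrt{1-q_{d,\alpha}}$ of $b_{d,\alpha}$ that this bound blows up. That is exactly what Lemma~\ref{diffroots} provides.

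By~\eqref{omegader}, the linear coefficient is immediately
\[\omega_{d,\alpha}'(s') = b_{d,\alpha}(s') = \frac{\sqrt{(d+\alpha)(d+\alpha+1)}\,\sqrt{1-q_{d,\alpha}-s'^2}}{1-s'^2},\]
matching the claim. Differentiating $b_{d,\alpha}(x) = \sqrt{(d+\alpha)(d+\alpha+1)}\cdot(1-q_{d,\alpha}-x^2)^{1/2}(1-x^2)^{-1}$ by the product rule, the two contributions combine to
\[b_{d,\alpha}'(x) = \sqrt{(d+\alpha)(d+\alpha+1)}\cdot \frac{x\,(1-2q_{d,\alpha}-x^2)}{\sqrt{1-q_{d,\alpha}-x^2}\,(1-x^2)^2},\]
so that $\tfrac12 b_{d,\alpha}'(s')$ is exactly the claimed coefficient of $(x-s')^2$.

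For the error term, I differentiate once more and track orders of magnitude. Under the assumption $\alpha,d = O(n)$, the factor $(\alpha+1)^{4/3}/(d^{2/3}(d+\alpha+1)^{2/3}(d+2\alpha+1)^{2/3})$ in Lemma~\ref{diffroots} is $\Theta(n^{-2/3})$, giving $\sqrt{1-q_{d,\alpha}}-s' = \Theta(n^{-2/3})$ and hence $1-q_{d,\alpha}-s'^2 = \Theta(n^{-2/3})$. The dominant term in $b_{d,\alpha}''(x)$ arises from differentiating $(1-q_{d,\alpha}-x^2)^{-1/2}$ in $b_{d,\alpha}'$, producing a $(1-q_{d,\alpha}-x^2)^{-3/2}$ singularity. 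Consequently
\[b_{d,\alpha}''(s') = O\!\left(\sqrt{(d+\alpha)(d+\alpha+1)}\cdot (1-q_{d,\alpha}-s'^2)^{-3/2}\right) = O\!\left(n\cdot n\right) = O(n^2),\]
and by continuity the same bound holds uniformly in a small neighborhood of $s'$ where $1-q_{d,\alpha}-x^2$ stays $\Theta(n^{-2/3})$. Taylor's theorem with integral remainder applied to $\omega_{d,\alpha}$ around $s'$ now yields the stated expansion with cubic error $O(n^2(x-s')^3)$.

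The only real obstacle is the remainder accounting, since the $(1-q_{d,\alpha}-x^2)^{-3/2}$ blow-up in $b_{d,\alpha}''$ near $\sqrt{1-q_{d,\alpha}}$ could in principle be catastrophic; Lemma~\ref{diffroots} is precisely the input guaranteeing the distance $\Theta(n^{-2/3})$ of $s'$ from that endpoint, converting the pointwise estimate into the uniform $O(n^2)$ bound used in the Taylor remainder.
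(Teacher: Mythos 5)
Your proof is correct and takes essentially the same route as the paper: a Taylor expansion of $\omega_{d,\alpha}$ around $s'$ to second order with a cubic remainder, using the fact that $1-q_{d,\alpha}-s'^2 = \Theta(n^{-2/3})$ to control the error. The paper phrases the expansion by Taylor-expanding the explicit $\arccos$ formula for $\omega_{d,\alpha}$ given in~\eqref{omegada} (and only afterwards multiplies by $\sqrt{(d+\alpha)(d+\alpha+1)}$), while you differentiate via $\omega_{d,\alpha}' = b_{d,\alpha}$ directly; since $b_{d,\alpha}$ is precisely the derivative of that $\arccos$ expression times $\sqrt{(d+\alpha)(d+\alpha+1)}$, the coefficients are identical. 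A minor point in your favor is that you make the role of Lemma~\ref{diffroots} explicit as the input that keeps $1-q_{d,\alpha}-x^2 = \Theta(n^{-2/3})$ uniformly near $s'$, which is what turns the pointwise $(1-q_{d,\alpha}-x^2)^{-3/2}$ singularity into a uniform $O(n^2)$ bound for the remainder; the paper leaves this implicit. You should also note that this uniform bound holds only on a window $|x-s'|=o(n^{-2/3})$ around $s'$, which is the regime in which the lemma is applied.
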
 
\begin{proof}
By~\eqref{omegada},
\[\omega_{d,\alpha}(x)=\sqrt{(d+\alpha)(d+\alpha+1)}\left(\arccos\sqrt{\frac{1-q_{d,\alpha}-x^2}{1-q_{d,\alpha}}}-\sqrt{q_{d,\alpha}}\arccos\sqrt{\frac{1-q_{d,\alpha}-x^2}{(1-q_{d,\alpha})(1-x^2)}}\right).\]
We write the Taylor expansion of part of this expression around $s'$ to get
\begin{eqnarray*}&&\left(\arccos\sqrt{\frac{1-q_{d,\alpha}-x^2}{1-q_{d,\alpha}}}-\sqrt{q_{d,\alpha}}\arccos\sqrt{\frac{1-q_{d,\alpha}-x^2}{(1-q_{d,\alpha})(1-x^2)}}\right)-\left(\arccos\sqrt{\frac{1-q_{d,\alpha}-s'^2}{1-q_{d,\alpha}}}-\sqrt{q_{d,\alpha}}\arccos\sqrt{\frac{1-q_{d,\alpha}-s'^2}{(1-q_{d,\alpha})(1-s'^2)}}\right)\\&=& \frac{\sqrt{1-q_{d,\alpha}-s'^2}}{1-s'^2}(x-s')+\frac{s' \left(1-2q_{d,\alpha}-s'^2\right)}{2\sqrt{1-q_{d,\alpha}-s'^2} \left(1-s'^2\right)^{2}}(x-s')^2+O(n(x-s')^3),
\end{eqnarray*}
from which we obtain
\[\omega_{d,\alpha}(x)-\omega_{d,\alpha}(s')=\frac{\sqrt{(d+\alpha)(d+\alpha+1)}\sqrt{1-q_{d,\alpha}-s'^2}}{1-s'^2}(x-s')+\frac{s'\sqrt{(d+\alpha)(d+\alpha+1)}\left(1-2q_{d,\alpha}-s'^2\right)}{2\sqrt{1-q_{d,\alpha}-s'^2} \left(1-s'^2\right)^{2}}(x-s')^2+O(n^2(x-s')^3),\]
as required.
\end{proof}
In the following, we estimate the remainder term $R_{d,\alpha}$ in Lemma~\ref{Krasikovlemma} near $t^{\alpha,\alpha}_{1,d}$ .
\begin{lemma}\label{RTaylor}Assume the conditions and notations of Lemma~\ref{Krasikovlemma}, and let $s'=t_{1,d}^{\alpha,\alpha}$. Recall the definitions of $b_{d,\alpha}(x)$ and $T_{d,\alpha}(x)$ given, respectively, in~\eqref{omegader} and~\eqref{Tint}. Assume furthermore that $\alpha,d$ grow linearly in $n$ with $\frac{\alpha}{d}\rightarrow\frac{1}{2\rho}$. Then for $|x-s'|=o(n^{-2/3})$,
\begin{equation*}\label{Rdaestimate}R_{d,\alpha}(x)-R_{d,\alpha}(s')=(x-s')b_{d,\alpha}(s')(T_{d,\alpha}(s')+o(1)).\end{equation*}
\end{lemma}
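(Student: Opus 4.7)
The plan is to use the integral representation~\eqref{Rintegral} to split
\[
R_{d,\alpha}(x) - R_{d,\alpha}(s') = (I) + (II),
\]
where $(I)$ is the boundary piece $\frac{1}{f_{d,\alpha}(0)}\int_{s'}^{x}\varepsilon_{d,\alpha}(t)b_{d,\alpha}(t)f_{d,\alpha}(t)\sin(\omega_{d,\alpha}(x)-\omega_{d,\alpha}(t))\,dt$ and $(II)$ is the bulk piece $\frac{1}{f_{d,\alpha}(0)}\int_{0}^{s'}\varepsilon_{d,\alpha}(t)b_{d,\alpha}(t)f_{d,\alpha}(t)\bigl[\sin(\omega_{d,\alpha}(x)-\omega_{d,\alpha}(t))-\sin(\omega_{d,\alpha}(s')-\omega_{d,\alpha}(t))\bigr]\,dt$. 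First I record the relevant size estimates at $s'$ in the regime $d,\alpha = \Theta(n)$ with $\alpha/d\to 1/(2\rho)$. Lemma~\ref{diffroots} gives $\sqrt{1-q_{d,\alpha}}-s' = \Theta(n^{-2/3})$, hence $1-q_{d,\alpha}-s'^2 = \Theta(n^{-2/3})$, so from~\eqref{omegader} $b_{d,\alpha}(s') = \Theta(n^{2/3})$; and the $(1-q-t^2)^{-3}$ singularity in $\varepsilon_{d,\alpha}$ is exactly cancelled by the $((d+\alpha)(d+\alpha+1))^{-1}$ prefactor, giving $\varepsilon_{d,\alpha}(s') = O(1)$.

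For $(II)$ I apply the identity $\sin A - \sin B = 2\sin(\tfrac{A-B}{2})\cos(\tfrac{A+B}{2})$ with $A-B = \omega_{d,\alpha}(x) - \omega_{d,\alpha}(s')$ independent of $t$. By Lemma~\ref{omegaTaylor}, the quadratic correction in the Taylor expansion is $O(n^{4/3}(x-s')^2)$, so under the hypothesis $|x-s'|=o(n^{-2/3})$,
\[
\omega_{d,\alpha}(x) - \omega_{d,\alpha}(s') = b_{d,\alpha}(s')(x-s')(1+o(1)) = o(1),
\]
and thus $2\sin\bigl(\tfrac{\omega_{d,\alpha}(x)-\omega_{d,\alpha}(s')}{2}\bigr) = b_{d,\alpha}(s')(x-s')(1+o(1))$. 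Writing $\phi := \tfrac{\omega_{d,\alpha}(x)-\omega_{d,\alpha}(s')}{2} = o(1)$, I expand
\[
\cos\!\left(\frac{\omega_{d,\alpha}(x)+\omega_{d,\alpha}(s')-2\omega_{d,\alpha}(t)}{2}\right) = \cos(\omega_{d,\alpha}(s')-\omega_{d,\alpha}(t))\cos\phi - \sin(\omega_{d,\alpha}(s')-\omega_{d,\alpha}(t))\sin\phi,
\]
which is $\cos(\omega_{d,\alpha}(s')-\omega_{d,\alpha}(t)) + O(|\phi|)$ pointwise. Substituting, the leading term in $(II)$ pairs with the definition~\eqref{Tint} to give $b_{d,\alpha}(s')(x-s')(1+o(1))\cdot T_{d,\alpha}(s')$, while the error is controlled by $\frac{|\phi|}{f_{d,\alpha}(0)}\int_0^{s'}|\varepsilon_{d,\alpha}b_{d,\alpha}f_{d,\alpha}|\,dt = O(|\phi|)$; here I use the uniform bound $\frac{1}{f_{d,\alpha}(0)}\int_0^{s'}|\varepsilon bf|\,dt = O(1)$ that also underlies~\eqref{Rda} and the analogous bound on $T_{d,\alpha}$. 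This gives $(II) = b_{d,\alpha}(s')(x-s')\bigl(T_{d,\alpha}(s')+o(1)\bigr)$.

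For $(I)$ I use only pointwise bounds on the tiny interval $[s',x]$: $|\varepsilon_{d,\alpha}(t)| = O(1)$, $|b_{d,\alpha}(t)| = O(n^{2/3})$, $|f_{d,\alpha}(t)| = O(f_{d,\alpha}(0))$ (from Lemma~\ref{Krasikovlemma} together with $|R_{d,\alpha}|=O(1)$ near $s'$), and $|\sin(\omega_{d,\alpha}(x)-\omega_{d,\alpha}(t))| \leq |\omega_{d,\alpha}(x)-\omega_{d,\alpha}(t)| = O(b_{d,\alpha}(s')|x-s'|) = O(n^{2/3}|x-s'|)$. Integrating over an interval of length $|x-s'|$ and dividing by $f_{d,\alpha}(0)$ yields $|(I)| = O(n^{4/3}|x-s'|^2) = o(n^{2/3}|x-s'|) = o(b_{d,\alpha}(s')|x-s'|)$, where the key step $n^{4/3}|x-s'| = o(n^{2/3})$ is exactly the content of $|x-s'|=o(n^{-2/3})$. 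Adding the two pieces gives the claimed formula.

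The main delicate point is bookkeeping the magnitudes at the edge of the oscillatory region: $b_{d,\alpha}(s')$ is as large as $\Theta(n^{2/3})$ and $s'$ sits at distance only $\Theta(n^{-2/3})$ from $\sqrt{1-q_{d,\alpha}}$ (where Krasikov's estimate fails), so a priori neither the first nor the second-order Taylor terms of $\omega_{d,\alpha}$ are obviously dominant. The hypothesis $|x-s'|=o(n^{-2/3})$ is precisely the threshold at which $b_{d,\alpha}(s')(x-s')=o(1)$ (so the sine/cosine expansions are valid) and simultaneously $(I) = o((II))$; weakening the hypothesis to $|x-s'| \asymp n^{-2/3}$ would make both of these marginal and the linearization in $x-s'$ would no longer suffice.
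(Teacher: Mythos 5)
Your proof is correct, and it takes a genuinely different route from the paper's. The paper applies Taylor's theorem directly to $R_{d,\alpha}$: it differentiates under the integral sign to identify $R'_{d,\alpha}(x) = b_{d,\alpha}(x)T_{d,\alpha}(x)$, differentiates once more to get
\[
R''_{d,\alpha}(x) = b'_{d,\alpha}(x)T_{d,\alpha}(x) + \frac{b^2_{d,\alpha}(x)}{f_{d,\alpha}(0)}\varepsilon_{d,\alpha}(x)f_{d,\alpha}(x) - b^2_{d,\alpha}(x)R_{d,\alpha}(x),
\]
then bounds $|R''_{d,\alpha}(\xi)| = O(n^{4/3})$ on the interval and absorbs the second-order Taylor remainder into the $o(1)$. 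You instead split the difference at the integration limit into a short boundary piece $(I)$ over $[s',x]$ and a bulk piece $(II)$ over $[0,s']$, then factor $(II)$ with the identity $\sin A - \sin B = 2\sin\bigl(\tfrac{A-B}{2}\bigr)\cos\bigl(\tfrac{A+B}{2}\bigr)$, whose $t$-independent factor $2\sin\phi$ carries the $(x-s')b_{d,\alpha}(s')$ scaling while the $\cos$ factor reproduces $T_{d,\alpha}(s')$ to leading order. In fact your $(II)$ admits the exact closed form $(II) = \sin\bigl(\omega_{d,\alpha}(x)-\omega_{d,\alpha}(s')\bigr)T_{d,\alpha}(s') - \bigl(1-\cos(\omega_{d,\alpha}(x)-\omega_{d,\alpha}(s'))\bigr)R_{d,\alpha}(s')$, which makes the error bookkeeping especially transparent and avoids writing down $R''$ at all. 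The paper's route is slightly shorter; yours is more structural in that it localizes the two error sources (the boundary contribution and the cosine linearization) and makes visible why $|x-s'| = o(n^{-2/3})$ is exactly the threshold at which both become lower order than $b_{d,\alpha}(s')(x-s')T_{d,\alpha}(s')$. All the magnitude estimates you invoke — $b_{d,\alpha}(s') = \Theta(n^{2/3})$, $\varepsilon_{d,\alpha} = O(1)$ near $s'$, the $O(1)$ triangle-inequality bound on $\frac{1}{f_{d,\alpha}(0)}\int_0^{s'}|\varepsilon_{d,\alpha}b_{d,\alpha}f_{d,\alpha}|$ — coincide with those the paper uses, via Lemma~\ref{diffroots} and the remainder bound~\eqref{Rda}, so your argument is self-contained given the surrounding lemmas.
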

\begin{proof}We use Taylor's theorem to estimate $R_{d,\alpha}(x)$ near $x=s'$. From~\eqref{Rintegral}, we know that
\begin{equation*}
R_{d,\alpha}(x)=\frac{1}{f_{d,\alpha}(0)}\int_0^x\varepsilon_{d,\alpha}(t)b_{d,\alpha}(t)f_{d,\alpha}(t)\sin(\omega_{d,\alpha}(x)-\omega_{d,\alpha}(t))dt.
\end{equation*}
This along with $\omega_{d,\alpha}'(x)=b_{d,\alpha}(x)$ imply that
\begin{equation}\label{Rprimeint}
R'_{d,\alpha}(x)=\frac{b_{d,\alpha}(x)}{f_{d,\alpha}(0)}\int_0^x\varepsilon_{d,\alpha}(t)b_{d,\alpha}(t)f_{d,\alpha}(t)\cos(\omega_{d,\alpha}(x)-\omega_{d,\alpha}(t))dt.
\end{equation}
By Taylor's theorem,
\begin{equation}\label{TaylorexpansionR}R_{d,\alpha}(x)-R_{d,\alpha}(s')=R'_{d,\alpha}(s')(x-s')+\frac{R''_{d,\alpha}(\xi)}{2}(x-s')^2\end{equation}
for some $\xi$ between $x$ and $s'$.
Furthermore,
\begin{eqnarray*}
R''_{d,\alpha}(x)&=&\frac{b'_{d,\alpha}(x)}{f_{d,\alpha}(0)}\int_0^x\varepsilon_{d,\alpha}(t)b_{d,\alpha}(t)f_{d,\alpha}(t)\cos(\omega_{d,\alpha}(x)-\omega_{d,\alpha}(t))dt\\&&+\frac{b_{d,\alpha}(x)}{f_{d,\alpha}(0)}\left(\varepsilon_{d,\alpha}(x)b_{d,\alpha}(x)f_{d,\alpha}(x)-b_{d,\alpha}(x)\int_0^x\varepsilon_{d,\alpha}(t)b_{d,\alpha}(t)f_{d,\alpha}(t)\sin(\omega_{d,\alpha}(x)-\omega_{d,\alpha}(t))dt\right)\\
&=& \frac{b'_{d,\alpha}(x)}{f_{d,\alpha}(0)}\int_0^x\varepsilon_{d,\alpha}(t)b_{d,\alpha}(t)f_{d,\alpha}(t)\cos(\omega_{d,\alpha}(x)-\omega_{d,\alpha}(t))dt+\frac{b^2_{d,\alpha}(x)}{f_{d,\alpha}(0)}\varepsilon_{d,\alpha}(x)f_{d,\alpha}(x)-b^2_{d,\alpha}(x)R_{d,\alpha}(x)\\
&=&b'_{d,\alpha}(x)T_{d,\alpha}(x)+\frac{b^2_{d,\alpha}(x)}{f_{d,\alpha}(0)}\varepsilon_{d,\alpha}(x)f_{d,\alpha}(x)-b^2_{d,\alpha}(x)R_{d,\alpha}(x).
\end{eqnarray*}

By Lemma~\ref{Krasikovlemma}, for $|x-s'|=o(n^{-2/3})$, $|R_{d,\alpha}(x)|=O(1)$. The bound on the remainder $R_{d,\alpha}$ in Lemma~\ref{Krasikovlemma} was proved in~\cite{IK} by first applying the triangle inequality to obtain
\[|R_{d,\alpha}(x)|\leq \frac{1}{|f_{d,\alpha}(0)|}\int_0^x|\varepsilon_{d,\alpha}(t)b_{d,\alpha}(t)f_{d,\alpha}(t)|dt.\]
As a result, we also have $|T_{d,\alpha}(x)|=O(1)$ for $|x-s'|=o(n^{-2/3})$. This,~\eqref{Rprimeint}, and the above calculation imply that $|R''_{d,\alpha}(x)|=O(n^{4/3})$ for $|x-s'|=o(n^{-2/3})$. Applying~\eqref{TaylorexpansionR} we obtain that for $|x-s'|=o(n^{-2/3})$,
\[R_{d,\alpha}(x)-R_{d,\alpha}(s')=(x-s')b_{d,\alpha}(s')\left(T_{d,\alpha}(s')+o(1)\right),\]
as required.
\end{proof}
For the calculations of improvement factors for functions constructed using Stiefel manifolds, we will apply the above estimates to the following setting. We will take $g=g_{n-m,\theta'}$ to be Levenshtein's optimal polynomial for $M(n-m,\theta')$ given by
\begin{equation}\label{gnthetageneral}
g(x)=\begin{cases}
\frac{(x+1)^2}{(x-t^{\alpha+1,\alpha+1}_{1,d})}\left(  p^{\alpha+1,\alpha+1}_{d}(x) \right)^2  &\text{ if } t^{\alpha+1,\alpha}_{1,d}< s' \leq t^{\alpha+1,\alpha+1}_{1,d} ,
\\
\frac{(x+1)}{(x-t^{\alpha+1,\alpha}_{1,d})}\left(  p^{\alpha+1,\alpha}_{d}(x) \right)^2  &\text{ if } t^{\alpha+1,\alpha+1}_{1,d-1}< s' \leq t^{\alpha+1,\alpha}_{1,d}.
\end{cases}
\end{equation}
By the works of Levenshtein~\cite{Lev79},~\cite{Lev98}, this satisfies the Delsarte linear programming conditions for $M(n-m,\theta')$ and gives the bound $M(n-m,\theta')\leq M_{\textup{Lev}}(n-m,\theta')$. We address the second case; the first case may be addressed in a similar way. Therefore, we take
\begin{equation}\label{gchoice}g(x)=\frac{(x+1)^2}{(x-s')}\left(  p^{\alpha+1,\alpha+1}_{d}(x) \right)^2\end{equation}
with $s'=t^{\alpha+1,\alpha+1}_{1,d}$. Assuming the notations in the construction of $H$, we will have $\alpha=\frac{n-m-3}{2}$, and $d$ uniquely determined by the inequality $t^{\alpha+1,\alpha+1}_{1,d-1}<\cos\theta'\leq t^{\alpha+1,\alpha+1}_{1,d}$. In fact, we will require $\frac{\alpha}{d}\rightarrow \frac{1}{2\rho}$ as $n\rightarrow\infty$, where $\rho:=\frac{1-\sin(\theta')}{2\sin(\theta')}$, in which case $t^{\alpha+1,\alpha+1}_{1,d}\rightarrow\cos(\theta')$ as $n\rightarrow\infty$. By taking $s'=t^{\alpha+1,\alpha+1}_{1,d}$ with $\frac{\alpha}{d}\rightarrow \frac{1}{2\rho}$,
\begin{equation}\label{qlim}q_{d,\alpha+1}=\frac{\left(\frac{\alpha}{d}\right)^2-\frac{1}{d^2}}{\left(1+\frac{\alpha}{d}\right)\left(1+\frac{\alpha}{d}+\frac{1}{d}\right)}\rightarrow \frac{1}{(1+2\rho)^2}=\sin^2(\theta')\textup{ as } n\rightarrow\infty,\end{equation}
from which it follows that $\sqrt{1-q_{d,\alpha+1}}\rightarrow\cos(\theta')$ as $n\rightarrow\infty$. We will use the above with $m=1$ in our proof of Theorem~\ref{THMtwo}. For Theorem~\ref{THMone}, we again use the above but with $m=0$ and $\theta'$ replaced by $\theta\in\left[\frac{\pi}{3},\frac{\pi}{2}\right)$. Applying the above lemmas to this setting, we have the following useful proposition.
\begin{proposition}\label{gcorollary}Assuming that $\alpha,d,n,\rho$ are chosen as above with $d$ even and $s'=t_{1,d}^{\alpha+1,\alpha+1}$ with $\frac{\alpha}{d}\rightarrow\frac{1}{2\rho}$ as $n\rightarrow\infty$, we have for $|x-s'|=o(n^{-2/3})$,
\begin{equation*}(1-x^2)^{\alpha}g(x)=(1+o(1))\frac{(1+s')}{(1-s')\left(1-q_{d,\alpha+1}-s'^2\right)^{1/2}}\frac{2\|p_d^{\alpha+1,\alpha+1}\|_2^2\left(1+\frac{1}{2\rho}\right)}{\pi\sqrt{(1+\frac{1}{\rho})}}b^2_{d,\alpha+1}(s')\left(\sin\left(\omega_{d,\alpha+1}(s')\right)-T_{d,\alpha+1}(s')+o(1)\right)^2(x-s')\end{equation*}
\end{proposition}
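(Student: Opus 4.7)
The plan is to reduce the computation of $(1-x^2)^\alpha g(x)$ to a near-root analysis of the normalized auxiliary function $f_{d,\alpha+1}$ from Lemma~\ref{Krasikovlemma}. First, I would write
\[
(1-x^2)^\alpha g(x) \;=\; \frac{(1-x)^\alpha(1+x)^{\alpha+2}}{x-s'}\bigl(p_d^{\alpha+1,\alpha+1}(x)\bigr)^2 \;=\; \frac{1+x}{(1-x)(x-s')}\,(1-x^2)^{\alpha+1}\bigl(p_d^{\alpha+1,\alpha+1}(x)\bigr)^2,
\]
and then use the definition~\eqref{fda} (with $\alpha+1$ in place of $\alpha$) to rewrite $(1-x^2)^{\alpha+1}(p_d^{\alpha+1,\alpha+1}(x))^2 = f_{d,\alpha+1}(x)^2/\sqrt{(1-q_{d,\alpha+1}-x^2)(d+\alpha+1)(d+\alpha+2)}$. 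This concentrates all the sensitive $x$-dependence into the factor $f_{d,\alpha+1}(x)^2/(x-s')$ while the rest is smooth at $s'$.

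Next, since $s'=t_{1,d}^{\alpha+1,\alpha+1}$ is a root of $p_d^{\alpha+1,\alpha+1}$, we have $f_{d,\alpha+1}(s')=0$, i.e., $\cos(\omega_{d,\alpha+1}(s'))+R_{d,\alpha+1}(s')=0$. Writing
\[
f_{d,\alpha+1}(x) = f_{d,\alpha+1}(0)\bigl[\cos(\omega_{d,\alpha+1}(x))-\cos(\omega_{d,\alpha+1}(s'))+R_{d,\alpha+1}(x)-R_{d,\alpha+1}(s')\bigr],
\]
I would Taylor expand each bracketed difference. By Lemma~\ref{diffroots}, $\sqrt{1-q_{d,\alpha+1}}-s'$ is of order $n^{-2/3}$, so $\sqrt{1-q_{d,\alpha+1}-s'^2}$ is of order $n^{-1/3}$ and $b_{d,\alpha+1}(s')$ is of order $n^{2/3}$. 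For $|x-s'|=o(n^{-2/3})$, Lemma~\ref{omegaTaylor} gives $\omega_{d,\alpha+1}(x)-\omega_{d,\alpha+1}(s')=b_{d,\alpha+1}(s')(x-s')(1+o(1))=o(1)$, which makes the standard linear expansion
\[
\cos(\omega_{d,\alpha+1}(x))-\cos(\omega_{d,\alpha+1}(s')) = -\sin(\omega_{d,\alpha+1}(s'))\,b_{d,\alpha+1}(s')(x-s')(1+o(1))
\]
valid. Combining this with Lemma~\ref{RTaylor}, which gives $R_{d,\alpha+1}(x)-R_{d,\alpha+1}(s')=b_{d,\alpha+1}(s')(x-s')(T_{d,\alpha+1}(s')+o(1))$, and then squaring, I get
\[
\frac{f_{d,\alpha+1}(x)^2}{x-s'} \;=\; f_{d,\alpha+1}(0)^2\,b_{d,\alpha+1}(s')^2\bigl(\sin(\omega_{d,\alpha+1}(s'))-T_{d,\alpha+1}(s')+o(1)\bigr)^2 (x-s').
\]

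Finally, I would apply Corollary~\ref{corollary:magnitudecomp} with parameter $\alpha+1$ (whose ratio $(\alpha+1)/d$ still tends to $1/(2\rho)$) to replace $f_{d,\alpha+1}(0)^2$ by $(1+o(1))d\cdot 2\|p_d^{\alpha+1,\alpha+1}\|_2^2(1+1/(2\rho))^2/(\pi\sqrt{1+1/\rho})$. Dividing by $\sqrt{(d+\alpha+1)(d+\alpha+2)}=(1+o(1))d(1+1/(2\rho))$ turns one factor of $(1+1/(2\rho))$ into a $1$, producing exactly the constant $\frac{2\|p_d^{\alpha+1,\alpha+1}\|_2^2(1+1/(2\rho))}{\pi\sqrt{1+1/\rho}}$ appearing in the statement. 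Substituting $x\mapsto s'$ in the smooth prefactor $\frac{1+x}{(1-x)\sqrt{1-q_{d,\alpha+1}-x^2}}$ at the cost of an $o(1)$-error (which is harmless since all error terms are multiplicative $1+o(1)$) then assembles the claimed identity.

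The main obstacle is verifying that the cosine Taylor expansion is legitimate: a priori, $\omega_{d,\alpha+1}(x)-\omega_{d,\alpha+1}(s')$ need not be small because $b_{d,\alpha+1}(s')\sim n^{2/3}$ blows up. The precise choice $|x-s'|=o(n^{-2/3})$ — rather than $O(n^{-2/3})$ — is exactly what forces this difference to be $o(1)$, so that the linear cosine expansion and the Taylor remainder in Lemma~\ref{omegaTaylor} (whose next two correction scales, $n^{4/3}(x-s')^2$ and $n^{2}(x-s')^3$, are each $o(1)$) are controlled relative to the leading linear term. Once this scaling is honored, the remaining work is bookkeeping of the constants.
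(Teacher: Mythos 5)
Your proposal is correct and follows essentially the same route as the paper: rewrite $g$ in terms of $f_{d,\alpha+1}$, use $f_{d,\alpha+1}(s')=0$, Taylor-expand the cosine and remainder differences near $s'$ via Lemmas~\ref{omegaTaylor} and~\ref{RTaylor}, and convert $f_{d,\alpha+1}(0)^2$ to $\|p_d^{\alpha+1,\alpha+1}\|_2^2$ via Corollary~\ref{corollary:magnitudecomp}. The only cosmetic difference is that you Taylor-expand $\cos$ directly while the paper passes through the product-to-sum identity $\cos A - \cos B = -2\sin\frac{A+B}{2}\sin\frac{A-B}{2}$; both yield the same $-(1+o(1))b_{d,\alpha+1}(s')(\sin\omega_{d,\alpha+1}(s')-T_{d,\alpha+1}(s')+o(1))$, and your scaling check that $b_{d,\alpha+1}(s')(x-s')=o(1)$ is exactly what makes either route work.
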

\begin{proof}By definition~\eqref{gchoice} of $g$, and by definition~\eqref{fda},
\begin{eqnarray*}(1-x^2)^{\alpha}g(x)&=&\frac{(x+1)^2}{(x-s')}(1-x^2)^{\alpha}\left(p^{\alpha+1,\alpha+1}_{d}(x)\right)^2\\
&=&\frac{(x+1)^2}{(x-s')(1-x^2)(1-q_{d,\alpha+1}-x^2)^{1/2}(d+\alpha+1)^{1/2}(d+\alpha+2)^{1/2}}(f_{d,\alpha+1}(x))^2\\
&=&\frac{(x+1)^2}{d(x-s')(1-x^2)\left(1-q_{d,\alpha+1}-x^2\right)^{1/2}\left(1+\frac{\alpha+1}{d}\right)^{1/2}\left(1+\frac{\alpha+2}{d}\right)^{1/2}}(f_{d,\alpha+1}(x))^2,
\end{eqnarray*}
where $q_{d,\alpha+1}=\frac{(\alpha+1)^2-1}{(d+\alpha+1)(d+\alpha+2)}$. Using $\frac{\alpha}{d}=\frac{1}{2\rho}(1+o(1))$, $|x-s'|=o(n^{-2/3})$, and Lemma~\ref{diffroots}, we obtain
\[(1-x^2)^{\alpha}g(x)=(1+o(1))\frac{(1+s')}{d(1-s')\left(1+\frac{1}{2\rho}\right)\left(1-q_{d,\alpha+1}-s'^2\right)^{1/2}}(x-s')\left(\frac{f_{d,\alpha+1}(x)}{x-s'}\right)^2.\]
By Lemma~\ref{Krasikovlemma}, since $d$ is even, for every $0\leq x\leq\sqrt{1-q_{d,\alpha+1}}$,
\[f_{d,\alpha+1}(x)=f_{d,\alpha+1}(0)(\cos(\omega_{d,\alpha+1}(x))+R_{d,\alpha+1}(x)).\]
Therefore,
\begin{equation}\label{modifiedg}(1-x^2)^{\alpha}g(x)=(1+o(1))\frac{(1+s')f_{d,\alpha+1}(0)^2}{d(1-s')\left(1+\frac{1}{2\rho}\right)\left(1-q_{d,\alpha+1}-s'^2\right)^{1/2}}(x-s')\left(\frac{(\cos(\omega_{d,\alpha+1}(x))+R_{d,\alpha+1}(x))}{x-s'}\right)^2.\end{equation}
Note that $g(s')=0$ and so $\cos(\omega_{d,\alpha+1}(s'))+R_{d,\alpha+1}(s')=0$. Therefore, we rewrite
\begin{eqnarray*}
&&\frac{\cos(\omega_{d,\alpha+1}(x))+R_{d,\alpha+1}(x)}{x-s'}\\&=&\frac{(\cos(\omega_{d,\alpha+1}(x))-\cos(\omega_{d,\alpha+1}(s')))+(R_{d,\alpha+1}(x)-R_{d,\alpha+1}(s'))}{x-s'}\\
&=& \frac{-2\sin\left(\frac{\omega_{d,\alpha+1}(x)+\omega_{d,\alpha+1}(s')}{2}\right)\sin\left(\frac{\omega_{d,\alpha+1}(x)-\omega_{d,\alpha+1}(s')}{2}\right)+(R_{d,\alpha+1}(x)-R_{d,\alpha+1}(s'))}{x-s'}.
\end{eqnarray*}
Applying Lemmas~\ref{omegaTaylor} and~\ref{RTaylor}, we get
\[\frac{R_{d,\alpha+1}(x)-R_{d,\alpha+1}(s')}{x-s'}=b_{d,\alpha+1}(s')(T_{d,\alpha+1}(s')+o(1)),\]
\[\frac{\sin\left(\frac{\omega_{d,\alpha+1}(x)-\omega_{d,\alpha+1}(s')}{2}\right)}{x-s'}=(1+o(1))\frac{\sqrt{(d+\alpha+1)(d+\alpha+2)}\sqrt{1-q_{d,\alpha+1}-s'^2}}{2(1-s'^2)}=(1+o(1))\frac{b_{d,\alpha+1}(s')}{2},\]
and
\[\sin\left(\frac{\omega_{d,\alpha+1}(x)+\omega_{d,\alpha+1}(s')}{2}\right)=(1+o(1))\sin(\omega_{d,\alpha+1}(s')).\]
As a result,
\begin{equation}\label{quotcalc}\frac{\cos(\omega_{d,\alpha+1}(x))+R_{d,\alpha+1}(x)}{x-s'}=-(1+o(1))b_{d,\alpha+1}(s')\left(\sin\left(\omega_{d,\alpha+1}(s')\right)-T_{d,\alpha+1}(s')+o(1)\right).\end{equation}
Equations~\eqref{modifiedg} and~\eqref{quotcalc} combine to give that for $|x-s'|=o(n^{-2/3})$,
\begin{equation}\label{modifiedg2}(1-x^2)^{\alpha}g(x)=(1+o(1))\frac{(1+s')f_{d,\alpha+1}(0)^2}{d(1-s')\left(1+\frac{1}{2\rho}\right)\left(1-q-s'^2\right)^{1/2}}b^2_{d,\alpha+1}(s')\left(\sin\left(\omega_{d,\alpha+1}(s')\right)-T_{d,\alpha+1}(s')+o(1)\right)^2(x-s').\end{equation}
Applying Corollary~\ref{corollary:magnitudecomp} completes the proof.
\end{proof}
A similar result could be obtained for odd $d$ using the recurrence relation~\eqref{recurrence}, and then for $p_d^{\alpha+1,\alpha}$ by using recurrence relation~\eqref{recurrence2}.\\
\\
A comment regarding the main term in this proposition is in order. By~\eqref{Rda}, if $q_{d,\alpha}\in\left[\frac{1}{2},1\right)$,
\begin{equation*}|R_{d,\alpha+1}(x)|<\frac{(1+q_{d,\alpha+1})x}{4(1-q_{d,\alpha+1}-x^2)^{\frac{3}{2}}\sqrt{(d+\alpha+1)(d+\alpha+2)}}.\end{equation*}
By Lemma~\ref{diffroots}, for $|x-s'|=o(n^{-2/3})$, we have 
\begin{eqnarray*}|R_{d,\alpha+1}(x)|&\leq &\frac{(1+q_{d,\alpha+1})}{4(1-q_{d,\alpha+1})\left(\frac{3(\alpha+2)^{\frac{4}{3}}}{d^{\frac{2}{3}}(d+\alpha+2)^{\frac{2}{3}}(d+2\alpha+3)^{\frac{2}{3}}}\right)^{\frac{3}{2}}\sqrt{(d+\alpha+1)(d+\alpha+2)}}(1+o(1))\\
&=& \frac{\rho(1+2\rho)(2-s'^2)}{2\cdot 3^{\frac{3}{2}}s'^2}(1+o(1)).
\end{eqnarray*}
When $\theta'>0.2432...$ ($=13.937...^{\circ}$), which is guaranteed by $q_{d,\alpha+1}\in\left[\frac{1}{2},1\right)$ and~\eqref{qlim}, $|R_{d,\alpha+1}(x)|<\frac{1}{\sqrt{2}}$ for $|x-s'|=o(n^{-2/3})$. This bound also applies to $T_{d,\alpha+1}(s')$ since the proof of Lemma~\ref{Krasikovlemma} in~\cite{IK} passed through applying the triangle inequality to~\eqref{Rintegral}. Furthermore, $\cos(\omega_{d,\alpha+1}(s'))=-R_{d,\alpha+1}(s')$, and so $|\sin\left(\omega_{d,\alpha+1}(s')\right)-T_{d,\alpha+1}(s')|$ is bounded away from $0$ for sufficiently large $n$.\\
\\
On the other hand, if $q_{d,\alpha+1}\in\left[0,\frac{1}{2}\right)$, then~\eqref{Rda} implies that
\begin{equation*}|R_{d,\alpha+1}(x)|<\frac{2(1-x^2)x}{(1-q_{d,\alpha+1})(1-q_{d,\alpha+1}-x^2)^{\frac{3}{2}}\sqrt{(d+\alpha+1)(d+\alpha+2)}}.\end{equation*}
Again, by Lemma~\ref{diffroots}, for $|x-s'|=o(n^{-2/3})$, we have
\begin{eqnarray*}|R_{d,\alpha+1}(x)|&\leq&
\frac{2(1-s'^2)s'}{(1-q_{d,\alpha+1})^{\frac{5}{2}}\left(\frac{3(\alpha+2)^{\frac{4}{3}}}{d^{\frac{2}{3}}(d+\alpha+2)^{\frac{2}{3}}(d+2\alpha+3)^{\frac{2}{3}}}\right)^{\frac{3}{2}}\sqrt{(d+\alpha+1)(d+\alpha+2)}}(1+o(1))\\
 &=&\frac{8\rho(1+2\rho)(1-s'^2)}{2\cdot 3^{\frac{3}{2}}s'^4}(1+o(1)).
\end{eqnarray*}
In this case, we have $|R_{d,\alpha+1}(x)|<\frac{1}{\sqrt{2}}$ if $\theta'<0.9954...$. This is guaranteed by $q_{d,\alpha+1}\in\left[0,\frac{1}{2}\right)$ and~\eqref{qlim}. Therefore, $|\sin\left(\omega_{d,\alpha+1}(s')\right)-T_{d,\alpha+1}(s')|$ is bounded away from $0$ for sufficiently large $n$ by a universal constant.\\
\\
Though we do not use the following lemma in the proofs of our main results, it tells us that we have concavity to the right of the maximal root to $\sqrt{1-q_{d,\alpha}}$.
\begin{lemma}\label{lemma:concavity}
For $\alpha=\frac{n-3}{2}$, $(1-t^2)^{\frac{\alpha+1}{2}}p_d^{\alpha,\alpha}(t)$ is concave in the interval $[s',\sqrt{1-q_{d,\alpha}}]$. 
\end{lemma}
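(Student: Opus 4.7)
The plan is to write $F(t):=(1-t^2)^{(\alpha+1)/2}p_d^{\alpha,\alpha}(t)$, compute $F''$, and substitute the Jacobi differential equation
\[(1-t^2)\,y''-2(\alpha+1)\,t\,y'+d(d+2\alpha+1)\,y=0\]
satisfied by $y=p_d^{\alpha,\alpha}$. Setting $c:=(\alpha+1)/2$, a direct Leibniz calculation gives
\[F''(t)=(1-t^2)^c y''-4ct(1-t^2)^{c-1}y'+\bigl[4c(c-1)t^2(1-t^2)^{c-2}-2c(1-t^2)^{c-1}\bigr]y.\]
Multiplying through by $(1-t^2)^{2-c}>0$ and using the ODE to replace $(1-t^2)^2y''$ by $(1-t^2)[2(\alpha+1)ty'-d(d+2\alpha+1)y]$ gives
\[(1-t^2)^{2-c}F''(t)=\bigl[2(\alpha+1)-4c\bigr]t(1-t^2)y'+\bigl[-d(d+2\alpha+1)(1-t^2)+(\alpha^2-1)t^2-(\alpha+1)(1-t^2)\bigr]y.\]
The choice $c=(\alpha+1)/2$ is made precisely so that the $y'$ term vanishes; this is the key algebraic reason the statement is clean.

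With the $y'$-term gone, the next step is to rearrange the coefficient of $y$. The plan is to collect terms as
\[-\bigl[d(d+2\alpha+1)+\alpha+1\bigr](1-t^2)+(\alpha^2-1)t^2=\bigl(d^2+2d\alpha+d+\alpha^2+\alpha\bigr)t^2-\bigl(d^2+2d\alpha+d+\alpha+1\bigr),\]
and to recognize both coefficients through $(d+\alpha)(d+\alpha+1)=d^2+2d\alpha+d+\alpha^2+\alpha$ and the identity $d^2+2d\alpha+d+\alpha+1=(d+\alpha)(d+\alpha+1)-(\alpha^2-1)=(d+\alpha)(d+\alpha+1)(1-q_{d,\alpha})$ coming straight from the definition $q_{d,\alpha}=\frac{\alpha^2-1}{(d+\alpha)(d+\alpha+1)}$. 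This yields the clean identity
\[(1-t^2)^{2-c}\,F''(t)=(d+\alpha)(d+\alpha+1)\bigl[t^2-(1-q_{d,\alpha})\bigr]\,p_d^{\alpha,\alpha}(t).\]

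The conclusion is then immediate. On the interval $[s',\sqrt{1-q_{d,\alpha}}]$ each factor on the right has a fixed sign: $(1-t^2)^{2-c}>0$ for $t\in(-1,1)$, the bracket $t^2-(1-q_{d,\alpha})\leq 0$ since $t\leq\sqrt{1-q_{d,\alpha}}$, and $p_d^{\alpha,\alpha}(t)\geq 0$ since $s'=t_{1,d}^{\alpha,\alpha}$ is the largest root and $p_d^{\alpha,\alpha}(1)=\binom{d+\alpha}{d}>0$, so $p_d^{\alpha,\alpha}$ is non-negative on $[s',1]$. Thus $F''\leq 0$ throughout $[s',\sqrt{1-q_{d,\alpha}}]$, proving concavity. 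The main potential obstacle is bookkeeping in the algebraic identification that brings the $(1-q_{d,\alpha})$ factor out, but since $q_{d,\alpha}$ is literally defined to make this cancellation work, the computation is essentially forced; no analytic input beyond the Jacobi ODE and the definitions of $q_{d,\alpha}$ and $s'$ is needed.
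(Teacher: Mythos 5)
Your proof is correct and takes essentially the same approach as the paper: both rely on the fact that $(1-t^2)^{(\alpha+1)/2}p_d^{\alpha,\alpha}(t)$ satisfies $F'' + b_{d,\alpha}^2F = 0$ (the normal form of the Jacobi ODE), and then read off the sign of $F''$ from the signs of $b_{d,\alpha}^2$ and $F$ on $[s',\sqrt{1-q_{d,\alpha}}]$. The paper simply cites the normal form and its connection to $b_{d,\alpha}$ as defined in \eqref{omegader}, whereas you derive it from scratch; your computation is correct (indeed your final identity is exactly $F''=-b_{d,\alpha}^2F$), so this is just a more self-contained rendering of the same argument.
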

\begin{proof}
$y:=(1-t^2)^{\frac{\alpha+1}{2}}p_d^{\alpha,\alpha}(t)$ satisfies the normal form of the Jacobi differential equation $y''+b_{d,\alpha}^2y=0$, where $b_{d,\alpha}$ is as in~\eqref{omegader}. Since $s'$ is the largest zero of $y$ and $y(t)\geq 0$ for every $t\geq s'$, we obtain that $y''\leq 0$ on $[s',\sqrt{1-q_{d,\alpha}}]$, as required.
\end{proof}
We also do not use the following lemma, but it gives us information about the rest of the interval.
\begin{lemma}\label{lemma:decreasing}For $n\geq 3$ and $\alpha=\frac{n-3}{2}$,
$(1-t^2)^{\alpha}(p_d^{\alpha,\alpha}(t))^2$ is decreasing on $[\sqrt{1-q_{d,\alpha}},1]$.
\end{lemma}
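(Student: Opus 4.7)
The plan is to reduce the monotonicity of
\[F(t) := (1-t^2)^{\alpha}(p_d^{\alpha,\alpha}(t))^2\]
to a sign calculation for a single auxiliary function. I set $y(t) := (1-t^2)^{(\alpha+1)/2} p_d^{\alpha,\alpha}(t)$, the normal-form variable used in the proof of Lemma~\ref{lemma:concavity}; then $F(t) = y(t)^2/(1-t^2)$, and a direct differentiation yields
\[F'(t) = \frac{2 y(t)\, \Psi(t)}{(1-t^2)^2}, \qquad \Psi(t) := (1-t^2) y'(t) + t y(t).\]
Since $t_{1,d}^{\alpha,\alpha} < \sqrt{1-q_{d,\alpha}}$ (all zeros of $p_d^{\alpha,\alpha}$ lie in the oscillatory region of the normal form; cf.\ Lemma~\ref{diffroots}), we have $y(t) \geq 0$ on $[\sqrt{1-q_{d,\alpha}}, 1]$, so it suffices to show $\Psi(t) \leq 0$ on this interval. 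Note also that $\Psi(1) = 0$ by direct substitution, so if I can prove $\Psi'(t) \geq 0$ on $[\sqrt{1-q_{d,\alpha}}, 1)$, then integrating backward from $t = 1$ forces $\Psi \leq 0$ throughout.

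The next step is to exploit the normal-form Jacobi equation $y'' + b_{d,\alpha}^2 y = 0$ with $b_{d,\alpha}$ as in~\eqref{omegader}. On $[\sqrt{1-q_{d,\alpha}}, 1)$ the coefficient $b_{d,\alpha}^2(t)$ is $\leq 0$, so the ODE reads $y''(t) = -b_{d,\alpha}^2(t) y(t) \geq 0$; that is, $y$ is convex on this interval. Combined with $y > 0$ on $[\sqrt{1-q_{d,\alpha}}, 1)$ and $y(1) = 0$, convexity forces $y' \leq 0$ throughout: if instead $y'(t^*) > 0$ at some interior $t^*$, then by convexity $y'$ is non-decreasing on $[t^*, 1)$, so $y$ is strictly increasing there with $y(t^*) > 0$, contradicting $y(1) = 0$.

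Differentiating $\Psi$ and substituting the ODE gives
\[\Psi'(t) = (1-t^2) y''(t) - t y'(t) + y(t) = -(1-t^2) b_{d,\alpha}^2(t)\, y(t) + y(t) - t y'(t).\]
On $[\sqrt{1-q_{d,\alpha}}, 1)$ each of the three summands is non-negative: the first because $(1-t^2) \geq 0$, $b_{d,\alpha}^2 \leq 0$, and $y \geq 0$; the second because $y \geq 0$; the third because $t > 0$ and $y' \leq 0$. Hence $\Psi' \geq 0$ on this interval, and together with $\Psi(1) = 0$ this yields $\Psi \leq 0$, so $F'(t) \leq 0$ on $[\sqrt{1-q_{d,\alpha}}, 1]$, proving the lemma. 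The only step that requires any real care is the sign of $y'$, where convexity must be combined with the boundary behavior at $t = 1$; all other steps are one-line calculations.
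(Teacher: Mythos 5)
Your argument is correct, but it takes a genuinely different route from the paper's. The paper's proof cites an upper bound on $\tfrac{d}{dt}\log|p_d^{\alpha,\alpha}(t)|$ from Ashikhmin--Barg--Litsyn and verifies algebraically that it is less than $\tfrac{(n-3)t}{2(1-t^2)}$ on $[\sqrt{1-q_{d,\alpha}},1]$. You instead work directly from the Liouville normal form $y''+b_{d,\alpha}^2y=0$ (the same ODE the paper already invokes in the proof of Lemma~\ref{lemma:concavity}), reducing the claim to a sign analysis of $\Psi=(1-t^2)y'+ty$: you show $\Psi'\geq 0$ on $[\sqrt{1-q_{d,\alpha}},1)$ and $\Psi(1)=0$, then use convexity of $y$ (since $b_{d,\alpha}^2\leq 0$ to the right of $\sqrt{1-q_{d,\alpha}}$) together with $y(1)=0$ to get $y'\leq 0$. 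Your version is more self-contained -- it avoids the external [ABL00] inequality entirely and reuses machinery already present in the section -- at the cost of being somewhat longer; the paper's version is essentially a one-line verification once the cited bound is accepted.

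Two small caveats worth noting. First, $\Psi(1)=0$ is really a limit statement when $\alpha\leq 1$ (where $y'$ diverges at $t=1$); it is literal only for $\alpha>1$. This is harmless because for $\alpha\leq 1$ one has $q_{d,\alpha}\leq 0$, so $[\sqrt{1-q_{d,\alpha}},1]$ is empty or a single point and the lemma is vacuous. Second, the citation to Lemma~\ref{diffroots} for $t_{1,d}^{\alpha,\alpha}<\sqrt{1-q_{d,\alpha}}$ carries extra hypotheses ($d\geq 5$, $\alpha>0$) not assumed in the lemma being proved; the underlying fact that the turning point $\sqrt{1-q_{d,\alpha}}$ lies to the right of the largest zero is standard for the normal form, so nothing is lost, but the reference is not quite the right one. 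Neither affects the validity of the argument.
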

\begin{proof}
By definition, $q_{d,\alpha}=\frac{\alpha^2-1}{(d+\alpha)(d+\alpha+1)}$. Note that
\[(n-1)^2t^2-4(1-t^2)d(d+n-2)-4t^2> 0\iff t^2> \frac{4d(d+n-2)}{(2d+n-3)(2d+n-1)+2(n-3)}.\]
For $t\in[\sqrt{1-q_{d,\alpha}},1]$,
\[t^2\geq 1-q_{d,\alpha}=1-\frac{(n-5)(n-1)}{(2d+n-3)(2d+n-1)}> \frac{4d(d+n-2)}{(2d+n-3)(2d+n-1)+2(n-3)}.\]
The last inequality follows from $n\geq 3$ and
\begin{eqnarray*}&&1-\frac{(n-5)(n-1)}{(2d+n-3)(2d+n-1)}- \frac{4d(d+n-2)}{(2d+n-3)(2d+n-1)+2(n-3)}\\
&&\geq 1-\frac{(n-5)(n-1)}{(2d+n-3)(2d+n-1)}- \frac{4d(d+n-2)}{(2d+n-3)(2d+n-1)}\\
&&=\frac{2(n-1)}{(2d+n-3)(2d+n-1)}>0.
\end{eqnarray*}
The analysis of~\cite{Barg} implies that
\[\frac{d}{dt}\log |p_d^{\alpha,\alpha}(t)|<\frac{(n-1)t-\sqrt{(n-1)^2t^2-4(1-t^2)d(d+n-2)}}{2(1-t^2)}.\]
The above imply that for $t\in[\sqrt{1-q_{d,\alpha}},1]$ and $n\geq 3$,
\[\frac{d}{dt}\log |p_d^{\alpha,\alpha}(t)|<\frac{(n-3)t}{2(1-t^2)},\]
and so
\[\frac{d}{dt}\log\left((1-t^2)^{\alpha}(p_d^{\alpha,\alpha}(t))^2\right)<0,\]
as desired.
\end{proof}
Combining Lemmas~\ref{lemma:concavity} and~\ref{lemma:decreasing}, if $n\geq 3$ and $\alpha=\frac{n-3}{2}$, on the interval $[s',1]$ with $s'=t_{1,d}^{\alpha,\alpha}$, we may bound $(1-t^2)^{\frac{\alpha+1}{2}}p_d^{\alpha,\alpha}(t)$ from above by its tangent line at $s'$. As a result, we may extend our estimate of $(1-x^2)^{\alpha}g(x)$ to hold on all of $[s',1]$ and $o(n^{-2/3})$ to the left of $s'$ at the expense of an innocuous inequality on $[s',1]$. This paper does not use this extension of our estimates; we leave the details to the reader.
\section{Proof of Theorem~\ref{THMone}}\label{section:m1b}
In this section, we prove Theorem~\ref{THMone}, that is, if $\frac{\pi}{3}\leq\theta\leq\frac{\pi}{2}$, then
\begin{equation*}\delta_n\leq\frac{1+o(1)}{e}\cdot\delta_n^{\textup{KL}}(\theta)\textup{ as }n\rightarrow\infty,\end{equation*}
and that $\frac{1}{e}$ cannot be improved if we use the construction of Subsection~\ref{m1b} applied to Levenshtein's optimal polynomials. The result has advantages over that given in~\cite{SZ}. The first is that $\frac{1}{e}$ is better than $0.4325$. The second is that $\frac{1}{e}$ is optimal in the above sense, and that the specific value of $\theta$ does not play a role in our calculations, and so we see that the improvement factor $\frac{1}{e}$ is, in contrast to~\cite{SZ}, independent of $\theta$. In particular, our argument does not use $\theta^*$ and its value. These lead to the cleaner and stronger conclusion above and explain the convergence of the entries in the table of Section 6 of~\cite{SZ} to what appeared to be $\frac{1}{e}$.\\
\\
Let $s:=\cos(\theta)$. We take $g=g_{n,\theta}$ to be Levenshtein's optimal polynomial for $M(n,\theta)$, assume that $s=t_{1,d}^{\alpha+1,\alpha+1}$, and let $F=\chi_{[0,r+\delta]}$, where $r=\frac{1}{2\sin(\theta/2)}$ and $\delta=O(n^{-1})$ is to be found. We also assume that $d$ is even. The odd case and the case where $s=t_{1,d}^{\alpha+1,\alpha}$ may be treated similarly. We associated the function $H$ given in equality~\eqref{Hspheresubsection}, which led in~\eqref{HT1} to the Cohn--Elkies linear programming conditions being satisfied if $H(T)\leq 0$ when $|T|\geq 1$, where
\begin{equation}\label{HTee}
H(T)=T^{2n-1}\int_{-1}^1g(x)\mu(x;\chi_{[0,r+\delta]}(x/T))dx.
\end{equation}
Here, we used Proposition~\ref{38}. We first find $\delta$ so that $H(1)\leq 0$. By Proposition~\ref{spheredensity}, it suffices that
\begin{equation}\label{1H}
\int_{-1}^1g_{n,\theta}(x)\mu(x;\chi_{[0,r+\delta]})dx<0
\end{equation}
for sufficiently large $n$, where, when $|x-s|=o(n^{-1/2})$,
\begin{equation*}
\mu(x;\chi_{[0,r+\delta]})=C_{n,\delta,r}(1+o(1))\frac{(1-x^2)^{\frac{n-4}{2}}}{(1-x)^{n-\frac{1}{2}}}\sqrt{1+\left(x-\frac{(1-x^2)r}{\sqrt{1-(1-x^2)r^2}}\right)^2}\left(r+\delta-\sqrt{1-(1-x^2)(r+\delta)^2}-x(r+\delta)\right)^+
\end{equation*}
as $n\rightarrow\infty$ with $C_{n,\delta,r}>0$ constants. Inequality~\eqref{1H} is equivalent to
\begin{equation*}\label{1Hunravel}
\int_{s}^1g_{n,\theta}(x)\mu(x;\chi_{[0,r+\delta]})dx<\int_{-1}^s|g_{n,\theta}(x)|\mu(x;\chi_{[0,r+\delta]})dx.
\end{equation*}
Observe that
\begin{equation*}
\left(r+\delta-\sqrt{1-(1-x^2)(r+\delta)^2}-x(r+\delta)\right)^+=\begin{cases}r+\delta-\sqrt{1-(1-x^2)(r+\delta)^2}-x(r+\delta)& \textup{ if }-1\leq x\leq 1-\frac{1}{2(r+\delta)^2}\\
0&\textup{otherwise}.\end{cases}
\end{equation*}
A Taylor expansion around $\delta=0$ gives
\begin{equation*}
1-\frac{1}{2(r+\delta)^2}=s+\frac{\delta}{r^3}+O(n^{-2}).
\end{equation*}
Furthermore, for $|x-s|=o(1)$, a Taylor expansion around $x=s$ and $\delta=0$ gives
\begin{equation*}
r+\delta-\sqrt{1-(1-x^2)(r+\delta)^2}-x(r+\delta)=2\delta-\frac{r}{1-s}(x-s)+o(1)=2\delta\left(1-\frac{r^3}{\delta}(x-s)\right)+o(1)
\end{equation*}
We also have
\begin{equation*}
\sqrt{1+\left(x-\frac{(1-x^2)r}{\sqrt{1-(1-x^2)r^2}}\right)^2}=\sqrt{2}(1+o(1))
\end{equation*}
and
\begin{equation*}
\frac{1}{(1-x)^{n-\frac{1}{2}}}=\frac{1+o(1)}{(1-s)^{n-\frac{1}{2}}}e^{\frac{n(x-s)}{1-s}}.
\end{equation*}
Therefore,
\begin{eqnarray*}
\int_{s}^1g_{n,\theta}(x)\mu(x;\chi_{[0,r+\delta]})dx=\frac{2\delta C_{n,\delta,r}\sqrt{2}(1+o(1))}{\sqrt{1-s^2}(1-s)^{n-\frac{1}{2}}}\int_s^{s+\frac{\delta}{r^3}}e^{\frac{n(x-s)}{1-s}}\left(1-\frac{r^3}{\delta}(x-s)\right)(1-x^2)^{\frac{n-3}{2}}g_{n,\theta}(x)dx.
\end{eqnarray*}
By Proposition~\ref{gcorollary}, there is a constant $K_{n,s}>0$ such that for $|x-s|=o(n^{-2/3})$,
\begin{equation*}
(1-x^2)^{\frac{n-3}{2}}g_{n,\theta}(x)=(1+o(1))K_{n,s}(x-s).
\end{equation*}
As a result, there is a constant $D_{n,\delta,s}>0$ such that
\begin{eqnarray*}
\int_{s}^1g_{n,\theta}(x)\mu(x;\chi_{[0,r+\delta]})dx=D_{n,\delta,s}(1+o(1))\int_0^{\frac{\delta}{r^3}}e^{2r^2nz}\left(1-\frac{r^3}{\delta}z\right)zdz
\end{eqnarray*}
as $n\rightarrow\infty$. Similarly, we have
\begin{eqnarray*}
\int_{-1}^s|g_{n,\theta}(x)|\mu(x;\chi_{[0,r+\delta]})dx=D_{n,\delta,s}(1+o(1))\int_0^{\frac{1}{n^{2/3}\log(n)}}e^{-2r^2nz}\left(1+\frac{r^3}{\delta}z\right)zdz
\end{eqnarray*}
as $n\rightarrow\infty$. The reason that this second estimate is an equality and not an inequality is that the ratio of  $K_{n,s}(x-s)$ to $(1-x^2)^{\frac{n-3}{2}}g_{n,\theta}(x)$ is bounded by a polynomial. This is a consequence of Lemmas~\ref{Krasikovupperbound} and~\ref{magnitudecomp} and Proposition~\ref{gcorollary}. Furthermore, away from $|x-s|=O(n^{-2/3})$, the integrand has exponential decay due to the concentration of the mass of the intersection of two balls in $\mathbb{R}^n$ near its boundary.\\
\\
Therefore, we want $\delta$ to satisfy
\begin{equation*}
\int_0^{\infty}e^{-2z}\left(1+\frac{r}{n\delta}z\right)zdz>\int_0^{\frac{n\delta}{r}}e^{2z}\left(1-\frac{r}{n\delta}z\right)zdz,
\end{equation*}
which is true exactly when
\begin{equation*}
\int_{-\frac{n\delta}{r}}^{\infty}e^{-2z}\left(1+\frac{r}{n\delta}z\right)zdz>0,
\end{equation*}
that is $\frac{n\delta}{r}<1$. Under this condition, we obtain that $H(1)\leq 0$. If $|T|-1\neq O(n^{-1})$, then it is clear that $H(T)\leq 0$. On the other hand, if $|T|-1=O(n^{-1})$, then the same argument as above combined with~\eqref{HTee} shows that if $\frac{n\delta}{r}<1$, then $H(T)\leq 0$ whenever $|T|\geq 1$. An application of Proposition~\ref{38} with $\frac{n\delta}{r}=1-o(1)$ improves $\delta_n\leq \frac{M_{\textup{Lev}}(n,\theta)}{(2r)^n}=\delta_n^{\textup{KL}}(\theta)$ to
\begin{equation*}
\delta_n\leq (1+o(1))\frac{M_{\textup{Lev}}(n,\theta)}{(2(r+\delta))^n}=\frac{(1+o(1))}{\left(1+\frac{\delta}{r}\right)^n}\delta_n^{\textup{KL}}(\theta)=\frac{1+o(1)}{e}\delta_n^{\textup{KL}}(\theta),
\end{equation*}
concluding the proof of Theorem~\ref{THMone}.
\section{Proofs of Theorem~\ref{THMtwo}}\label{section:m1}
In this section, we prove Theorem~\ref{THMtwo}, an analogue of Theorem~\ref{THMone} for spherical codes. Recall that it states that for angles $0<\theta<\theta'\leq\frac{\pi}{2}$, we have
\[M(n,\theta)\leq\frac{1+o(1)}{e}\frac{M_{\textup{Lev}}(n-1,\theta')}{\mu_n(\theta,\theta')}\textup{ as }n\rightarrow\infty.\]
Furthermore, by considering $V_1(\mathbb{R}^n)$ with Levenshtein's optimal polynomial, $\frac{1}{e}$ cannot be improved. Note that this improves the spherical codes theorem of~\cite{SZ} in two directions. First, the improvement factor is $\frac{1}{e}$ which is better than $0.4325$. Additionally, we claim optimality. Furthermore, $\theta'=\theta^*$ is not important; the optimization constant is always $\frac{1}{e}(1+o(1))$ and the proof, as we will see, does not make use of $\theta^*$ and its value, in contrast to the argument we gave in~\cite{SZ}.\\
\\
We give two proofs of this result. One proof uses the estimate on conditional densities for averaging over spheres, which was important to our calculations in~\cite{SZ}. However, we replace our estimation of Levenshtein's optimal polynomials in~\cite{SZ} with Proposition~\ref{gcorollary}. In the second proof, we show that this new estimate allows us to circumvent estimating conditional densities altogether and directly obtain inequality~\eqref{constantimprovement} in a cleaner way.
\subsection{Proof using conditional densities estimate}
In this first proof, we use our estimate of conditional densities in the case of $V_1(\mathbb{R}^n)$, that is, Proposition~\ref{nuFestimate}, to find the maximal $\delta\in[0,r]$ such that we have $H(t)\leq 0$ for every $t\in[-1,s]$, where $s:=\cos(\theta)$. Let $s':=\cos(\theta')$.\\
\\
First, note that it suffices to assume that $|x-s'|=o\left(n^{-2/3}\right)$ and so $|t-s|=o\left(n^{-2/3}\right)$. As in~\cite{SZ}, it suffices to consider $t=s$. Given $\theta'$, $d$ is uniquely determined by $t^{\alpha+1,\alpha+1}_{1,d-1}< \cos(\theta') \leq t^{\alpha+1,\alpha+1}_{1,d}$, where $\alpha=\frac{n-4}{2}$, and $g=g_{n-1,\theta'}$ is Levenshtein's optimal polynomial for bounding $M(n-1,\theta')$. Without loss of generality, we may assume that  $s':=\cos(\theta')=t^{\alpha+1,\alpha+\varepsilon}_{1,d}$ for some $\varepsilon\in\{0,1\}$. We address the case when $\varepsilon=1$ and $d$ is even; the other cases are resolved in a similar way. We therefore have
\[g(x)=\frac{(x+1)^2}{(x-s')}\left(  p^{\alpha+1,\alpha+1}_{d}(x) \right)^2.\]
See the discussion around equation~\eqref{gchoice}. As a result of equation~\eqref{Hrewrittenm1} and Proposition~\ref{Hexplicit2m1}, $H(s)\leq 0$ is equivalent to the following inequality being valid.
\[0\geq\int_{-1}^1g(x)\nu_1(x;s,\chi_{[r-\delta,R]})dx=\int_{-1}^{s'}g(x)\nu_1(x;s,\chi_{[r-\delta,R]})dx+\int_{s'}^1g(x)\mu(x,s)dx=-\int_{-1}^{s'}|g(x)|\nu_1(x;s,\chi_{[r-\delta,R]})dx+\int_{s'}^1g(x)\mu(x,s)dx,\]
that is,
\begin{equation}\label{gineqm1}\int_{s'}^1g(x)\nu_1(x;s,\chi_{[r-\delta,R]})dx\leq \int_{-1}^{s'}|g(x)|\nu_1(x;s,\chi_{[r-\delta,R]})dx.\end{equation}
Note that by construction, $g(x)\leq 0$ for every $x\in[-1,s']$. We estimate both sides of this inequality and find conditions equivalent to the validity of the first Delsarte linear programming condition for $M(n,\theta)$. By Proposition~\ref{nuFestimate}, for $|x-s'|=o(n^{-1/2})$ and $\delta=o(n^{-1/2})$,
\[\nu_1(x;s,\chi_{[r-\delta,R]})=C_{n,R,r,\delta,s}(1+o(1))\left(\frac{1-x^2}{x^2}\right)^{\frac{n-4}{2}}\left(\delta+\sqrt{\frac{s-x}{1-x}}-r\right)^+e^{-\frac{2nr\left(\sqrt{\frac{s-x}{1-x}}-r\right)}{s-r^2}}\]
for some constant $C_{n,R,r,\delta,s}>0$. As a result,
\[C_{n,R,r,\delta,s}^{-1}\int_{s'}^1g(x)\nu_1(x;s,\chi_{[r-\delta,R]})dx=(1+o(1))\int_{s'}^1\frac{1}{x^{n-4}}\left(\delta+\sqrt{\frac{s-x}{1-x}}-r\right)^+e^{-\frac{2nr\left(\sqrt{\frac{s-x}{1-x}}-r\right)}{s-r^2}}g(x)(1-x^2)^{\frac{n-4}{2}}dx.\]
Let $z:=x-s'$ with $|z|\leq\frac{1}{n^{2/3}\log(n)}=o(n^{-2/3})$. We have the Taylor expansions
\begin{equation*}
\sqrt{\frac{s-x}{1-x}}-r=-\frac{z(1-s)}{2(s-s')^{\frac{1}{2}}(1-s')^{\frac{3}{2}}}(1+o(1))
\end{equation*}
and
\begin{equation*}\frac{1}{x}=\frac{1}{s'}\left(1-\frac{z}{s'}\right)+o\left(\frac{1}{n}\right)\implies \frac{1}{x^{n-4}}=(1+o(1))\frac{1}{s'^{n-4}}e^{-n\frac{z}{s'}}.\end{equation*}
Therefore,
\begin{equation}\label{upperint}C_{n,R,r,\delta,s}^{-1}s'^{n-4}\int_{s'}^1g(x)\nu_1(x;s,\chi_{[r-\delta,R]})dx=(1+o(1))\int_{0}^{\frac{2\delta(s-s')^{\frac{1}{2}}(1-s')^{\frac{3}{2}}}{(1-s)}}e^{\frac{nz}{1-s'}}\left(\delta-\frac{z(1-s)}{2(s-s')^{\frac{1}{2}}(1-s')^{\frac{3}{2}}}\right)g(s'+z)(1-(s'+z)^2)^{\frac{n-4}{2}}dz.\end{equation}
Similarly,
\begin{equation}\label{lowerint}C_{n,R,r,\delta,s}^{-1}s'^{n-4}\int_{-1}^{s'}|g(x)|\nu_1(x;s,\chi_{[r-\delta,R]})dx\geq (1+o(1))\int_{0}^{\frac{1}{n^{2/3}\log(n)}}e^{\frac{-nz}{1-s'}}\left(\delta+\frac{z(1-s)}{2(s-s')^{\frac{1}{2}}(1-s')^{\frac{3}{2}}}\right)|g(s'-z)|(1-(s'-z)^2)^{\frac{n-4}{2}}dz.\end{equation}
By Proposition~\ref{gcorollary}, for $|x-s'|=o(n^{-2/3})$,
\begin{equation}\label{gK}(1-x^2)^{\frac{n-4}{2}}g(x)=(1+o(1))K_{\alpha,d,s'}(x-s')\end{equation}
for a constant $K_{\alpha,d,s'}>0$. Combining~\eqref{gK} with~\eqref{upperint} and~\eqref{lowerint}, inequality~\eqref{gineqm1} is satisfied if for large enough $n$, $\delta$ satisfies
\begin{eqnarray*}
&&\int_{0}^{\frac{1}{n^{2/3}\log(n)}}e^{\frac{-nz}{1-s'}}\left(\delta+\frac{z(1-s)}{2(s-s')^{\frac{1}{2}}(1-s')^{\frac{3}{2}}}\right)zdz\\&>&\int_{0}^{\frac{2\delta(s-s')^{\frac{1}{2}}(1-s')^{\frac{3}{2}}}{(1-s)}}e^{\frac{nz}{1-s'}}\left(\delta-\frac{z(1-s)}{2(s-s')^{\frac{1}{2}}(1-s')^{\frac{3}{2}}}\right)zdz.
\end{eqnarray*}
Changing variables, this is equivalent to
\begin{eqnarray*}
&&\int_{0}^{\frac{n^{1/3}}{\log(n)}}e^{\frac{-v}{1-s'}}\left(1+\frac{v(1-s)}{2n\delta(s-s')^{\frac{1}{2}}(1-s')^{\frac{3}{2}}}\right)vdv\\&>&\int_{0}^{\frac{2\delta n(s-s')^{\frac{1}{2}}(1-s')^{\frac{3}{2}}}{(1-s)}}e^{\frac{v}{1-s'}}\left(1-\frac{v(1-s)}{2n\delta(s-s')^{\frac{1}{2}}(1-s')^{\frac{3}{2}}}\right)vdv.
\end{eqnarray*}
Let $\Lambda:=\frac{2\delta n(s-s')^{\frac{1}{2}}(1-s')^{\frac{3}{2}}}{(1-s)}$. Therefore, we may equivalently maximize $\Lambda>0$ such that
\begin{eqnarray*}
\int_{0}^{\infty}e^{\frac{-v}{1-s'}}\left(1+\frac{v}{\Lambda}\right)vdv>\int_{0}^{\Lambda}e^{\frac{v}{1-s'}}\left(1-\frac{v}{\Lambda}\right)vdv
\end{eqnarray*}
for sufficiently large $n$, that is,
\begin{eqnarray*}
\int_{-\Lambda}^{\infty}e^{\frac{-v}{1-s'}}\left(1+\frac{v}{\Lambda}\right)vdv>0.
\end{eqnarray*}
This is true if and only if $\Lambda<2(1-s')$. Letting $\delta=\frac{\gamma}{n}$ for some constant $\gamma>0$,
\[\Lambda=\frac{2\gamma(s-s')^{\frac{1}{2}}(1-s')^{\frac{3}{2}}}{(1-s)}< 2(1-s')\]
that is, we can take
\[\gamma=(1-o(1))\frac{(1-s)}{(s-s')^{\frac{1}{2}}(1-s')^{\frac{1}{2}}}=(1-o(1))\frac{1-r^2}{r}.\]
We obtain that the improvement factor is
\[\left(\sqrt{\frac{1-(r-\delta)^2}{1-r^2}}\right)^{-n}=\left(1+\frac{r\gamma}{n(1-r^2)}+o\left(\frac{1}{n}\right)\right)^{-n}=e^{-1}(1+o(1)),\]
establising inequality~\eqref{constantimprovement}. We comment on optimality in the second proof; the argument is similar to the sphere packings case given in the previous section.
\subsection{Proof without using conditional densities estimate}\label{subsection:m1b}
By~\eqref{Hrewrittenm1}, for $t\in(-1,1)$, $H(t)$ is a positive multiple of
\begin{equation}\label{Hrewrittenm1b}
\iint_{\cal{R}_t}g\left(\frac{t-uv}{\sqrt{(1-u^2)(1-v^2)}}\right)\chi_{[r-\delta,R]}(u)\chi_{[r-\delta,R]}(v)\left((1-u^2)(1-v^2)-(t-uv)^2\right)^{\frac{n-4}{2}}dudv,
\end{equation}
where
\[\cal{R}_t:=\left\{(u,v)\in[-1,1]^2:-1\leq\frac{t-uv}{\sqrt{(1-u^2)(1-v^2)}}\leq 1\right\}.\]
Similar to the first proof, it suffices to prove that $H(s)\leq 0$. Since $g(x)\leq 0$ for $x\in[-1,s']$, $H(s)\leq 0$ if
\begin{equation}\label{Hm1new}
\int_{r-\delta}^{r+\frac{1}{n^{2/3}\log(n)}}\int_{r-\delta}^{r+\frac{1}{n^{2/3}\log(n)}}\left(1-\left(\frac{s-uv}{\sqrt{(1-u^2)(1-v^2)}}\right)^2\right)^{\frac{n-4}{2}}g\left(\frac{s-uv}{\sqrt{(1-u^2)(1-v^2)}}\right)(1-u^2)^{\frac{n-4}{2}}(1-v^2)^{\frac{n-4}{2}}dudv\leq 0.
\end{equation} 
Note that for $r-\delta\leq u,v\leq r+\frac{1}{n^{2/3}\log(n)}$ with $\delta=o(1)$,
\[\frac{s-uv}{\sqrt{(1-u^2)(1-v^2)}}=s'+o(1),\]
and so $(u,v)\in\cal{R}_t$ is automatically satisfied. From Proposition~\ref{gcorollary}, inequality~\eqref{Hm1new} is satisfied for sufficiently large $n$ if and only if
\begin{equation}\label{Hm1newbineq}
\int_{r-\frac{\gamma}{n}}^{r+\frac{1}{n^{2/3}\log(n)}}\int_{r-\frac{\gamma}{n}}^{r+\frac{1}{n^{2/3}\log(n)}}\left(\frac{s-uv}{\sqrt{(1-u^2)(1-v^2)}}-s'\right)(1-u^2)^{\frac{n-4}{2}}(1-v^2)^{\frac{n-4}{2}}dudv< 0,
\end{equation}
where $\gamma>0$ is a constant such that $\delta=\frac{\gamma}{n}$. In order to find the largest $\gamma>0$ such that inequality~\eqref{Hm1newbineq} is true, we calculate the asymptotics of the integral. Expanding the integral gives
\begin{eqnarray*}\label{Hm1newb}
&&\int_{r-\frac{\gamma}{n}}^{r+\frac{1}{n^{2/3}\log(n)}}\int_{r-\frac{\gamma}{n}}^{r+\frac{1}{n^{2/3}\log(n)}}\left(\frac{s-uv}{\sqrt{(1-u^2)(1-v^2)}}-s'\right)(1-u^2)^{\frac{n-4}{2}}(1-v^2)^{\frac{n-4}{2}}dudv\\
&=&s\left(\int_{r-\frac{\gamma}{n}}^{r+\frac{1}{n^{2/3}\log(n)}}(1-u^2)^{\frac{n-5}{2}}du\right)^2-\left(\int_{r-\frac{\gamma}{n}}^{r+\frac{1}{n^{2/3}\log(n)}}u(1-u^2)^{\frac{n-5}{2}}du\right)^2-s'\left(\int_{r-\frac{\gamma}{n}}^{r+\frac{1}{n^{2/3}\log(n)}}(1-u^2)^{\frac{n-4}{2}}du\right)^2.
\end{eqnarray*}
We have
\begin{eqnarray*}
&&\int_{r-\frac{\gamma}{n}}^{r+\frac{1}{n^{2/3}\log(n)}}(1-u^2)^{\frac{n-5}{2}}du\\ 
&=&(1-r^2)^{\frac{n-5}{2}}\int_{-\frac{\gamma}{n}}^{\frac{1}{n^{2/3}\log(n)}}\left(1-\frac{2rt+t^2}{1-r^2}\right)^{\frac{n-5}{2}}dt\\
&=&\frac{(1-r^2)^{\frac{n-5}{2}}}{n}\int_{-\gamma}^{\frac{n^{1/3}}{\log(n)}}\left(1-\frac{2r\frac{t}{n}+\frac{t^2}{n^2}}{1-r^2}\right)^{\frac{n-5}{2}}dt.
\end{eqnarray*}
Similarly,
\begin{eqnarray*}
\int_{r-\frac{\gamma}{n}}^{r+\frac{1}{n^{2/3}\log(n)}}(1-u^2)^{\frac{n-4}{2}}du=\frac{(1-r^2)^{\frac{n-4}{2}}}{n}\int_{-\gamma}^{\frac{n^{1/3}}{\log(n)}}\left(1-\frac{2r\frac{t}{n}+\frac{t^2}{n^2}}{1-r^2}\right)^{\frac{n-4}{2}}dt
\end{eqnarray*}
and
\[\int_{r-\frac{\gamma}{n}}^{r+\frac{1}{n^{2/3}\log(n)}}u(1-u^2)^{\frac{n-5}{2}}du=\frac{(1-r^2)^{\frac{n-5}{2}}}{n}\int_{-\gamma}^{\frac{n^{1/3}}{\log(n)}}\left(r+\frac{t}{n}\right)\left(1-\frac{2r\frac{t}{n}+\frac{t^2}{n^2}}{1-r^2}\right)^{\frac{n-5}{2}}dt.\]
From these and the fact that $s-r^2-s'(1-r^2)=0$, we deduce that the desired inequality is equivalent to
\begin{eqnarray*}
0&>&s\left(\int_{-\gamma}^{\frac{n^{1/3}}{\log(n)}}\left(1-\frac{2r\frac{t}{n}+\frac{t^2}{n^2}}{1-r^2}\right)^{\frac{n-5}{2}}dt\right)^2-\left(\int_{-\gamma}^{\frac{n^{1/3}}{\log(n)}}\left(r+\frac{t}{n}\right)\left(1-\frac{2r\frac{t}{n}+\frac{t^2}{n^2}}{1-r^2}\right)^{\frac{n-5}{2}}dt\right)^2-s'(1-r^2)\left(\int_{-\gamma}^{\frac{n^{1/3}}{\log(n)}}\left(1-\frac{2r\frac{t}{n}+\frac{t^2}{n^2}}{1-r^2}\right)^{\frac{n-4}{2}}dt\right)^2\\
&=& -\left(\frac{2r}{n}\left(\int_{-\gamma}^{\frac{n^{1/3}}{\log(n)}}t\left(1-\frac{2r\frac{t}{n}+\frac{t^2}{n^2}}{1-r^2}\right)^{\frac{n-5}{2}}dt\right)\left(\int_{-\gamma}^{\frac{n^{1/3}}{\log(n)}}t\left(1-\frac{2r\frac{t}{n}+\frac{t^2}{n^2}}{1-r^2}\right)^{\frac{n-5}{2}}dt\right)+\frac{1}{n^2}\left(\int_{-\gamma}^{\frac{n^{1/3}}{\log(n)}}t\left(1-\frac{2r\frac{t}{n}+\frac{t^2}{n^2}}{1-r^2}\right)^{\frac{n-5}{2}}dt\right)^2\right)\\
&&+\frac{2rs'}{n}\left(\int_{-\gamma}^{\frac{n^{1/3}}{\log(n)}}\left(1-\frac{2r\frac{t}{n}+\frac{t^2}{n^2}}{1-r^2}\right)^{\frac{n-5}{2}}dt\right)\left(\int_{-\gamma}^{\frac{n^{1/3}}{\log(n)}}t\left(1-\frac{2r\frac{t}{n}+\frac{t^2}{n^2}}{1-r^2}\right)^{\frac{n-5}{2}}dt\right)+O(n^{-2})
\end{eqnarray*}
This is negative for sufficiently large $n$ exactly when
\begin{equation*}\label{ineq:1}\int_{-\gamma}^{\infty}te^{-\frac{rt}{1-r^2}}dt>0,\end{equation*}
from which we would obtain that $H(s)\leq 0$ for sufficiently large $n$ provided that $\frac{r\gamma}{1-r^2}<1$. Note that if $|t-s|\neq O(n^{-1})$ and $t<s$, then it is clear that $H(t)<0$. On the other hand, if $|t-s|=O(n^{-1})$, we could simply replace $s$ with $t$ in the above argument. Choosing $\gamma=(1-o(1))\frac{1-r^2}{r}$ leads to the same improvement factor $\frac{1}{e}(1+o(1))$ as in the first proof.\\
\\
As for the optimality of this improvement factor, note that applying Lemmas~\ref{diffroots} and~\ref{Krasikovupperbound} and Proposition~\ref{gcorollary}, we see that the bound of Lemma~\ref{Krasikovupperbound} and the estimate of Proposition~\ref{gcorollary} differ by a multiplicative polynomial factor. However, the rest of the integrand $((1-u^2)(1-v^2))^{\frac{n-4}{2}}$ is exponentially smaller in the $\max\{u,v\}\neq r+O(n^{-1})$ regime compared to the $\max\{u,v\}=r+O(n^{-1})$ regime. Therefore, up to a multiple of order $1+o(1)$, there is no loss in considering~\eqref{Hm1new}, equivalently~\eqref{Hm1newbineq}, instead of~\eqref{Hrewrittenm1b}. Similarly, in the first proof, the inequality sign in~\eqref{lowerint} may be replaced with an equality.\\
\\
By~\cite{KL}, we have as $n\rightarrow\infty$
\begin{equation*}
\frac{1}{n}\log_2(M(n,\theta'))\leq (1+o(1))\left(\frac{1+\sin(\theta')}{2\sin(\theta')}\log_2\left(\frac{1+\sin(\theta')}{2\sin(\theta')}\right)-\frac{1-\sin(\theta')}{2\sin(\theta')}\log_2\left(\frac{1-\sin(\theta')}{2\sin(\theta')}\right)\right).
\end{equation*}
As a result, on geometric average, the presence of $\frac{M_{\textup{Lev}}(n-1,\theta')}{M_{\textup{Lev}}(n+1,\theta')}$ leads to an improvement by a factor of
\begin{equation*}
\frac{1+o(1)}{e}2^{-2\left(\frac{1+\sin(\theta')}{2\sin(\theta')}\log_2\left(\frac{1+\sin(\theta')}{2\sin(\theta')}\right)-\frac{1-\sin(\theta')}{2\sin(\theta')}\log_2\left(\frac{1-\sin(\theta')}{2\sin(\theta')}\right)\right)}.
\end{equation*}
When $\theta'=\theta^*$, which is the best comparison angle, we obtain an improvement by a factor of $0.2304...$ on geometric average, completing the proof of Theorem~\ref{THMtwo}.
\section{Improvement factors for $V_2(\mathbb{R}^n)$}\label{section:m2}
We now determine the improvement factor obtained when integrating over the Stiefel manifold $V_2(\mathbb{R}^n)$, that is, when $m=2$. Though it does not lead to a bound better than
\[M(n,\theta)\leq \frac{1+o(1)}{e}\frac{M_{\textup{Lev}}(n-1,\theta^*)}{\mu_n(\theta,\theta^*)}\]
whenever $0<\theta<\theta^*$, the improvement factors obtained by comparing $M(n,\theta)$ to $M(n-2,\theta')$ are better if $\theta'$ is small.
We do not use an estimate of conditional densities for averaging over Stiefel manifolds. We give a proof in the spirit of the second proof in the $V_1(\mathbb{R}^n)$ case given in Subsection~\ref{subsection:m1b}.\\
\\
The setup is as follows. Given angles $0<\theta<\theta'\leq\frac{\pi}{2}$, and $\delta\in[0,\theta'-\theta]$ with $\eta\in[0,\frac{\pi}{2}]$, consider the region
\[\cal{C}^{\theta,\theta'}_{\delta,\eta,2}:=\left\{\boldsymbol{u}\in\mathbb{B}^2: r-\delta\leq|\boldsymbol{u}|\leq R\textup{ and }\left<\boldsymbol{u},\boldsymbol{p}\right>\geq |\boldsymbol{u}|\cos(\eta)\right\}\]
where $r$ and $R$ are as in~\eqref{goodR} and~\eqref{bigRfirst}. We take $F=\chi_{\cal{C}^{\theta,\theta'}_{\delta,\eta,2}}:[-1,1]^2\rightarrow\mathbb{R}$ the characteristic function of this region, and $g=g_{n-2,\theta'}$ Levenshtein's optimal polynomial for $M(n-2,\theta')$. Using this setup, with appropriately chosen $\delta,\eta$, we prove the following.
\begin{theorem}For angles $0<\theta<\theta'\leq\frac{\pi}{2}$, the function $H$ obtained by averaging over $V_2(\mathbb{R}^n)$ gives the bound
\[M(n,\theta)\leq (1+o(1))\sqrt{\frac{\pi}{6e(1-s')}}\cdot\frac{M_{\textup{Lev}}(n-2,\theta')}{M_{\textup{Lev}}(n+1,\theta')}\cdot\underbrace{\frac{\sqrt{2\pi n}rM_{\textup{Lev}}(n+1,\theta')}{(1-r^2)^{\frac{n-1}{2}}}}_{\textup{KL bound}}.\]
\end{theorem}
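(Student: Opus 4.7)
The plan is to verify the first Delsarte linear programming condition for $H$ with a sharp simultaneous optimization over the radial thickness $\delta$ and angular aperture $\eta$, then compare the resulting bound to the Kabatyanskii--Levenshtein benchmark. I follow the template of the second proof of Theorem~\ref{THMtwo} in Subsection~\ref{subsection:m1b}, modified to accommodate the genuinely two-dimensional region $\cal{C}^{\theta,\theta'}_{\delta,\eta,2}$.

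First, taking $g = g_{n-2,\theta'}$ and $F = \chi_{\cal{C}^{\theta,\theta'}_{\delta,\eta,2}}$, inequality~\eqref{generalchibound} together with Lemma~\ref{masscomputation} at $m=2$ gives
\[M(n,\theta) \leq \frac{\pi}{\eta} \cdot \frac{M_{\textup{Lev}}(n-2,\theta') \int_0^1 (1-\rho^2)^{(n-4)/2}\rho\,d\rho}{\int_{r-\delta}^R (1-\rho^2)^{(n-4)/2}\rho\,d\rho},\]
conditional on the Delsarte condition $H(t) \leq 0$ for $t \in [-1,\cos\theta]$. Since $R > r$, the radial integrand concentrates at $\rho = r-\delta$ and the denominator evaluates to $(1+o(1))(1-(r-\delta)^2)^{(n-2)/2}/(n-2)$. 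To verify the Delsarte condition it suffices to treat $t = s := \cos\theta$, the other values following by the continuity/concentration argument used in Section~\ref{section:m1}. Parametrizing $\boldsymbol{u} = (r+\tau_u)(\sin\phi_u,\cos\phi_u)$ with $\tau_u \in [-\delta,R-r]$, $\phi_u \in [-\eta,\eta]$, and similarly for $\boldsymbol{v}$, a Taylor expansion using $s - r^2 = s'(1-r^2)$ yields
\[\frac{s - \left<\boldsymbol{u},\boldsymbol{v}\right>}{\sqrt{(1-|\boldsymbol{u}|^2)(1-|\boldsymbol{v}|^2)}} = s' + \frac{-r(1-s')(\tau_u+\tau_v) + \tfrac{r^2}{2}(\phi_u-\phi_v)^2}{1-r^2} + \textup{higher order}.\]

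With the ansatz $\delta = \gamma/n$ and $\eta = \kappa/\sqrt{n}$, the deviation $x - s'$ remains in the $o(n^{-2/3})$-window where Proposition~\ref{gcorollary} gives $(1-x^2)^{(n-5)/2} g(x) \sim K(x - s')$ for a positive constant $K$. Combined with the approximation $((1-|\boldsymbol{u}|^2)(1-|\boldsymbol{v}|^2))^{(n-5)/2} \sim (1-r^2)^{n-5} e^{-\lambda(\tau_u+\tau_v)}$ with $\lambda := rn/(1-r^2)$, the inequality $H(s) \leq 0$ reduces, up to the positive factor $K$ and factorable constants, to
\[-r(1-s')(2\eta)^2 \iint(\tau_u+\tau_v)\, e^{-\lambda(\tau_u+\tau_v)}\, d\tau_u d\tau_v + \tfrac{r^2}{2}\! \iint e^{-\lambda(\tau_u+\tau_v)} d\tau_u d\tau_v \cdot \iint (\phi_u-\phi_v)^2 d\phi_u d\phi_v \leq 0.\]
Each of these four integrals is elementary; in the large-$n$ limit the upper radial cutoff $R - r$ contributes only exponentially small corrections, and the condition collapses to
\[\eta^2 \leq \frac{6(1-s')(1-r^2)(1-\xi)}{r^2 n}(1+o(1)), \qquad \xi := \lambda\delta = \frac{r\gamma}{1-r^2}.\]
Since the step-one bound scales like $\eta^{-1} e^{-\xi} (1-r^2)^{-(n-2)/2}$ up to quantities independent of $(\delta,\eta)$, the optimization reduces to maximizing $e^\xi \sqrt{1-\xi}$. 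An elementary derivative calculation gives the optimum $\xi = 1/2$ with value $\sqrt{e/2}$, whence $\eta = \sqrt{3(1-s')(1-r^2)/(r^2 n)}(1+o(1))$. Substituting back and using the Laplace asymptotic $\mu_n(\theta,\theta') \sim (1-r^2)^{(n-1)/2}/(r\sqrt{2\pi n})$ (from $\int_{-1}^1(1-t^2)^{(n-3)/2} dt \sim \sqrt{2\pi/n}$ and $\int_r^1 (1-t^2)^{(n-3)/2} dt \sim (1-r^2)^{(n-1)/2}/((n-1)r)$) to identify the KL bound, the ratio $M(n,\theta)/\textup{KL}$ simplifies algebraically to the claimed $\sqrt{\pi/(6e(1-s'))} \cdot M_{\textup{Lev}}(n-2,\theta')/M_{\textup{Lev}}(n+1,\theta')$.

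The main obstacle is controlling the portion of the $(\boldsymbol{u},\boldsymbol{v})$-integral defining $H(s)$ outside the Taylor window $|x - s'| = o(n^{-2/3})$, where Proposition~\ref{gcorollary} no longer applies. I plan to handle this by combining Lemma~\ref{Krasikovupperbound} (which gives a polynomial $L^\infty$-bound on $g$) with the exponential concentration of the weight $((1-|\boldsymbol{u}|^2)(1-|\boldsymbol{v}|^2))^{(n-5)/2}$ near $|\boldsymbol{u}| = |\boldsymbol{v}| = r$, showing that the tail contributions are $o(1)$ relative to the leading-order main term computed above, and so do not affect the final sign of $H(s)$.
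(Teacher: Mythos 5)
Your proposal is correct and follows essentially the same route as the paper's own proof: the same choice $F=\chi_{\mathcal{C}^{\theta,\theta'}_{\delta,\eta,2}}$ with $g=g_{n-2,\theta'}$, the reduction to $H(s)\le 0$ via Proposition~\ref{gcorollary}, a polar-coordinate computation of the main quadratic form, the constraint $\eta^2\le\frac{6(1-s')(1-r^2)(1-\xi)}{r^2 n}(1+o(1))$, and the optimization at $\xi=\frac{r\gamma}{1-r^2}=\frac{1}{2}$. The only difference is cosmetic: you shift to centered coordinates $\rho=r+\tau$ and extract $\xi$ as the natural dimensionless parameter, whereas the paper works with $\rho_1,\rho_2\in[r-\gamma/n,\,r+\frac{1}{n^{2/3}\log n}]$ and substitutes $t=n(\rho-r)$ afterward; both yield the same elementary integrals and the same optimum $\gamma=\frac{1-r^2}{2r}$.
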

\begin{proof}
By Proposition~\ref{Hexplicit1}, we have $H(t)\leq 0$ if and only if
\begin{equation*}\label{Hexplicitm2app}\iint_{\cal{R}^2_t}g_{n-2,\theta'}\left(\frac{t-\left<\boldsymbol{u},\boldsymbol{v}\right>}{\sqrt{(1-|\boldsymbol{u}|^2)(1-|\boldsymbol{v}|^2)}}\right)\left(1-\left(\frac{t-\left<\boldsymbol{u},\boldsymbol{v}\right>}{\sqrt{(1-|\boldsymbol{u}|^2)(1-|\boldsymbol{v}|^2)}}\right)^2\right)^{\frac{n-5}{2}}\chi_{\cal{C}^{\theta,\theta'}_{\delta,\eta,2}}(\boldsymbol{u})\chi_{\cal{C}^{\theta,\theta'}_{\delta,\eta,2}}(\boldsymbol{v})\left((1-|\boldsymbol{u}|^2)(1-|\boldsymbol{v}|^2)\right)^{\frac{n-5}{2}}d\boldsymbol{u}d\boldsymbol{v}\leq 0.\end{equation*}
As in the $V_1(\mathbb{R}^n)$ case, by Proposition~\ref{gcorollary}, if $\delta=o(n^{-2/3})$ and $1-\cos(\eta)=o(n^{-2/3})$, we may reduce this to showing that for sufficiently large $n$,
\begin{equation}\label{m2ineq}
\iint_{(\cal{C}^{\theta,\theta'}_{\delta,\eta,2}\times\cal{C}^{\theta,\theta'}_{\delta,\eta,2})\cap(\mathbb{B}^2_{r+\frac{1}{n^{2/3}\log(n)}}\times\mathbb{B}^2_{r+\frac{1}{n^{2/3}\log(n)}})}\left(\frac{s-\left<\boldsymbol{u},\boldsymbol{v}\right>}{\sqrt{(1-|\boldsymbol{u}|^2)(1-|\boldsymbol{v}|^2)}}-s'\right)(1-|\boldsymbol{u}|^2)^{\frac{n-5}{2}}(1-|\boldsymbol{v}|^2)^{\frac{n-5}{2}}dudv<0.
\end{equation}
Here, $\mathbb{B}^2_{r+\frac{1}{n^{2/3}\log(n)}}$ is the closed ball centered at the origin of radius $r+\frac{1}{n^{2/3}\log(n)}$. It suffices to show that this quantity is negative for appropriately chosen constant $\gamma>0$ with $\delta=\frac{\gamma}{n}$ and appropriately chosen $\eta$. In polar coordinates, the integral in inequality~\eqref{m2ineq} is equal to
\begin{eqnarray*}
&&\iint\limits_{\substack{r-\delta\leq\rho_1\leq r+\frac{1}{n^{2/3}\log(n)}\\ -\eta\leq \theta_1\leq \eta}}\iint\limits_{\substack{r-\delta\leq\rho_2\leq r+\frac{1}{n^{2/3}\log(n)}\\ -\eta\leq \theta_2\leq \eta}}\left(\frac{s-\rho_1\rho_2\cos(\theta_1-\theta_2)}{\sqrt{(1-\rho_1^2)(1-\rho_2^2)}}-s'\right)(1-\rho_1^2)^{\frac{n-5}{2}}(1-\rho_2^2)^{\frac{n-5}{2}}\rho_1\rho_2d\theta_2d\rho_2d\theta_1 d\rho_1\\
&=&4s\eta^2\left(\int_{r-\delta}^{r+\frac{1}{n^{2/3}\log(n)}}u(1-u^2)^{\frac{n-6}{2}}du\right)^2-\left(\iint_{[-\eta,\eta]^2}\cos(\theta_1-\theta_2)d\theta_1d\theta_2\right)\left(\int_{r-\delta}^{r+\frac{1}{n^{2/3}\log(n)}}u^2(1-u^2)^{\frac{n-6}{2}}du\right)^2\\&&-4s'\eta^2\left(\int_{r-\delta}^{r+\frac{1}{n^{2/3}\log(n)}}u(1-u^2)^{\frac{n-5}{2}}du\right)^2\\
&=&4s\eta^2\left(\int_{r-\frac{\gamma}{n}}^{r+\frac{1}{n^{2/3}\log(n)}}u(1-u^2)^{\frac{n-6}{2}}du\right)^2-4\sin^2(\eta)\left(\int_{r-\frac{\gamma}{n}}^{r+\frac{1}{n^{2/3}\log(n)}}u^2(1-u^2)^{\frac{n-6}{2}}du\right)^2-4s'\eta^2\left(\int_{r-\frac{\gamma}{n}}^{r+\frac{1}{n^{2/3}\log(n)}}u(1-u^2)^{\frac{n-5}{2}}du\right)^2.
\end{eqnarray*}
We want to choose $\gamma,\eta$ such that the last integral is negative for sufficiently large $n$. As in Subsection~\ref{subsection:m1b},
\[\int_{r-\frac{\gamma}{n}}^{r+\frac{1}{n^{2/3}\log(n)}}u(1-u^2)^{\frac{n-6}{2}}du=\frac{(1-r^2)^{\frac{n-6}{2}}}{n}\int_{-\gamma}^{\frac{n^{1/3}}{\log(n)}}\left(r+\frac{t}{n}\right)\left(1-\frac{2r\frac{t}{n}+\frac{t^2}{n^2}}{1-r^2}\right)^{\frac{n-6}{2}}dt,\]
and
\[\int_{r-\frac{\gamma}{n}}^{r+\frac{1}{n^{2/3}\log(n)}}u(1-u^2)^{\frac{n-5}{2}}du=\frac{(1-r^2)^{\frac{n-5}{2}}}{n}\int_{-\gamma}^{\frac{n^{1/3}}{\log(n)}}\left(r+\frac{t}{n}\right)\left(1-\frac{2r\frac{t}{n}+\frac{t^2}{n^2}}{1-r^2}\right)^{\frac{n-5}{2}}dt.\]
Similarly,
\[\int_{r-\frac{\gamma}{n}}^{r+\frac{1}{n^{2/3}\log(n)}}u^2(1-u^2)^{\frac{n-6}{2}}du=\frac{(1-r^2)^{\frac{n-6}{2}}}{n}\int_{-\gamma}^{\frac{n^{1/3}}{\log(n)}}\left(r+\frac{t}{n}\right)^2\left(1-\frac{2r\frac{t}{n}+\frac{t^2}{n^2}}{1-r^2}\right)^{\frac{n-6}{2}}dt.\]
Therefore, it suffices to require $\gamma,\eta$ to be such that
\begin{eqnarray*}
&&s\left(\int_{-\gamma}^{\frac{n^{1/3}}{\log(n)}}\left(r+\frac{t}{n}\right)\left(1-\frac{2r\frac{t}{n}+\frac{t^2}{n^2}}{1-r^2}\right)^{\frac{n-6}{2}}dt\right)^2-\left(r\int_{-\gamma}^{\frac{n^{1/3}}{\log(n)}}\left(r+\frac{t}{n}\right)\left(1-\frac{2r\frac{t}{n}+\frac{t^2}{n^2}}{1-r^2}\right)^{\frac{n-6}{2}}dt+\frac{1}{n}\int_{-\gamma}^{\frac{n^{1/3}}{\log(n)}}t\left(r+\frac{t}{n}\right)\left(1-\frac{2r\frac{t}{n}+\frac{t^2}{n^2}}{1-r^2}\right)^{\frac{n-6}{2}}dt\right)^2\\&&-s'(1-r^2)\left(\int_{-\gamma}^{\frac{n^{1/3}}{\log(n)}}\left(r+\frac{t}{n}\right)\left(1-\frac{2r\frac{t}{n}+\frac{t^2}{n^2}}{1-r^2}\right)^{\frac{n-5}{2}}dt\right)^2+\left(1-\frac{\sin^2(\eta)}{\eta^2}\right)\left(\int_{-\gamma}^{\frac{n^{1/3}}{\log(n)}}\left(r+\frac{t}{n}\right)^2\left(1-\frac{2r\frac{t}{n}+\frac{t^2}{n^2}}{1-r^2}\right)^{\frac{n-6}{2}}dt\right)^2\\
&=&-\frac{2r}{n}\left(\int_{-\gamma}^{\frac{n^{1/3}}{\log(n)}}\left(r+\frac{t}{n}\right)\left(1-\frac{2r\frac{t}{n}+\frac{t^2}{n^2}}{1-r^2}\right)^{\frac{n-6}{2}}dt\right)\left(\int_{-\gamma}^{\frac{n^{1/3}}{\log(n)}}t\left(r+\frac{t}{n}\right)\left(1-\frac{2r\frac{t}{n}+\frac{t^2}{n^2}}{1-r^2}\right)^{\frac{n-6}{2}}dt\right)-\frac{1}{n^2}\left(\int_{-\gamma}^{\frac{n^{1/3}}{\log(n)}}t\left(r+\frac{t}{n}\right)\left(1-\frac{2r\frac{t}{n}+\frac{t^2}{n^2}}{1-r^2}\right)^{\frac{n-6}{2}}dt\right)^2\\&&+\left(1-\frac{\sin^2(\eta)}{\eta^2}\right)\left(\int_{-\gamma}^{\frac{n^{1/3}}{\log(n)}}\left(r+\frac{t}{n}\right)^2\left(1-\frac{2r\frac{t}{n}+\frac{t^2}{n^2}}{1-r^2}\right)^{\frac{n-6}{2}}dt\right)^2+s'(1-r^2)\left(\int_{-\gamma}^{\frac{n^{1/3}}{\log(n)}}\left(r+\frac{t}{n}\right)\left(1-\frac{2r\frac{t}{n}+\frac{t^2}{n^2}}{1-r^2}\right)^{\frac{n-6}{2}}dt\right)^2\\
&&-s'(1-r^2)\left(\int_{-\gamma}^{\frac{n^{1/3}}{\log(n)}}\left(r+\frac{t}{n}\right)\left(1-\frac{2r\frac{t}{n}+\frac{t^2}{n^2}}{1-r^2}\right)^{\frac{n-5}{2}}dt\right)^2
\end{eqnarray*}
is negative for sufficiently large $n$. In the equality, the identity $s-r^2-s'(1-r^2)=0$ is used. Note that
\begin{eqnarray*}
&&s'(1-r^2)\left(\int_{-\gamma}^{\frac{n^{1/3}}{\log(n)}}\left(r+\frac{t}{n}\right)\left(1-\frac{2r\frac{t}{n}+\frac{t^2}{n^2}}{1-r^2}\right)^{\frac{n-6}{2}}dt\right)^2-s'(1-r^2)\left(\int_{-\gamma}^{\frac{n^{1/3}}{\log(n)}}\left(r+\frac{t}{n}\right)\left(1-\frac{2r\frac{t}{n}+\frac{t^2}{n^2}}{1-r^2}\right)^{\frac{n-5}{2}}dt\right)^2\\
&=& s'(1-r^2)\left(\int_{-\gamma}^{\frac{n^{1/3}}{\log(n)}}\left(r+\frac{t}{n}\right)\left(1-\frac{2r\frac{t}{n}+\frac{t^2}{n^2}}{1-r^2}\right)^{\frac{n-6}{2}}\left(1-\sqrt{1-\frac{2r\frac{t}{n}+\frac{t^2}{n^2}}{1-r^2}}\right)dt\right)\left(\int_{-\gamma}^{\frac{n^{1/3}}{\log(n)}}\left(r+\frac{t}{n}\right)\left(1-\frac{2r\frac{t}{n}+\frac{t^2}{n^2}}{1-r^2}\right)^{\frac{n-6}{2}}\left(1+\sqrt{1-\frac{2r\frac{t}{n}+\frac{t^2}{n^2}}{1-r^2}}\right)dt\right)\\
&=& \frac{2s'r}{n}\left(\int_{-\gamma}^{\frac{n^{1/3}}{\log(n)}}\left(r+\frac{t}{n}\right)\left(1-\frac{2r\frac{t}{n}+\frac{t^2}{n^2}}{1-r^2}\right)^{\frac{n-6}{2}}dt\right)\left(\int_{-\gamma}^{\frac{n^{1/3}}{\log(n)}}t\left(r+\frac{t}{n}\right)\left(1-\frac{2r\frac{t}{n}+\frac{t^2}{n^2}}{1-r^2}\right)^{\frac{n-6}{2}}dt\right)+O(n^{-2})
\end{eqnarray*}
We take $\gamma$ to satisfy $\frac{r\gamma}{1-r^2}<1$ so that $\int_{-\gamma}^{\infty}te^{-\frac{rt}{1-r^2}}dt>0$. Then it suffices to take
\[\eta^2<\frac{6(1-s')}{nr}\frac{\left(\int_{-\gamma}^{\infty}te^{-\frac{rt}{1-r^2}}dt\right)}{\left(\int_{-\gamma}^{\infty}e^{-\frac{rt}{1-r^2}}dt\right)}=\frac{6(1-s')}{nr}\left(\frac{1-r^2}{r}-\gamma\right)\]
in order to have the negativity of the integral for sufficiently large $n$. Consequently, for each $\gamma$ satisfying $\frac{r\gamma}{1-r^2}<1$ we could take $\eta=\frac{\kappa}{\sqrt{n}}$ with $\kappa>0$ such that
\[\kappa=(1-o(1))\sqrt{\frac{6(1-s')}{r}\left(\frac{1-r^2}{r}-\gamma\right)}.\]
We now calculate the bound obtained in this setup and optimize over $\gamma$. By inequality~\eqref{generalchibound},
\begin{eqnarray*}M(n,\theta)&\leq& \frac{M_{\textup{Lev}}(n-2,\theta')}{\left(\frac{\int_{\cal{C}^{\theta,\theta'}_{\delta,\eta,2}}(1-|\boldsymbol{u}|^2)^{\frac{n-4}{2}}d\boldsymbol{u}}{\int_{\mathbb{B}^2}(1-|\boldsymbol{u}|^2)^{\frac{n-4}{2}}d\boldsymbol{u}}\right)}\\&\leq&  \frac{M_{\textup{Lev}}(n-2,\theta')}{\left(\frac{\int_{-\eta}^{\eta}\int_{r-\frac{\gamma}{n}}^{r+\frac{1}{n^{2/3}\log(n)}}u(1-u^2)^{\frac{n-4}{2}}dud\theta}{\int_{0}^{2\pi}\int_{0}^{1}u(1-u^2)^{\frac{n-4}{2}}dud\theta}\right)}\\&=&(1+o(1))\frac{\pi M_{\textup{Lev}}(n-2,\theta')}{\eta(1-r^2)^{\frac{n-2}{2}}e^{\frac{r\gamma}{1-r^2}}}\\&=&(1+o(1))\frac{\sqrt{\pi} \sqrt{1-r^2}M_{\textup{Lev}}(n-2,\theta')}{r\sqrt{\frac{12(1-s')}{r}\left(\frac{1-r^2}{r}-\gamma\right)} M_{\textup{Lev}}(n+1,\theta')e^{\frac{r\gamma}{1-r^2}}}\cdot\underbrace{\frac{\sqrt{2\pi n}rM_{\textup{Lev}}(n+1,\theta')}{(1-r^2)^{\frac{n-1}{2}}}}_{\textup{KL bound}}.
\end{eqnarray*}
As a result of this, our improvement factor for any given $\gamma$ such that $\frac{r\gamma}{1-r^2}<1$ is
\begin{equation*}\label{improvementm2}
(1+o(1))\frac{\sqrt{\pi}\sqrt{1-r^2}}{r\sqrt{\frac{12(1-s')}{r}\left(\frac{1-r^2}{r}-\gamma\right)}e^{\frac{r\gamma}{1-r^2}}}\cdot\frac{M_{\textup{Lev}}(n-2,\theta')}{M_{\textup{Lev}}(n+1,\theta')}.
\end{equation*}
This is minimized when $\gamma=\frac{1-r^2}{2r}$ with minimum value
\begin{equation*}\label{improvementm2opt}
(1+o(1))\sqrt{\frac{\pi}{6e(1-s')}}\cdot\frac{M_{\textup{Lev}}(n-2,\theta')}{M_{\textup{Lev}}(n+1,\theta')}.
\end{equation*}
\end{proof}
By~\cite{KL}, we have as $n\rightarrow\infty$
\begin{equation*}
\frac{1}{n}\log_2(M(n,\theta'))\leq (1+o(1))\left(\frac{1+\sin(\theta')}{2\sin(\theta')}\log_2\left(\frac{1+\sin(\theta')}{2\sin(\theta')}\right)-\frac{1-\sin(\theta')}{2\sin(\theta')}\log_2\left(\frac{1-\sin(\theta')}{2\sin(\theta')}\right)\right).
\end{equation*}
As a result, on geometric average, the presence of $\frac{M_{\textup{Lev}}(n-2,\theta')}{M_{\textup{Lev}}(n+1,\theta')}$ leads to an improvement by a factor of
\begin{equation*}
\sqrt{\frac{\pi}{6e(1-s')}}2^{-3\left(\frac{1+\sin(\theta')}{2\sin(\theta')}\log_2\left(\frac{1+\sin(\theta')}{2\sin(\theta')}\right)-\frac{1-\sin(\theta')}{2\sin(\theta')}\log_2\left(\frac{1-\sin(\theta')}{2\sin(\theta')}\right)\right)}.
\end{equation*}
This tends to $0$ as $\theta'\rightarrow 0^+$; however, it leads to the improvement factor $0.2944$ on geometric average when $\theta'=\theta^*$, which is not better than $0.2304...$ obtained on geometric average from the $V_1(\mathbb{R}^n)$ case with $\theta'=\theta^*$.
\section{Improvement factor for $V_m(\mathbb{R}^n)$ with $m\geq 3$}\label{section:mhigher}
We determine the improvement factor obtained for an arbitrary $m\geq 3$. This is the setting in which we average over the Stiefel manifold $V_m(\mathbb{R}^n)$. In this setup, we have angles $0<\theta<\theta'\leq\frac{\pi}{2}$ as before, $\delta=\frac{\gamma}{n}$ for some constant $\gamma$, and $\eta>0$. As before, we define $r,R$ as in~\eqref{goodR} and~\eqref{bigRfirst}, and let
\[\cal{C}^{\theta,\theta'}_{\delta,\eta,m}:=\left\{\boldsymbol{u}\in\mathbb{B}^m:r-\delta\leq|\boldsymbol{u}|\leq R\textup{ and }\left<\boldsymbol{u},\boldsymbol{p}\right>\geq |\boldsymbol{u}|\cos(\eta)\right\}.\]
We then take $F=\chi_{\cal{C}^{\theta,\theta'}_{\delta,\eta,m}}$ to be the characteristic function of this region, and $g=g_{n-m,\theta'}$ Levenshtein's optimal polynomial for $M(n-m,\theta')$. Associated to this data, we define the function $H$ as before by averaging over $V_m(\mathbb{R}^n)$. We want to find parameters $\gamma,\eta$ such that $H(t)\leq 0$ for every $t\in[-1,s]$ when $n$ is large enough and such that $\gamma,\eta$ are optimized so that we obtain the best possible upper bound to $M(n,\theta)$ from $H$. By Proposition~\ref{gcorollary}, since $\delta=o(n^{-2/3})$, for $t=s$, it suffices to require that for large enough $n$,
\begin{equation}\label{mhigherineq}
\iint_{(\cal{C}^{\theta,\theta'}_{\delta,\eta,m}\times\cal{C}^{\theta,\theta'}_{\delta,\eta,m})\cap(\mathbb{B}^m_{r+\frac{1}{n^{2/3}\log(n)}}\times\mathbb{B}^m_{r+\frac{1}{n^{2/3}\log(n)}})}\left(\frac{s-\left<\boldsymbol{u},\boldsymbol{v}\right>}{\sqrt{(1-|\boldsymbol{u}|^2)(1-|\boldsymbol{v}|^2)}}-s'\right)(1-|\boldsymbol{u}|^2)^{\frac{n-m-3}{2}}(1-|\boldsymbol{v}|^2)^{\frac{n-m-3}{2}}d\boldsymbol{u}d\boldsymbol{v}<0.
\end{equation}
We rewrite the integral above using spherical coordinates. Points in $\mathbb{R}^m$ may be parametrized using spherical coordinates $\rho,\varphi_1,\hdots,\varphi_{m-1}$ such that $\rho\in[0,\infty)$, $\varphi_1,\hdots,\varphi_{m-2}\in[0,\pi]$, and $\varphi_{m-1}\in[0,2\pi)$, and in which cartesian coordinates are represented by
\[\begin{cases}x_1=\rho\cos(\varphi_1)\\
x_2=\rho\sin(\varphi_1)\cos(\varphi_2),\\
x_3=\rho\sin(\varphi_1)\sin(\varphi_2)\cos(\varphi_3),\\
\vdots\\
x_{m-1}=\rho\sin(\varphi_1)\hdots\sin(\varphi_{m-2})\cos(\varphi_{m-1}),\\
x_m=\rho\sin(\varphi_1)\hdots\sin(\varphi_{m-2})\sin(\varphi_{m-1}).\end{cases}\]
The volume form in spherical coordinates is given by $\rho^{m-1}\sin^{m-2}(\varphi_1)\sin^{m-3}(\varphi_2)\hdots\sin(\varphi_{m-1})d\rho d\varphi_1\hdots d\varphi_{m-1}$. In spherical coordinates, the inequality~\eqref{mhigherineq} is equivalent to the negativity of
\begin{equation}\label{mhigherineq2}
s\left(\int_{r-\frac{\gamma}{n}}^{r+\frac{1}{n^{2/3}\log(n)}}u^{m-1}(1-u^2)^{\frac{n-m-4}{2}}\right)^2-\frac{\sin^{2(m-1)}(\eta)}{(m-1)^2\left(\int_{0}^{\eta}\sin^{m-2}(\varphi)d\varphi\right)^2}\left(\int_{r-\frac{\gamma}{n}}^{r+\frac{1}{n^{2/3}\log(n)}}u^{m}(1-u^2)^{\frac{n-m-4}{2}}\right)^2-s'\left(\int_{r-\frac{\gamma}{n}}^{r+\frac{1}{n^{2/3}\log(n)}}u^{m-1}(1-u^2)^{\frac{n-m-3}{2}}\right)^2.
\end{equation}
By a linear change of variables, 
\[\int_{r-\frac{\gamma}{n}}^{r+\frac{1}{n^{2/3}\log(n)}}u^{m-1}(1-u^2)^{\frac{n-m-4}{2}}du=\frac{(1-r^2)^{\frac{n-m-4}{2}}}{n}\int_{-\gamma}^{\frac{n^{1/3}}{\log(n)}}\left(r+\frac{t}{n}\right)^{m-1}\left(1-\frac{2r\frac{t}{n}+\frac{t^2}{n^2}}{1-r^2}\right)^{\frac{n-m-4}{2}}dt,\]
and
\[\int_{r-\frac{\gamma}{n}}^{r+\frac{1}{n^{2/3}\log(n)}}u^{m}(1-u^2)^{\frac{n-m-4}{2}}du=\frac{(1-r^2)^{\frac{n-m-4}{2}}}{n}\int_{-\gamma}^{\frac{n^{1/3}}{\log(n)}}\left(r+\frac{t}{n}\right)^m\left(1-\frac{2r\frac{t}{n}+\frac{t^2}{n^2}}{1-r^2}\right)^{\frac{n-m-4}{2}}dt.\]
Similarly,
\[\int_{r-\frac{\gamma}{n}}^{r+\frac{1}{n^{2/3}\log(n)}}u^{m-1}(1-u^2)^{\frac{n-m-3}{2}}du=\frac{(1-r^2)^{\frac{n-m-3}{2}}}{n}\int_{-\gamma}^{\frac{n^{1/3}}{\log(n)}}\left(r+\frac{t}{n}\right)^{m-1}\left(1-\frac{2r\frac{t}{n}+\frac{t^2}{n^2}}{1-r^2}\right)^{\frac{n-m-3}{2}}dt.\]
Inequality~\eqref{mhigherineq2} is then equivalent to
\begin{small}
\begin{eqnarray*}
0&>&s\left(\int_{-\gamma}^{\frac{n^{1/3}}{\log(n)}}\left(r+\frac{t}{n}\right)^{m-1}\left(1-\frac{2r\frac{t}{n}+\frac{t^2}{n^2}}{1-r^2}\right)^{\frac{n-m-4}{2}}dt\right)^2-\frac{\sin^{2(m-1)}(\eta)}{(m-1)^2\left(\int_{0}^{\eta}\sin^{m-2}(\varphi)d\varphi\right)^2}\left(\int_{-\gamma}^{\frac{n^{1/3}}{\log(n)}}\left(r+\frac{t}{n}\right)^m\left(1-\frac{2r\frac{t}{n}+\frac{t^2}{n^2}}{1-r^2}\right)^{\frac{n-m-4}{2}}dt\right)^2\\&&
-s'(1-r^2)\left(\int_{-\gamma}^{\frac{n^{1/3}}{\log(n)}}\left(r+\frac{t}{n}\right)^{m-1}\left(1-\frac{2r\frac{t}{n}+\frac{t^2}{n^2}}{1-r^2}\right)^{\frac{n-m-3}{2}}dt\right)^2\\
&=&-\frac{2r}{n}\left(\int_{-\gamma}^{\frac{n^{1/3}}{\log(n)}}t\left(r+\frac{t}{n}\right)^{m-1}\left(1-\frac{2r\frac{t}{n}+\frac{t^2}{n^2}}{1-r^2}\right)^{\frac{n-m-4}{2}}dt\right)\left(\int_{-\gamma}^{\frac{n^{1/3}}{\log(n)}}\left(r+\frac{t}{n}\right)^{m-1}\left(1-\frac{2r\frac{t}{n}+\frac{t^2}{n^2}}{1-r^2}\right)^{\frac{n-m-4}{2}}dt\right)\\
&&+\left(1-\frac{\sin^{2(m-1)}(\eta)}{(m-1)^2\left(\int_{0}^{\eta}\sin^{m-2}(\varphi)d\varphi\right)^2}\right)\left(\int_{-\gamma}^{\frac{n^{1/3}}{\log(n)}}\left(r+\frac{t}{n}\right)^m\left(1-\frac{2r\frac{t}{n}+\frac{t^2}{n^2}}{1-r^2}\right)^{\frac{n-m-4}{2}}dt\right)\\
&&+s'(1-r^2)\left(\int_{-\gamma}^{\frac{n^{1/3}}{\log(n)}}\left(r+\frac{t}{n}\right)^{m-1}\left(1-\frac{2r\frac{t}{n}+\frac{t^2}{n^2}}{1-r^2}\right)^{\frac{n-m-4}{2}}dt\right)^2-s'(1-r^2)\left(\int_{-\gamma}^{\frac{n^{1/3}}{\log(n)}}\left(r+\frac{t}{n}\right)^{m-1}\left(1-\frac{2r\frac{t}{n}+\frac{t^2}{n^2}}{1-r^2}\right)^{\frac{n-m-3}{2}}dt\right)^2.
\end{eqnarray*}
\end{small}
Note that
\begin{small}
\begin{eqnarray*}&&\left(\int_{-\gamma}^{\frac{n^{1/3}}{\log(n)}}\left(r+\frac{t}{n}\right)^{m-1}\left(1-\frac{2r\frac{t}{n}+\frac{t^2}{n^2}}{1-r^2}\right)^{\frac{n-m-4}{2}}dt\right)^2-\left(\int_{-\gamma}^{\frac{n^{1/3}}{\log(n)}}\left(r+\frac{t}{n}\right)^{m-1}\left(1-\frac{2r\frac{t}{n}+\frac{t^2}{n^2}}{1-r^2}\right)^{\frac{n-m-3}{2}}dt\right)^2\\&=&\left(\int_{-\gamma}^{\frac{n^{1/3}}{\log(n)}}\left(r+\frac{t}{n}\right)^{m-1}\left(1-\frac{2r\frac{t}{n}+\frac{t^2}{n^2}}{1-r^2}\right)^{\frac{n-m-4}{2}}\left(1-\sqrt{1-\frac{2r\frac{t}{n}+\frac{t^2}{n^2}}{1-r^2}}\right)dt\right)\left(\int_{-\gamma}^{\frac{n^{1/3}}{\log(n)}}\left(r+\frac{t}{n}\right)^{m-1}\left(1-\frac{2r\frac{t}{n}+\frac{t^2}{n^2}}{1-r^2}\right)^{\frac{n-m-4}{2}}\left(1+\sqrt{1-\frac{2r\frac{t}{n}+\frac{t^2}{n^2}}{1-r^2}}\right)dt\right)\\
&=&\frac{2r}{n(1-r^2)}\left(\int_{-\gamma}^{\frac{n^{1/3}}{\log(n)}}\left(r+\frac{t}{n}\right)^{m-1}\left(1-\frac{2r\frac{t}{n}+\frac{t^2}{n^2}}{1-r^2}\right)^{\frac{n-m-4}{2}}dt\right)\left(\int_{-\gamma}^{\frac{n^{1/3}}{\log(n)}}t\left(r+\frac{t}{n}\right)^{m-1}\left(1-\frac{2r\frac{t}{n}+\frac{t^2}{n^2}}{1-r^2}\right)^{\frac{n-m-4}{2}}dt\right)+o(n^{-1}).\end{eqnarray*}
\end{small}
Therefore, it is sufficient to require that for large enough $n$,
\begin{small}
\begin{eqnarray*}
&&-\frac{2r}{n}\left(\int_{-\gamma}^{\frac{n^{1/3}}{\log(n)}}t\left(r+\frac{t}{n}\right)^{m-1}\left(1-\frac{2r\frac{t}{n}+\frac{t^2}{n^2}}{1-r^2}\right)^{\frac{n-m-4}{2}}dt\right)\left(\int_{-\gamma}^{\frac{n^{1/3}}{\log(n)}}\left(r+\frac{t}{n}\right)^{m-1}\left(1-\frac{2r\frac{t}{n}+\frac{t^2}{n^2}}{1-r^2}\right)^{\frac{n-m-4}{2}}dt\right)\\
&&+\left(1-\frac{\sin^{2(m-1)}(\eta)}{(m-1)^2\left(\int_{0}^{\eta}\sin^{m-2}(\varphi)d\varphi\right)^2}\right)\left(\int_{-\gamma}^{\frac{n^{1/3}}{\log(n)}}\left(r+\frac{t}{n}\right)^m\left(1-\frac{2r\frac{t}{n}+\frac{t^2}{n^2}}{1-r^2}\right)^{\frac{n-m-4}{2}}dt\right)^2\\
&&+\frac{2rs'}{n}\left(\int_{-\gamma}^{\frac{n^{1/3}}{\log(n)}}\left(r+\frac{t}{n}\right)^{m-1}\left(1-\frac{2r\frac{t}{n}+\frac{t^2}{n^2}}{1-r^2}\right)^{\frac{n-m-4}{2}}dt\right)\left(\int_{-\gamma}^{\frac{n^{1/3}}{\log(n)}}t\left(r+\frac{t}{n}\right)^{m-1}\left(1-\frac{2r\frac{t}{n}+\frac{t^2}{n^2}}{1-r^2}\right)^{\frac{n-m-4}{2}}dt\right)<0.
\end{eqnarray*}
\end{small}
For $m=o(n)$, this is satisfied if 
\begin{eqnarray*}
-\frac{2}{n}\left(\int_{-\gamma}^{\frac{n^{1/3}}{\log(n)}}te^{-\frac{rt}{1-r^2}}dt\right)+r\left(1-\frac{\sin^{2(m-1)}(\eta)}{(m-1)^2\left(\int_{0}^{\eta}\sin^{m-2}(\varphi)d\varphi\right)^2}\right)\left(\int_{-\gamma}^{\frac{n^{1/3}}{\log(n)}}e^{-\frac{rt}{1-r^2}}dt\right)+\frac{2s'}{n}\left(\int_{-\gamma}^{\frac{n^{1/3}}{\log(n)}}te^{-\frac{rt}{1-r^2}}dt\right)<0
\end{eqnarray*}
for sufficiently large $n$. As a result, it suffices to require $\frac{r\gamma}{1-r^2}<1$ and, for large enough $n$,
\begin{equation}\label{largemcondition}\left(1-\frac{\sin^{2(m-1)}(\eta)}{(m-1)^2\left(\int_{0}^{\eta}\sin^{m-2}(\varphi)d\varphi\right)^2}\right)<\frac{2(1-s')}{nr}\frac{\left(\int_{-\gamma}^{\infty}te^{-\frac{rt}{1-r^2}}dt\right)}{\left(\int_{-\gamma}^{\infty}e^{-\frac{rt}{1-r^2}}dt\right)}=\frac{2(1-s')}{nr}\left(\frac{1-r^2}{r}-\gamma\right).\end{equation}
Inequality~\eqref{largemcondition} is equivalent to having, for sufficiently large $n$,
\begin{equation*}
\eta^2<\left(\frac{m+1}{m-1}\right)\frac{2(1-s')}{nr}\left(\frac{1-r^2}{r}-\gamma\right).
\end{equation*}
We choose $\eta^2=(1-o(1))\left(\frac{m+1}{m-1}\right)\frac{2(1-s')}{nr}\left(\frac{1-r^2}{r}-\gamma\right)$.
Any $\gamma,\eta$ satisfying this inequality gives us the bound

\begin{equation*}
M(n,\theta)\leq\frac{M_{\textup{Lev}}(n-m,\theta')}{\left(\frac{\int_{\cal{C}^{\theta,\theta'}_{\delta,\eta,m}}(1-|\boldsymbol{u}|^2)^{\frac{n-m-2}{2}}d\boldsymbol{u}}{\int_{\mathbb{B}^m}(1-|\boldsymbol{u}|^2)^{\frac{n-m-2}{2}}d\boldsymbol{u}}\right)}=\frac{M_{\textup{Lev}}(n-m,\theta')}{\left(\frac{\int_{r-\frac{\gamma}{n}}^{R}u^{m-1}(1-u^2)^{\frac{n-m-2}{2}}du}{\int_{0}^{1}u^{m-1}(1-u^2)^{\frac{n-m-2}{2}}du}\cdot \frac{\int_{\cos(\eta)}^1(1-t^2)^{\frac{m-3}{2}}dt}{\int_{-1}^1(1-t^2)^{\frac{m-3}{2}}dt}\right)},
\end{equation*}
where the equality is a consequence of Proposition~\ref{masscomputation}. Since
\[\int_{r-\frac{\gamma}{n}}^{r+\frac{1}{n^{2/3}\log(n)}}u^{m-1}(1-u^2)^{\frac{n-m-2}{2}}du=\frac{(1-r^2)^{\frac{n-m-2}{2}}}{n}\int_{-\gamma}^{\frac{n^{1/3}}{\log(n)}}\left(r+\frac{t}{n}\right)^{m-1}\left(1-\frac{2r\frac{t}{n}+\frac{t^2}{n^2}}{1-r^2}\right)^{\frac{n-m-2}{2}}dt=(1+o(1))\frac{r^{m-1}(1-r^2)^{\frac{n-m-2}{2}}}{n}\int_{-\gamma}^{\infty}e^{-\frac{rt}{1-r^2}}dt\]
and
\[\int_{0}^{1}u^{m-1}(1-u^2)^{\frac{n-m-2}{2}}du=\frac{1}{2}\int_{0}^{1}v^{\frac{m-2}{2}}(1-v)^{\frac{n-m-2}{2}}dv=\frac{\Gamma\left(\frac{m}{2}\right)\Gamma\left(\frac{n-m}{2}\right)}{2\Gamma\left(\frac{n}{2}\right)},\]
we obtain
\[\frac{\int_{r-\frac{\gamma}{n}}^{R}u^{m-1}(1-u^2)^{\frac{n-m-2}{2}}du}{\int_{0}^{1}u^{m-1}(1-u^2)^{\frac{n-m-2}{2}}du}\geq (1+o(1))\frac{\frac{r^{m-1}(1-r^2)^{\frac{n-m-2}{2}}}{n}\int_{-\gamma}^{\infty}e^{-\frac{rt}{1-r^2}}dt}{\frac{\Gamma\left(\frac{m}{2}\right)\Gamma\left(\frac{n-m}{2}\right)}{2\Gamma\left(\frac{n}{2}\right)}}=(1+o(1))\frac{\frac{r^{m-2}(1-r^2)^{\frac{n-m}{2}}}{n}e^{\frac{r\gamma}{1-r^2}}}{\frac{\Gamma\left(\frac{m}{2}\right)\Gamma\left(\frac{n-m}{2}\right)}{2\Gamma\left(\frac{n}{2}\right)}}.\]
Specializing to $m=3$ and using Stirling's formula, we get
\[\frac{\int_{r-\frac{\gamma}{n}}^{R}u^{2}(1-u^2)^{\frac{n-5}{2}}du}{\int_{0}^{1}u^{2}(1-u^2)^{\frac{n-5}{2}}du}\geq (1+o(1))\frac{\frac{r(1-r^2)^{\frac{n-3}{2}}}{n}e^{\frac{r\gamma}{1-r^2}}}{\frac{\frac{\sqrt{\pi}}{2}\left(\frac{n-5}{2e}\right)^{\frac{n-5}{2}}}{2\left(\frac{n-2}{2e}\right)^{\frac{n-2}{2}}}}=(1+o(1))\frac{2\sqrt{n}r(1-r^2)^{\frac{n-3}{2}}e^{\frac{r\gamma}{1-r^2}}}{\sqrt{2\pi}}.\]
When $m=3$, we also have
\[\frac{\int_{\cos(\eta)}^1(1-t^2)^{\frac{m-3}{2}}dt}{\int_{-1}^1(1-t^2)^{\frac{m-3}{2}}dt}=\sin^2(\eta/2)= (1+o(1))\frac{(1-s')}{nr}\left(\frac{1-r^2}{r}-\gamma\right),\]
where the last equality is a consequence of our choice of $\eta$, depending on $\gamma$. As a result,
\[\frac{\int_{r-\frac{\gamma}{n}}^{R}u^{2}(1-u^2)^{\frac{n-5}{2}}du}{\int_{0}^{1}u^{2}(1-u^2)^{\frac{n-5}{2}}du}\cdot \frac{\int_{\cos(\eta)}^1(1-t^2)^{\frac{m-3}{2}}dt}{\int_{-1}^1(1-t^2)^{\frac{m-3}{2}}dt}\geq (1+o(1))\frac{2(1-s')(1-r^2)^{\frac{n-3}{2}}e^{\frac{r\gamma}{1-r^2}}}{\sqrt{2\pi n}}\left(\frac{1-r^2}{r}-\gamma\right).\]
This is maximized when $\gamma=0$, leading to

\begin{equation*}
M(n,\theta)\leq (1+o(1))\frac{1}{\frac{2(1-s')(1-r^2)^{\frac{n-3}{2}}}{\sqrt{2\pi n}}\left(\frac{1-r^2}{r}\right)}M_{\textup{Lev}}(n-3,\theta')=(1+o(1))\frac{M_{\textup{Lev}}(n-3,\theta')}{2(1-s')M_{\textup{Lev}}(n+1,\theta')}\frac{r\sqrt{2\pi n}M_{\textup{Lev}}(n+1,\theta')}{(1-r^2)^{\frac{n-1}{2}}}
\end{equation*}
When $m=4$, we obtain
\[\frac{\int_{r-\frac{\gamma}{n}}^{R}u^{3}(1-u^2)^{\frac{n-6}{2}}du}{\int_{0}^{1}u^{3}(1-u^2)^{\frac{n-6}{2}}du}\geq (1+o(1))\frac{nr^{2}(1-r^2)^{\frac{n-4}{2}}}{2}e^{\frac{r\gamma}{1-r^2}}\]
and
\[\frac{\int_{\cos(\eta)}^1(1-t^2)^{1/2}dt}{\int_{-1}^1(1-t^2)^{1/2}dt}=\eta\frac{1-\frac{\sin(2\eta)}{2\eta}}{\pi}=\frac{2}{3\pi}(1+o(1))\eta^3= \frac{2}{3\pi}(1+o(1))\left(\frac{10(1-s')}{3nr}\left(\frac{1-r^2}{r}-\gamma\right)\right)^{\frac{3}{2}}.\]
This is maximized when $\gamma=-\frac{1-r^2}{2r}$. As a result,
\begin{eqnarray*}
M(n,\theta)&\leq& (1+o(1))\frac{M_{\textup{Lev}}(n-4,\theta')}{\frac{\int_{r-\frac{\gamma}{n}}^{R}u^{3}(1-u^2)^{\frac{n-6}{2}}du}{\int_{0}^{1}u^{3}(1-u^2)^{\frac{n-6}{2}}du}\frac{\int_{\cos(\eta)}^1(1-t^2)^{1/2}dt}{\int_{-1}^1(1-t^2)^{1/2}dt}}\\&=&(1+o(1))\frac{3\sqrt{e\pi} }{\sqrt{2}\left(5(1-s')\right)^{\frac{3}{2}}}\frac{M_{\textup{Lev}}(n-4,\theta')}{M_{\textup{Lev}}(n+1,\theta')}\cdot\underbrace{\frac{r\sqrt{2\pi n}M_{\textup{Lev}}(n+1,\theta')}{(1-r^2)^{\frac{n-1}{2}}}}_{\textup{old bound}}.
\end{eqnarray*}
As before, using~\cite{KL}, on geometric average, this gives us the improvement factor $0.4267...$.\\
\\
We now address the case when $m\rightarrow\infty$ with $m=o(n)$. In this case,
\[\frac{\int_{\cos(\eta)}^1(1-t^2)^{\frac{m-3}{2}}dt}{\int_{-1}^1(1-t^2)^{\frac{m-3}{2}}dt}=(1+o(1))\sqrt{\frac{m}{2\pi}}\int_0^{\frac{\eta^2}{2}}(2v)^{\frac{m-3}{2}}dv=(1+o(1))\frac{\eta^{m-1}}{m-1}\sqrt{\frac{m}{2\pi}}.\]
As a result, for $m,n\rightarrow\infty$ with $m=o(n)$,
\begin{equation*}
M(n,\theta)\leq(1+o(1))\frac{M_{\textup{Lev}}(n-m,\theta')}{\frac{\frac{r^{m-2}(1-r^2)^{\frac{n-m}{2}}}{n}e^{\frac{r\gamma}{1-r^2}}}{\frac{\Gamma\left(\frac{m}{2}\right)\Gamma\left(\frac{n-m}{2}\right)}{2\Gamma\left(\frac{n}{2}\right)}}\cdot \frac{\eta^{m-1}}{m-1}\sqrt{\frac{m}{2\pi}}}=(1+o(1))\frac{\sqrt{n}M_{\textup{Lev}}(n-m,\theta')(1-r^2)^{\frac{m-1}{2}}}{\sqrt{m}M_{\textup{Lev}}(n+1,\theta')\frac{r^{m-1}e^{\frac{r\gamma}{1-r^2}}}{\frac{\Gamma\left(\frac{m}{2}\right)\Gamma\left(\frac{n-m}{2}\right)}{2\Gamma\left(\frac{n}{2}\right)}}\cdot \frac{\eta^{m-1}}{m-1}}\cdot\underbrace{\frac{r\sqrt{2\pi n}M_{\textup{Lev}}(n+1,\theta')}{(1-r^2)^{\frac{n-1}{2}}}}_{\textup{old bound}}.
\end{equation*}
Using Stirling's formula, we obtain
\begin{eqnarray*}
\frac{\sqrt{n}(1-r^2)^{\frac{m-1}{2}}}{\sqrt{m}\frac{r^{m-1}e^{\frac{r\gamma}{1-r^2}}}{\frac{\Gamma\left(\frac{m}{2}\right)\Gamma\left(\frac{n-m}{2}\right)}{2\Gamma\left(\frac{n}{2}\right)}}\cdot \frac{\eta^{m-1}}{m-1}}
=(1+o(1))\frac{\sqrt{\pi}(1-r^2)^{\frac{m-1}{2}}}{r^{m-1}e^{\frac{r\gamma}{1-r^2}}}\cdot \frac{m^{\frac{m}{2}}e^{\frac{m^2}{8n}(1+o(1))}}{e^{\frac{m}{2}}\left(\left(\frac{m+1}{m-1}\right)\frac{2(1-s')}{r}\left(\frac{1-r^2}{r}-\gamma\right)\right)^{\frac{m-1}{2}}}.
\end{eqnarray*}
This is minimized when $\gamma=-\frac{(m-3)(1-r^2)}{2r}$, in which case we obtain the improvement factor
\begin{eqnarray*}&&(1+o(1))\frac{\sqrt{\pi}(1-r^2)^{\frac{m-1}{2}}e^{\frac{m-3}{2}}}{r^{m-1}}\cdot \frac{m^{\frac{m}{2}}e^{\frac{m^2}{8n}(1+o(1))}}{e^{\frac{m}{2}}\left(\left(\frac{m+1}{m-1}\right)\frac{2(1-s')}{r}\left((m-1)\frac{1-r^2}{2r}\right)\right)^{\frac{m-1}{2}}}\frac{M_{\textup{Lev}}(n-m,\theta')}{M_{\textup{Lev}}(n+1,\theta')}\\
&=& (1+o(1))\cdot \frac{\sqrt{\pi m}e^{\frac{m^2}{8n}(1+o(1))}}{e^{2}\left(1-s'\right)^{\frac{m-1}{2}}}\frac{M_{\textup{Lev}}(n-m,\theta')}{M_{\textup{Lev}}(n+1,\theta')}
\end{eqnarray*}
\section{Improvement factor for $V_m(\mathbb{R}^n)$ with $n-m$ constant}\label{section:constant}
In this section, we sketch the improvement factor obtained by considering Stiefel manifolds $V_{n-k}(\mathbb{R}^n)$ with $k\geq 2$ constant. In this situation, our function $H$ is constructed as follows. We fix angles $0<\theta<\theta'\leq\frac{\pi}{2}$, and take $\delta\in[0,r]$, and $\eta_1,\eta_2\in (0,\pi]$ such that $\eta_1<\eta_2$, where
\[r=\sqrt{\frac{s-s'}{1-s'}}\]
with $s:=\cos(\theta)$ and $s':=\cos(\theta')$ as before. We also let $R=\cos\left(2\arctan\left(\frac{s}{(1-s)(s-s')}\right)+\arccos(r)-\pi\right)>r$ as before. Using these parameters, we define
\[\cal{D}^{\theta,\theta'}_{\delta,\eta_1,\eta_2,n-k}=\left\{\boldsymbol{u}\in\mathbb{B}^{n-k}:r\leq|\boldsymbol{u}|\leq R+\delta\textup{ and }|\boldsymbol{u}|\cos(\eta_2)\leq\left<\boldsymbol{u},\boldsymbol{p}\right>\leq|\boldsymbol{u}|\cos(\eta_1)\right\}.\]
We let $\delta=\frac{\gamma}{n}$, where $\gamma>0$ is a constant to be determined later. Taking $g=g_{k,\theta'}$ Levenshtein's optimal polynomial for bounding $M(k,\theta')$, and $F=\chi_{\cal{D}^{\theta,\theta'}_{\delta,\eta_1,\eta_2,n-k}}:[-1,1]^{n-k}\rightarrow\mathbb{R}$ the characteristic function of $\cal{D}^{\theta,\theta'}_{\delta,\eta_1,\eta_2,n-k}$, we construct $H$ as before. By Proposition~\ref{Hexplicit1}, it suffices to require $\gamma,\eta_2$ to be such that for sufficiently large $n$, we have that for every $t\in[-1,s]$,
\begin{equation*}\label{constantineq}\iint\limits_{\substack{\cal{D}^{\theta,\theta'}_{\delta,\eta_1,\eta_2,n-k}\times\cal{D}^{\theta,\theta'}_{\delta,\eta_1,\eta_2,n-k}\\ -1\leq \frac{t-\left<\boldsymbol{u},\boldsymbol{v}\right>}{\sqrt{(1-|\boldsymbol{u}|^2)(1-|\boldsymbol{v}|^2)}}\leq 1}}g_{k,\theta'}\left(\frac{t-\left<\boldsymbol{u},\boldsymbol{v}\right>}{\sqrt{(1-|\boldsymbol{u}|^2)(1-|\boldsymbol{v}|^2)}}\right)\left(1-\left(\frac{t-\left<\boldsymbol{u},\boldsymbol{v}\right>}{\sqrt{(1-|\boldsymbol{u}|^2)(1-|\boldsymbol{v}|^2)}}\right)^2\right)^{\frac{k-3}{2}}(1-|\boldsymbol{u}|^2)^{\frac{k-3}{2}}(1-|\boldsymbol{v}|^2)^{\frac{k-3}{2}}d\boldsymbol{u}d\boldsymbol{v}\leq 0.\end{equation*}
Note that in this construction, the region was expanded from above, not from below, the reason being that the main contribution to $H$ for $n-m=k$, $k$ a constant and $n\rightarrow\infty$, comes from the parts with largest radii, that is, near radius $R$. Since $k$ is constant in this setting and we want to find \textit{constants} $\gamma,\eta_2$ (independent of $n$), we cannot use the estimates on Levenshtein's optimal polynomials. We focus attention on $t=s$.\\
\\
Since the main contribution to $H$ comes from near $|\boldsymbol{u}|=|\boldsymbol{v}|=R$ as $n\rightarrow\infty$, we focus attention on
\[-1\leq \frac{s-\alpha R^2}{1-R^2}\leq 1\textup{ and } -1\leq \alpha\leq 1\]
being satisfied. This is equivalent to
\[\max\left\{-1,1-\frac{1-s}{R^2}\right\}\leq\alpha\leq \min\left\{1,-1+\frac{1+s}{R^2}\right\}.\]
From these, one obtains that $H(s)\leq 0$ if for sufficiently large $n$,
\begin{equation}\label{constantineq2}
\int_{\cos(\eta_2)}^{\cos(\eta_1)}\int_{\cos(\eta_2)}^{\cos(\eta_1)}\int_{-1}^1g_{k,\theta'}\left(\frac{s-\alpha R^2}{1-R^2}\right)\left(\max\left\{0,1-\left(\frac{s-\alpha R^2}{1-R^2}\right)^2\right\}\right)^{\frac{k-3}{2}}\max\{0,(1-x^2)(1-y^2)-(\alpha-xy)^2)\}^{\frac{n-k-4}{2}}d\alpha dxdy<0.
\end{equation}
Since $n$ is large and $k$ constant, by concentration around $x=y=\cos(\eta_2)$, we may replace inequality~\eqref{constantineq2} with
\begin{equation}\label{constantineq3}
\int_{\cos(2\eta_2)}^1g_{k,\theta'}\left(\frac{s-\alpha R^2}{1-R^2}\right)\left(\max\left\{0,1-\left(\frac{s-\alpha R^2}{1-R^2}\right)^2\right\}\right)^{\frac{k-3}{2}}\left(1-\left(\frac{\alpha-\cos^2(\eta_2)}{\sin^2(\eta_2)}\right)^2\right)^{\frac{n}{2}}d\alpha<0
\end{equation}
for sufficiently large $n$. Generically, $\alpha=\cos^2(\eta_2)$ has the greatest contribution to the integrand, and so inequality~\eqref{constantineq3} is satisfied for sufficiently large $n$ if 
\[g_{k,\theta'}\left(\frac{s-\cos^2(\eta_2) R^2}{1-R^2}\right)\left(\max\left\{0,1-\left(\frac{s-\cos^2(\eta_2) R^2}{1-R^2}\right)^2\right\}\right)^{\frac{k-3}{2}}<0.\]
Since $g_{k,\theta'}(s')=0$, this inequality is satisfied if $\frac{s-\cos^2(\eta_2) R^2}{1-R^2}=s'(1-o(1)),$ that is,
\[\cos(\eta_2)=\sqrt{\frac{s-s'(1-R^2)}{R^2}}(1+o(1)).\]
It is then easy to see that the upper bound on $M(n,\theta)$ obtained from this is
\[2^{n(1+o(1))\log_2\left(\frac{\sin(\theta/2)}{R\left(1-\frac{s-s'(1-R^2)}{R^2}\right)^{\frac{1}{2}}}\right)}\geq 2^{-\frac{n}{2}(1+o(1))},\]
a bound exponentially worse than the bound of Kabatyanskii--Levenshtein~\cite{KL}.

\end{document}